\newfont{\msbm}{msbm10 at 11pt}
\newcommand\QED{\pushQED{\qed} 
\qedhere
\popQED}
\newcommand\Quote[1]{``#1"}
\newcommand {\PP} {\mathbb{P}}
\newtheorem{Theo}{Theorem}[section]
\newtheorem{Lemma}{Lemma}[section]
\newtheorem{Cor}{Corollary}[section]
\newtheorem{Prop}{Proposition}[section]
\newtheorem{Exm}{Example}[section]
\newtheorem{Dfn}{Definition}[section]
\newtheorem{Rmk}{Remark}[section]
\newtheorem{Assum}{Assumption} 
\begin{document}

\title{Perturbing the principal Dirichlet 
eigenfunction}
\author{Brian Chao and Laurent Saloff-Coste \\ Cornell University}
\maketitle
\begin{abstract}
We study the principal Dirichlet eigenfunction $\varphi_U$ when the domain $U$ is a perturbation of a bounded inner uniform domain in a strictly local regular Dirichlet space. We prove that if $U$ is suitably contained in between two inner uniform domains, then $\varphi_U$ admits two-sided bounds in terms of the principal Dirichlet eigenfunctions of the two approximating domains. The main ingredients of our proof include domain monotonicity properties associated to Dirichlet boundary conditions, intrinsic ultracontractivity estimates, and parabolic Harnack inequality. As an application of our results, we give explicit expressions comparable to $\varphi_U$ for certain domains $U\subseteq \mathbb{R}^n$, as well as improved Dirichlet heat kernel estimates for such domains. We also prove that under a uniform exterior ball condition on $U$, a point achieving the maximum of $\varphi_U$ is separated away from the boundary, complementing a result of Rachh and Steinerberger \cite{rs}. Our principal Dirichlet eigenfunction estimates are applicable to second-order uniformly elliptic operators in Euclidean space, Riemannian manifolds with nonnegative Ricci curvature, and Lie groups of polynomial volume growth.
\end{abstract}

{\small {{\it AMS 2020 subject classifications}:  Primary 31C25, 35B51, 35J05, 60J60;
Secondary 35J25, 35K08}


{{\it Key words and phrases}: Dirichlet eigenfunction, heat kernel, inner uniform domain}}

\tableofcontents

\section{Introduction}
\label{introduction}
For a bounded domain $U\subseteq \mathbb{R}^n$, it is well-known that the Dirichlet Laplacian $-\Delta$ on $U$ has a discrete spectrum. The smallest positive eigenvalue $\lambda_U$ is given by
\begin{align}
    \label{rayleigh0}
    \lambda_U=\inf_{f \in W^{1,2}_0(U)\backslash \{0\}}\frac{\int_U |\nabla f|^2 dx}{\int_U f^2 dx},
\end{align}
and the principal Dirichlet Laplacian eigenfunction $\varphi_U$, normalized so that $\|\varphi_U\|_{L^2(U)}=1$, can be characterized as the minimizer of (\ref{rayleigh0}). Dirichlet eigenvalues and eigenfunctions have been intensively studied from the point of view of spectral theory, Markov diffusion operators, analysis on manifolds, dynamical systems, and quantum mechanics, to name a few. 

This work investigates how perturbations of a bounded inner uniform domain (Definition \ref{iudefn}) affect the principal Dirichlet Laplacian eigenfunction of that domain. We more generally work over a strictly local regular Dirichlet space $(X,d,\mu,\mathcal{E},\mathcal{D}(\mathcal{E}))$ that supports volume doubling and Poincar\'{e} inequalities up to some scale $R\in (0,\infty]$ (in the Euclidean case, $R=\infty$). In this general setting, Dirichlet Laplacian eigenfunctions are replaced with Dirichlet eigenfunctions of the infinitesimal generator associated to the Dirichlet form. We refer to Sections \ref{dirichletspace} and \ref{R-DPH} for the precise definitions. For readers unfamiliar with the theory of Dirichlet forms, it is helpful to refer to Section \ref{examples} and the Appendix for explicit examples in Euclidean space.

The aim of this paper is to provide a general framework for comparing principal Dirichlet eigenfunctions of different domains, where one domain is included in the other. Our main results, roughly speaking, come in the following form. Suppose $V$ is a bounded inner uniform domain and, for each $c\geq 1$, $V_c\supseteq V$ is a \Quote{larger version} of $V$ (Definition \ref{Vc}). (A useful example to keep in mind is a star-shaped bounded domain $V\subseteq \mathbb{R}^n$ with $ V_c=cV$ being the dilation of $V$ by $c\geq 1$.) We show that for domains $U$ with $V\subseteq U\subseteq V_c$, one has, up to constants controlled by the volume doubling and Poincar\'{e} constant of $X$, the inner uniformity constants of $V$, an upper bound on the scale of dilation $c\geq 1$, and other tractable quantities associated to the domains $U$ and $V$,
\begin{align}
    \label{mainresult}
    \varphi_V\lesssim \varphi_U\lesssim \varphi_{V_c},
\end{align}
In (\ref{mainresult}), the left inequality holds on $V$ and the right inequality holds on $U$. We write $A\lesssim B$ to denote that $A\leq CB$ where $C>0$ is some constant independent of $A$ and $B$ and controlled only by important parameters which will be specified in each theorem statement. Our main results are new even for the Laplacian, and for example imply the following theorem.

\begin{Theo}    
    \label{sample}
    Consider Euclidean space $\mathbb{R}^n$ equipped with the Laplacian and Lebesgue measure $d\mu$. Let $V\subseteq \mathbb{R}^n$ be a bounded $(C_0,c_0)$-inner uniform star-shaped domain with respect to the origin $0\in \mathbb{R}^n$. Let $\varphi_V>0$ denote the principal Dirichlet Laplacian eigenfunction of the domain $V$ normalized so that $\|\varphi_V\|_{L^2(V)}=1$.
    \begin{enumerate}
        \item  Suppose $U$ is a $(C_0,c_0)$-inner uniform domain with $V\subseteq U\subseteq cV$, where the dilation factor $c\geq 1$ is uniformly bounded above by $c_1$. Then 
    \begin{align}
        \label{sample1.1}
        & \varphi_{V}(x)\lesssim\varphi_U(x),\enspace \forall x\in V,
        \\ \label{sample1.2}& \varphi_U(x)\lesssim\varphi_{cV}(x),\enspace \forall x\in U.
    \end{align}
    The implied constants in (\ref{sample1.1}) and (\ref{sample1.2}) depend only on $C_0,c_0,c_1,$ and $n$.
        \item Now assume that for some constants $A,B>0$,
    $$\frac{\mu(\{x\in V:\textup{dist}(x,\partial V)<\delta\textup{diam}(V)\})}{\mu(V)}\leq A\delta^B,\enspace \forall \delta>0.$$
    Then there exists a constant $c=c(n,C_0,c_0)>1$ sufficiently close to $1$ such that if $U$ is any arbitrary domain with $V\subseteq U\subseteq cV$, 
    \begin{align}
        \label{sample2.1}
        & \varphi_{V}(x)\lesssim\varphi_U(x),\enspace \forall x\in V,
        \\ \label{sample2.2}& \varphi_U(x)\lesssim\varphi_{cV}(x),\enspace \forall x\in U.
    \end{align}
    The implied constant depends only on $C_0,c_0, n$ in the upper bound (\ref{sample2.1}), and depends on $C_0,c_0,n,A,B$ in the lower bound (\ref{sample2.2}).
    \end{enumerate}
\end{Theo}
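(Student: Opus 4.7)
The plan is to reduce the eigenfunction comparisons to Dirichlet heat kernel comparisons via intrinsic ultracontractivity (IU), then exploit the monotonicity $p^V_t \leq p^U_t \leq p^{cV}_t$. For a bounded inner uniform domain $W$, IU provides the two-sided bound $p^W_t(x,y) \asymp e^{-\lambda_W t}\varphi_W(x)\varphi_W(y)$ for all $t \geq t_0$, with $t_0$ and the implicit constants controlled only by $C_0, c_0, n$. The eigenvalue bounds $\lambda_{cV} = c^{-2}\lambda_V \leq \lambda_U \leq \lambda_V$ (from scaling of the star-shaped dilation and from domain monotonicity) ensure that all exponential factors of the form $e^{(\lambda_W - \lambda_{W'})t_0}$ appearing below are uniformly bounded.

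For Part 1, since IU is available on each of $V$, $U$, and $cV$, the proof reduces to two heat kernel sandwiches. For \eqref{sample1.1}, the diagonal bound $e^{-\lambda_V t_0}\varphi_V(x)^2 \lesssim p^V_{t_0}(x,x) \leq p^U_{t_0}(x,x) \lesssim e^{-\lambda_U t_0}\varphi_U(x)^2$ immediately yields $\varphi_V(x) \lesssim \varphi_U(x)$ on $V$. For \eqref{sample1.2}, applying the analogous sandwich at $y=0$ (the center of star-shapedness) gives $\varphi_U(x)\varphi_U(0) \lesssim \varphi_{cV}(x)\varphi_{cV}(0)$; by the scaling $\varphi_{cV}(0) = c^{-n/2}\varphi_V(0)$ and the just-proven bound $\varphi_V(0) \lesssim \varphi_U(0)$, the prefactor $\varphi_{cV}(0)/\varphi_U(0)$ is uniformly bounded.

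For Part 2, the main new difficulty is that $U$ need not be inner uniform, so IU on $U$ is unavailable. I plan to replace the pointwise heat kernel sandwich by an integrated one coming from the semigroup identity $\varphi_U = e^{\lambda_U t_0}T^U_{t_0}\varphi_U$. For \eqref{sample2.1}, for $x\in V$,
$$e^{-\lambda_U t_0}\varphi_U(x) \geq \int_V p^V_{t_0}(x,y)\varphi_U(y)\,d\mu(y) \gtrsim e^{-\lambda_V t_0}\varphi_V(x)\int_V \varphi_V\varphi_U\,d\mu,$$
so the task reduces to bounding the projection $\int_V\varphi_V\varphi_U\,d\mu$ from below. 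A spectral expansion of the zero-extension of $\varphi_V$ in the $L^2(U)$-eigenbasis of the Dirichlet Laplacian on $U$ gives $\int_V\varphi_V\varphi_U\,d\mu \geq \sqrt{1 - (\lambda_V - \lambda_U)/(\lambda_2^U - \lambda_U)}$; the numerator is bounded by $(1-c^{-2})\lambda_V$ and hence is small when $c$ is close to $1$, while the denominator is controlled from below by the spectral gap of $V$ via $\lambda_2^U \geq \lambda_2^{cV} = c^{-2}\lambda_2^V$. For \eqref{sample2.2}, the symmetric argument with IU upper on $cV$ and Cauchy--Schwarz gives $\varphi_U(x) \lesssim \varphi_{cV}(x)\int_U\varphi_{cV}\varphi_U\,d\mu \lesssim \varphi_{cV}(x)$.

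The main obstacle is the quantitative stability of the spectral gap, $\lambda_2^U - \lambda_U \gtrsim 1$, under the possibly irregular perturbation from $V$ to $U$; only with such stability does the projection estimate remain effective for $c$ close to $1$. The small-boundary condition on $V$ provides exactly this control: it implies $\mu(U\setminus V) \leq \mu(cV\setminus V) \lesssim A(c-1)^B\mu(V)$, and combined with the parabolic Harnack inequality this volumetric smallness translates into the required eigenvalue/eigenfunction stability bounds --- which is where the dependence of the Part 2 constants on $A$ and $B$ ultimately enters.
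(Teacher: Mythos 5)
Your Part 1 and your Part 2 upper bound $\varphi_U\lesssim\varphi_{cV}$ are, in substance, the paper's argument via Theorem \ref{ub}: dominate $p^D_U$ by $p^D_{cV}$, invoke intrinsic ultracontractivity (Theorem \ref{jannalierl}) on $cV$, and use the semigroup identity for $\varphi_U$; the detour through the basepoint $y=0$ for \eqref{sample1.2} can be avoided by simply taking the diagonal $x=y$, as the paper does, which also avoids needing IU on $U$ in that step. The genuinely different piece is your Part 2 lower bound $\varphi_V\lesssim\varphi_U$. After the paper's own preliminary reduction (Lemma \ref{preliminaryLB}) to bounding $a_1:=\int_V\varphi_V\varphi_U\,d\mu$ from below, you expand the zero-extension of $\varphi_V$ in the $L^2(U)$ Dirichlet eigenbasis, giving $a_1^2\geq 1-(\lambda_V-\lambda_U)/(\lambda_2(U)-\lambda_U)$, and close with $\lambda_U\leq\lambda_V$, $\lambda_2(U)\geq\lambda_2(cV)=c^{-2}\lambda_2(V)$, and a lower bound on the spectral gap of $V$. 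The paper instead shows $\varphi_U$ retains mass $\geq 1/4$ on the shrunken set $V(\delta)=\{d(\cdot,\partial V)\geq 2\delta\,\textup{diam}_V\}$ and runs a Harnack chain (Lemma \ref{phiharnack}); the choice of $\delta$ there is exactly where the dependence on $h_V$, and hence on $A,B$, enters.

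Your closing paragraph, however, misidentifies where the small-boundary hypothesis would enter your own argument, and that exposes a real gap. First, $\mu(cV\setminus V)=(c^n-1)\mu(V)\lesssim_n (c-1)\mu(V)$ holds with no hypothesis on $\partial V$ whatsoever; the condition $h_V(\delta)\leq A\delta^B$ controls the tube on the \emph{inside} of $\partial V$ and simply does not appear anywhere in your spectral expansion. Second, and more to the point, you assert but do not justify the uniform spectral gap lower bound $\lambda_2(V)-\lambda_V\gtrsim_{C_0,c_0,n}\textup{diam}_V^{-2}$, which is the crux of making the denominator effective as $c\downarrow 1$. This does hold, but it requires an argument: from Theorem \ref{jannalierl}, the Doob-transformed density $\widetilde{p}(t,x,y)=e^{\lambda_V t}p^D_V(t,x,y)/\bigl(\varphi_V(x)\varphi_V(y)\bigr)$, reversible with respect to $\varphi_V^2\,d\mu$, satisfies $\widetilde{p}(t_0,\cdot,\cdot)\geq a>0$ at $t_0=C\,\textup{diam}_V^2$, a Doeblin minorization that forces $e^{-(\lambda_2(V)-\lambda_V)t_0}\leq 1-a$. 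Once this step is supplied, your spectral-projection route is correct and in fact \emph{sharper} than the paper's Theorem \ref{lowerbound} in the Euclidean star-shaped setting, since the implied constant in the lower bound $\varphi_V\lesssim\varphi_U$ then depends only on $C_0,c_0,n$, with no dependence on $A,B$. The paper's Harnack-chain method is framed for general Dirichlet spaces and abstract families $V_c$, where the exact scaling $\lambda_2(V_c)=g(c)^{-2}\lambda_2(V)$ on which your step 3 relies is unavailable; for Theorem \ref{sample} itself, your approach, properly completed, is the better one.
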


For numerous variations and generalizations of Theorem \ref{sample}, we refer the reader to Theorems \ref{soue}, \ref{cylinderthm}, \ref{ub}, \ref{lowerbound}, \ref{lowerbound2}, and \ref{lowerbound3}. We summarize below the other main results of this paper.
\begin{enumerate}
    \item In Theorem \ref{separate2}, we consider a Dirichlet space $X$ satisfying a scale-invariant parabolic Harnack inequality, and show that a bounded domain $U\subseteq X$ satisfying an $\alpha$-exterior ball condition must have $d(x_U,\partial U)\gtrsim \lambda_U^{-1/2}$, where $x_U\in U$ is any point such that $\varphi_U(x_U)$ is sufficiently close to $\|\varphi_U\|_{L^{\infty}(U)}$. This complements a result of Rachh and Steinerberger \cite{rs}. See Theorem \ref{separationexample} for an illustrative result in $\mathbb{R}^n$.
    \item In Theorem \ref{separate3}, we consider uniformly elliptic operators in $\mathbb{R}^2$. Given a bounded domain $U$ satisfying an $\alpha$-exterior ball condition, we prove that the hypothesis $\varphi_U(x_U)\gtrsim \mu(U)^{-1/2}$ is sufficient for $d(x_U,\partial U)\gtrsim \lambda_U^{-1/2}$ to hold.
    \item In Theorems \ref{triangleprofile}, \ref{polygonal}, and \ref{polythm}, we give explicit expressions (i.e. a \textit{caricature function}) comparable to the principal Dirichlet Laplacian eigenfunctions of triangles, polygons, and regular polygons respectively. In Section \ref{examples}, Example \ref{triangleexample}, we propose a general method to obtain a caricature function for $\varphi_U$ when $U$ is a small polygonal perturbation of a polygonal shape, such as a planar triangle. In Example \ref{3dcube}, we discuss the difficulty of obtaining caricature functions in dimensions $n\geq 3$, then in Theorem \ref{cubethm}, we give a nontrivial example of a caricature function for a $3$-dimensional domain. Furthermore, in Section 6.1 of the Appendix, we obtain caricature functions for certain $C^{1,1}$ Euclidean domains, including convex sets in $\mathbb{R}^n$ with \Quote{rounded corners}. 
    \item In Section \ref{applylierl} of the Appendix, we discuss how the above caricature functions can be used to give essentially explicit two-sided estimates of Dirichlet heat kernels. 
\end{enumerate}

There are at least two reasons why we choose to work with inner uniform domains, which were independently introduced by Martio and Sarvas \cite{martiosarvas}, as well as Jones \cite{jones}. The first reason is the abundance of such domains. For example, the class of inner uniform domains include many Lipschitz domains in $\mathbb{R}^n$ (Page 73, \cite{jones}). Moreover, inner uniform domains also include domains with irregular boundary points or fractal boundary. The second reason is the existing works on Dirichlet heat kernel estimates on inner uniform domains, which play a crucial role in this work. Results of Grigor'yan, Saloff-Coste, and Sturm (in \cite{grigoryan}, \cite{saloff-coste}, and \cite{sturm2} respectively) established the equivalence between (i) volume doubling and Poincar\'{e} inequality on all geodesic balls, (ii) two-sided Gaussian heat kernel bounds, and (iii) the parabolic Harnack inequality. In \cite{gyryalsc}, the aforementioned result was used in conjunction with a Doob $h$-transform technique to prove two-sided Dirichlet heat kernel bounds on unbounded inner uniform domains. (Here $h$ denotes a positive harmonic function on an unbounded domain which vanishes on the boundary; we call $h$ a \textit{harmonic profile}.)
In \cite{lierllsc}, the authors considered a Doob $\varphi$-transform technique with the principal Dirichlet eigenfunction $\varphi$ playing the role of a \textit{profile function}. They then proved two-sided Dirichlet heat kernel bounds for bounded inner uniform domains in the context of possibly non-symmetric Dirichlet forms. We attempt to understand similar results for bounded non-inner uniform domains which are perturbations of inner uniform domains; we do so by studying how the profile function $\varphi_U$ behaves under domain perturbations of $U$.

Since explicit expressions for the principal Dirichlet eigenfunction $\varphi_U$ are rare even for the Dirichlet Laplacian in Euclidean space, our results contribute to understanding the shape of $\varphi_U$ for domains $U$ approximated by ones where the principal Dirichlet eigenfunction is better understood. More specifically, we aim to understand $\varphi_U$ by finding some explicit \textit{caricature function} $\Phi_U$ defined on $U$ with $\Phi_U\asymp \varphi_U$, and such that $\Phi_U$ captures the behavior of $\varphi_U$ on and away from the boundary $\partial U$. For illustration, consider the simplest case when $U=(0,a)\subseteq \mathbb{R}$ is a bounded interval and $\varphi_U(x)=(2/a)^{1/2}\sin(\pi x/a)$ is the principal Dirichlet Laplacian eigenfunction of $U$. In this case, we have $\Phi_U\leq \varphi_U\leq \pi \Phi_U/2$, where $\Phi_U(x)=2\sqrt{2}a^{-3/2}\min\{x,a-x\}$. The caricature function $\Phi_U$ captures the fact that $\phi_U(x)$ decays linearly as $x\to 0$ or $x\to a$, and that $\varphi_U\asymp a^{-1/2}$ at scale $a$ away from the boundary. In general, finding a caricature function $\Phi_U$ is highly nontrivial even if explicit expressions for $\varphi_U$ are available. For example, there are exact expressions for $\varphi_T$ when $T\subseteq \mathbb{R}^2$ is an equilateral  (see e.g. \cite{prager}) or right isosceles triangle. However, they are expressed as sums and products of trigonometric functions, which obscures the caricature of $\varphi_T$. In Theorem \ref{triangleprofile}, we consider the class of planar or spherical triangles $T$ with angles uniformly bounded below, and we propose caricature functions $\Phi_T$ that are uniformly comparable to $\varphi_T$.    

Still working with the Laplacian in Euclidean space, consider a regular polygon $P\subseteq \mathbb{R}^2$ with $n$ vertices and with each side having length $1$. We consider a domain $U\supseteq P$ whose boundary is an arbitrary Jordan curve, as pictured to the left in Figure \ref{polygon1}. We write $cP$ to denote the dilation of $P$ by a factor of $c$ with respect to the center of $P$. 
\begin{figure}[H]
  \centering
  \includegraphics[width=0.35\textwidth]{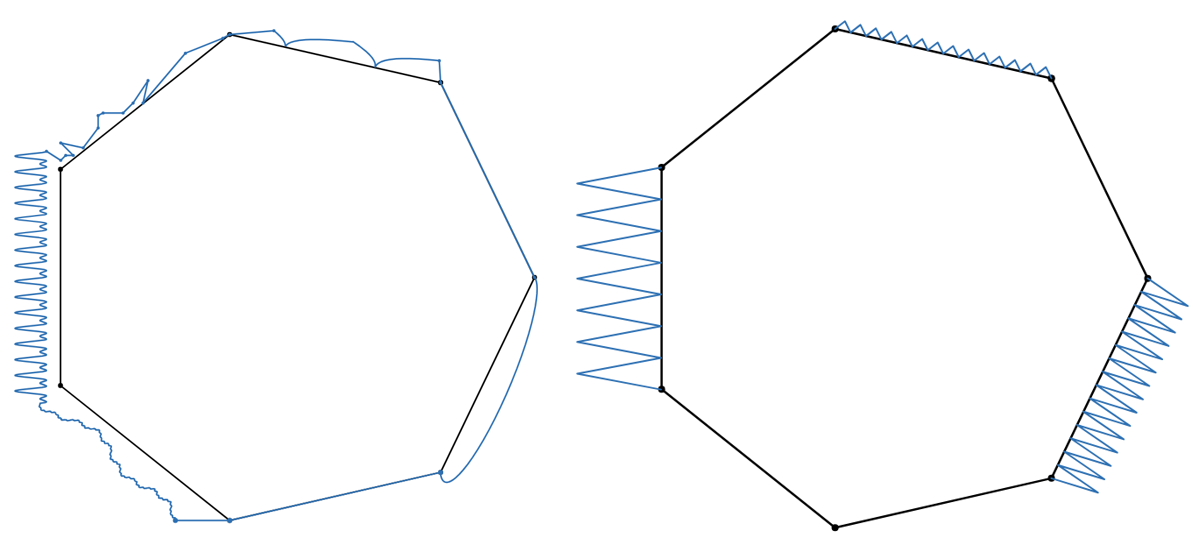}
  \caption{A regular $7$-gon $P$ is perturbed in two different ways; the perturbed domain is $U$.}
  \label{polygon1}
\end{figure}
Our results imply the stability of $\varphi_P$ under small perturbations: there is an absolute constant $c>1$ sufficiently close to $1$ such that if $U$ is contained in $c P$, then $\varphi_U \asymp \varphi_P$ on $0.99P$. A caricature function $\Phi_P\asymp \varphi_P$ is given in Theorem \ref{polythm}; therefore $\varphi_U$ is comparable to $\Phi_P$ on $0.99P$. Since we only assume $\partial U$ is a Jordan curve, there is no general description of the boundary behavior of $\varphi_U$ near $\partial U$. However, if $U$ has polygonal boundary, then we can use a boundary Harnack principle from \cite{lierllsc} to obtain boundary estimates for $\varphi_U$. Combining estimates for $\varphi_U$ near and away from the boundary, we then obtain an explicit caricature function $\Phi_U\asymp \varphi_U$. 

For a concrete example, let $N\geq 1$, and for each (or some) of the $n$ sides of the regular $n$-gon $P$, choose constants $\alpha_i>0$ and $\beta_i\in \mathbb{N}$. On the $i$th side, place $N^{\beta_i}$ identical triangles with height of order $N^{-\alpha_i}$ and perpendicular to the $i$th side. Let $U\supseteq P$ be the domain obtained by replacing the $i$th side of $P$ by the above triangles; $U$ is depicted to the right of Figure \ref{polygon1}. Our results (Theorems \ref{ub} and \ref{lowerbound}) imply that $\varphi_U\asymp \varphi_P$ on $0.99P$ with implied constants independent of $N,\alpha_i,\beta_i$. Now suppose that $\alpha_i\geq \beta_i$ so that the angles of $U$ are bounded away from zero. In a small ball centered at a vertex point of $U$ with angle $\nu$, a boundary Harnack argument implies that $\varphi_U$ is comparable to the harmonic profile $h(r,\theta)=r^{\pi/\nu }\sin(\pi \theta/\nu)$ of an infinite planar cone with opening angle $\nu$. Thus, the above estimates can be combined to give an explicit caricature function $\Phi_U$ for $\varphi_U$ when $U$ is a reasonable polygonal perturbation of $P$ such as in Figure \ref{polygon1}; the perturbations can occur at several different scales and locations. We will neither write out the caricature function $\Phi_U$ for $\varphi_U$ at this level of generality, nor will we address every admissible polygonal perturbation for which our result applies. However, we will present a similar construction in Example \ref{triangleexample} below using the same ideas, and present an illustrative result in Theorem \ref{addonetriangle2}.

 Also, understanding perturbations of principal Dirichlet eigenfunctions is understanding the behavior of diffusions with killing boundary conditions, on a class of bounded domains larger than bounded inner uniform domains. To explain why, we recall that for a bounded domain $U$, the Dirichlet heat kernel $p^D_U(t,x,y)$ can be expanded as
\begin{align*}
    p^D_U(t,x,y)=\sum_{j=1}^{\infty}e^{-\lambda_j t}\varphi_j(x)\varphi_j(y),
\end{align*}
where the $\lambda_j>0$ are the Dirichlet eigenvalues (counting multiplicity) listed in increasing order and $\varphi_j$ are the corresponding Dirichlet eigenfunctions normalized so that $\|\varphi_j\|_{L^2(U)}=1$. We can then define a new Markov kernel on $U$ by 
\begin{align}
    \nonumber \widetilde{p}_U(t,x,y):=\frac{e^{\lambda_1t}p^D_U(t,x,y)}{\varphi_1(x)\varphi_1(y)},
\end{align}
for which the associated process stays in $U$ for all times and has invariant (probability) measure $\varphi_1^2 dx$. 
This is known as the Doob transform technique and has been used extensively by many authors. It was shown in \cite{lierllsc} that if $U$ is a bounded inner uniform domain, then $\widetilde{p}_U(t,x,y)$ satisfies two-sided Gaussian heat kernel bounds, which is equivalent to volume doubling and Poincar\'{e} inequalities on $U$ with respect to the measure $\varphi_1^2 dx$. We expect bounds on $\varphi_U$ of the form (\ref{mainresult}) to be useful for extending such functional inequalities to domains approximated by inner uniform domains. 
 
We make some comments regarding the technical tools used in this paper. First, suppose $U$ is a bounded domain admitting a positive principal Dirichlet eigenfunction $\varphi_U$. Following \cite{lierllsc}, we say that the Dirichlet heat kernel $p^D_U(t,x,y)$ is \textit{intrinsically ultracontractive} if, for each $t>0$, there is a constant $A_t>0$ such that 
\begin{align}
\label{intrinsicu}
p^D_U(t,x,y)\leq A_t\varphi_U(x)\varphi_U(y),\hspace{0.2in}x,y\in U.
\end{align}
The term \Quote{intrinsic ultracontractivity} was first introduced in the seminal paper \cite{daviessimon} of Davies and Simon, and has since then been proven in a variety of contexts. 
It is known that bounded inner uniform domains are intrinsically ultracontractive (see e.g. \cite{lierllsc}).

Second, we will frequently make use of a \textit{scale-invariant parabolic Harnack inequality} (PHI, see Definition \ref{RPHI}), which establishes the comparability of pointwise values of a weak solution to the heat equation. In the case of $\mathbb{R}^n$ equipped with a second-order uniformly elliptic operator (Example \ref{soue}), PHI has been proven by Moser (\cite{moser}, \cite{moser1}), and it is actually sufficient for all of our results of the form (\ref{mainresult}) to hold, assuming we only wanted to work over Euclidean space equipped with such a differential operator. 

Third, we will often use a \textit{boundary Harnack principle} (BHP) to obtain explicit boundary estimates of $\varphi_U$ (e.g. Examples \ref{triangleexample}, \ref{polygons}, and \ref{regularpoly} in Section \ref{examples}). The classical form of the BHP states that given a domain $U$ and a sufficiently small ball $B=B(x,2r)$ with $x\in \partial U$, two harmonic functions  vanishing on $B\cap \partial U$ have comparable ratios on $B(x,r)\cap U$. The BHP for the Laplacian on Lipschitz domains has been independently proven by Ancona \cite{ancona}, Dahlberg \cite{dahlberg}, and Wu \cite{wu}. Since then the BHP has been proven in many settings, for example on uniform domains \cite{aikawa}, as well as on inner uniform domains in a Dirichlet space satisfying either PHI (\cite{gyryalsc},\cite{lierllsc},\cite{lierllsc2},\cite{lierl}) or an elliptic Harnack inequality  \cite{barlowmurugan}. We refer the reader to the aforementioned works and the references therein for more information about the BHP. We note that the BHP proven in \cite{lierllsc2} applies to Dirichlet forms that are not necessarily symmetric and with lower order terms, in particular allowing us to treat $\varphi_U$ as a \Quote{harmonic function} in the BHP.    

Fourth, an important feature of the principal Dirichlet eigenfunction $\varphi_U$ is the location of points $x_U\in U$ achieving the maximum of $\varphi_U$ (or achieving a value comparable to the maximum). This is because knowledge of the location of such points, especially the distance of such points to the boundary, will allow us to prove the lower bound $\varphi_U\gtrsim \varphi_V$ in cases where the scale of dilation $c\geq 1$ is not sufficiently close to $1$ (e.g. Theorems \ref{cylinderthm} and \ref{lowerbound2}). Brascamp and Lieb \cite{blieb} proved that the principal Dirichlet Laplacian eigenfunction $\varphi_U$ of a convex domain $U\subseteq \mathbb{R}^n$ is log-concave, which implies that $\varphi_U$ achieves a unique maximum. Sakaguchi \cite{sakaguchi} also proved the log-concavity of $\varphi_U$ for a smooth convex domain $U\subseteq \mathbb{R}^n$ and the Laplacian replaced with a quasilinear elliptic operator $\text{div}(|\nabla u|^{p-2}u),$ $p>1$. For planar convex domains, Greiser and Jerison \cite{gj} showed that $x_U$ is uniformly close to a point defined in terms of principal eigenfunctions of certain one-dimensional Schr\"{o}dinger operators. More recently, Rachh and Steinerberger \cite{rs} showed that if $U\subseteq \mathbb{R}^2$ is any bounded simply connected domain, then $\text{dist}(x_U,\partial U)\gtrsim \lambda_U^{-1/2}$ up to an universal constant. 
 
The organization of this paper is as follows. In Section \ref{examples}, we give very concrete examples for which our results apply, mostly in Euclidean space. In Section \ref{preliminaries}, we introduce some technical assumptions as well as background information about Dirichlet spaces, spectrum of the associated infinitesimal generator, and inner uniform domains. In Section \ref{proofsofmainresults}, we present the proofs of the main results about eigenfunction comparison. In Section \ref{separationsection}, we present and prove results of the form $\text{dist}(x_U,\partial U)\gtrsim \lambda_U^{-1/2}$. In Section \ref{round} of the Appendix, we find a caricature function for certain $C^{1,1}$ domains $U\subseteq \mathbb{R}^n$. In Section \ref{applylierl} of the Appendix, we discuss how our main results can be used to improve heat kernel estimates from \cite{lierllsc}.

\section{Examples}\label{examples}

\begin{Exm} \label{soue}
    \normalfont
    Let $X=\mathbb{R}^n$ and $\mu$ be the Lebesgue measure on $\mathbb{R}^n$. Consider a divergence form second-order uniformly elliptic operator 
    \begin{align}
        \label{soueo}\mathcal{L}u=\sum_{i,j=1}^{n}\frac{\partial}{\partial x_i}\Big(a_{ij}\frac{\partial}{\partial x_j}u\Big),
    \end{align}
    where $(a_{ij})$ is a symmetric matrix-valued measurable function on $\mathbb{R}^n$ satisfying uniform ellipticity with constant $\Lambda\geq 1$, i.e.
    $$\forall x,\xi\in \mathbb{R}^n,\enspace \frac{1}{\Lambda}|\xi|^2\leq \sum_{i,j=1}^{n}a_{ij}(x)\xi_i\xi_j \leq \Lambda |\xi|^2.$$
    The corresponding Dirichlet form is
    \begin{align*}
        \mathcal{E}(f,f)=\int_{\mathbb{R}^n} \sum_{i,j=1}^{n}a_{ij}\frac{\partial f}{\partial x_i}\frac{\partial f}{\partial x_j} d\mu,\hspace{0.2in} f\in W^{1,2}(\mathbb{R}^n). 
    \end{align*}
    As will be seen later in Section \ref{preliminaries}, we will equip $\mathbb{R}^n$ with a distance function (\ref{intrinsicdist}) induced by $\mathcal{E}$. This distance is comparable to the Euclidean distance with constants depending only on the ellipticity $\Lambda\geq 1$. The hypotheses of volume doubling and Poincar\'{e} inequalities in Assumption \ref{assumption2} below are satisfied with $R=\infty$. Moreover, the volume doubling and Poincar\'{e} constants depend only on $n$ and $\Lambda$. 

    As a consequence of Lemma \ref{convexvolume}, Theorem \ref{ub}, and Theorem \ref{lowerbound}, we have the following theorem, which shows the stability of $\varphi_V$ under small perturbations of a bounded convex domain $V\subseteq \mathbb{R}^n$.

    \begin{Theo}
        \label{examplethm1}
        Consider a second-order uniformly elliptic operator on $\mathbb{R}^n$ as in (\ref{soueo}), with ellipticity constant $\Lambda\geq 1$. Let $V\subseteq \mathbb{R}^n$ be a bounded convex domain, and assume $V$ has bounded eccentricity (Definition \ref{boundedecc}) with constant $K\geq 1$. For $c\geq 1$, let $cV$ denote the dilation of $V$ with respect to any fixed point in the closure of $V$. There exists a constant $c=c(n,\Lambda,K)>1$ sufficiently close to $1$ such that if $U\subseteq \mathbb{R}^n$ is any domain with $V\subseteq U\subseteq cV$, then
        \begin{align*}
             & \varphi_V(x) \lesssim \varphi_{U}(x),\hspace{0.1in} x\in V
            \\ & \varphi_U(x) \lesssim \varphi_{cV}(x),\hspace{0.1in} x\in U.
        \end{align*}
        The implied constants depend only on $n,\Lambda,$ and $K$.
    \end{Theo}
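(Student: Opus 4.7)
The plan is to reduce Theorem \ref{examplethm1} to the general comparison results of the paper (specifically Theorem \ref{ub} and Theorem \ref{lowerbound}, whose hypotheses match the formulation in the second part of Theorem \ref{sample}), after verifying that a bounded convex domain of bounded eccentricity fits the required framework. To begin, I would note that in the setting of Example \ref{soue}, the intrinsic distance associated to $\mathcal{E}$ is comparable to the Euclidean distance with constants depending only on $\Lambda$, so any geometric hypothesis about $V$ in Euclidean terms translates, up to constants, into the same hypothesis with respect to the intrinsic distance. It is a classical fact that a bounded convex domain in $\mathbb{R}^n$ is inner uniform, with inner uniformity constants $(C_0,c_0)$ depending only on $n$ and on the eccentricity constant $K$ of $V$; and that the dilation $cV$ is also inner uniform with the same dependence (for $c$ in a bounded range close to $1$).

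Next, I would invoke Lemma \ref{convexvolume} to secure the crucial boundary-volume estimate
\begin{align*}
\frac{\mu(\{x\in V:\textup{dist}(x,\partial V)<\delta\,\textup{diam}(V)\})}{\mu(V)}\leq A\delta^B,\qquad \delta>0,
\end{align*}
with constants $A,B$ depending only on $n$ and $K$. This is precisely the hypothesis on $V$ needed to invoke the second part of Theorem \ref{sample}, or equivalently, the version of Theorem \ref{lowerbound} that applies when the scale of dilation $c$ is only required to be sufficiently close to $1$ (rather than arbitrary, bounded above).

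At this point the proof essentially assembles itself. For the upper bound $\varphi_U \lesssim \varphi_{cV}$ on $U$, I would apply Theorem \ref{ub} with the pair $(V, cV)$; its hypotheses reduce to inner uniformity of $V$ and $cV$ (verified above) and to quantitative control of $c$, which is built into the statement. For the lower bound $\varphi_V \lesssim \varphi_U$ on $V$, I would apply Theorem \ref{lowerbound} to the pair $(V,U)$. Here the boundary-volume estimate from Lemma \ref{convexvolume} is what allows one to take $c>1$ merely sufficiently close to $1$ (depending on $n,\Lambda,K$) rather than in a bounded range, and also what is responsible for the dependence of the implicit constants on $A$ and $B$, which in turn depend only on $n$ and $K$.

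The main obstacle I anticipate is purely bookkeeping: one must track that the inner uniformity constants of $V$ (and of its small dilations $cV$) and the constants in the convex boundary-volume estimate can all be taken to depend only on $n$ and $K$, and that the volume doubling and Poincaré constants for $\mathcal{E}$ depend only on $n$ and $\Lambda$. Once these dependencies are verified, the conclusion of Theorem \ref{examplethm1} follows directly by specializing Theorems \ref{ub} and \ref{lowerbound} to $(V,cV)$ and $(V,U)$ respectively. No new analytic input beyond Lemma \ref{convexvolume} and the intrinsic-distance comparability is required.
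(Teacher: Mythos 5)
Your proposal is correct and follows exactly the route the paper takes: the paper itself introduces Theorem \ref{examplethm1} with the sentence ``As a consequence of Lemma \ref{convexvolume}, Theorem \ref{ub}, and Theorem \ref{lowerbound}, we have the following theorem,'' and you have simply unpacked the bookkeeping (inner uniformity of convex domains with bounded eccentricity, comparability of intrinsic and Euclidean distances, the volume-decay estimate $h_V(\delta)\lesssim_{n,K}\delta$ from Lemma \ref{convexvolume}, and the parameter dependencies $D_0,P_0\leftrightarrow n,\Lambda$). No gaps.
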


\end{Exm}
Examples show that the conclusion of Theorem \ref{polygonal} fails if $c>1$ is not sufficiently close to $1$. We do not know the optimal value of the constant $c>1$.

\begin{Exm} \normalfont
    Example \ref{soue} above includes an inequality of the form $\varphi_V\lesssim \varphi_U\lesssim \varphi_{cV}$ (whenever both sides of the inequality are defined) where $V\subseteq U\subseteq cV$, $c>1$ is sufficiently close to $1$, and $V$ is inner uniform. Under certain additional assumptions on the domains $U$ and $V$, similar results hold even when $c>1$ is merely bounded above. In this example we consider two situations where this is the case.

    One situation is when the domain $U$ itself is also an inner uniform domain. Then, as shown in Theorem \ref{ub}, we still have $\varphi_V\lesssim \varphi_U\lesssim \varphi_{cV}$, but with the implied constants also depending on an upper bound on $c\geq 1$. A nontrivial example is to consider an Euclidean ball $V\subseteq \mathbb{R}^2$ centered at the origin, a Von Koch snowflake $\mathcal{V}\mathcal{K}\subseteq \mathbb{R}^2$ that contains $V$, and $c>1$ chosen large enough so that $cV\supseteq \mathcal{V}\mathcal{K}$. Since both $V$ and $\mathcal{V}\mathcal{K}$ are inner uniform, Theorem \ref{ub} implies the inequalities $\varphi_V\lesssim \varphi_{\mathcal{V}\mathcal{K}}$ and $\varphi_{\mathcal{V}\mathcal{K}}\lesssim\varphi_{cV}$, whenever both sides of the inequality are defined.

    Another situation is when $U$ contains at least one point $x_U\in U$ with $\varphi_U(x_U)\gtrsim 1/\sqrt{\mu(U)}$ and such that $x_U$ connects to $V$ via a bounded number of balls contained inside $U$ with radii bounded below. In this case, as made precise in Theorem \ref{lowerbound2}, we still have $\varphi_V\lesssim \varphi_U\lesssim \varphi_{cV}$. Let us give an illustrative example of the lower bound $\varphi_U\gtrsim \varphi_V$ holding on $V$ for $U\supseteq V$ ranging over a family of domains which is not uniformly inner uniform. Let $V=B(0,1)\subseteq \mathbb{R}^3$ be the unit ball, and consider the domain $U\subseteq \mathbb{R}^3$ formed by adding to the unit ball three cylinders of radii $r_1,r_2,r_3\in (0,1]$ and heights $h_1,h_2,h_3\geq 1$, which can be expressed as
    \begin{align}
        \label{Udefn}
        U=B(0,1)\cup \{y^2+z^2<r_1^2,|x|<h_1\}\cup \{x^2+z^2<r_2^2,|y|<h_2\}\cup \{x^2+y^2<r_3^2,|z|<h_3\}.
    \end{align}
    See Figure \ref{cylinderfig} below. When the cylinders are not too thin, $U$ is an inner uniform domain, and thus Theorem \ref{ub} applies. When the cylinders become thinner and thinner, $U$ becomes less inner uniform, but Theorem \ref{separate2} implies that the maximum value of $\varphi_U$ is achieved inside $B(0,1)$, in which case (\ref{cor2}) of Corollary \ref{corollary1} applies. We thus get the following illustrative result.
    \begin{Theo}
        \label{cylinderthm}
         Consider a second-order uniformly elliptic operator $\mathcal{L}$ on $\mathbb{R}^3$ as in (\ref{soueo}), with ellipticity constant $\Lambda\geq 1$. Let $U\subseteq \mathbb{R}^3$ be defined as in (\ref{Udefn}) and as depicted in Figure \ref{cylinderfig}. Suppose the heights of the cylinders are uniformly bounded above: $h_1,h_2,h_3\leq h$ for some $h>1$. Let $\varphi_{U}$ (resp. $\varphi_{B(0,1)}$) be the principal Dirichlet eigenfunction of the domain $U$ (resp. $B(0,1)$) with respect to $\mathcal{L}$, normalized so that $\|\varphi_U\|_{L^2(U)}=1$ (resp. $\|\varphi_{B(0,1)}\|_{L^2(B(0,1))}=1$). Then, for any $x\in B(0,1)$,
         $$\varphi_U(x)\gtrsim \varphi_{B(0,1)}(x)$$
         where the implied constant depends only on $h$ and $\Lambda$.
    \end{Theo}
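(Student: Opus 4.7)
My plan is to follow the strategy sketched in the paragraph preceding the statement: use Theorem~\ref{separate2} to show that a near-maximizer of $\varphi_U$ must lie inside the central ball $B(0,1)$, and then invoke (\ref{cor2}) of Corollary~\ref{corollary1} with $V=B(0,1)$ to upgrade a lower bound of $\varphi_U$ at that single point to a lower bound of $\varphi_U$ on all of $V$.

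First I would record the basic a priori estimates. Since $B(0,1)\subseteq U\subseteq B(0,h+1)$, domain monotonicity gives $\lambda_U\leq \lambda_{B(0,1)}\leq \lambda_0(\Lambda)$. The volume satisfies $\mu(U)\leq C(\Lambda)h$ because each cylinder has radius at most $1$ and length at most $2h$. Combined with the normalization $\|\varphi_U\|_{L^2(U)}=1$, any point $x_U\in U$ at which $\varphi_U$ attains at least half its supremum satisfies
\begin{align*}
\varphi_U(x_U)\geq \tfrac{1}{2}\|\varphi_U\|_{L^\infty(U)}\geq \tfrac{1}{2}\mu(U)^{-1/2}\gtrsim h^{-1/2}.
\end{align*}

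Next I would apply Theorem~\ref{separate2} to locate $x_U$. The boundary $\partial U$ decomposes into portions of a sphere, three cylindrical lateral surfaces, and six flat end caps, with the exterior ball condition holding on the regular part with parameter $\alpha=\alpha(h,\Lambda)>0$. The theorem then yields $d(x_U,\partial U)\gtrsim \lambda_U^{-1/2}\geq \rho_0(\Lambda)$. Fixing a threshold $r_\star=\rho_0(\Lambda)/2$, I split into cases: if all $r_i\leq r_\star$, then every point of any cylinder lies within distance $r_i<\rho_0$ of $\partial U$, so the distance bound forces $x_U\in B(0,1)$; if instead some $r_i>r_\star$, the corresponding cylinder is not thin, $U$ is itself uniformly inner uniform, and Theorem~\ref{ub} already delivers the conclusion. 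In either scenario we may assume $x_U\in B(0,1)$ and $\varphi_U(x_U)\gtrsim \mu(U)^{-1/2}$. Applying Corollary~\ref{corollary1}(\ref{cor2}) with the inner uniform subdomain $V=B(0,1)$ then produces $\varphi_U\gtrsim \varphi_V$ pointwise on $V$, with implied constants controlled by $\Lambda$, the dimension $3$, the (absolute) inner uniformity constants of $B(0,1)$, and the volume ratio $\mu(U)/\mu(V)\leq C(\Lambda)h$, which is precisely the statement of the theorem.

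The main obstacle lies in invoking Theorem~\ref{separate2} cleanly: the re-entrant junction circles where each cylinder meets the sphere are concave corners of $U$ along which no genuine exterior ball can be inscribed, so the exterior ball condition literally fails on a one-dimensional subset of $\partial U$. I expect to circumvent this either by verifying that the proof of Theorem~\ref{separate2} tolerates an exceptional boundary subset of codimension at least two, or by rounding the junctions at a scale much smaller than $\min_i r_i$ to produce an inner approximation $\widetilde U\subseteq U$ on which the theorem applies verbatim, and then passing to the limit via monotone spectral convergence of the principal eigenfunction.
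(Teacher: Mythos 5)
Your overall plan — locate a near-maximizer $x_U$ with Theorem~\ref{separate2} and then invoke Corollary~\ref{corollary1} with $V=B(0,1)$ — is the approach the paper intends, but the execution contains a misreading and a genuine gap. The obstacle you flag is not an obstacle. Definition~\ref{alphaball} asks for a ball $B\subseteq B(x,2d(x,\partial U))\setminus U$ of radius $\alpha\,d(x,\partial U)$ for each interior point $x$, and the paper states immediately after the definition that $B$ need not meet $\partial U$; so the impossibility of inscribing a ball in $U^c$ tangent to a re-entrant edge is beside the point. Near a junction circle where a cylinder of radius $r_i$ meets the unit sphere, $U$ is locally the union of two convex bodies, hence $U^c$ is locally the intersection of two half-space-like regions whose outward normals make angle $\arccos(r_i)$; the exterior wedge therefore has dihedral angle $\pi-\arccos(r_i)\in[\pi/2,\pi)$ for every $r_i\in(0,1]$, and such a wedge contains, at distance comparable to $d(x,\partial U)$ from its edge, balls of radius an absolute fraction of $d(x,\partial U)$. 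The spherical, cylindrical, and flat pieces of $\partial U$ are even more favorable. Thus the $\alpha$-exterior ball condition holds for the entire family with a non-degenerating $\alpha$, and neither rounding nor a codimension-two exceptional set argument is needed; Theorem~\ref{separate2} applies verbatim.

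The second issue is where the argument actually fails: your case split is wrong in the mixed case. If $r_1>r_\star$ while, say, $r_2\leq r_\star$, you assert that $U$ is uniformly inner uniform and invoke Theorem~\ref{ub}; but a single thin cylinder already destroys inner uniformity with constants independent of $r_2$. Take $x$ near the end cap of cylinder $2$ and $y=0$: any curve joining them contains points $z$ with $d_U(x,z)$ and $d_U(y,z)$ both comparable to $h_2$ while $d(z,\partial U)\leq r_2$, so (\ref{iu}) forces $r_2\gtrsim h_2$, which fails for small $r_2$. Theorem~\ref{ub} is therefore unavailable, and your proposal does not conclude when the radii are of mixed sizes. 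The correct fix is to run Theorem~\ref{separate2} unconditionally to get $d(x_U,\partial U)\geq c(\Lambda)>0$ (using $\lambda_U\leq\lambda_{B(0,1)}$ and the uniform $\alpha$ above), and then split on where $x_U$ lands: either $x_U\in B(0,1)$, in which case Corollary~\ref{corollary1} applies with $V=B(0,1)$, or $x_U$ lies in some cylinder $i$ outside $B(0,1)$, which forces $r_i\geq c(\Lambda)$, and one connects $x_U$ to $B(0,1)$ by a chain of at most $Ch/c(\Lambda)$ balls of radius comparable to $c(\Lambda)$ along that cylinder's axis and applies Theorem~\ref{lowerbound2} rather than Theorem~\ref{ub}.
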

    \begin{figure}[H]
  \centering
  \includegraphics[width=0.3\textwidth]{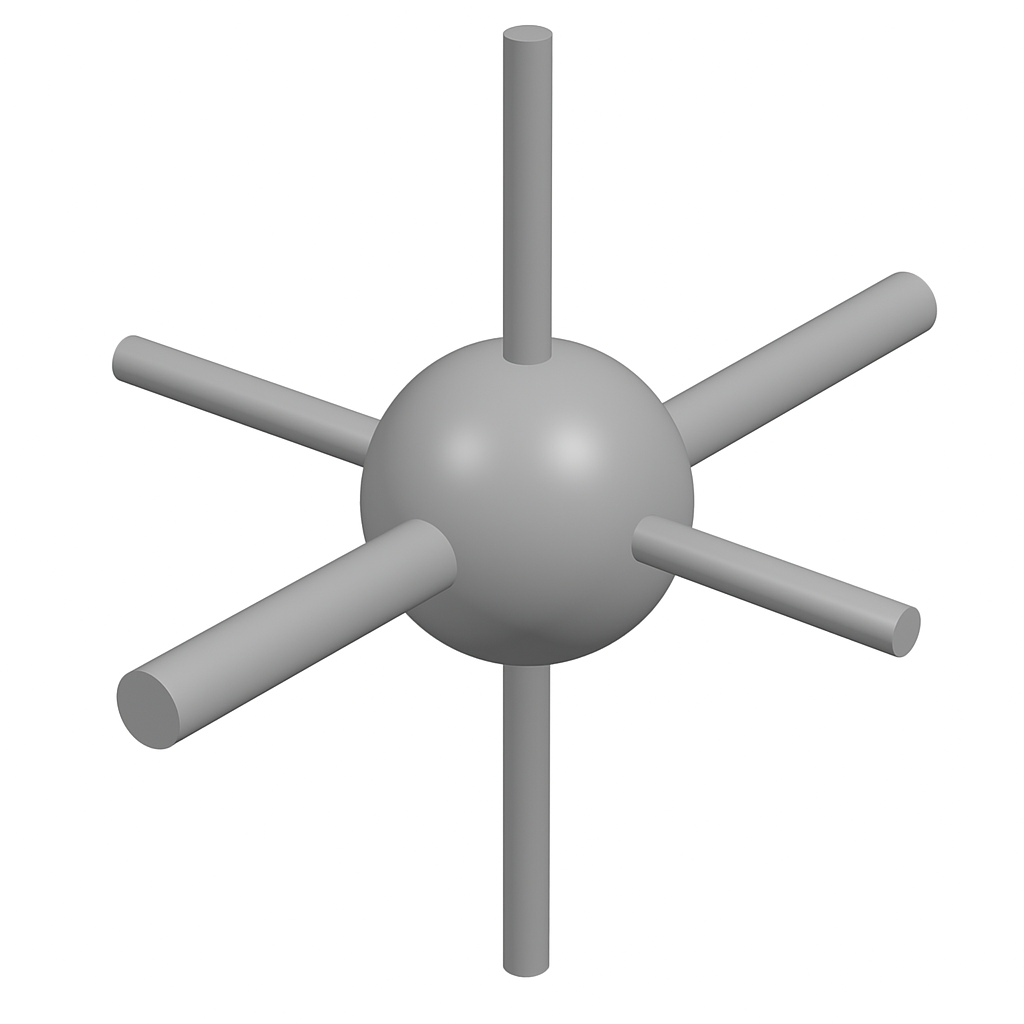}
  \caption{$U$ is obtained by adding three bounded cylinders to a ball.}
  \label{cylinderfig}
\end{figure}
    In Theorem \ref{cylinderthm}, the particular choices of the domains $U$ and $V$ are not essential, and are chosen only for concreteness. For example, a similar result holds if the unit ball of $\mathbb{R}^3$ is replaced by a bounded convex domain in $\mathbb{R}^n$ with bounded eccentricity (Definition \ref{boundedecc}), or if cylinders are replaced by twisted tubes. The only essential features of $U$ and $V$ needed for Theorem \ref{cylinderthm} to hold is (i) the inner uniformity of $V$, (ii) the family of domains $U$ to uniformly satisfy an exterior ball condition in the sense of Definition \ref{alphaball}, and uniformly be contained inside a bounded dilate of $V$, as well as (iii) the radius of the \Quote{tubes} to be uniformly small relative to $V$. In other words, Theorem \ref{cylinderthm} is not specific to three dimensions, and it is stable under reasonable perturbations.     
    
    Since any point $x_U\in U$ with $\varphi_U(x_U)=\|\varphi_U\|_{L^{\infty}(U)}$ has $\varphi_U(x_U)\geq 1/\sqrt{\mu(U)}$, it is natural to consider if such points $x_U$ must be uniformly separated from $\partial U$. By modifying the proof strategy in Rachh and Steinerberger \cite{rs}, we answer this question in the affirmative in Theorem \ref{separate2} (in a Dirichlet space and $x_U$ such that $\varphi_U(x_U)\geq (1-\varepsilon)\|\varphi_U\|_{L^{\infty}(U)}$), and in Theorem \ref{separate3} (for both $\mathbb{R}$ or $\mathbb{R}^2$, and $x_U$ such that $\varphi_U(x_U)\gtrsim 1/\sqrt{\mu(U)}$), by showing that $d(x_U,\partial U)\gtrsim 1/\sqrt{\lambda_U}$. Note that  Theorem \ref{separate3} is not covered by Theorem \ref{separate2} since the hypothesis on $x_U$ in Theorem \ref{separate3} is weaker. We provide an illustrative result below which follows from Theorem \ref{separate2}.
    
    \begin{Theo}
        \label{separationexample}
        Consider a second-order uniformly elliptic operator $\mathcal{L}$ on $\mathbb{R}^n$ as in (\ref{soueo}), with ellipticity constant $\Lambda\geq 1$. Let $U\subseteq \mathbb{R}^n$ be any bounded convex domain. Let $\{\lambda_j(U):j\geq 1\}$ be the Dirichlet eigenvalues of $U$ with respect to $\mathcal{L}$, listed in increasing order and counting multiplicity. Let $\varphi^U_j$ be the Dirichlet eigenfunction corresponding to $\lambda_j(U)$. There exists an absolute constant $c$ depending only on $n$ and $\Lambda$, such that for any $x_j\in U$ with $|\varphi^U_j(x_j)|=\|\varphi^U_j\|_{L^{\infty}(U)}$,   
        $$d(x_j,\partial U)\geq \frac{c}{\sqrt{\lambda_j(U)}}.$$
    \end{Theo}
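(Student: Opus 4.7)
The plan is to reduce Theorem \ref{separationexample} to the principal eigenfunction framework of Theorem \ref{separate2} via a nodal domain argument. By replacing $\varphi^U_j$ with $-\varphi^U_j$ if necessary, I may assume $\varphi^U_j(x_j) > 0$. Let $N$ denote the connected component of the open set $\{x \in U : \varphi^U_j(x) > 0\}$ that contains $x_j$. Elliptic regularity (De Giorgi--Nash--Moser) makes $\varphi^U_j$ continuous, so $N$ is a well-defined bounded open set, and $\varphi^U_j|_N$ extends continuously by $0$ on $\partial N$. By the strong maximum principle on $N$ combined with the simplicity of the principal Dirichlet eigenvalue and the uniqueness of the positive ground state, the restriction $\varphi^U_j|_N$ is the principal Dirichlet eigenfunction of $\mathcal{L}$ on $N$ with principal eigenvalue $\lambda_1(N) = \lambda_j(U)$. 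Moreover, since $\varphi^U_j(x_j) = \|\varphi^U_j\|_{L^\infty(U)} \geq \|\varphi^U_j|_N\|_{L^\infty(N)}$, the point $x_j \in N$ attains the $L^\infty(N)$ norm of the principal eigenfunction of $N$, which is the precise setup of Theorem \ref{separate2}.

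To invoke Theorem \ref{separate2} on the pair $(N, \varphi^U_j|_N)$, I must verify that $N$ satisfies an $\alpha$-exterior ball condition (Definition \ref{alphaball}) at some scale $\alpha = \alpha(n, \Lambda) > 0$. The boundary decomposes as $\partial N = (\partial N \cap \partial U) \cup (\partial N \cap U)$. At every $y \in \partial N \cap \partial U$, convexity of $U$ supplies a supporting hyperplane at $y$, hence an exterior half-space containing exterior balls of arbitrarily large radius tangent to $\partial U$ at $y$; these directly serve as exterior balls for $N$. At $y \in \partial N \cap U$, which lies on the interior nodal set of $\varphi^U_j$, the complement of $N$ near $y$ is occupied by neighboring nodal domains, so an exterior ball for $N$ at $y$ must be found inside one such neighbor. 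Verifying this quantitatively --- namely, showing that the interior nodal set of an eigenfunction of a uniformly elliptic operator admits exterior balls at scale comparable to $\lambda_j(U)^{-1/2}$, uniformly in $n$ and $\Lambda$ alone --- is the main technical obstacle. One expects to handle it by combining the $C^\alpha$ regularity of $\varphi^U_j$ with a non-degeneracy estimate at the nodal set at the scale where Theorem \ref{separate2} is applied, noting that the argument of Theorem \ref{separate2} only requires the exterior ball condition to hold at the scale $\lambda_j(U)^{-1/2}$ relevant to the maximizer $x_j$, not at all scales.

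Granted the exterior ball condition for $N$, Theorem \ref{separate2} applied to $(N, \varphi^U_j|_N, x_j)$ yields $d(x_j, \partial N) \gtrsim \lambda_1(N)^{-1/2} = \lambda_j(U)^{-1/2}$, with implied constants depending only on $n$ and $\Lambda$ via the volume doubling, Poincar\'{e}, and PHI constants on $(\mathbb{R}^n, \mathcal{L})$ recorded in Example \ref{soue}. Since $N \subseteq U$, the open ball $B(x_j, d(x_j, \partial N))$ is contained in $U$, so $d(x_j, \partial U) \geq d(x_j, \partial N) \gtrsim \lambda_j(U)^{-1/2}$, which is the claimed inequality with $c = c(n, \Lambda) > 0$.
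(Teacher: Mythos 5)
There is a genuine gap, and it is precisely the one you flag yourself: the $\alpha$-exterior ball condition for the nodal domain $N$ at points of the interior nodal set $\partial N \cap U$, with $\alpha$ controlled only by $n$ and $\Lambda$. This is not a routine technical step. For a divergence-form operator with merely measurable coefficients, $\varphi^U_j$ is only H\"older continuous (De Giorgi--Nash--Moser gives $C^\gamma$, nothing more), the nodal set $\{\varphi^U_j = 0\}$ is not a $C^{1}$ hypersurface, and I see no way to extract an exterior ball of radius $\asymp d(x_j,\partial N)$ from H\"older regularity alone. Worse, a quantitative version of what you need --- that the nodal set stays at distance $\gtrsim \lambda_j^{-1/2}$ from the max of $\varphi^U_j$ on its nodal domain --- is essentially \emph{equivalent} to the statement you are trying to prove, so conditioning on it risks circularity. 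The nodal-domain reduction therefore does not close.

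The detour is also unnecessary. Theorem \ref{separate2} is already stated for every eigenfunction $\varphi^U_k$, not just the principal one, and its proof does not use positivity. The engine of the argument is the identity
$$e^{-\lambda_k t}\varphi^U_k(x) \;=\; \mathbb{E}^x\big[\varphi^U_k(X_t)\,\mathbf{1}_{\{\tau_U>t\}}\big],$$
which holds for every eigenfunction, together with the trivial pointwise bound $\varphi^U_k \le \|\varphi^U_k\|_{L^\infty(U)}$. Evaluating at $x = x_j$ with $\varphi^U_j(x_j) = \|\varphi^U_j\|_{L^\infty(U)}$ gives $e^{-\lambda_j(U)\,t} \le \mathbb{P}^{x_j}(\tau_U > t)$, after which the hitting-time estimate for the exterior ball proceeds verbatim --- none of it sees the sign of $\varphi^U_j$ away from $x_j$. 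So the correct (and paper's) route is to apply Theorem \ref{separate2} \emph{directly to $U$}: convexity of $U$ supplies the $\alpha$-exterior ball condition with $\alpha = \alpha(n,\Lambda)$ (the supporting-hyperplane argument, together with the comparability of the intrinsic and Euclidean metrics from Example \ref{soue}), and the doubling, Poincar\'e, and PHI constants for $(\mathbb{R}^n,\mathcal{L})$ depend only on $n$ and $\Lambda$. No passage to a nodal domain, and no exterior ball condition for a nodal set, is needed.
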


\end{Exm}

Theorem \ref{separationexample} should be compared with the main result of \cite{rs}, which holds for all simply connected domains $U\subseteq \mathbb{R}^2$, and with $\mathcal{L}$ replaced by a Schr\"{o}dinger operator $\Delta+V$. 

\begin{Exm}
    \label{triangleexample}
    \normalfont As pictured below in Figure \ref{trianglesfig}, consider either a planar triangle in $\mathbb{R}^2$ or a spherical triangle on the $2$-sphere $\mathbb{S}^2$. (A spherical triangle is determined by three great circles on $\mathbb{S}^2$.)

    \begin{figure}[H]
  \centering
  \includegraphics[width=0.5\textwidth]{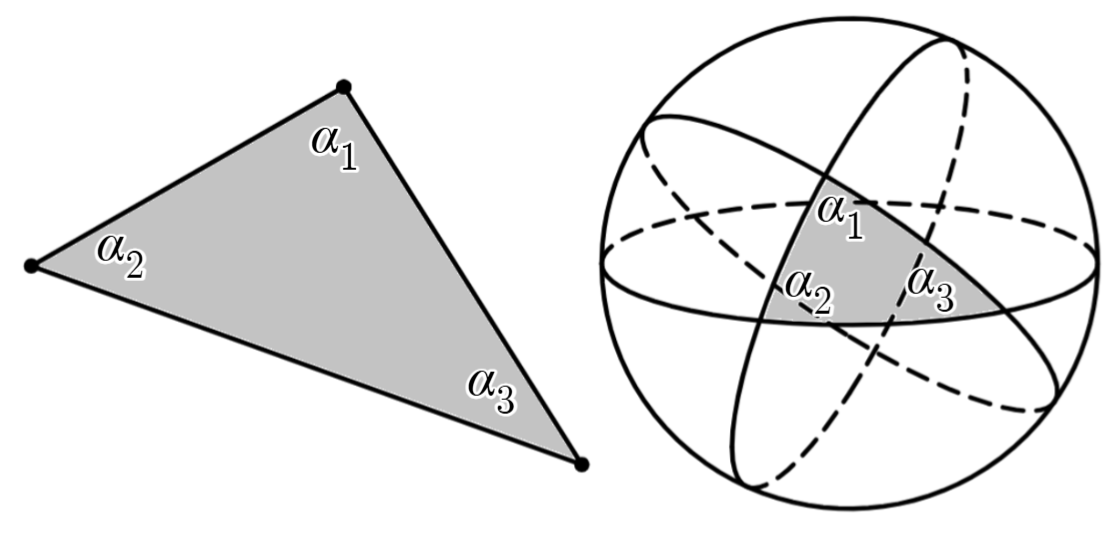}
  \caption{Triangles in $\mathbb{R}^2$ or $\mathbb{S}^2$ with angles $\alpha_1,\alpha_2,\alpha_3$.}
  \label{trianglesfig}
\end{figure}
        
    No explicit formula is known for the principal Dirichlet Laplacian eigenfunctions of such triangles except for  equilateral or right isosceles triangles in $\mathbb{R}^2$. However, for triangles with angles bounded below, we have the following result.


    \begin{Theo}
    \label{triangleprofile}
        Let $T$ be either a planar triangle or a spherical triangle with angles $\alpha_1,\alpha_2,\alpha_3$. Assume that $\min\{\alpha_1,\alpha_2,\alpha_3\}\geq \alpha>0$ for some $\alpha>0$. Let $l_i$ denote the side of $T$ opposite to $\alpha_i$. For $x\in T$ and $i=1,2,3$ let $d_i(x)=\textup{dist}(x,l_i)$, where all distances are geodesic distances on $T$. Then if $\varphi_T$ is the principal Dirichlet Laplacian eigenfunction of $T$ normalized so that $\|\varphi_T\|_{L^2(T)}=1$, we have
        \begin{align}
            \label{triangleprofile1}
            \varphi_T\asymp \frac{d_1 d_2 d_3 (d_1+d_3)^{\pi/\alpha_2-2}(d_2+d_3)^{\pi/\alpha_1-2}(d_1+d_2)^{\pi/\alpha_3-2}}{\textup{diam}(T)^{\pi/\alpha_1+\pi/\alpha_2+\pi/\alpha_3-2}}.
        \end{align}
        The implied constants depend only on $\alpha$.
    \end{Theo}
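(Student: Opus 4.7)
The plan is to estimate $\varphi_T$ separately in three overlapping regions of $T$ --- a macroscopic interior region, three edge-interior regions (one near each side $l_i$), and three vertex regions (one near each corner) --- and then verify directly that the explicit expression on the right-hand side of (\ref{triangleprofile1}) reduces to the correct behavior in each regime. Since all angles of $T$ are bounded below by $\alpha$, the triangle is inner uniform with constants $(C_0,c_0)$ depending only on $\alpha$, its inradius is comparable to $\textup{diam}(T)$ (and to $|T|^{1/2}$), and a standard Rayleigh quotient argument gives $\lambda_T\asymp\textup{diam}(T)^{-2}$. In particular, the intrinsic ultracontractivity bound from \cite{lierllsc} combined with PHI yields $\varphi_T(x)\asymp\textup{diam}(T)^{-1}$ whenever $\textup{dist}(x,\partial T)\geq c\,\textup{diam}(T)$ for some $c=c(\alpha)>0$, which handles the macroscopic interior regime.

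For the boundary regimes I would apply the boundary Harnack principle from \cite{lierllsc2}, which applies to $\varphi_T$ viewed as a solution of an eigenvalue problem on an inner uniform domain. At points $x$ near the interior of side $l_i$ (i.e.\ $d_i(x)\ll\textup{diam}(T)$ while $d_j(x),d_k(x)\asymp\textup{diam}(T)$), one compares $\varphi_T$ with the linear harmonic function $d_i$, which vanishes on $l_i$; matching at a nearby interior reference point yields $\varphi_T(x)\asymp d_i(x)/\textup{diam}(T)^2$. Near a vertex $v_i$ of angle $\alpha_i$ (where $l_j$ and $l_k$ meet), introduce polar coordinates $(r,\theta)$ with $\theta\in(0,\alpha_i)$ and apply BHP with reference function equal to the wedge harmonic profile $h_i(r,\theta)=r^{\pi/\alpha_i}\sin(\pi\theta/\alpha_i)$; matching at a point at distance $\asymp\textup{diam}(T)$ from the vertex then gives
\[
\varphi_T(x)\asymp\frac{r^{\pi/\alpha_i}\sin(\pi\theta/\alpha_i)}{\textup{diam}(T)^{\pi/\alpha_i+1}}
\]
in the vertex region. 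In the spherical case the same reasoning applies after working in geodesic polar coordinates at the vertex, since at scales well below the curvature radius the geometry is uniformly Euclidean with constants depending only on $\alpha$.

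Finally, I would check directly that the right-hand side of (\ref{triangleprofile1}) recovers each of these three asymptotics up to multiplicative constants depending only on $\alpha$: substituting $d_i\asymp\textup{diam}(T)$ for all $i$ recovers $\textup{diam}(T)^{-1}$ in the interior; substituting $d_j,d_k\asymp\textup{diam}(T)$ and $d_i\ll\textup{diam}(T)$ recovers $d_i/\textup{diam}(T)^2$ in the $l_i$-side regime; and at the vertex $v_i$ (angle $\alpha_i$, where $l_j,l_k$ meet), substituting $d_j\asymp r\sin\theta$, $d_k\asymp r\sin(\alpha_i-\theta)$, and $d_i\asymp\textup{diam}(T)$ turns the factor $d_j d_k(d_j+d_k)^{\pi/\alpha_i-2}$ into $r^{\pi/\alpha_i}\sin(\pi\theta/\alpha_i)$ up to an $\alpha$-dependent constant, while the other factors reduce to constants times the correct power of $\textup{diam}(T)$. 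The main obstacle will be matching regimes in the transition zones --- for instance near a vertex but angularly close to one of the incident sides, one is simultaneously in a vertex regime and a side regime, and one must check that the factorized formula does not spuriously suppress or amplify $\varphi_T$ there; the BHP formulation of \cite{lierllsc2} handles these overlaps, but the bookkeeping of which factor is dominant where must be done carefully. A secondary subtlety is the rigorous use of BHP with the explicit wedge profile $h_i$: in the planar case this is justified by a conformal straightening of the wedge, and in the spherical case by a local Euclideanization via the exponential map at each vertex.
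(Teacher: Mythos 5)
Your planar argument follows essentially the same route as the paper's proof sketch (Section 4.4): cover a boundary layer of width $\asymp c\,\textup{diam}(T)$ by balls centered on $\partial T$, apply the boundary Harnack principle from \cite{lierllsc} (Proposition 5.12 there) against the wedge profile $r^{\pi/\alpha_i}\sin(\pi\theta/\alpha_i)$ at vertices and the linear profile at interior boundary points, normalize at a reference point $y$ at distance $\asymp\textup{diam}(T)$, verify that the factorized right-hand side of (\ref{triangleprofile1}) reproduces each local profile, and handle the macroscopic interior via $L^\infty$ bounds plus the parabolic Harnack inequality. The paper treats edge points and vertices uniformly by viewing an edge point as a ``vertex with opening angle $\pi$,'' but that is cosmetic. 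Your mention of conformal straightening is unnecessary: the BHP of \cite{lierllsc} applies directly to the inner uniform domain $T$ with no straightening required.

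The spherical case is where you have a genuine gap. You propose to ``locally Euclideanize via the exponential map'' and apply BHP against the Euclidean wedge profile pulled back to the sphere. But the BHP compares $\varphi_T$ only to functions that are \emph{harmonic on the sphere} and vanish on the relevant boundary arc; the pullback of the Euclidean profile under the exponential map is not $\Delta_{\mathbb{S}^2}$-harmonic, so it is not an admissible comparison function. Moreover, the BHP ball at a vertex has radius $\asymp c(\alpha)\textup{diam}(T)$, and for spherical triangles with angles bounded below, $\textup{diam}(T)$ can be a fixed fraction of $\pi$, so the ``scales well below the curvature radius'' assumption is not guaranteed and the Euclidean approximation error would not be uniformly controlled by $\alpha$ alone. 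The paper avoids both issues entirely: at a vertex of angle $\alpha_1$ placed at the north pole, it compares $\varphi_T$ to the (essentially explicit) principal Dirichlet eigenfunction of the full spherical lune $W=\{\theta\in(0,\alpha_1)\}$, namely $\varphi_W(\theta,\psi)\asymp\sin(\pi\theta/\alpha_1)(\sin\psi)^{\pi/\alpha_1}$, which is an exact intrinsic spherical object requiring no Euclidean approximation. If you want to keep your Euclideanization strategy, you would need to first establish that the genuine spherical harmonic profile at the vertex is uniformly comparable to $r^{\pi/\alpha_i}\sin(\pi\theta/\alpha_i)$ on balls of radius up to $c\pi$, with constants depending only on $\alpha$; this is plausible but is precisely the work that the exact lune eigenfunction formula makes unnecessary.
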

\end{Exm}

A proof of Theorem \ref{triangleprofile} is sketched in Section \ref{proofsketch}; the proof strategy is not new and it uses a boundary Harnack principle developed in \cite{lierllsc}. For the four-player Gambler's Ruin problem on a discrete simplex in four dimensions, a result similar in nature to Theorem \ref{triangleprofile} is given in \cite{ocsc}. 
For some polygonal domains $U$ which are perturbations of triangles, our results can be combined with Theorem \ref{triangleprofile} to give explicit expressions comparable to $\varphi_U$ (i.e. a \textit{caricature function} for $\varphi_U$).  We give an illustrative result in Theorem \ref{addonetriangle2} below, then outline the general method.

To begin, let $T\subseteq \mathbb{R}^2$ be a triangle with angles $\alpha_1,\alpha_2,\alpha_3$ bounded below by $\alpha>0$. Add a small equilateral triangle of side length $\varepsilon>0$ to one the sides of $T$, as in Figure \ref{trianglesperturbation2} below. 

  \begin{figure}[H]
  \centering
  \includegraphics[scale=0.25]{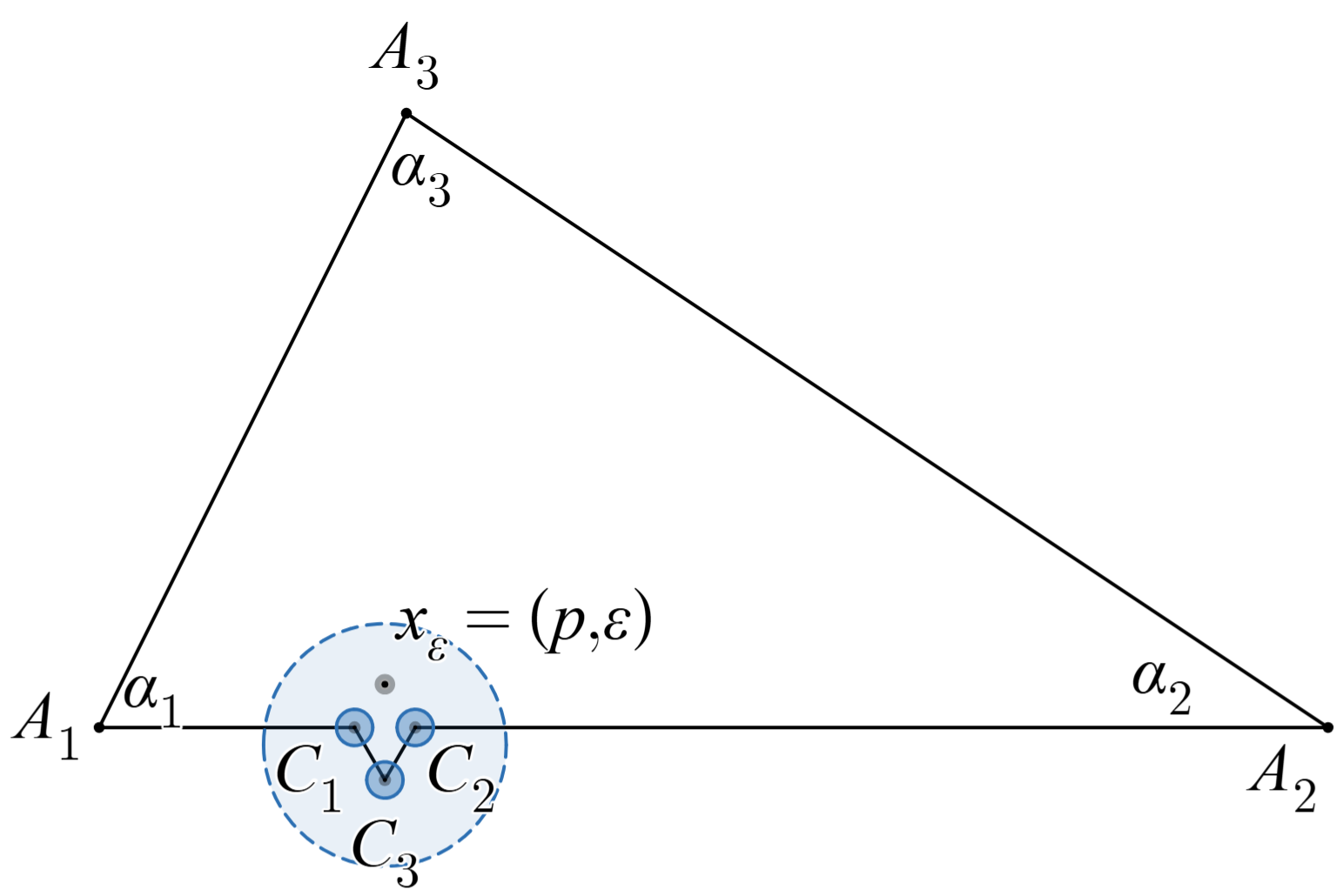}
  \caption{A triangle $T\subseteq \mathbb{R}^2$ is perturbed by adding a small equilateral triangle of side length $\varepsilon$.}
  \label{trianglesperturbation2}
\end{figure}

 In Figure \ref{trianglesperturbation2}, for concreteness, fix a coordinate system on $\mathbb{R}^2$ so that $A_1=(0,0)$ and $\overline{A_1A_2}$ lies on the positive $x$-axis. Let $l$ denote the side length of $\overline{A_1A_2}$. For $\varepsilon \in (0,l/2)$, pick $p\in (\varepsilon,l-\varepsilon)$ and add to $T$ a small equilateral triangle with side length $\varepsilon>0$, with vertices $C_1=(p-\varepsilon/2,0)$, $C_2=(p+\varepsilon/2,0)$, and $C_3$. Call this domain $U$. On the domain $U$, we define the following distances:
\begin{align*}
     d_1(x)=\textup{dist}(x,\overline{A_1C_1}),\hspace{0.1in}d_2(x)=\textup{dist}(x,\overline{C_1C_3}),\hspace{0.1in}d_3(x) &= \textup{dist}(x,\overline{C_2C_3}), \hspace{0.1in} d_4(x)=\textup{dist}(x,\overline{C_2A_2}).
 \end{align*}

For $\varepsilon>0$, we also define in coordinates a distinguished point $x_{\varepsilon}\in T$ by $x_{\varepsilon}:=(p,\varepsilon)\in \mathbb{R}^2$. This particular choice of $x_{\varepsilon}$ is chosen only for concreteness, and may be replaced with any other $x_{\varepsilon}'\in T$ that is distance $\asymp \varepsilon$ from the centroid of $\triangle C_1C_2C_3$ and distance $\gtrsim \varepsilon$ away from $\partial T$.

For $i=1,2,3$, consider a small Euclidean ball $B_i$ centered at $C_i$ with radius comparable to $\varepsilon$. An application of Theorem \ref{sample} and a boundary Harnack principle due to \cite{lierllsc}, which will be explained later in this example, implies that uniformly for all $x\in B_1$,
\begin{align}
    \label{0.01}
    \varphi_U(x)\asymp \frac{d_1(x)d_2(x)(d_1(x)+d_2(x))^{-5/4}}{(d_1(x)+\varepsilon)(d_2(x)+\varepsilon)(d_1(x)+d_2(x)+\varepsilon)^{-5/4}}\varphi_T(x_{\varepsilon}).
\end{align}
The expression for $\varphi_U$ on the small Euclidean ball $B_2$ is the same as (\ref{0.01}), but with $d_1,d_2$ replaced by $d_3,d_4$. For all $x\in B_3$, we have
\begin{align*}
    \varphi_U(x)\asymp \frac{d_2(x)d_3(x)(d_2(x)+d_3(x))}{(d_2(x)+\varepsilon)(d_3(x)+\varepsilon)(d_2+d_3+\varepsilon)}\varphi_T(x_{\varepsilon}).
\end{align*}
On the other hand, near the small triangle $\triangle C_1C_2C_3$ and away from the vertices $C_1,C_2,C_3$, $\varphi_U$ decays linearly in $\text{dist}(x,\partial U)$. Combining the different local boundary behaviors of $\varphi_U$ into a single expression, we obtain the following. 

\begin{Theo}
    \label{addonetriangle2} As in Figure \ref{trianglesperturbation2} above, consider a triangle $T=\triangle A_1A_2A_3\subseteq \mathbb{R}^2$ with angles $\alpha_1,\alpha_2,\alpha_3\geq \alpha>0$. Let $U\subseteq \mathbb{R}^2$ be the domain obtained by adding a small $\varepsilon$-triangle to $T$, as described above. Let $B$ be a ball of radius $\varepsilon>0$ centered at the centroid of $\triangle C_1C_2C_3$, and let $2B$ be the same ball with radius $2\varepsilon$. There exists $\varepsilon_{\alpha}>0$ such that for all $\varepsilon\in (0,\varepsilon_{\alpha})$,
we have
\begin{align*}
    \varphi_U \asymp\begin{cases}\displaystyle
        \frac{d_1d_2d_3d_4(d_2+d_3)(d_1+d_2+\varepsilon)^{5/4}(d_3+d_4+\varepsilon)^{5/4}}{(d_1+\varepsilon)(d_2+\varepsilon)(d_3+\varepsilon)(d_4+\varepsilon)(d_2+d_3+\varepsilon)(d_1+d_2)^{5/4}(d_3+d_4)^{5/4}} \varphi_T(x_{\varepsilon}) & \text{on }2B\cap U,
        \\ \varphi_T& \text{on }T\backslash B,
    \end{cases} 
\end{align*}
The implied constants depend only on $\alpha$. An explicit expression comparable to $\varphi_T$ is given by Theorem \ref{triangleprofile}.
\end{Theo}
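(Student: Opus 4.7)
The plan is to combine two ingredients: a \emph{macroscopic} comparison $\varphi_U \asymp \varphi_T$ on $T \setminus B$, obtained from the perturbation results of this paper, together with a \emph{local} boundary Harnack analysis near each of the three new vertices $C_1, C_2, C_3$. I would first observe that, since $T$ is a convex polygon with minimum angle at least $\alpha$, it is inner uniform and satisfies the thin-layer volume hypothesis of Theorem \ref{sample}(2). For $\varepsilon$ smaller than some $\varepsilon_\alpha$ depending on $\alpha$ and $\textup{diam}(T)$, one has $T \subseteq U \subseteq cT$ with $c$ close to $1$, so Theorem \ref{sample}(2) yields the lower bound $\varphi_T \lesssim \varphi_U$ on $T$. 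For the matching upper bound $\varphi_U \lesssim \varphi_T$ on $T\setminus B$, I would apply the boundary Harnack principle of \cite{lierllsc2} (valid in the presence of the lower-order term $-\lambda \varphi$; $\lambda_U$ stays bounded since $\lambda_U \to \lambda_T$ as $\varepsilon \to 0$) to the pair $(\varphi_T, \varphi_U)$, both positive and both vanishing on $\partial T \setminus B$ (note $\partial T \setminus B \subseteq \partial U$ because the added triangle is glued to $T$ along $\overline{C_1 C_2} \subseteq B$). Chaining across $T \setminus B$ via the interior Harnack inequality, with the ratio pinned down at the centroid of $T$ where $\varphi_T \asymp \varphi_U$ are both of order $1/\sqrt{\mu(T)}$, delivers the macroscopic comparison.

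For the local analysis on $2B \cap U$, the domain $U$ is, near each vertex $C_i$, a plane wedge of interior angle $\nu_1 = \nu_2 = 4\pi/3$ and $\nu_3 = \pi/3$. The positive harmonic profile in a wedge of opening $\nu$, expressed via the two distances $d, d'$ to its two sides, is $h_\nu(x) \asymp d\, d'\, (d+d')^{\pi/\nu - 2}$, a rewriting of the classical polar form $r^{\pi/\nu} \sin(\pi\theta/\nu)$. Applying the boundary Harnack principle of \cite{lierllsc2} in a ball of radius comparable to $\varepsilon$ centered at $C_i$, I obtain $\varphi_U \asymp K_i \cdot h_{\nu_i}$ on that ball, with $K_i$ determined by matching at a reference point $z_i$ at distance $\asymp \varepsilon$ from $C_i$ inside $U$. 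Interior Harnack chaining from $z_i$ to $x_\varepsilon$ (both interior to $T$ at distance $\gtrsim \varepsilon$ from $\partial T$) combined with the macroscopic comparison gives $\varphi_U(z_i) \asymp \varphi_T(x_\varepsilon)$, and since $h_{\nu_i}(z_i) \asymp \varepsilon^{\pi/\nu_i}$ one obtains $K_i \asymp \varepsilon^{-\pi/\nu_i} \varphi_T(x_\varepsilon)$. The values $\pi/\nu_1 = \pi/\nu_2 = 3/4$ and $\pi/\nu_3 = 3$ are precisely what account for the exponents $-5/4$ and $+1$ appearing in the displayed caricature.

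To assemble the global formula, I would verify that the displayed caricature, restricted to a neighborhood of $C_i$, reduces to $K_i \cdot d\, d'\, (d+d')^{\pi/\nu_i - 2}$: in such a neighborhood, the distances $d_j$ associated with sides away from $C_i$ are all $\asymp \varepsilon$, so the corresponding factors $d_j/(d_j + \varepsilon)$ and $(d_j + d_k)/(d_j + d_k + \varepsilon)$ cancel to leading order and the remaining factor equals the local profile. On the interior of $2B$ away from all $C_i$, all $d_j \asymp \varepsilon$, the formula collapses to $\asymp \varphi_T(x_\varepsilon)$, consistent with $\varphi_U \asymp \varphi_T \asymp \varphi_T(x_\varepsilon)$ by interior Harnack on the overlap $(2B \cap U) \setminus B$. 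The main obstacle is the careful bookkeeping required to glue the three separate boundary Harnack estimates at $C_1, C_2, C_3$ into a single formula with implied constants uniform in $\varepsilon$; the lower-order term $-\lambda_U \varphi_U$ in the eigenvalue equation introduces no real difficulty because $\lambda_U$ is uniformly bounded as $\varepsilon \to 0$ and the BHP of \cite{lierllsc2} accommodates such terms.
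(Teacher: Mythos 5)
Your proposal fleshes out, in essentially the same way, the method that the paper only sketches around Equations~(\ref{triangleper1})--(\ref{triangleper3}) in Example~\ref{triangleexample}: a macroscopic comparison $\varphi_U\asymp\varphi_T$ from the perturbation theorems, followed by a local boundary Harnack analysis near each new vertex against a harmonic wedge profile $h_\nu(x)\asymp d\,d'(d+d')^{\pi/\nu-2}$, with the normalization constants $K_i\asymp\varepsilon^{-\pi/\nu_i}\varphi_T(x_\varepsilon)$ pinned down by interior Harnack chaining to $x_\varepsilon$. Your wedge angles $\nu_1=\nu_2=4\pi/3$, $\nu_3=\pi/3$ and the bookkeeping checks (recovering (\ref{0.01}) near $C_1$, the linear profile on the flat parts of $\partial U$ within $2B$, and the collapse of the displayed formula to $\asymp\varphi_T(x_\varepsilon)$ on the overlap $(2B\cap U)\setminus B$) are all correct.

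One step should be tightened. You invoke the BHP of \cite{lierllsc2} directly for the pair $(\varphi_T,\varphi_U)$, but these solve \emph{different} equations, $\Delta u+\lambda_Tu=0$ and $\Delta u+\lambda_Uu=0$, whereas that BHP compares two positive solutions of one fixed operator vanishing on a common boundary portion. The route that the paper actually uses in (\ref{triangleper2}) avoids this: compare each eigenfunction separately to a common positive harmonic $h$ vanishing on the relevant piece of $\partial T$, so that $\varphi_U\asymp c\,h$ and $\varphi_T\asymp c'h$ on the boundary ball, then get $c\asymp c'$ by matching both at an interior reference point $P$ where $\varphi_U(P)\asymp\varphi_T(P)$ (the latter from the perturbation theorems plus a short interior Harnack chain). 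The boundedness of $\lambda_T$ and $\lambda_U$ is what lets the BHP apply to each eigenfunction individually; on its own it does not license a direct BHP between them. A second, smaller point: for $T\subseteq U\subseteq cT$ to hold with $c$ close to $1$, the dilation of $T$ must be taken about an interior point (e.g.\ the incenter), since the added $\varepsilon$-triangle protrudes across the side $\overline{A_1A_2}$ and a dilation about $A_1$ would not contain it.
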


Given the statement of Theorem \ref{addonetriangle2}, some remarks are in order. First, the ball $B$ always contains the small triangle $\triangle C_1C_2C_3$, and its radius $\varepsilon$ is only taken sufficiently small so that $B$ stays at a positive distance away from the two other sides. Second, note that the two cases $x\in 2B\cap U$ and $x\in T\backslash B$ are not disjoint: they overlap when $x\in U$ is in $2B\backslash B$, in which case $\varphi_U(x)$ is uniformly comparable to either expression written in Theorem \ref{addonetriangle2}. Third, as $\varepsilon\to 0$, the expression for $\varphi_U$ \Quote{converges} to the expression for $\varphi_T$ from Theorem \ref{triangleprofile}.

We now describe the general method that can be used to prove Theorem \ref{addonetriangle2} and many of its variations. In Figure \ref{trianglesperturbation} below, consider a planar triangle $T=\triangle A_1A_2A_3$ with angles bounded below by $\alpha>0$. By rescaling if necessary, assume the side $l_1$ has length $1$.
Let $\gamma>0$ and $\beta, N\in \mathbb{N}$. Replace the side $l_1$ by $N^{\beta}$ identical isosceles triangles with height $N^{-\gamma}$ and perpendicular to $l_1$. Let $U\supseteq T$ be the domain resulting from this replacement, and let $\theta$ denote the angle $\angle A_3 X_1Y_1$. 

    \begin{figure}[H]
  \centering
  \includegraphics[width=0.9\textwidth]{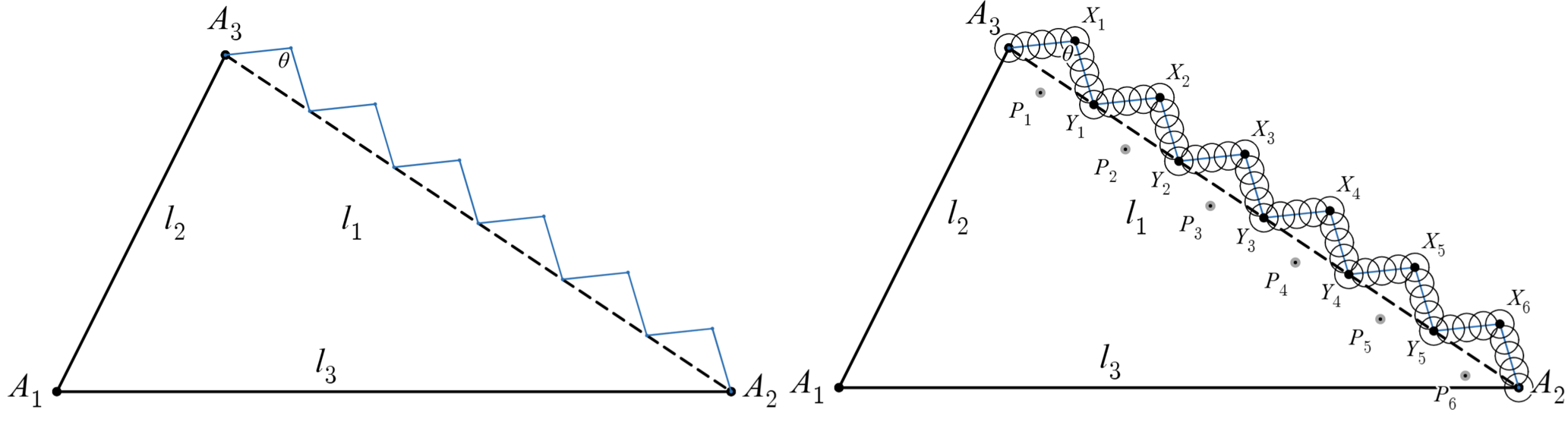}
  \caption{A triangle $T$ is perturbed by adding $6$ smaller triangles to one of its sides.  }
  \label{trianglesperturbation}
\end{figure}

Note that by construction, if $\theta$ is small, then the height of the isosceles triangles must also be small, and Theorem \ref{examplethm1} applies. On the other hand if $\theta$ is bounded away from $0$, then $U$ is an inner uniform domain and Theorem \ref{ub} applies. Combining the two cases, there exists a sufficiently small $\varepsilon=\varepsilon(\alpha)>0$ such that on $(1-\varepsilon)T$ (the dilation being with respect to $A_1$),
\begin{align}
    \label{triangleper1}
    \varphi_U\asymp \varphi_T,
\end{align}
with the implied constants depending only on $\alpha$. 

Now we assume that $\gamma\geq \beta$ so that $\theta$ is bounded away from zero. Consider a chain of $\asymp N^{\gamma}$ balls $\{B_i\}$ centered on the boundary of $U$ and with radius comparable to $N^{-\gamma}$. Choose the $B_i$ so that their centers include the vertices $\{A_3,X_1,Y_1,X_2,Y_2,...,X_{N^{\beta}-1},Y_{N^{\beta}-1},X_{N^{\beta}},A_2\}$ of the perturbed domain $U$.

On each boundary ball $B_i$, a boundary Harnack principle (Theorem 5.12, \cite{lierllsc}) implies that $\varphi_U(x)/\varphi_U(y)\asymp h(x)/h(y)$ for all $x,y\in \overline{B_i\cap U}$, where $h$ is any nonnegative harmonic function vanishing on $\overline{B_i}\cap \partial U$.
Combining this with our results on eigenfunction comparison, we get that on $\overline{B_i\cap U}$
\begin{align}  
    \label{triangleper2}
    \varphi_U\asymp \frac{\varphi_T(P_i)}{h(P_i)}h,
\end{align}
where $P_i\in T\backslash (1-\varepsilon)T$ is any point such that $d(P_i,\partial T)\gtrsim N^{-\gamma}$ and $d(P_i,B_i)\asymp N^{-\gamma}$; such points $P_i$ exist due to the inner uniformity of $U$ (see Proposition \ref{ballinside}). See Figure \ref{trianglesperturbation} above for the $P_i$ corresponding to the balls $B_i$ centered at $X_i$. If $B_i$ is centered on a vertex of $U$, say $X_1$, then we can take (in appropriate polar coordinates) $$h(r,\theta)=r^{\pi/\theta_i}\sin(\pi\theta/\theta_i) \asymp d(\cdot, \overline{A_3X_1})d(\cdot,\overline{X_1Y_1})\Big\{d(\cdot, \overline{A_3X_1})+d(\cdot,\overline{X_1Y_1})\Big\}^{\pi/\theta_i-2},$$
where $\theta_i$ is the interior angle of the vertex. On the other hand, if $B_i$ is centered on a flat part of $\partial U$ (say the midpoint of $\overline{A_3X_1}$), then thinking of the flat part locally as the upper half plane in $\mathbb{R}^2$, we can take  
$$h(x,y)= y \asymp d(\cdot,\overline{A_3X_1}).$$
For the portion of $U$ that is not contained inside any ball $B_i$ or $(1-\varepsilon)T$, comparison of principal Dirichlet eigenfunctions and a Harnack inequality argument gives
\begin{align}
    \label{triangleper3}
    \varphi_U(x)\asymp \varphi_T(P_i)
\end{align}
where $P_i$ is the point corresponding to the $X_i$ closest to $x$. Equations (\ref{triangleper1}), (\ref{triangleper2}), (\ref{triangleper3}), and Theorem \ref{triangleprofile}, together yield an explicit caricature function $\Phi_U\asymp \varphi_U$.

\begin{Exm} \label{polygons}
    \normalfont 
    The proof strategy in Theorem \ref{triangleprofile} more generally applies to a large class of polygons in $\mathbb{R}^2$ with a fixed number of sides. To be more specific, let $P\subseteq \mathbb{R}^2$ be a polygon with sides $l_i\subseteq \mathbb{R}^2$ and angles $\alpha_i>0$ for $i=1,2,...,N$, listed in counterclockwise order. Also, let $P_i\in \mathbb{R}^2$ denote the vertices of $P$ corresponding to angle $\alpha_i$. Several possible polygons for which our methods apply are shown in Figure \ref{polygonfigure} below. 
\end{Exm}

By abuse of notation, we will write $l_i$ to denote either the $i$th side as a subset of $\mathbb{R}^2$, or its side length. We equip the polygon $P$ not with the Euclidean distance, but with the geodesic distance obtained by minimizing lengths of paths in $P$ connecting two points. Thus all distances and balls in the remainder of this example are defined with respect to the geodesic distance on $P$. This allows us to treat, for example, polygonal domains with a slit (e.g. Figure \ref{polygonfigure}), where we think of a slit as two different sides of the polygon glued together. For instance, the square with one slit in Figure \ref{polygonfigure} should be thought of as a \Quote{polygon} with $7$ sides and $7$ angles.

    \begin{figure}[H]
  \centering
  \includegraphics[scale=0.45]{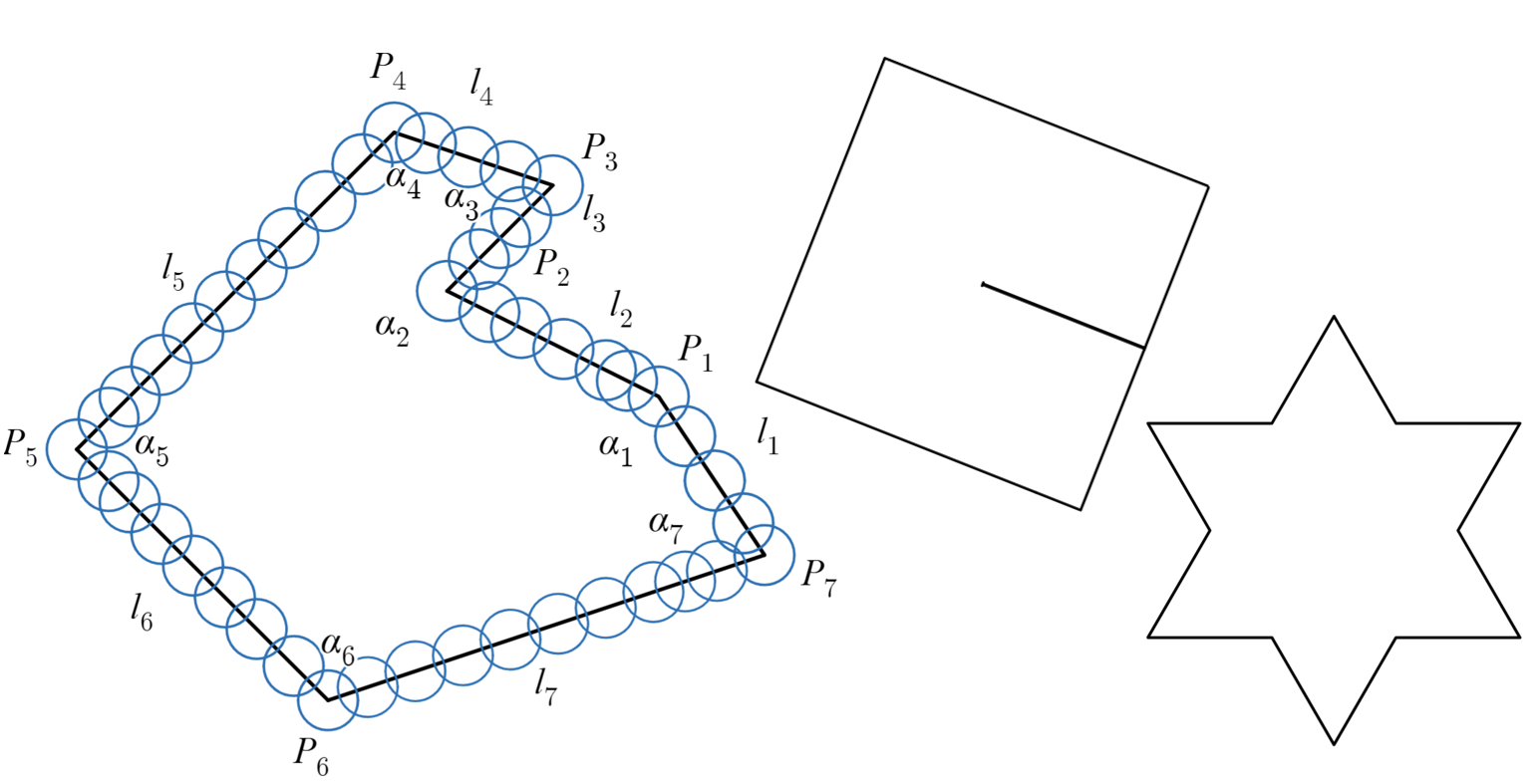}
  \caption{Some polygonal domains in $\mathbb{R}^2$ for which Theorem \ref{polygonal} applies.}
  \label{polygonfigure}
\end{figure}

We make the assumption that there are constants $C\geq 1$, $c\in (0,\frac{1}{2}\min l_i)$ and $r>0$ such that the following holds:

\begin{enumerate}
    \item $r/C\leq l_i\leq Cr$ for all $i=1,2,...,N$.
    \item There is a collection of at most $2NCr/c$ balls $\{B_j\}$ with radius $c$ that cover $\partial P$, where each $B_j$ is either centered at a vertex point of $P$, or is centered at a non-vertex boundary point in $\partial P$. Assume that each $B_j$ intersects either one $l_i$ or two $l_i$, depending on if $B_j$ is centered at a vertex or not. (See Figure \ref{polygonfigure}.)

    \item Further assume that if $B_j$ intersects the sides $\{l_j\}_{j\in J}$, then for all $i\notin J$, $d_i(\cdot ):=\text{dist}(\cdot,l_i)$ has $r/C\leq d_i \leq Cr$ on $B_j$.

    \item The angles $\alpha_i$ are uniformly bounded below: $\min\{\alpha_i:1\leq i\leq N\}>\alpha$ for some $\alpha>0$.
\end{enumerate}
The first assumption implies that all the sides $l_i$ of $P$ have roughly comparable length (excluding, for example, Figure \ref{trianglesperturbation2}). The second assumption is to ensure we can apply boundary Harnack principle (BHP) to compare $\varphi_P$ with a harmonic function near the boundary $\partial P$. The third assumption ensures that nonadjacent sides of the polygon are neither too close nor too far away from each other. The fourth assumption gives \Quote{local inner uniformity}, a necessary hypothesis to apply the BHP from \cite{lierllsc}. 

With essentially the same line of reasoning used to prove Theorem \ref{triangleprofile}, we get the following result.

\begin{Theo}
    \label{polygonal}
    Fix $N\geq 3$. Let $P\subseteq \mathbb{R}^2$ be a polygon with $N$ sides $l_1,l_2,...,l_N$ and angles $\alpha_i>0$, as pictured in Figure \ref{polygonfigure}.  Suppose $P$ satisfies the above four assumptions. Let $d_i(\cdot):=\textup{dist}(\cdot,l_i)$ be the geodesic distance (on $P$) to the $i$th side. If $\varphi_P$ is the principal Dirichlet Laplacian eigenfunction of $P$ normalized so that $\|\varphi_P\|_{L^2(P)}=1$, then 
    $$\varphi_{P}\asymp \frac{d_1d_2\cdots d_N(d_1+d_2)^{\pi/\alpha_1-2}(d_2+d_3)^{\pi/\alpha_2-2}\cdots(d_N+d_1)^{\pi/\alpha_N-2}}{r^{\pi/\alpha_1+\pi/\alpha_2+\cdots+\pi/\alpha_N-N+1}},$$
    where the implied constants depend only on $C,c,\alpha$, and $N$. 
\end{Theo}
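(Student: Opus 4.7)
The plan is to mirror the strategy sketched for Theorem \ref{triangleprofile} in Example \ref{triangleexample}: decompose $P$ into a ``bulk'' interior region and the boundary balls $\{B_j\}$ provided by hypothesis (2), estimate $\varphi_P$ separately in each region using the appropriate Harnack-type tool, and then verify by direct exponent-counting that the single closed-form expression on the right-hand side reduces, up to the implied constants, to the correct local behavior in every regime.

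First I would establish a baseline interior estimate. The four structural assumptions imply $\mu(P) \asymp r^{2}$ and the existence of a ``deep interior'' set $P_{\mathrm{in}} := \{x \in P : d(x,\partial P) \gtrsim r\}$ containing a ball of radius $\gtrsim r$ (with constants depending only on $C, c, \alpha, N$). The normalization $\|\varphi_P\|_{L^2(P)} = 1$, combined with intrinsic ultracontractivity and Moser-type $L^{\infty}$ bounds (available because the hypotheses make $P$ inner uniform at scale $r$), gives $\|\varphi_P\|_{L^{\infty}(P)} \asymp r^{-1}$; the elliptic Harnack inequality applied along short chains of balls then upgrades this to $\varphi_P(x) \asymp r^{-1}$ uniformly for $x \in P_{\mathrm{in}}$.

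Second, on each boundary ball $B_j$ I would apply the boundary Harnack principle of \cite{lierllsc} (whose hypotheses are guaranteed by the local inner uniformity coming from assumption (4)) to obtain
\begin{equation*}
\frac{\varphi_P(x)}{\varphi_P(y)} \asymp \frac{h_j(x)}{h_j(y)}, \qquad x,y \in \overline{B_j \cap P},
\end{equation*}
for any nonnegative harmonic $h_j$ in $B_j \cap P$ vanishing on $B_j \cap \partial P$. At a vertex ball centered at $P_j$ the local geometry is, up to bi-Lipschitz distortion controlled by $\alpha$, a wedge of opening $\alpha_j$, so I take the cone profile $h_j(\rho,\theta) = \rho^{\pi/\alpha_j}\sin(\pi\theta/\alpha_j) \asymp d_j d_{j+1}(d_j+d_{j+1})^{\pi/\alpha_j-2}$; at a flat ball centered on $l_k$ the half-plane profile gives $h_j \asymp d_k$. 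Using hypothesis (3) I can select a reference point $Q_j \in P_{\mathrm{in}}$ at distance $\asymp r$ from the center of $B_j$ with $\varphi_P(Q_j) \asymp r^{-1}$; evaluating $h_j(Q_j)$ explicitly pins down the proportionality constant and yields the local formulas
\begin{equation*}
\varphi_P(x) \asymp r^{-\pi/\alpha_j - 1}\, d_j(x)\, d_{j+1}(x)\bigl(d_j(x)+d_{j+1}(x)\bigr)^{\pi/\alpha_j - 2}
\end{equation*}
on a vertex ball and $\varphi_P(x) \asymp r^{-2}\, d_k(x)$ on a flat ball.

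Third, a bookkeeping exercise confirms that the proposed caricature function $\Phi_P$ on the right-hand side of the theorem reduces, up to uniform constants, to $r^{-1}$ when every $d_i \asymp r$, to the vertex expression above when $d_j,d_{j+1}$ are small and all other $d_i + d_{i+1}$ are of order $r$, and to $r^{-2}d_k$ when only $d_k$ is small; in each regime the net exponent of $r$ works out to $-1$, $-\pi/\alpha_j - 1$, and $-2$ respectively. Since $P_{\mathrm{in}}$ together with the $B_j$ cover $P$ and the three local estimates agree on overlaps, this assembles into the stated global comparison $\varphi_P \asymp \Phi_P$. The main obstacle, as in Theorem \ref{triangleprofile}, is to verify that the BHP constants depend only on $C, c, \alpha, N$; this reduces to exhibiting, using assumption (4) and the geodesic distance on $P$, a uniform local inner uniformity of each $P \cap 2B_j$ at scale $c$ with constants controlled only by these four parameters, after which the eigenfunction factor $\lambda_P \varphi_P$ is absorbed into the BHP exactly as in \cite{lierllsc2}.
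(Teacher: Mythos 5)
Your proposal follows the same architecture as the paper's implicit proof: the paper refers Theorem \ref{polygonal} to the proof sketch of Theorem \ref{triangleprofile}, namely (i) an interior estimate $\varphi_P\asymp r^{-1}$ at distance $\gtrsim r$ from $\partial P$, (ii) a BHP-driven boundary estimate pinning $\varphi_P$ to an explicit cone or half-plane harmonic profile on each boundary ball (Proposition 5.12 of \cite{lierllsc}), and (iii) an exponent-counting check that the single caricature formula interpolates all local regimes. Your steps (ii) and (iii) mirror the paper's exactly, and your exponent bookkeeping is correct.

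The one place your argument under-specifies is the interior lower bound. You assert that $\|\varphi_P\|_{L^\infty(P)}\asymp r^{-1}$ plus ``elliptic Harnack along short chains'' gives $\varphi_P\gtrsim r^{-1}$ on $P_{\mathrm{in}}$, but Harnack only propagates comparability \emph{within} $P_{\mathrm{in}}$; it does not show that the global supremum is comparably attained on $P_{\mathrm{in}}$, since a priori the sup could sit very close to $\partial P$, where the chain constant degenerates. The paper's sketch supplies this missing step for triangles via Theorem \ref{separate2}, using the exterior ball condition, which does not transfer verbatim to all of Figure \ref{polygonfigure} (a square with a slit has no exterior ball at the slit tip). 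Two routes close the gap with uniform constants depending only on $C,c,\alpha,N$. (a) Use the two-sided intrinsic ultracontractivity of Theorem \ref{jannalierl}, $\varphi_P^2(x)\asymp p^D_P(t,x,x)$ at $t\asymp r^2$, together with $p^D_P(t,x,x)\geq p^D_{B(x,\rho)}(t,x,x)\gtrsim r^{-2}$ for $x\in P_{\mathrm{in}}$ and a ball $B(x,\rho)\subseteq P$ of radius $\rho\gtrsim r$. (b) Combine Lemma \ref{carleson} with a measure-count: $\|\varphi_P\|_{L^\infty}^2\lesssim r^{-2}$ and perimeter $\asymp Nr$ force the $\delta r$-tube around $\partial P$ to carry $L^2$-mass $\lesssim N\delta$, so for $\delta=\delta(N,C,c,\alpha)$ small enough at least half the mass lies on $P_{\mathrm{in}}$ and your Harnack chain then propagates the lower bound. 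With that step made explicit your proof matches the paper's intended argument.
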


\begin{Exm} \label{regularpoly}
\normalfont
Consider a regular polygon $P(n,l)\subseteq \mathbb{R}^2$ with $n$ vertices and side length $l$ (so that the perimeter of $P(n,l)$ is $nl$). To find a caricature function for $\varphi_{P(n,l)}$, we could use Theorem \ref{polygonal}, but it only applies when there is a fixed number of sides $n$, and does not give an expression that is uniformly comparable to $\varphi_{P(n,l)}$ over all $n$. However, this difficulty can be overcome by combining boundary Harnack principle with comparison of principal Dirichlet eigenfunctions, which give the following result.    

\begin{Theo}    
    \label{polythm}
    Let $P(n,l)\subseteq \mathbb{R}^2$ be a regular polygon as above. Label the sides of $P(n,l)$ counterclockwise as $l_1,l_2,...,l_n$, and let $d_i(x)=\textup{dist}(x,l_i)$. Then if $\varphi_{P(n,l)}$ is the principal Dirichlet Laplacian eigenfunction of $P(n,l)$ normalized so that $\|\varphi_{P(n,l)}\|_{L^2(P(n,l))}=1$, we have
    \begin{align*}
        \varphi_{P(n,l)}\asymp \frac{\min\{d_1d_2,d_2d_3,...,d_nd_1\}\cdot \min\{d_1+d_2,d_2+d_3,...,d_n+d_1\}^{\frac{n}{n-2}-2}}{(nl)^{\frac{n}{n-2}+1}}.
    \end{align*}
    The implied constants are absolute.
\end{Theo}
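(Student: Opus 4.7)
The plan is to verify $\varphi_W \asymp \Phi$ on $W := P(n,l)$ by analyzing three regions—bulk, flat-boundary, and vertex—and patching them together. I would first establish global scales: the inradius and circumradius of $W$ are both $\asymp nl$ with absolute constants, hence $\lambda_W \asymp (nl)^{-2}$ and $\mu(W) \asymp (nl)^2$. By log-concavity (Brascamp--Lieb) and dihedral symmetry, $\|\varphi_W\|_\infty = \varphi_W(O) \asymp 1/(nl)$, where $O$ is the center. The interior angle $\alpha := \pi(n-2)/n \in [\pi/3, \pi)$ is bounded below, so the inner uniformity constants of $W$ are independent of $n$. On the bulk $\{x : d(x, \partial W) \geq c \cdot nl\}$ for absolute $c>0$, a short chain of elliptic Harnack balls connects $x$ to $O$ and gives $\varphi_W(x) \asymp 1/(nl)$; since every $d_i(x) \asymp nl$ there, $\Phi(x) \asymp 1/(nl)$ as well.

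To bridge scales between the bulk and the boundary, I would apply Theorem~\ref{examplethm1} with $V$ the inscribed disk $D_{r_{in}}$, so that $V \subseteq W \subseteq (R/r_{in})V$. The dilation factor $R/r_{in} = 1/\cos(\pi/n) \to 1$ as $n \to \infty$, so for $n$ above some absolute threshold the hypothesis of Theorem~\ref{examplethm1} is met, yielding $\varphi_W \asymp \varphi_{D_{r_{in}}}$ on $D_{r_{in}}$. The explicit Bessel-function formula for $\varphi_{D_{r_{in}}}$ gives $\varphi_W(x) \asymp d(x, \partial D_{r_{in}})/(nl)^2$ on $D_{r_{in}}$, which in particular provides the values of $\varphi_W$ at reference points $P_v$ inside $D_{r_{in}}$ near each vertex $v$. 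For $n$ below the threshold, Theorem~\ref{polygonal} applies directly with bounded $n$-dependent constants. To handle scales $\lesssim l/n$ near each vertex $v$, apply the BHP of \cite{lierllsc} on the ball $B(v, \rho)$, where $\rho$ is a fixed fraction of the distance from $v$ to the nearest non-adjacent side, so that $B(v, 2\rho) \cap \partial W$ lies on the two sides $l_i, l_{i+1}$ incident at $v$. BHP compares $\varphi_W$ with the wedge harmonic $h_v(r,\theta) = r^{\pi/\alpha}\sin(\pi\theta/\alpha)$, yielding
\[
\varphi_W(x) \asymp h_v(x)\,\frac{\varphi_W(P_v)}{h_v(P_v)}, \qquad x \in B(v, \rho/2) \cap W,
\]
with $\varphi_W(P_v)$ supplied by the disk (or polygon) estimate above. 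An elementary polar-coordinate computation using $d_i = r\sin\theta$, $d_{i+1} = r\sin(\alpha-\theta)$ yields $h_v \asymp d_i d_{i+1}(d_i + d_{i+1})^{\pi/\alpha - 2}$ uniformly for $\alpha \in [\pi/3, \pi)$, which is the form appearing inside $\Phi$.

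Finally, one checks that the min-structure in $\Phi$ recovers the three local expressions: near vertex $v = l_i \cap l_{i+1}$, both $\min_j d_j d_{j+1}$ and $\min_j(d_j + d_{j+1})$ are attained up to an absolute constant by the pair $(i, i+1)$; near a flat portion of side $l_i$ (at a definite distance from all vertices), $\Phi$ reduces to $\asymp d_i(x)/(nl)^2$; and in the bulk $\Phi \asymp 1/(nl)$. The main obstacle is securing \emph{uniform} constants as $n \to \infty$: the inner uniformity constants of $P(n,l)$, the BHP constants of \cite{lierllsc}, the stability threshold in Theorem~\ref{examplethm1}, and the wedge comparison $h_v \asymp d_i d_{i+1}(d_i + d_{i+1})^{\pi/\alpha - 2}$ must all be shown independent of $n$ as $\alpha \in [\pi/3, \pi)$. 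The last reduces to a calculus check on $\sin(\pi\theta/\alpha)$ versus $\sin\theta\sin(\alpha-\theta)\,[2\sin(\alpha/2)\cos(\theta-\alpha/2)]^{\pi/\alpha-2}$ for $\theta \in (0,\alpha)$.
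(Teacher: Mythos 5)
Your proposal is correct and follows essentially the same strategy as the paper's proof sketch: boundary Harnack principle near $\partial P$ comparing $\varphi_P$ with the explicit wedge harmonic, eigenfunction comparison with a disk to fix the reference values, and Harnack chains in the bulk, with the uniformity in $n$ coming from the uniform inner uniformity and angle lower bound. The only minor variations are that you invoke Theorem~\ref{examplethm1} (requiring the dilation factor $1/\cos(\pi/n)$ to be near $1$) together with a case split on $n$, whereas the paper applies Theorem~\ref{ub} directly since $P$ is inner uniform and $1/\cos(\pi/n)\leq 2$ for all $n\geq 3$; and you appeal to Brascamp--Lieb log-concavity to locate the maximum, which is unnecessary because Lemma~\ref{maxphi} and Lemma~\ref{phiharnack} already give $\varphi_P\asymp 1/(nl)$ throughout the bulk (and note the paper's chaining argument uses the \emph{parabolic} Harnack inequality applied to $(t,x)\mapsto e^{-\lambda_P t}\varphi_P(x)$ rather than an elliptic one).
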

The proof of Theorem \ref{polythm}, which we sketch in Section \ref{proofsketch}, is similar to that of Theorem \ref{triangleprofile}. The key difference is that the proof of Theorem \ref{polythm} also relies on comparing the principal Dirichlet eigenfunctions of $P(n,l)$ and a ball in $\mathbb{R}^2$.

As in Example \ref{triangleexample}, it is possible to use Theorem \ref{polythm} together with our main results to obtain explicit expressions comparable to $\varphi_U$ when $U\supseteq P(n,l)$ is a polygonal perturbation of $P(n,l)$ (e.g. Section \ref{introduction}, Figure \ref{polygon1}). 

\end{Exm}

\begin{Exm} 
    \label{3dcube}
    \normalfont
   In Examples \ref{triangleexample}, \ref{polygons}, and \ref{regularpoly}, the only reason why we restrict to polygonal domains $P\subseteq \mathbb{R}^2$ is because harmonic functions are often explicitly computable in $\mathbb{R}^2$, but not in $\mathbb{R}^n$ for $n\geq 3$. Specifically, the boundary Harnack principle implies that near an angle $\theta_0$,  $\varphi_P$ behaves like the harmonic profile $h(r,\theta)=r^{\pi/\theta_0}\sin(\pi\theta/\theta_0)$ of the infinite planar cone $\{re^{i\theta}\in \mathbb{R}^2:r>0,\theta\in (0,\theta_0)\}$. 
     In higher dimensions $n\geq 3$, it is known that the harmonic profile of $\{(r,\theta)\in \mathbb{R}^n:r>0,\theta\in U\subseteq \mathbb{S}^{n-1}\}$ is of the form $h(x)=\|x\|^{\alpha}\varphi_U(x/\|x\|)$ (see e.g. Proposition 6.31, \cite{gyryalsc}). Here $\alpha$ is an exponent determined by $\alpha(\alpha+n-2)=\lambda_U$, where $\lambda_U$ is the first Dirichlet eigenvalue of $U\subseteq \mathbb{S}^{n-1}$.
     In full generality, the exact values of $\lambda_U$ (and hence $\alpha$) are not known. In \cite{ocsc}, a numerical algorithm due to Grady Wright is used to approximate $\lambda_U$ when $U\subseteq \mathbb{S}^2$ is a spherical equilateral triangle.

     In this example, we work in dimension $n=3$, and we describe the caricature function of a cube with a smaller cube removed (Figure \ref{cubefig}). The caricature function given will be fully explicit except for an exponent $\alpha$ which is not known to have an explicit expression. To begin, let $V=[0,L]^3\subseteq \mathbb{R}^3$ be a cube with side length $L$, and for any $\varepsilon\in (0,L/4)$, define
     $$U=V\backslash \{L-\varepsilon \leq x,y,z\leq L\}.$$

       \begin{figure}[H]
  \centering
  \includegraphics[scale=0.135]{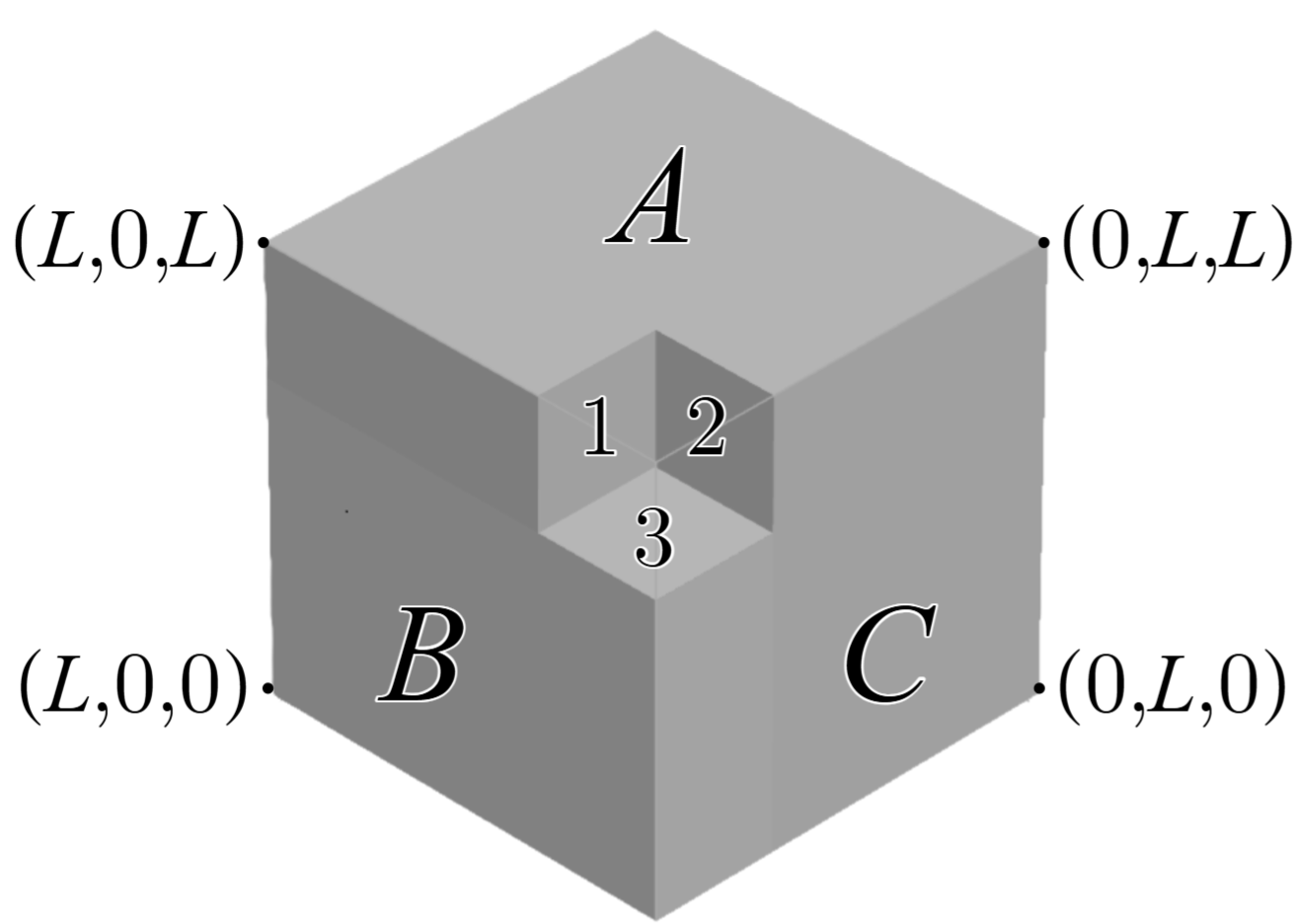}
  \caption{$U$ is obtained from removing a small $\varepsilon$-cube from $[0,L]^3\subseteq \mathbb{R}^3$.}
  \label{cubefig}
\end{figure}
     
\end{Exm}

As in Figure \ref{cubefig}, label the outward faces of $U$ as $A,B,C,1,2,3$. For a label $i\in \{A,B,C,1,2,3\}$ and $x\in U$, let $d_i(x)$ denote the length of the shortest path in $U$ joining $x$ to face $i$. For $\varepsilon>0$ and $x\in U$, we define in coordinates a distinguished point $x_{\varepsilon}\in [0,L]^3$ by $x_{\varepsilon}=(L-2\varepsilon,L-2\varepsilon,L-2\varepsilon)$. Like in Example \ref{triangleexample}, $x_{\varepsilon}$ may be replaced with any other $x'_{\varepsilon}\in [0,L]^3$ that is at distance $\asymp \varepsilon$ away from the small removed cube, and distance $\gtrsim \varepsilon$ away from the boundary of $[0,L]^3$.

Via a similar argument as in Example \ref{triangleexample}, we obtain the following result. 

\begin{Theo}  
    \label{cubethm}
    Fix $L>0$ and $\varepsilon\in (0,L/4)$. Let $U\subseteq \mathbb{R}^3$ be the domain obtained from removing an $\varepsilon$-cube from the cube $V=[0,L]^3$, as in Figure \ref{cubefig}. Let $\varphi_U$ (resp. $\varphi_V$) denote the principal Dirichlet Laplacian eigenfunction of $U$ (resp. $V$), normalized so that $\|\varphi_U\|_{L^2(U)}=1$ (resp. $\|\varphi_V\|_{L^2(V)}=1$). Then
    \begin{align*}
        \varphi_U\asymp\begin{cases}
            \displaystyle\Big(\prod_{i\in \{1,2,3,A,B,C\}}\frac{d_i}{d_i+\varepsilon}\Big)
            \Big(\frac{(d_1+d_2+\varepsilon)(d_2+d_3+\varepsilon)(d_3+d_1+\varepsilon)}{(d_1+d_2)(d_2+d_3)(d_3+d_1)}\Big)^{4/3}
            \\ \hspace{0.3in}\displaystyle \times \Big(\frac{d_1^2+d_2^2+d_3^2}{(d_1+\varepsilon)^2+(d_2+\varepsilon)^2+(d_3+\varepsilon)^2}\Big)^{\frac{\alpha+1}{2}}\varphi_V(x_{\varepsilon})& \text{on }U\cap [L-2\varepsilon,L]^3
            \\  \displaystyle \varphi_V & \text{on }U\backslash[L-2\varepsilon,L]^3. 
        \end{cases}
    \end{align*}
    All of the implied constants are absolute. Here
    $$\varphi_V(x,y,z)=\Big(\frac{2}{L}\Big)^{3/2}\sin\Big(\frac{\pi x}{L}\Big)\sin\Big(\frac{\pi y}{L}\Big)\sin\Big(\frac{\pi z}{L}\Big),$$
    and $\alpha\in \mathbb{R}$ is determined by $\alpha(\alpha+1)=\lambda_S$, where $\lambda_S$ is the first Dirichlet eigenvalue of
    $$S=\mathbb{S}^2\backslash \{(\theta,\psi):\theta\in (0,\pi/2),\psi\in (0,\pi/2)\}$$
    with respect to the Laplace-Beltrami operator on $\mathbb{S}^2=\{(\theta,\psi):\theta\in [0,2\pi),\psi\in [0,\pi]\}$. 
\end{Theo}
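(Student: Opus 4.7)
The plan is to adapt the strategy of Example \ref{triangleexample} to three dimensions, combining our main comparison results with a boundary Harnack principle. The caricature is built by gluing two regimes: (a) on $U\setminus [L-2\varepsilon,L]^3$, far from the perturbation, where $\varphi_U\asymp \varphi_V$; and (b) on $U\cap [L-2\varepsilon,L]^3$, near the removed cube, where $\varphi_U$ is governed by the harmonic profile of the infinite cone $C=\{x\in \mathbb{R}^3:\min(x_1,x_2,x_3)<0\}$ at its trihedral reflex vertex.

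For regime (a), the upper bound $\varphi_U\lesssim\varphi_V$ on all of $U$ is immediate from the domain monotonicity $p^D_U(t,\cdot,\cdot)\le p^D_V(t,\cdot,\cdot)$ combined with the intrinsic ultracontractivity of $V$. For the matching lower bound $\varphi_V\lesssim\varphi_U$ on $U\setminus[L-2\varepsilon,L]^3$, I would apply Theorem \ref{lowerbound2}: $U$ is a bounded Lipschitz (hence inner uniform) domain with $\mu(U)\asymp L^3$, any $L^\infty$-maximizer $x_U$ of $\varphi_U$ satisfies $\varphi_U(x_U)\gtrsim L^{-3/2}\asymp \mu(U)^{-1/2}$, and Theorem \ref{separate2} together with the uniform exterior ball condition of $U$ at its convex boundary points ensures $d(x_U,\partial U)\gtrsim L$. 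A final boundary Harnack argument along the portion of $\partial U$ shared with $\partial V$ upgrades this comparability to $\varphi_U\asymp \varphi_V$ up to the boundary.

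For regime (b), the boundary Harnack principle of \cite{lierllsc}, applied on balls of radius $\asymp\varepsilon$ centered on $\partial U\cap [L-2\varepsilon,L]^3$, identifies $\varphi_U$ (up to a multiplicative constant per ball) with the harmonic profile $h(x)=\|x\|^{\alpha}\varphi_S(x/\|x\|)$ of the cone $C$, where $\alpha(\alpha+1)=\lambda_S$ arises from separation of variables in spherical coordinates. The spherical factor $\varphi_S$ is the principal Dirichlet eigenfunction on a spherical region with three sides and three reflex angles of $3\pi/2$; by the same argument as in Theorem \ref{triangleprofile}, one has $\varphi_S(\omega)\asymp D_1D_2D_3\prod_{i<j}(D_i+D_j)^{-4/3}$, where $D_i$ is the spherical distance to the $i$-th arc of $\partial S$. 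Converting to Euclidean coordinates via $D_i\asymp d_i/r$ and $r\asymp\sqrt{d_1^2+d_2^2+d_3^2}$, and fixing the per-ball Harnack constant by requiring $\varphi_U(x_\varepsilon)\asymp\varphi_V(x_\varepsilon)$ (which holds by regime (a) since $x_\varepsilon$ is at distance $\asymp\varepsilon$ from both the removed cube and $\partial V$), one recovers the stated expression; the $\varepsilon$-mollified denominators $(d_i+\varepsilon)$, $(d_i+d_j+\varepsilon)$, and $((d_1+\varepsilon)^2+(d_2+\varepsilon)^2+(d_3+\varepsilon)^2)$ come from dividing $h(x)$ by $h(x_\varepsilon)$ so as to interpolate continuously with the bulk region where all $d_i\gtrsim\varepsilon$ and these factors are each of order one.

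The main obstacle is the rigorous identification of $\varphi_S$: $S$ is not a standard spherical triangle since its interior angles exceed $\pi$ (it is the complement of a spherical triangle), so Theorem \ref{triangleprofile} does not apply verbatim. However, its proof relies only on the boundary Harnack principle at each vertex and a bulk Harnack chain, both of which remain valid whenever the interior angles are bounded away from $0$, a condition trivially met by $3\pi/2$. A secondary bookkeeping step is verifying the compatibility of the two regimes on the common boundary $\partial[L-2\varepsilon,L]^3\cap U$; this reduces to checking that both expressions evaluate to $\asymp\varphi_V(x_\varepsilon)$ near $x_\varepsilon$ and that the powers of $\varepsilon$ balance correctly after the coordinate conversion, which is a routine computation given that all six distances $d_i$ are of order $\varepsilon$ at $x_\varepsilon$.
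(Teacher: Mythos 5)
Your regime (a) argument is sound, and your identification of the spherical profile $\varphi_S\asymp D_1D_2D_3\prod_{i<j}(D_i+D_j)^{-4/3}$ together with the radial exponent $\alpha$ correctly captures the reentrant trihedral vertex. But regime (b) has a genuine gap: the cone profile $h(x)=\|x\|^{\alpha}\varphi_S(x/\|x\|)$ only vanishes on the three faces $1,2,3$ of the removed cube (extended to infinity), not on the portions of $\partial V$ (faces $A,B,C$) that bound $U\cap[L-2\varepsilon,L]^3$. Hence $h$ is not an admissible comparison function in the boundary Harnack principle on a ball centered at, say, the convex corner where faces $\{A,B,1\}$ meet, or along a reentrant edge where faces $\{1,2\}$ meet face $A$. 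Concretely, near the vertex at $\{A,B,1\}$ one has $d_2,d_3,d_C\asymp\varepsilon$ and $d_1,d_A,d_B\to 0$; the target formula reduces to $d_1d_Ad_B/\varepsilon^3\cdot\varphi_V(x_\varepsilon)$, whereas $h(x)/h(x_\varepsilon)\asymp d_1/\varepsilon$ there, off by the arbitrarily small factor $d_Ad_B/\varepsilon^2$. So a single global profile cannot work.

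The paper resolves this by applying BHP separately with three different local model profiles depending on the geometry of the boundary patch: the trilinear profile $x_1x_2x_3$ at convex trihedral corners (giving equation (\ref{0.001})), a profile of the form $d_A\cdot r^{2/3}\sin(2\theta/3)$ along the reentrant edges where the local geometry is a $2$D reentrant wedge times an interval (giving (\ref{0.002}) with its $(d_i+d_j)^{-4/3}$ factor), and the cone profile $\|x\|^{\alpha}\varphi_S(x/\|x\|)$ only in a small neighborhood of the single reentrant vertex $(L-\varepsilon,L-\varepsilon,L-\varepsilon)$ (giving (\ref{0.003})). The displayed single formula in the theorem is then designed so that each of its $\varepsilon$-mollified factors degenerates to $\asymp 1$ outside the corresponding boundary patch, making it simultaneously match (\ref{0.001}), (\ref{0.002}), (\ref{0.003}) in their respective regimes; you would also need a Harnack-chain step on the balls centered at non-vertex, non-edge boundary points and in the bulk of $U\cap[L-2\varepsilon,L]^3$ to patch these local estimates together. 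Your proposal accounts for only the last of the three local comparisons, so the $d_A,d_B,d_C$ factors and the $(d_i+d_j+\varepsilon)^{4/3}/(d_i+d_j)^{4/3}$ factors in the final answer are left underived.
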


The interpretation of Theorem \ref{cubethm} in terms of the local boundary behavior of $\varphi_U$ near a vertex of $U$ is as follows. Near the vertex at the intersection of the faces $\{A,B,1\}$, $\varphi_U$ is comparable to the principal Dirichlet eigenfunction of a $3$-dimensional box; Theorem \ref{sample} and the boundary Harnack principle (BHP) in \cite{lierllsc} gives

\begin{align}
    \label{0.001}
    \varphi_U\asymp \frac{d_1d_Ad_B}{(d_1+\varepsilon)(d_A+\varepsilon)(d_B+\varepsilon)}\varphi_V(x_{\varepsilon}).
\end{align}
Near the vertex at the intersection of the faces $\{A,1,2\}$, the domain $U$ is locally the Cartesian product of face $A$ (as domain in $\mathbb{R}^2$) with $[0,1]\subseteq \mathbb{R}$, and comparing $\varphi_U$ to the principal Dirichlet eigenfunction of such a domain yields 
\begin{align}
\label{0.002}
    \varphi_U\asymp \frac{d_Ad_1d_2(d_1+d_2)^{-4/3}}{(d_A+\varepsilon)(d_1+\varepsilon)(d_2+\varepsilon)(d_1+d_2+\varepsilon)^{-4/3}}\varphi_V(x_{\varepsilon}).
\end{align}
Near the vertex $(L-\varepsilon,L-\varepsilon,L-\varepsilon)$, 
$\varphi_U(x)$ can be compared to the harmonic profile $\|x\|^{\alpha}\varphi_S(x/\|x\|)$, described at the beginning of this example. By Theorem \ref{triangleprofile}, we know an explicit expression comparable to $\varphi_S$. Thus,
\begin{align}
    \nonumber
    \varphi_U& \asymp \Big(\prod_{i\in \{1,2,3\}}\frac{d_i}{d_i+\varepsilon}\Big)
            \Big(\frac{(d_1+d_2+\varepsilon)(d_2+d_3+\varepsilon)(d_3+d_1+\varepsilon)}{(d_1+d_2)(d_2+d_3)(d_3+d_1)}\Big)^{4/3}
            \\ \label{0.003}& \hspace{0.5in}\times\Big(\frac{d_1^2+d_2^2+d_3^2}{(d_1+\varepsilon)^2+(d_2+\varepsilon)^2+(d_3+\varepsilon)^2}\Big)^{\frac{\alpha+1}{2}}\varphi_V(x_{\varepsilon}).
\end{align}

By the obvious symmetries of $U$, Equations (\ref{0.001}), (\ref{0.002}) and (\ref{0.003}) cover the local boundary behavior of $\varphi_U$ near all vertices on the removed little cube.

\section{Preliminaries}
\label{preliminaries}

\subsection{Notation}
$\|x\|$ - Euclidean norm of $x\in \mathbb{R}^n$
\\
$C_c(\Omega)$ - the space of continuous compactly supported functions in $\Omega$
\\
$\text{supp}(f):=\text{supp}(|f|d\mu)$ for a function $f$ defined on a measure space $(X,\mu)$
\\
$A\lesssim B$ denotes $A\leq CB$ where $C>0$ is some constant depending only on important parameters
\\ $A\lesssim_{\alpha} B$ denotes $A\lesssim B$ where the implied constant depends on $\alpha$
\\ $A\asymp B$ denotes $A\lesssim B\lesssim A$
\\ $R>0$ always denotes the parameter in Assumption \ref{assumption2} 
\\ $X\subseteq Y$ only means $X$ is a subset of $Y$, and does not imply whether or not $X$ can equal $Y$ 
\\ For $a,b\in \mathbb{R}$, $a\vee b:=\max\{a,b\}$ and $a\wedge b:=\min\{a,b\}$
\\ $\tau_U:=\inf\{t>0:X_t\notin U\}$ is the first time $X_t$ leaves $U$
\\ $\sigma_U:=\inf\{t>0:X_t\in U\}$ is the first time $X_t$ hits $U$

\subsection{Dirichlet space $(X,d,\mu,\mathcal{E},\mathcal{D}(\mathcal{E}))$}
\label{dirichletspace}

The setting and definitions introduced in this section are taken from the works \cite{gyryalsc}, \cite{hebischlsc}, and \cite{lierllsc}. Let $X$ be a connected, locally compact, and separable space equipped with a positive Radon measure $\mu$ with $\text{supp}(\mu)=X$. We will assume that $X$ is equipped with a Dirichlet form $(\mathcal{E},\mathcal{D}(\mathcal{E}))$ with domain $\mathcal{D}(\mathcal{E})\subseteq L^2(X,\mu)$. Further, assume that
\begin{enumerate}
    \item $\mathcal{E}$ is \textit{strictly local}, i.e. for any $u,v\in \mathcal{D}(\mathcal{E})$ with compact support such that $u$ is constant on a neighborhood of the support of $v$, it holds that $\mathcal{E}(u,v)=0$;
    \item $\mathcal{E}$ is \textit{regular}, i.e. there is $\mathcal{C}\subseteq  \mathcal{D}(\mathcal{E})\cap C_c(X)$ that is dense in $C_c(X)$ with respect to the uniform norm and dense in $\mathcal{D}(\mathcal{E})$ with respect to the norm $(\|f\|_2^2+\mathcal{E}(f,f))^{1/2}$. We call $\mathcal{C}$ a \textit{core} of the Dirichlet form $\mathcal{E}$.
\end{enumerate}
A strictly local and regular Dirichlet form $(\mathcal{E},\mathcal{D}(\mathcal{E}))$ can be written as $\mathcal{E}(u,v)=\int_X d\Gamma(u,v)$, where, for fixed $u,v\in \mathcal{D}(\mathcal{E})$, $\Gamma(u,v)$ is a signed Radon measure which we call the \textit{carr\'{e} du champ}. Moreover, we can define an \textit{intrinsic distance} on $X$ by setting
\begin{align}
    \label{intrinsicdist}
    d(x,y):=\text{sup}\{f(x)-f(y):f\in\mathcal{C}\text{ and }d\Gamma(f,f)\leq d\mu\}.
\end{align}
In full generality, $d$ is symmetric and satisfies the triangle inequality. However, it is possible that $d(x,y)=0$ or $d(x,y)=\infty$ for $x\neq y$. For example, \cite{ablsc} gives an example of $d(x,\cdot)=\infty$ on a full measure subset of an infinite-dimensional space. Therefore, $d$ is in general only a pseudo-distance on $X$. In this paper, we will always assume that the following properties on the pseudo-distance $d$ are satisfied:
\begin{enumerate}
    \item[(A)] The pseudo-distance $d$ is finite everywhere, continuous, and generates the original topology on $X$. 
    \item[(B)] $(X,d)$ is a complete metric space.
\end{enumerate}
Assumptions (A) and (B) are satisfied in all of the examples from Section \ref{examples}. In Example \ref{soue}, the intrinsic distance is comparable to the Euclidean distance on $\mathbb{R}^n$. In Example \ref{rm} below, the intrinsic distance equals the Riemannian distance. We refer the reader to (\cite{fukushima},\cite{chenfuku}) for the general theory of Dirichlet forms, and to \cite{sturm} for the intrinsic distance associated to a Dirichlet form.

\begin{Assum}\label{assumption1}
\normalfont
Henceforth, $(X,d,\mu,\mathcal{E},\mathcal{D}(\mathcal{E}))$ will denote a Dirichlet space satisfying all of the above conditions with induced intrinsic distance $d$. We will sometimes only write $(X,d)$ (resp. $(X,d,\mu)$) to emphasize the metric (resp. metric measure) structure of $X$, and only write $(\mathcal{E},\mathcal{D}(\mathcal{E}))$ to emphasize the Dirichlet form.

\end{Assum}

The intrinsic distance $d$ on $X$ allows us to define the lengths of curves, and hence the geodesic distance on a open subset $U\subseteq X$, as in the below definitions.

\begin{Dfn}
    \normalfont
    In a metric space $(X,d)$, we define the length of a continuous curve $\gamma:[a,b]\to X$ by
    $$L(\gamma)=\sup\Big\{\sum_{j=1}^{n}d(\gamma(t_{j-1}),\gamma(t_{j})):n\in\mathbb{N},a\leq t_0<\cdots <t_n\leq b\Big\}.$$
\end{Dfn}

\begin{Dfn}
    \normalfont
    For an open subset  $U$ of a metric space $(X,d)$, we define the \textit{intrinsic distance $d_U$ associated to $U$} by
    \begin{align*}
        d_U(x,y) & = \inf\{L(\gamma):\gamma:[0,1]\to U\text{ continuous, }\gamma(0)=x,\gamma(1)=y\}.
    \end{align*}
\end{Dfn}

\begin{Dfn}
    \normalfont A metric space $(X,d)$ is called a \textit{length space} if $d_X(x,y)=d(x,y)$ for any points $x,y\in X$. That is, the intrinsic distance on $X$ coincides with the distance induced by the Dirichlet form $(\mathcal{E},\mathcal{D}(\mathcal{E}))$. Any Dirichlet space $(X,d,\mu,\mathcal{E},\mathcal{D}(\mathcal{E}))$ as in Assumption \ref{assumption1} is a length space, see e.g. Theorem 2.11, \cite{gyryalsc}.
\end{Dfn}

\begin{Dfn}
    \normalfont
    For an open subset $U$ of a length metric space $(X,d)$, let $\widetilde{U}$ be the completion of the metric space $(U,d_U)$, equipped with the natural extension of the metric $d_U$. We write $\partial U=\widetilde{U}\backslash U$.
\end{Dfn}

\subsection{Inner uniform domains}

We now review the definition of inner uniform domains and some properties of such domains. Inner uniform domains were introduced by Martio and Sarvas \cite{martiosarvas} as well as Jones \cite{jones}, and we refer the reader to \cite{martiosarvas}, \cite{jones}, \cite{gyryalsc}, \cite{lierllsc} for further basic properties and examples of inner uniform domains.   

\begin{Dfn} 
    \normalfont
    \label{iudefn}
    Let $U$ be an open connected subset of a length metric space $(X,d)$. We say that $U$ is $(C_0,c_0)$\textit{-inner uniform} if there are constants $C_0\in [1,\infty),c_0\in (0,1]$ such that for any $x,y\in U$, there exists a continuous curve $\gamma:[0,1]\to U$ connecting $x$ to $y$ with length at most $C_0d_U(x,y)$, and such that for any $z\in \gamma([0,1])$, 
    \begin{align}
        \label{iu}
        d(z,\partial U)\geq c_0\frac{d_U(x,z)d_U(y,z)}{d_U(x,y)}.
    \end{align}
\end{Dfn}

As $\max\{d_U(x,z),d_U(y,z)\}\geq d_U(x,y)/2$, it is sometimes convenient to replace (\ref{iu}) in the definition of inner uniformity with the equivalent condition
\begin{align}
    \label{iu2}
    d(z,\partial U)\geq \frac{c_0}{2}\min\{d_U(x,z),d_U(y,z)\}.
\end{align}

\begin{Exm}
    \label{iuexamples}
    \normalfont
    Consider $X =\mathbb{R}^n$ equipped with the Euclidean distance. 
    \begin{enumerate}
        \item Any open convex set $U\subseteq \mathbb{R}^n$ with $B(x,r_1)\subseteq U\subseteq B(x,r_2)$ is inner uniform, with $(C_0,c_0)$ depending only on $r_2/r_1$.
        \item As noted in \cite{jones}, any bounded Lipschitz domain in $\mathbb{R}^n$ is inner uniform for some $(C_0,c_0)$, but the class of all bounded Lipschitz domains in $\mathbb{R}^n$ fails to be \textit{uniformly} inner uniform. 
        \item In $\mathbb{R}^2$, the Von Koch snowflake is $(C_0,c_0)$-inner uniform for some $(C_0,c_0)$. This follows from the fact that the snowflake is the quasiconformal image of a disk in $\mathbb{R}^2$, see \cite{martiosarvas}. The inner uniformity of the snowflake is also proven directly in \cite{gyryalsc}.
        \item In $\mathbb{R}^n$, consider the unbounded domain $U_{\beta}=\{x\in \mathbb{R}^n:x_{n}>(x_1^2+x_2^2+\cdots+x_{n-1}^2)^{\beta/2}\}$ for $\beta>0$. Then $U_{\beta}$ is inner uniform if and only if $\beta=1$. When $\beta\in (0,1)$, the constant $c_0\in (0,1)$ does not exist because of the lack of admissible paths connecting $(0,0,...,0,1)$ to points near the origin. With similar reasoning, the constant $C_0\in [1,\infty)$ does not exist when $\beta>1$. When $\beta=1$, the set $U_{\beta}$ is the domain above the graph of a Lipschitz function, thus inner uniform (see Proposition 6.6 of \cite{gyryalsc}). Any bounded domain in $\mathbb{R}^n$ that locally has a cusp like the one in $U_{\beta}$ for $\beta\in (0,1)$ is not inner uniform.
    \end{enumerate} 
\end{Exm}

\begin{Dfn}
    \normalfont The open balls in $(U,d_U)$ and $(\widetilde{U},d_U)$ are defined by
    $$B_U(x,r):=\{z\in U:d_U(z,x)<r\}\text{ and }B_{\widetilde{U}}(x,r)=\{z\in \widetilde{U}:d_U(z,x)<r\}.$$ 
    By convention, the radii of such balls are taken to be minimal. We also define the \textit{inner diameter} of $U$ to be 
    \begin{align*}
        \text{diam}_U:=\sup_{x,y\in U} d_U(x,y).
    \end{align*}
\end{Dfn}
The next proposition, which is a restatement of Lemma 3.20 in \cite{gyryalsc}, states that inner uniform domains $U$ contain a ball with radius comparable to its inner diameter $\text{diam}_U$.
\begin{Prop}
    \label{ballinside}
    Let $U$ be a $(C_0,c_0)$-inner uniform domain. For every ball $B=B_{\widetilde{U}}(x,r)$, there is some $y\in B$ with $d(y,\partial U)\geq c_0r/8$ and $d_U(y,x)=r/4$.
    In particular, if $U=B_U(x,r)$ for $r>0$ chosen to be minimal, then there exists a ball $B_X(y,c_0r/8)$ centered at some $y\in U$ that is contained inside $U$.
\end{Prop}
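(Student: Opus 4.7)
The plan is to apply the inner uniformity condition of Definition \ref{iudefn} to a carefully chosen pair of points. The key observation is that the minimality of the radius $r$ guarantees the existence of a point $x' \in U$ with $d_U(x, x') > r/2$. Indeed, for any $r' \in (r/2, r)$, the strict inclusion $B_{\widetilde{U}}(x, r') \subsetneq B_{\widetilde{U}}(x, r) = B$ produces a point at inner distance at least $r'$ from $x$, and if this point lies on $\partial U = \widetilde{U}\setminus U$ we approximate it by elements of $U$ using that $\widetilde{U}$ is the $d_U$-completion of $U$. This takes care of the only real subtlety in the argument.

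With $x'$ in hand, the next step is to produce $y$ on an inner uniform curve $\gamma : [0,1] \to U$ connecting $x$ to $x'$, whose existence is furnished by Definition \ref{iudefn}. Since $t \mapsto d_U(x, \gamma(t))$ is continuous, vanishes at $t = 0$, and exceeds $r/4$ at $t = 1$, the intermediate value theorem yields a point $y = \gamma(t^*) \in U$ with $d_U(x, y) = r/4$; note that $y \in B$ automatically since $r/4 < r$. The inequality (\ref{iu}) applied at $z = y$, combined with the triangle-inequality bound $d_U(x', y) \geq d_U(x, x') - r/4$ and the choice $d_U(x, x') > r/2$, delivers $d(y, \partial U) \geq c_0 r/8$ after one line of arithmetic (the ratio $(d_U(x,x') - r/4)/d_U(x,x')$ exceeds $1/2$).

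The second assertion is immediate from the first: taking $B = U = B_U(x, r)$ with $r$ minimal, the first part exhibits $y \in U$ with $d(y, \partial U) \geq c_0 r/8$, so $B_X(y, c_0 r/8) \subseteq U$. The main (indeed essentially the only) obstacle is the initial approximation step, where the minimality of $r$ must be used to obtain $x'$ genuinely in $U$ at inner distance strictly exceeding $r/2$; everything else is a direct appeal to Definition \ref{iudefn}, the triangle inequality, and the intermediate value theorem.
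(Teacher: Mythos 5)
Your argument is correct when the center $x$ lies in $U$, and the arithmetic checks out: with $d_U(x,x') > r/2$ and $d_U(x,y) = r/4$, inequality (\ref{iu}) indeed yields $d(y,\partial U) > c_0 r/8$. The gap is that Proposition \ref{ballinside} is stated for balls $B_{\widetilde{U}}(x,r)$ with $x \in \widetilde{U}$, and it is used in exactly this generality later on --- Lemma \ref{carleson} takes the center $\xi \in \widetilde{V}$, which may well lie on $\partial V$. You apply Definition \ref{iudefn} directly to the pair $(x,x')$, but that definition only furnishes an inner uniform curve between two points \emph{of $U$}; when $x \in \partial U = \widetilde{U}\setminus U$ there is no such curve issuing from $x$, so (\ref{iu}) cannot be invoked with $x$ as an endpoint. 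Your remark that approximating $x'$ is the only real subtlety is therefore not quite right: approximating the center $x$ is at least as necessary, and you have not done it.

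The repair is a small variant of what you already do for $x'$. Pick $\tilde{x} \in U$ with $d_U(x,\tilde{x}) < \delta$, and choose $x' \in U$ with $d_U(x,x')$ close to $r$ (minimality of $r$ gives this just as readily as $> r/2$, and the extra slack is useful). Apply Definition \ref{iudefn} along the pair $(\tilde{x},x')$, run the intermediate value argument on the still-continuous function $t \mapsto d_U(x,\gamma(t))$ to locate $y = \gamma(t^*)$ with $d_U(x,y) = r/4$, and then estimate $d_U(\tilde{x},y) \geq r/4-\delta$, $d_U(x',y) \geq d_U(x,x') - r/4$, and $d_U(\tilde{x},x') \leq d_U(x,x') + \delta$ by the triangle inequality before substituting into (\ref{iu}). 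As $d_U(x,x') \to r$ and $\delta \to 0$ the resulting lower bound on $d(y,\partial U)$ tends to $3c_0 r/16 > c_0 r/8$, so the conclusion holds for $\delta$ small enough. For context, the paper itself does not reprove this proposition --- it is cited as Lemma 3.20 of \cite{gyryalsc} --- so your reconstruction is the right kind of exercise; it just needs to treat the boundary center honestly.
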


\subsection{$R$-doubling, $R$-Poincar\'{e} inequality, and $R$-parabolic Harnack inequality}
\label{R-DPH}

Following \cite{hebischlsc}, we now introduce the notions of $R$-doubling and $R$-scale-invariant Poincar\'{e} inequality on a Dirichlet space. 
\begin{Dfn}
    \label{Rdoubling}
    \normalfont
    Fix $R\in (0,\infty]$. We say that $(X,d,\mu)$ is $R$-\textit{doubling} if there is a constant $D_0$ such that 
    for any ball $B\subseteq X$ of radius at most $R$, 
    $$\mu(2B)\leq D_0\mu(B).$$
    A $R$-doubling space with $R=\infty$ will simply be called \textit{doubling}. 
    \begin{Dfn}
        \label{Rpoincare}
        \normalfont
        Fix $R\in (0,\infty]$.  We say $(X,d,\mu,\mathcal{E},\mathcal{D}(\mathcal{E}))$ satisfies a $R$-\textit{scale-invariant Poincar\'{e} inequality} if there is a constant $P_0$ such that for any ball $B\subseteq X$ of radius $r\leq R$ and for any $f\in \mathcal{D}(\mathcal{E})$,
        $$\min_{\xi\in\mathbb{R}}\int_{B}|f-\xi|^2d\mu \leq P_0 r^2\int_B d\Gamma(f,f).$$
    \end{Dfn}
\end{Dfn}

\begin{Assum}
    \label{assumption2}
    \normalfont
    We will assume that the Dirichlet space $(X,d,\mu,\mathcal{E},\mathcal{D}(\mathcal{E}))$ satisfies both $R$-doubling and a $R$-scale-invariant Poincar\'{e} inequality for some $R\in (0,\infty]$.
\end{Assum}

We also define a parabolic Harnack inequality up to scale $R$ from \cite{hebischlsc}. In the below, $\mathcal{L}$ denotes the infinitesimal generator associated to the Dirichlet form $(\mathcal{E},\mathcal{D}(\mathcal{E}))$.
\begin{Dfn}
    \label{RPHI}
    \normalfont
    Fix $R\in (0,\infty]$. We say that $(X,d,\mu,\mathcal{E},\mathcal{D}(\mathcal{E}))$ satisfies a $R$-\textit{scale-invariant parabolic Harnack inequality} if there exists a constant $H_0$ such that for any $r\in (0,R], x\in X$, and nonnegative weak solution $u$ of the heat equation $(\partial_t-\mathcal{L})u=0$ in $Q=(0,r^2)\times B(x,r)$,
    $$\sup_{Q_-} u(t,x)\leq H_0\inf_{Q_+}u(t,x),$$
    where $Q_+=(r^2/4,r^2/2)\times B(x,r/2)$ and $Q_-=(3r^2/4,r^2)\times B(x,r/2)$.
    
\end{Dfn}

\begin{Rmk}
\normalfont
As observed in \cite{hebischlsc}, if a space satisfies any one of Definitions \ref{Rdoubling}, \ref{Rpoincare}, or \ref{RPHI} with constant $R>0$, then it also satisfies the same definition for all finite $R'\geq R$ with larger constants $D'_0,P'_0$, or $H'_0$. Thus, we only have to distinguish between the cases where $R<\infty$ and $R=\infty$.
\end{Rmk}

To emphasize the connection between volume doubling, Poincar\'{e} inequality, and parabolic Harnack inequality, we mention the following theorem, due to the works \cite{grigoryan}, \cite{saloff-coste}, and \cite{sturm2}.    

\begin{Theo}\label{breakthrough} Let $(X,d,\mu,\mathcal{E},\mathcal{D}(\mathcal{E}))$ be a Dirichlet space and let $R\in (0,\infty]$. Then the following are equivalent.
\begin{enumerate}
    \item The Dirichlet space is $R$-doubling and satisfies a $R$-scale-invariant Poincar\'{e} inequality.
    \item The Dirichlet space satisfies a $R$-scale-invariant parabolic Harnack inequality.
    \item There exists $c_i>0$ such that the associated semigroup admits two-sided heat kernel bounds
    $$\frac{c_1}{\mu(B(x,\sqrt{t}))}\exp\Big(-\frac{d(x,y)^2}{c_2t}\Big)\leq p_X(t,x,y) \leq \frac{c_3}{\mu(B(x,\sqrt{t}))}\exp\Big(-\frac{d(x,y)^2}{c_4t}\Big),$$
    for all $t\in (0, R^2)$ and $x,y\in X$.
\end{enumerate}
\end{Theo}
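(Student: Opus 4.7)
The plan is to establish the cycle of implications $(1) \Rightarrow (2) \Rightarrow (3) \Rightarrow (1)$, essentially following the strategies from the three cited works. Throughout, one has to be careful about the scale $R$: all cut-off functions, iteration annuli, and ball chains should be kept at scales $\lesssim R$ so that the doubling/Poincar\'e/Harnack constants are usable. Scaling arguments which are routine when $R=\infty$ require a small-parameter version when $R<\infty$, but the arguments go through uniformly.

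For $(1) \Rightarrow (2)$, I would follow Saloff-Coste's Moser iteration approach. The first step is to upgrade the $R$-doubling plus $R$-Poincar\'e pair to a scale-invariant Sobolev(-type) inequality on each ball $B$ of radius $r\le R$, with $\mu(B)^{-1/\nu}$ dilation factors governed by the doubling dimension $\nu=\log_2 D_0$. Armed with that Sobolev inequality, Moser iteration applied to $u^p$ for $p>1$ yields a local mean-value inequality: any nonnegative weak subsolution $u$ of $(\partial_t-\mathcal{L})u=0$ in a parabolic cylinder $Q=(0,r^2)\times B(x,r)$ satisfies $\sup_{Q_-} u \lesssim (r^{-2}\mu(B)^{-1}\iint_Q u^2)^{1/2}$. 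The second step, and in my view the main obstacle, is the weak Harnack inequality for nonnegative supersolutions. The standard route is to work with $v=-\log u + c$, use the Poincar\'e inequality on $v$ to show that $v$ has bounded mean oscillation on parabolic cylinders, and then invoke a parabolic John--Nirenberg lemma to conclude that $v$ satisfies $\int e^{\lambda v}\int e^{-\lambda v} \lesssim 1$ on complementary time slices. Combining this with the $L^p\to L^\infty$ bound for $u^{-1}$ (itself obtained by reverse Moser iteration) gives the weak Harnack inequality $(\mathrm{inf}_{Q_+} u) \gtrsim (\mathrm{mean}_{Q_-'} u)$, and together with the mean-value inequality for subsolutions this yields $(2)$.

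For $(2)\Rightarrow (3)$, I would first use PHI on a chain of parabolic cylinders of side $\sqrt{t}$ (with $\sqrt{t}\le R$) covering a spacetime path from $(t,x)$ to $(2t,y)$ to obtain the lower bound
\[
 p_X(t,x,y) \gtrsim \frac{1}{\mu(B(x,\sqrt{t}))}\exp\!\Bigl(-\frac{d(x,y)^2}{c t}\Bigr),
\]
the number of iterations being of order $1+d(x,y)^2/t$ so that each factor $H_0$ accumulates into the Gaussian tail. The matching upper bound follows from the on-diagonal bound $p_X(t,x,x)\lesssim 1/\mu(B(x,\sqrt{t}))$ (derived via Nash/Moser together with $R$-doubling) combined with Davies' exponential perturbation trick $e^{\psi}p e^{-\psi}$ with $\psi$ a bounded Lipschitz approximation of the distance, optimized in the Lipschitz constant.

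For $(3)\Rightarrow (1)$, evaluating the two-sided heat kernel bounds at $y=x$ for $t=r^2$ and $t=(2r)^2$ and comparing yields $\mu(B(x,2r))\lesssim \mu(B(x,r))$, which is $R$-doubling. The Poincar\'e inequality is recovered variationally: for $f$ supported in a ball $B=B(x_0,r)$ of radius $r\le R$, one writes $f-P_{t}f$ in spectral terms and uses $\|f-P_t f\|_2^2 \le t\,\mathcal{E}(f,f)$ together with the Gaussian upper bound at time $t=r^2$ to bound $\min_{\xi}\|f-\xi\|_{L^2(B)}^2$ by $r^2 \mathcal{E}(f,f)$ (up to constants depending only on the heat kernel constants and the doubling constant). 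This closes the cycle. As indicated in the remark preceding the theorem, at every step increasing $R$ only deteriorates constants, so it suffices to treat the two regimes $R<\infty$ and $R=\infty$ and the proof adapts uniformly.
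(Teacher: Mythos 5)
The paper does not prove Theorem \ref{breakthrough}: it is stated as a background result, attributed to Grigor'yan, Saloff-Coste, and Sturm (\cite{grigoryan}, \cite{saloff-coste}, \cite{sturm2}), and used as an ingredient elsewhere. Your sketch follows exactly the canonical three-step cycle from those references, and the overall architecture (Moser iteration from a Sobolev inequality, chaining of parabolic cylinders plus Davies' perturbation, recovery of doubling and Poincar\'e from the Gaussian bounds) is the right one. Two small points, though. In the $(3)\Rightarrow(1)$ direction, it is the \emph{lower} Gaussian bound, not the upper, that is the crucial ingredient for recovering the Poincar\'e inequality: one uses the identity $\iint |f(x)-f(y)|^2 p_X(t,x,y)\,d\mu\,d\mu = 2\langle f,(I-P_t)f\rangle \le 2t\,\mathcal{E}(f,f)$ and then bounds the left side below by $(c/\mu(B))\iint_{B\times B}|f(x)-f(y)|^2\,d\mu\,d\mu$ using $p_X(r^2,x,y)\gtrsim \mu(B(x_0,r))^{-1}$ on $B(x_0,r)\times B(x_0,r)$; the upper Gaussian bound appears only in the doubling step. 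Also, "evaluating the two-sided bounds at $y=x$ and comparing" does not quite give doubling by itself, since the diagonal bound $p_X(t,x,x)\asymp \mu(B(x,\sqrt{t}))^{-1}$ is a single statement rather than two independent ones; you need the semigroup identity $p_X(2t,x,x)=\int p_X(t,x,z)^2\,d\mu(z)$ combined with the off-diagonal lower bound on $B(x,\sqrt{t})$ to compare $\mu(B(x,\sqrt{2t}))$ with $\mu(B(x,\sqrt{t}))$. With those corrections the sketch is sound, and the scale-$R$ caveats you flag are handled exactly as in \cite{hebischlsc}.
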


For later use, we record a consequence of the $R$-doubling property; see e.g. \cite{hebischlsc} or \cite{asti}.

\begin{Prop}
    \label{volumeratioballs}
    Let $(X,d,\mu)$ be a $R$-doubling space with doubling constant $D_0$. 
    For all radii  $0<s<r<R$ and points $x,y\in X$ with $d(x,y)\leq r$, we have
    $$\frac{\mu(B(x,r))}{\mu(B(y,s))}\leq D_0^2\Big(\frac{r}{s}\Big)^{\log_2 D_0}.$$
\end{Prop}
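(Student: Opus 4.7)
The plan is to exploit the $R$-doubling hypothesis iteratively, moving from the smaller ball $B(y,s)$ to a ball that contains $B(x,r)$. The first observation is that since $d(x,y)\leq r$, the triangle inequality gives $B(x,r)\subseteq B(y,2r)$, so it suffices to upper bound $\mu(B(y,2r))/\mu(B(y,s))$.

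Next, I would choose the smallest nonnegative integer $k$ with $2^k s\geq r$; by minimality $2^{k-1}s<r$, so $k\leq 1+\log_2(r/s)$. Since all the intermediate radii $s,2s,\ldots,2^{k-1}s$ are strictly less than $r<R$, the $R$-doubling property applies at each step, and a $k$-fold iteration yields $\mu(B(y,2^k s))\leq D_0^k\mu(B(y,s))$.

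Finally, combine the two ingredients. Since $2^k s\geq r$ and $r<R$, one more application of doubling at radius $r$ gives
\begin{align*}
\mu(B(x,r))\leq \mu(B(y,2r))\leq D_0\mu(B(y,r))\leq D_0\mu(B(y,2^k s))\leq D_0^{k+1}\mu(B(y,s)).
\end{align*}
The exponent satisfies $k+1\leq 2+\log_2(r/s)$, so $D_0^{k+1}\leq D_0^2\,(r/s)^{\log_2 D_0}$, which is the claimed bound.

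There is no substantive obstacle here; this is a routine consequence of the doubling property. The only point to verify is that each ball to which doubling is applied has radius at most $R$, which is ensured by the stopping condition on $k$ together with the hypothesis $r<R$. One could slightly sharpen the constant by being more careful with ceilings, but the stated form $D_0^2(r/s)^{\log_2 D_0}$ already absorbs the two \textquotedblleft loss\textquotedblright\ factors of $D_0$ coming from the initial step $B(x,r)\subseteq B(y,2r)$ and the fractional part in $\log_2(r/s)$.
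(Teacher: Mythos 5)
Your argument is correct. The paper does not present its own proof of this proposition---it simply cites \cite{hebischlsc} and \cite{asti}---and your proof is exactly the standard chaining argument one finds there: replace $B(x,r)$ by $B(y,2r)$ via the triangle inequality, then iterate doubling from radius $s$ up to a radius just past $r$, checking at each step that the radius stays below $R$. The only point worth noting is that you implicitly use $D_0\geq 1$ (so that $D_0^{k-1}\leq D_0^{\log_2(r/s)}$), which is automatic since $\mu(2B)\geq\mu(B)$; this is not a gap, but worth making explicit.
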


In addition to Euclidean space, we end this section with two further examples of Dirichlet spaces satisfying both of Assumptions \ref{assumption1} and \ref{assumption2}.

\begin{Exm}\label{rm} \normalfont
Let $X=(M,g)$ be a complete Riemannian manifold and $\mu$ the Riemannian measure on $M$. The canonical Dirichlet form on $M$ is given by
\begin{align*}
    \mathcal{E}(f,f)=\int_M g(\nabla f,\nabla f) d\mu,\hspace{0.2in}f\in W^{1,2}(M).
\end{align*}
Here $\nabla f$ is the Riemannian gradient of $f$. The distance function (\ref{intrinsicdist}) induced by the Dirichlet form $\mathcal{E}$ is exactly the Riemannian distance on $M$, see e.g. \cite{sturm}. The class of Riemannian manifolds for which our results apply to are those with Ricci curvature bounded below. This means that for some $K\in \mathbb{R}$ we have $\text{Ric}\geq -Kg$ as symmetric 2-tensors. 

$(M,g)$ satisfies volume doubling (VD) and Poincar\'{e} inequalities (PI) up to scale $R<\infty$ if $K>0$ and up to scale $R=\infty$ if $K=0$. The volume doubling is a consequence of Bishop-Gromov volume comparison, and the Poincar\'{e} inequality is due to Buser \cite{buser}. See also Theorems 5.6.4 and 5.6.5 of \cite{asti}. 

The model space for $K<0$ is a sphere $\mathbb{S}^{n-1}(r)$ of radius $r$ in $\mathbb{R}^n$. By definition, for a sphere we could simply assume $K=0$  as it is weaker than $K<0$. However, in practice we will treat $\mathbb{S}^{n-1}(r)$ as satisfying (VD) and (PI) up to scale $R\asymp r$ (see Remark \ref{spherer} below).   
\end{Exm}

\begin{Exm}
    \normalfont
    \label{lg} Let $X=G$ be a connected unimodular Lie group equipped with Haar measure $d\mu$. Consider a family of left-invariant vector fields $\mathcal{X}=\{X_1,...,X_k\}$ which generates the Lie algebra $\mathfrak{g}$ in the sense that the span of the $X_i$ and their iterated Lie brackets generate $\mathfrak{g}$. We can define a Dirichlet form on $G$ by setting, for all $f\in L^2(G)$ such that $X_i f\in L^2(G)$,
    \begin{align*}
        \mathcal{E}_{\mathcal{X}}(f,f)=\int_G \sum_{i=1}^{k}|X_if|^2 d\mu. 
    \end{align*}
   The Dirichlet form $\mathcal{E}_{\mathcal{X}}$ induces a left-invariant distance function $d_{\mathcal{X}}$ on $G$ that is compatible with the topology of $G$ and such that $(G,d_{\mathcal{X}})$ is a complete metric space. Informally speaking, the distance $d_{\mathcal{X}}$ is obtained by minimizing lengths of paths that stay tangent to the span of $\mathcal{X}$. 
   
   In order to ensure that $G$ satisfies volume doubling (VD) and Poincar\'{e} inequalities (PI), as is assumed in Assumption \ref{assumption2}, we assume that $G$ has polynomial volume growth. 
   Indeed, the conjunction of (VD) and (PI) is equivalent to polynomial volume growth, see e.g. \cite{saloff-coste}, \cite{asti}, \cite{saloff-coste2}, as well as Theorem 2.2 of \cite{gyryalsc} from which this example is taken. A concrete example is the Heisenberg group, which is topologically $\mathbb{R}^3$ equipped with Lebesgue measure, the vector fields $\mathcal{X}=\{\partial_x-y\partial_z/2,\partial_y+x\partial_z/2\}$, and product $$(x_1,y_1,z_1)\cdot(x_2,y_2,z_2)=(x_1+x_2,y_1+y_2,z_1+z_2+(x_1y_2-x_2y_1)/2).$$
\end{Exm}

\subsection{Heat kernel, the generator $\mathcal{L}$, and Dirichlet eigenvalues}
\label{heatkernelsect}

Let $(\mathcal{E},\mathcal{D}(\mathcal{E}))$ be a Dirichlet form as in Assumption \ref{assumption1}. Let $(P_t)_{t>0}$ be the associated heat semigroup. By Theorem \ref{breakthrough},
    $(P_t)_{t>0}$ has a density with respect to the measure $\mu$, i.e. there is a non-negative measurable function $p_X(t,x,y)$ such that 
    $$P_t f(x)=\int_X p_X(t,x,y)f(y)d\mu(y).$$
Let $U\subseteq X$ be a domain (i.e. a nonempty connected open set with respect to the intrinsic distance $d$ as defined in (\ref{intrinsicdist})). On $U$, we can consider the heat semigroup $(P^{D}_{U,t})_{t>0}$ with Dirichlet boundary conditions. This semigroup can be constructed as the (sub-)Markovian semigroup associated to the Dirichlet form $(\mathcal{E}^D_U,\mathcal{D}(\mathcal{E}^D_U))$ on $L^2(U,d\mu)$; see Definitions 2.34 and 2.37 of \cite{gyryalsc}. We include the relevant notions from \cite{gyryalsc} and \cite{lierllsc} for completeness.

\begin{Dfn}
\label{flocu}
    For any open set $U\subseteq X$, set 
    \begin{align*}
    \mathcal{F}_{\textup{loc}}(U)=\{u\in L^2_{\textup{loc}}(U):\textup{for any compact }K\subseteq U\textup{ there exists }u^{\#}\in \mathcal{D}(\mathcal{E})\textup{ with }u=u^{\#}|_K\textup{ a.e.}\}.
    \end{align*}
\end{Dfn}

On the open set $U$, we can define a localized version of the carr\'{e} du champ operator $\Gamma_U$, defined on $\mathcal{F}_{\text{loc}}(U)\times \mathcal{F}_{\text{loc}}(U)$ by setting for all precompact $\Omega\subseteq U$
\begin{align*}
    \Gamma_U(u,v)|_{\Omega}=\Gamma(u^{\#},v^{\#})|_{\Omega},
\end{align*}
where $u^{\#}$ and $v^{\#}$ are as in Definition \ref{flocu} (with $K=\overline{\Omega}$). 
\begin{Dfn}
\label{fufcu}
    For any open set $U\subseteq X$, set
    \begin{align*}
        &\mathcal{F}(U) = \Big\{u\in \mathcal{F}_{\textup{loc}}(U):\|u\|^2_{L^2(U)}+\int_U d\Gamma_U(u,u)<\infty\Big\}
        \\ & \mathcal{F}_c(U)=\{u\in \mathcal{F}(U):\textup{supp}(u)\textup{ is compact in }U\}.
    \end{align*}
\end{Dfn}
With Definition \ref{fufcu}, we can now give a formal definition of the Dirichlet form and heat semigroup on the domain $U$ with Dirichlet boundary conditions.

\begin{Dfn}
    We define $(\mathcal{E}^D_U,\mathcal{D}(\mathcal{E}^D_U))$ to be the closure of $(\mathcal{E},\mathcal{F}_c(U))$ in $L^2(U,d\mu)$, and define $(P^D_{U,t})_{t>0}$ to be the associated heat semigroup.
\end{Dfn}

Assumptions \ref{assumption1} and \ref{assumption2} combined with Theorem \ref{breakthrough} imply the following well-known result.

\begin{Theo}
    Let $(X,d,\mu,\mathcal{E},\mathcal{D}(\mathcal{E}))$ be a Dirichlet space satisfying Assumptions \ref{assumption1} and \ref{assumption2}. Let $(P^D_{U,t})_{t>0}$ be the $L^2(U,d\mu)$-heat semigroup with Dirichlet boundary conditions on a bounded domain $U\subseteq X$. Then $P^D_{U,t}$ admits a Hilbert-Schmidt kernel $p^D_U(t,x,y)$ with respect to $\mu|_U$. Moreover, we can express
    \begin{align}
        \label{eigenexpansion}p^D_U(t,x,y)=\sum_{j=1}^{\infty}e^{-\lambda_j(U)t}\varphi_j^U(x)\varphi_j^U(y),
    \end{align}
    where $0<\lambda_1(U)<\lambda_2(U)\leq \cdots \leq \lambda_j(U)\to\infty$ and $\{\varphi_j^U\}_{j=1}^{\infty}$ is an orthonormal basis of $L^2(U)$. 
\end{Theo}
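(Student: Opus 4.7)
My approach is to exploit the Gaussian heat kernel bounds on $X$ provided by Theorem \ref{breakthrough}, together with boundedness of $U$, to show that the Dirichlet heat semigroup is Hilbert--Schmidt, then invoke the spectral theorem for compact self-adjoint operators. First I would construct the kernel $p^D_U(t,x,y)$ and establish the pointwise domination $0 \leq p^D_U(t,x,y) \leq p_X(t,x,y)$ for $\mu\times\mu$-a.e.\ $(x,y)\in U\times U$. This is a standard consequence of viewing $\mathcal{E}^D_U$ as the part of $\mathcal{E}$ on $\mathcal{F}_c(U)$ and passing to the closure; equivalently, in the probabilistic picture $P^D_{U,t}$ corresponds to the ambient process killed at the first exit time $\tau_U$.

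Next, I would combine the Gaussian upper bound in Theorem \ref{breakthrough} with the Chapman--Kolmogorov identity and symmetry of the kernel to write
$$
\|p^D_U(t,\cdot,\cdot)\|_{L^2(U\times U)}^2 \;=\; \int_U p^D_U(2t,x,x)\, d\mu(x) \;\lesssim\; \int_U \frac{d\mu(x)}{\mu(B(x,\sqrt{2t}))}
$$
for $2t \leq R^2$. Since $U$ is bounded, Proposition \ref{volumeratioballs} bounds the reciprocal volume uniformly in $x\in U$, so the right-hand side is finite; the range $2t > R^2$ is handled by iterating the semigroup identity $P^D_{U,t}=P^D_{U,t-s}\circ P^D_{U,s}$. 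Therefore $P^D_{U,t}$ is Hilbert--Schmidt, hence compact, on $L^2(U,\mu)$ for every $t>0$. Self-adjointness of the symmetric form $(\mathcal{E}^D_U,\mathcal{D}(\mathcal{E}^D_U))$ now permits the spectral theorem: diagonalizing $P^D_{U,t_0}$ for one fixed $t_0>0$ yields an orthonormal eigenbasis $\{\varphi_j^U\}$ of $L^2(U,\mu)$ with positive eigenvalues decreasing to zero; labeling them $e^{-\lambda_j(U)t_0}$ forces $\lambda_j(U)\to\infty$, and the semigroup property makes the same eigenfunctions diagonalize every $P^D_{U,t}$. Expanding the Hilbert--Schmidt kernel in the orthonormal tensor basis $\{\varphi_j^U\otimes\varphi_k^U\}$ then gives (\ref{eigenexpansion}); the positivity $\lambda_j(U)>0$ comes from the Faber--Krahn / Poincar\'{e} inequality implied by boundedness of $U$, which makes $P^D_{U,t}$ strictly contractive.

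The one subtle point, which I expect to be the main obstacle, is the strict inequality $\lambda_1(U)<\lambda_2(U)$. I plan to obtain it via a Perron--Frobenius argument: the $R$-scale-invariant parabolic Harnack inequality of Definition \ref{RPHI}, chained along a finite covering of $U$ by balls of radius comparable to $R$ and the inner diameter of $U$, shows that $P^D_{U,t}$ is \emph{positivity improving}, meaning $f\geq 0$, $f\not\equiv 0$ implies $P^D_{U,t}f>0$ everywhere on $U$. The standard Perron--Frobenius theorem for positivity-improving self-adjoint operators then forces the top eigenvalue $e^{-\lambda_1(U)t}$ to be simple with a strictly positive eigenfunction, which is exactly the statement $\lambda_1(U)<\lambda_2(U)$ together with positivity of $\varphi_1^U$. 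This step is well-trodden in the Dirichlet form / heat kernel literature but requires some care to execute via the Harnack chaining.
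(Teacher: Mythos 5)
The paper states this result without proof, remarking only that it is a ``well-known result'' following from Assumptions \ref{assumption1}, \ref{assumption2}, and Theorem \ref{breakthrough}; so there is no proof in the paper against which to compare. Your proposal is the standard argument that the authors are implicitly invoking, and it is essentially correct: (i) domination $p^D_U \leq p_X$ plus the on-diagonal Gaussian upper bound from Theorem~\ref{breakthrough} plus Chapman--Kolmogorov shows $P^D_{U,t}$ is Hilbert--Schmidt, (ii) the spectral theorem for compact self-adjoint operators commuting with the whole semigroup gives the eigenexpansion, and (iii) the parabolic Harnack inequality makes $P^D_{U,t}$ positivity improving, whence Perron--Frobenius gives simplicity of the top eigenvalue, i.e.\ $\lambda_1(U)<\lambda_2(U)$.

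Two small technical points you should tighten up if this were to be written out in full. First, when you invoke Proposition~\ref{volumeratioballs} to bound $\int_U \mu(B(x,\sqrt{2t}))^{-1}\,d\mu(x)$ uniformly, you tacitly need both $\sqrt{2t}$ and $\mathrm{diam}(U)$ below $R$; the case of large $\mathrm{diam}(U)$ should be handled by the remark following Definition~\ref{RPHI} (finite $R$ self-improves to any larger finite $R'$), not just by iterating in $t$. Second, the positivity-improving/Harnack-chaining step should be arranged non-circularly: a clean route is to observe that $|\varphi_1^U|$ is also a minimizer of the Rayleigh quotient (since $\mathcal{E}(|f|,|f|)\le\mathcal{E}(f,f)$ for Dirichlet forms), then apply the parabolic Harnack inequality to the caloric function $(t,x)\mapsto e^{-\lambda_1 t}|\varphi_1^U(x)|$ along a chain of balls in $U$ to conclude it is everywhere positive, which forces simplicity; this avoids having to first establish strict positivity of $p^D_U(t,x,y)$ for all $x,y$. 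Also note that $\lambda_1(U)>0$ requires $U\neq X$ (it fails for $U=X$ compact); this is an implicit hypothesis you should either assume or derive from ``bounded domain with nonempty boundary.''
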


The Dirichlet heat kernel associated to $U$ satisfies domain monotonicity: for any open sets $U,V\subseteq X$ with $U\subseteq V$, 
\begin{align}
    \label{domainmono}
    p^D_U(t,x,y)\leq p^D_V(t,x,y)\text{ for all }t>0\text{ and }x,y\in U.
\end{align}
Let $(\mathcal{L}^D_U,\mathcal{D}(\mathcal{L}^D_U))$ be the associated infinitesimal generator, which is a nonpositive self-adjoint operator on $L^2(U,d\mu)$. Then $\{\lambda_j(U)\}_{j=1}^{\infty}$ are the eigenvalues of $-\mathcal{L}^D_U$ counted with multiplicity, and $\varphi_j^U$ are the corresponding Dirichlet eigenfunctions. We will often write $\lambda_U:=\lambda_1(U)$ for the smallest positive eigenvalue of $-\mathcal{L}$, which can also be expressed by the variational formula
\begin{align}
    \label{rayleigh}
    \lambda_U=\inf\Big\{\frac{\mathcal{E}(f,f)}{\|f\|^2_{L^2(U)}}:f\in \mathcal{D}(\mathcal{E})\cap C_c(U),f\neq 0\Big\}=\inf\Big\{\frac{\mathcal{E}(f,f)}{\|f\|^2_{L^2(U)}}:f\in \mathcal{D}(\mathcal{E}^D_U),f\neq 0\Big\}.
\end{align}
From (\ref{rayleigh}), we see that for open sets $U,V\subseteq X$ with $U\subseteq V$, the principal Dirichlet eigenvalue satisfies domain monotonicity: 
\begin{align}   
    \label{eigmono}
    \lambda_1(U)\geq \lambda_1(V).
\end{align}
In fact, we more generally have $\lambda_k(U)\geq \lambda_k(V)$ for all $k\geq 1$. The principal Dirichlet eigenfunction $\varphi^U_1$ can be characterized as the minimizer of (\ref{rayleigh}), and we will often write $\varphi_U$ in place of $\varphi^U_1$. By the parabolic Harnack inequality (e.g. Theorem 2.30 in \cite{gyryalsc}), the function $(t,x)\mapsto e^{-\lambda_U t}\varphi_U(x)$, a weak solution to the heat equation, has a continuous representative. Thus, we will always assume that $\varphi_U$ is continuous on $U$.

We state a lemma giving the scale of $\lambda_U$ when $U$ is inner uniform.

\begin{Lemma}
    \label{scaleeig}
    Let $(X,d,\mu,\mathcal{E},\mathcal{D}(\mathcal{E}))$ be a Dirichlet space satisfying both Assumptions \ref{assumption1} and \ref{assumption2}. 
    \begin{enumerate}
        \item Suppose $U\subseteq X$ is any $(C_0,c_0)$-inner uniform domain with $\textup{diam}_U\leq 16R/c_0$. Then 
        $$\lambda_U\leq \frac{1024\cdot D_0}{c_0^2\textup{diam}_U^2}.$$
        \item There exists $a=a(D_0,P_0)>0$ and $\varepsilon_0=\varepsilon_0(D_0,P_0)\in (0,1)$ such that if $U\subseteq X$ is any domain with $\textup{diam}_U\leq \varepsilon_0 R$ and the $1.01\textup{diam}_U$-neighborhood of $U$ does not cover $X$, then
        \begin{align}
            \label{eiglower}
            \lambda_U\geq \frac{a}{\textup{diam}^2_U}.
        \end{align}
        
    \end{enumerate}
\end{Lemma}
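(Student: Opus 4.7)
The plan is to adapt standard Faber--Krahn--type arguments to the $R$-local, inner-uniform setting. For part (1), I would use Proposition \ref{ballinside} to locate a metric ball $B_X(y,\rho)\subseteq U$ of radius $\rho = c_0\textup{diam}_U/16$; this is possible because $U = B_U(x_0,r)$ for some $x_0\in U$ and $r\leq \textup{diam}_U$. By domain monotonicity (\ref{eigmono}) it suffices to bound $\lambda_{B(y,\rho)}$. Since $\textup{diam}_U\leq 16R/c_0$, the radius $\rho$ is at most $R$, so $R$-doubling applies at this scale. A direct Rayleigh quotient estimate with a Lipschitz cutoff $\phi$ equal to $1$ on $B(y,\rho/2)$, vanishing outside $B(y,\rho)$, and with $d\Gamma(\phi,\phi)\leq (2/\rho)^2 d\mu$ (obtained by composing a piecewise-linear cutoff with the intrinsic distance $x\mapsto d(x,y)$, whose carr\'{e} du champ is bounded by $d\mu$) gives $\mathcal{E}(\phi,\phi)\leq (4/\rho^2)\mu(B(y,\rho))$ and $\|\phi\|_{L^2}^2 \geq \mu(B(y,\rho/2))\geq D_0^{-1}\mu(B(y,\rho))$. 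Substituting $\rho=c_0\textup{diam}_U/16$ yields $\lambda_U \leq 4D_0/\rho^2 = 1024 D_0/(c_0^2\textup{diam}_U^2)$.

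For part (2), the plan is a Poincar\'{e}-based argument exploiting the macroscopic ``room'' outside $U$ provided by the hypothesis. Choose $z\in X$ with $d(z,U)\geq 1.01\textup{diam}_U$. Fix $x_0\in U$; since $(X,d)$ is a complete locally compact length space, pick a minimizing geodesic $\gamma$ from $x_0$ to $z$. The map $t\mapsto d(\gamma(t),U)$ is $1$-Lipschitz and runs from $0$ to at least $1.01\textup{diam}_U$, so I fix a small $\delta>0$ (say $\delta=1/200$) and choose $w=\gamma(t^{*})$ with $d(w,U)=\delta\textup{diam}_U$. Using $d_U\geq d$ together with the inner-diameter definition, the triangle inequality yields $d(w,x_0)\leq (1+\delta)\textup{diam}_U$. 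Then $B_0 := B(w,\delta\textup{diam}_U)$ is disjoint from $U$, and both $B_0$ and $U$ sit inside $B := B(x_0,(1+2\delta)\textup{diam}_U)$. Provided $\varepsilon_0$ is chosen so that $(1+2\delta)\textup{diam}_U<R$, Proposition \ref{volumeratioballs} gives $\mu(B_0)/\mu(B)\geq \kappa(D_0)>0$, hence $\mu(U)\leq (1-\kappa)\mu(B)$.

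To finish, I apply a Poincar\'{e}--Cauchy--Schwarz step. For any $f\in\mathcal{D}(\mathcal{E}^D_U)$, extend $f$ by zero to $B$ and apply the $R$-scale-invariant Poincar\'{e} inequality on $B$ with $\xi=\mu(B)^{-1}\int_B f$ to obtain $\int_B f^2\, d\mu - (\int_B f)^2/\mu(B) \leq P_0(1+2\delta)^2\textup{diam}_U^2\,\mathcal{E}(f,f)$. Since $f$ is supported in $U$, Cauchy--Schwarz gives $(\int_B f)^2 = (\int_U f)^2 \leq \mu(U)\int_U f^2 \leq (1-\kappa)\mu(B)\int_U f^2$. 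Substituting and rearranging yields $\kappa\|f\|_{L^2(U)}^2 \leq P_0(1+2\delta)^2\textup{diam}_U^2\,\mathcal{E}(f,f)$, so the variational characterization (\ref{rayleigh}) produces $\lambda_U \geq a/\textup{diam}_U^2$ with $a = \kappa/(P_0(1+2\delta)^2)$. The main obstacle is the geometric step in the previous paragraph: the hypothesis on the $1.01\textup{diam}_U$-neighborhood is exactly what allows one to produce a point $w$ whose distances to $U$ and to $x_0$ are both comparable to $\textup{diam}_U$, so that the enclosing ball $B$ has enough volume outside $U$ for the doubling and Poincar\'{e} machinery to deliver a uniform lower bound.
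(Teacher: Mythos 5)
Your proof of part (1) is essentially the paper's own argument: both reduce by domain monotonicity to a ball of radius $\rho=c_0\textup{diam}_U/16$ produced by Proposition \ref{ballinside}, then run a Rayleigh-quotient estimate and a single application of doubling to get $\lambda_U\leq 4D_0/\rho^2 = 1024 D_0/(c_0^2\textup{diam}_U^2)$. The only cosmetic difference is the test function: the paper uses the tent function $\phi = d(\cdot,\partial B)$ (citing $d\Gamma(\phi,\phi)\leq d\mu$ from \cite{gyryalsc}), while you use a Lipschitz cutoff that is $1$ on $\tfrac{1}{2}B$ and $0$ outside $B$. Both yield the same constant.

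For part (2), however, you take a genuinely different route from the paper. The paper simply invokes Theorem 2.5 of \cite{hebischlsc} (together with its Corollary 3.4), and notes that the factor $10$ there can be improved to $1.01$ via Lemma 5.2.8 of \cite{asti}. You instead give a self-contained Faber--Krahn-type argument: use the hypothesis to locate a point $z$ with $d(z,U)\geq 1.01\textup{diam}_U$, walk along a geodesic from some $x_0\in U$ toward $z$ until you reach a $w$ at distance $\delta\textup{diam}_U$ from $U$, form the disjoint ball $B_0=B(w,\delta\textup{diam}_U)$ sitting inside an enclosing ball $B=B(x_0,(1+2\delta)\textup{diam}_U)$, bound $\mu(U)\leq (1-\kappa)\mu(B)$ using Proposition \ref{volumeratioballs}, and close with the Poincar\'{e} inequality and a Cauchy--Schwarz step. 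The steps all check out (including the identification of $\mathcal{D}(\mathcal{E}^D_U)$, extended by zero, with a subspace of $\mathcal{D}(\mathcal{E})$ so that Definition \ref{Rpoincare} applies, and the choice $\delta=1/200$ giving $\varepsilon_0\leq 1/1.01$). Your version is longer but has the advantage of being self-contained and making transparent exactly where the $1.01$ threshold is used — namely to guarantee a full ball of radius $\delta\textup{diam}_U$ disjoint from $U$ yet inside a comparable enclosing ball; the paper's version is shorter but relies on the reader chasing the cited results.
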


\begin{proof}
    We can express $U=B_U(x,\alpha \text{diam}_U)$ for some $\alpha\in [1/2,1)$. By Proposition \ref{ballinside}, there is $y\in U$ such that
    $$B(y,c_0\text{diam}_U/16)\subseteq U\subseteq B(y,\text{diam}_U).$$
    By (\ref{eigmono}), we then have
$$\lambda_{B(y,\text{diam}_U)}\leq \lambda_U\leq \lambda_{B(y,c_0\text{diam}_U/16)}.$$
The first statement of the lemma now follows from a test function argument. Indeed, for $B=B(y,c_0\text{diam}_U/16)$, the test function $\phi(x):= d(x,\partial B)$ has $d\Gamma(\phi,\phi)/d\mu\leq 1$ (e.g. Theorem 2.11 of \cite{gyryalsc}), and thus
\begin{align*}
    \lambda_{B(y,c_0\text{diam}_U/16)} \leq \frac{\int_B d\Gamma(\phi,\phi)}{\int _B d(x,\partial B)^2 d\mu(x)}\leq \frac{\mu(B)}{\int_{\frac{1}{2}B}d(x,\partial B)^2 d\mu (x)} & \leq \frac{\mu(B)}{(\frac{c_0\text{diam}_U}{32})^2\mu(\frac{1}{2}B)},
\end{align*}
and $\mu(B)\leq D_0\mu(\frac{1}{2}B)$ by volume doubling.
The second statement of the lemma follows from Theorem 2.5 of \cite{hebischlsc} (the hypothesis of the theorem is satisfied because of Assumption \ref{assumption2} together with Corollary 3.4 of \cite{hebischlsc}). The factor of $10$ in that theorem can be lowered to $1.01$ by following the logic in Lemma 5.2.8 of \cite{asti}. 
\end{proof}

\begin{Rmk}
    \label{spherer}
    \normalfont  
    
    Consider a Dirichlet space $X$ satisfying Assumption \ref{assumption2} for $R=\infty$ but is bounded. The prototypical example is given by $\mathbb{S}^n(r)\subseteq \mathbb{R}^{n+1}$, a $n$-dimensional sphere of radius $r>0$. Clearly, $\mathbb{S}^n(r)$ also satisfies Assumption \ref{assumption2} for any finite $R>0$. When applying Lemma \ref{scaleeig} to $\mathbb{S}^n(r)$, we treat $\mathbb{S}^n(r)$ as a $R$-doubling space not for $R=\infty$ but for some $R\lesssim_{D_0,P_0} r$. 
    
    When $X=\mathbb{R}^n$ and $\mathcal{L}$ is a second-order uniformly elliptic operator as in Example \ref{soue}, there is a simpler proof of Lemma \ref{scaleeig} using scaling properties of Dirichlet Laplacian eigenvalues. In this case the balls are defined with respect to the intrinsic distance (\ref{intrinsicdist}), and thus the volume doubling constant $D_0$ depends not only on $n$ but also on the ellipticity $\Lambda\geq 1$.   
\end{Rmk}

\subsection{The associated Hunt process} \label{huntprocess}

This subsection follows \cite{gyryalsc}. For each strictly local regular Dirichlet space $(X,d,\mu,\mathcal{E},\mathcal{D}(\mathcal{E}))$, there is an (essentially unique) associated symmetric Markov process on $X$ with continuous sample paths, called the \textit{Hunt process}. The correspondence is explained in detail in \cite{fukushima} and \cite{chenfuku}. We will use $\{X_t:t\geq 0\}$ to denote a generic Hunt process, and write $\mathbb{E}^x$ and $\mathbb{P}^x$ to denote the expectation and probability measure with respect to continuous paths starting at $x\in X$. The transition density of $\{X_t:t\geq 0\}$ is given by $p_X(t,x,y)$, which we can assume to be continuous due to the parabolic Harnack inequality of Theorem \ref{breakthrough}.

Given a open set $U\subseteq X$, we can consider the (sub)Markovian Dirichlet heat semigroup $(P^D_{U,t})_{t>0}$ on $L^2(U,\mu)$, as in Section \ref{heatkernelsect}. If $\tau_U=\inf\{t>0:X_t\notin U\}$ is the first exit time of the Hunt process from $U$, then for any bounded continuous function $\phi$,
\begin{align}
    \label{huntexp}
    P^D_{U,t}\phi(x)=\int_{U}p^D_U(t,x,y)\phi(y)d\mu(y)=\mathbb{E}^x[\phi(X_t)\mathbf{1}_{\{\tau_U>t\}}].
\end{align}

\subsection{The domain $V$ and the family of perturbations $V_c$}
\label{VVc}

In this section, we will first define a family of domains $\{V_c:c\geq 1\}$ representing an inner uniform expansion of an inner uniform domain $V$, then define a function $h_V(\delta)$ representing the measure of the $\delta$-tube around $\partial V$, which will be used in Theorem \ref{lowerbound}.

\begin{Dfn}
    \label{Vc}
    \normalfont
    Let $f,g:[1,\infty)\to[1,\infty)$ be two continuous functions with $f(1)=1=g(1)$. Let $V\subseteq X$ be a bounded $(C_0,c_0)$-inner uniform domain.  We write $\{V_c:c\geq 1\}$ to denote any one-parameter family of bounded $(C_0,c_0)$-inner uniform domains of $X$ such that for all $c\geq 1$,
    \begin{align*}
        V_c\supseteq V\text{, }\mu(V_c)=f(c)\mu(V)\text{, and }\text{diam}_{V_c}=g(c)\text{diam}_{V}.
    \end{align*}
\end{Dfn}

\begin{Exm}
\normalfont
    As in Example \ref{soue}, consider $X=\mathbb{R}^n$ with its canonical metric measure structure. We note that if $V\subseteq \mathbb{R}^n$ is $(C_0,c_0)$-inner uniform and $cV$ is a homothetic copy of $V$ dilated by $c\geq 1$, then $cV$ is also $(C_0,c_0)$-inner uniform. Thus, a simple example of Definition \ref{Vc} is obtained by letting $V$ be a star-shaped inner uniform domain with respect to a point, and letting $V_c:=cV$ be a dilation of $V$ about that point. In this case $f(c)=c^n$ and $g(c)\asymp_{n,c_0} c$. 
\end{Exm}

Let $V$ be a bounded domain. We define $h_V:[0,1]\to [0,1]$ by 
\begin{align}
    \label{hv}
    h_V(\delta)=\frac{\mu(\{x\in V:d(x,\partial V)\leq  \delta\text{diam}_V\})}{\mu(V)}.
\end{align}
The function $h_V$ gives the ratio of the volume of the $\delta$-tubular neighborhood of $\partial V$ inside $V$. Clearly, $h_V(\delta)\to 0$ as $\delta\to 0$.

\begin{Rmk}
    \label{33}
    \normalfont
    In Theorem \ref{lowerbound}, the lower bound $\varphi_U\gtrsim \varphi_V$ will have an implied constant depending on $h_V$. If we could show for all  $\varepsilon>0$, we have $h_V(\delta)<\varepsilon$ for $\delta>0$ depending only on $\varepsilon>0$, the inner uniformity constants $(C_0,c_0)$ of $V$, and the volume doubling constant $D_0$, then this would yield an improved estimate in Theorem \ref{lowerbound} with the implied constant independent of $h_V$. However, the decay of $h_V(\delta)$ as $\delta\to 0$ depends strongly on the boundary of the inner uniform domain.
    
    For example, consider $\mathbb{R}^n$ equipped with the Euclidean distance and Lebesgue measure. If $V\subseteq \mathbb{R}^n$ is a domain with piecewise smooth boundary, then $h_V(\delta)\asymp  \delta$ as $\delta\to 0$. 
    
    If $V\subseteq \mathbb{R}^2$ is the Von Koch snowflake, then $h_V(\delta)\lesssim \delta^{2-M}$ as $\delta\to 0$, where $M=\log 4/\log 3\approx 1.26$ is the \textit{Minkowski dimension} of the snowflake boundary $\partial V$. An explicit expression for $h_V(\delta)$ for the snowflake is calculated in \cite{lapiduspearse}.

    This shows that inner uniformity alone is insufficient for obtaining explicit upper bounds on $h_V(\delta)$ as $\delta\to 0$. For instance, a planar rectangle and the Von Koch snowflake in $\mathbb{R}^2$ are both inner uniform (for some common $(C_0,c_0)$) by Example \ref{iuexamples}, yet their $h_V(\delta)$ decay at different rates. 
\end{Rmk}

Nevertheless, we supplement Theorem \ref{lowerbound} with a lemma that controls the decay of $h_V(\delta)$ as $\delta\to 0$ uniformly for convex domains with bounded eccentricity (Definition \ref{boundedecc}). Lemma \ref{convexvolume} below is used to prove Theorem \ref{examplethm1} in Section \ref{examples}.


\begin{Lemma}
    \label{convexvolume}
    Consider $\mathbb{R}^n$ equipped with the Euclidean distance and the Lebesgue measure $\mu$. Suppose $V\subseteq \mathbb{R}^n$ is any convex domain that is contained in between two concentric balls in the sense that $B(x,a)\subseteq V\subseteq B(x,A)$. Then
    \begin{align}
        \label{convex}
        h_V(\delta)=\frac{\mu(\{x\in V:d(x,\partial V)\leq \delta\textup{diam}(V)\})}{\mu(V)}\leq 1-\Big(1-\frac{2A}{a}\delta\Big)^n,
    \end{align}
    for all $\delta\in (0,a/(2A))$.
\end{Lemma}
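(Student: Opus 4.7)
The plan is to exploit a standard convex-geometry contraction trick: for $s\in[0,1]$, let $V_s:=x+(1-s)(V-x)$ be the rescaling of $V$ by factor $1-s$ about the inscribed center $x$. I would first show that every point of $V_s$ is at distance at least $sa$ from $\partial V$, and then set $s$ so that $sa = \delta\,\textup{diam}(V)$, after which the bound drops out from the Lebesgue scaling $\mu(V_s)=(1-s)^n\mu(V)$ and the outer containment $\textup{diam}(V)\le 2A$.

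For the key claim, take any $y\in V_s$ and write $y=(1-s)v+sx$ for some $v\in V$. For any vector $b$ with $\|b\|<sa$, we have $x+b/s\in B(x,a)\subseteq V$, and hence
\begin{equation*}
y+b=(1-s)v+s(x+b/s)
\end{equation*}
lies in $V$ by convexity. Thus $B(y,sa)\subseteq V$, so $d(y,\partial V)\ge sa$. This gives the inclusion $V_s\subseteq\{y\in V:d(y,\partial V)\ge sa\}$.

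Now choose $s=\delta\,\textup{diam}(V)/a$. The hypothesis $\delta<a/(2A)$ combined with $\textup{diam}(V)\le 2A$ (which follows from $V\subseteq B(x,A)$) ensures $s\in(0,1)$. Then
\begin{equation*}
\mu(\{y\in V:d(y,\partial V)<\delta\,\textup{diam}(V)\})\le \mu(V)-\mu(V_s)=\bigl(1-(1-s)^n\bigr)\mu(V).
\end{equation*}
Dividing by $\mu(V)$ and using $s\le 2A\delta/a$ together with the monotonicity of $t\mapsto(1-t)^n$ yields the desired inequality.

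There is essentially no obstacle here; the argument is a one-paragraph application of convexity. The only mild care required is to invoke the outer radius $A$ only at the very end, through $\textup{diam}(V)\le 2A$, so that the inner radius $a$ controls how far one may contract while still guaranteeing interior points stay away from the boundary.
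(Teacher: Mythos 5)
Your proof is correct and follows essentially the same strategy as the paper's: contract $V$ toward the inscribed-ball center by a factor $1-s$ (the paper writes $\gamma = 1-s$), show the contracted set sits at distance $\geq sa$ from $\partial V$, and then invoke the Lebesgue scaling $\mu(V_s)=(1-s)^n\mu(V)$ together with $\textup{diam}(V)\le 2A$. The only cosmetic difference is in how the key containment is verified: you use the direct convex-combination identity $y+b=(1-s)v+s(x+b/s)$, while the paper routes through the Minkowski gauge $N(w)=\inf\{t>0:w/t\in V\}$ and its subadditivity; these are interchangeable.
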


\begin{proof}
    Without loss of generality assume that $x$ is the origin and that $B(0,1)\subseteq V\subseteq B(0,A/a)$. For $\gamma\in [0,1]$, let $\gamma V$ denote the dilation of $V$ with respect to the origin. By convexity of $V$, we have $\gamma V\subseteq V$ for all $\gamma\in [0,1]$. Suppose we have already shown that for all $x\in \partial V$ and $\gamma\in [0,1]$
    \begin{align}
        \label{convex1}
        B(\gamma x,1-\gamma)\subseteq \overline{V}. 
    \end{align}
    Then (\ref{convex1}) implies that 
    \begin{align*}
        \text{dist}(\gamma x,\partial V)\geq 1-\gamma=(1-\gamma)\cdot \frac{\text{diam}_V}{\text{diam}_V}\geq \frac{1-\gamma}{(2A/a)}\cdot \text{diam}_V. 
    \end{align*}
    We thus get
    \begin{align*}
        \gamma V\subseteq\Big\{x\in V:d(x,\partial V)\geq \frac{1-\gamma}{2A/a}\text{diam}_V\Big\},
    \end{align*}
    and the desired result (\ref{convex}) follows by setting $\delta=(1-\gamma)/(2A/a)$ and using $\mu(\gamma V)/\mu(V)=\gamma^n$. It remains to prove (\ref{convex1}). Continue to assume that $B(0,1)\subseteq V\subseteq B(0,A/a)$ and define
    \begin{align*}
        N:\mathbb{R}^n\to [0,\infty),\hspace{0.2in} N(x):=\inf\{t>0:x/t\in V\}
    \end{align*}
    to be the \Quote{norm} on $\mathbb{R}^n$ induced by the convex set $V\subseteq \mathbb{R}^n$. While strictly speaking $N(\cdot )$ does not satisfy all of the axioms of a norm, the following properties are easily verified for all $\alpha>0$ and $x,y\in\mathbb{R}^n$:
    \begin{align*}
        & N(\alpha x)=\alpha N(x),\hspace{0.2in}  N(x+y)\leq N(x)+N(y),\hspace{0.2in}  N(x)\leq \|x\|, 
    \end{align*}
    where $\|x\|$ is the Euclidean norm on $\mathbb{R}^n$. Moreover, we can express $V=\{x\in\mathbb{R}^n:N(x)<1\}$ and $\gamma V=\{x\in \mathbb{R}^n:N(x)<\gamma\}$. Then if $\|y-\gamma x\|\leq 1-\gamma$ for $x\in \partial V$ and $\gamma\in [0,1]$, then
    \begin{align*}
        N(y)\leq N(y-\gamma x)+N(\gamma x) \leq 1-\gamma+\gamma N(x)\leq 1,
    \end{align*}
    implying $y\in \{x\in\mathbb{R}^n:N(x)\leq 1\}=\overline{V}$ and that (\ref{convex1}) holds.
\end{proof}

\section{Proofs of main results} \label{proofsofmainresults}

For obtaining both upper and lower bounds for $\varphi_U$, we make use of a result from \cite{lierllsc} that dominates the Dirichlet heat kernel $p^D_V(t,x,y)$ of an inner uniform domain $V\subseteq X$ by its leading term $e^{-\lambda_V t}\varphi_V(x)\varphi_V(y)$. Theorem \ref{jannalierl} below is a special case of Theorem 7.9 in \cite{lierllsc}, for a (symmetric) strictly local Dirichlet form, which we assume in Assumption \ref{assumption1}.

\begin{Theo}
    \label{jannalierl}
    Let $(X,d,\mu,\mathcal{E},\mathcal{D}(\mathcal{E}))$ be a Dirichlet space satisfying Assumptions \ref{assumption1} and \ref{assumption2}. Let $V\subseteq X$ be a bounded $(C_0,c_0)$-inner uniform domain with $\textup{diam}_V\leq  KR$ for some auxilliary constant $K\geq 1$. Then, for some constants $a,A>0$ depending only on $C_0,c_0,D_0,P_0$ and an upper bound on $K$, we have 
    $$ae^{-\lambda_V t}\varphi_V(x)\varphi_V(y)\leq p^D_V(t,x,y)\leq Ae^{-\lambda_V t}\varphi_V(x)\varphi_V(y),$$
    for all $t\geq C\textup{diam}_V^2$, where $C=C(C_0,c_0)>0$.
\end{Theo}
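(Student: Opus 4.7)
The plan is to extract both bounds from a two-sided comparison between $p^D_V(t,x,y)$ and its leading spectral term via the Doob $\varphi$-transform, following the strategy of \cite{lierllsc}. Introduce the normalized kernel
\[
\widetilde p_V(t,x,y) := \frac{e^{\lambda_V t}\,p^D_V(t,x,y)}{\varphi_V(x)\varphi_V(y)},
\]
which is the transition density of a symmetric Markov semigroup on $V$ with invariant probability measure $d\widetilde\mu := \varphi_V^2\,d\mu$ (a probability measure because $\|\varphi_V\|_{L^2(V)}=1$). Since $\varphi_V>0$ on $V$ and vanishes on $\partial V$, the $\varphi$-transform absorbs the Dirichlet boundary condition into the weight, and the theorem reduces to the uniform two-sided bound $\widetilde p_V(t,x,y)\asymp 1$ for all $t\geq C\,\textup{diam}_V^2$.

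To get that asymptotic I would verify that the transformed Dirichlet space on $(V,d_V,\widetilde\mu)$ satisfies both $R'$-doubling and a $R'$-scale-invariant Poincar\'e inequality, for some $R'$ comparable to $\textup{diam}_V$. Granting this, Theorem \ref{breakthrough} applied inside $V$ yields two-sided Gaussian bounds
\[
\frac{c_1}{\widetilde\mu(B_V(x,\sqrt t))}\exp\!\Bigl(-\frac{d_V(x,y)^2}{c_2 t}\Bigr)\;\leq\;\widetilde p_V(t,x,y)\;\leq\;\frac{c_3}{\widetilde\mu(B_V(x,\sqrt t))}\exp\!\Bigl(-\frac{d_V(x,y)^2}{c_4 t}\Bigr)
\]
for $t$ in a range determined by $R'$. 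For $t\geq C\,\textup{diam}_V^2$ with $C$ sufficiently large, one has $\sqrt t\gtrsim \textup{diam}_V\geq d_V(x,y)$, so the Gaussian exponentials are trapped between two positive constants and $\widetilde\mu(B_V(x,\sqrt t))\asymp \widetilde\mu(V)=1$ by doubling. This produces the desired constant two-sided bounds on $\widetilde p_V$, which unravel directly into the statement of the theorem.

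The main obstacle is the intermediate step of verifying volume doubling and the Poincar\'e inequality for $(V,d_V,\varphi_V^2\,d\mu)$ after the Doob transform. This is where the inner uniformity of $V$ enters crucially. The approach, carried out in \cite{lierllsc}, is to compare $\varphi_V$ on $V$ with a positive $\mathcal{L}$-harmonic profile of $V$ (a positive $\mathcal{L}$-harmonic function on $V$ that vanishes on $\partial V$ in the appropriate weak sense) via the boundary Harnack principle for inner uniform domains of \cite{gyryalsc,lierllsc2}, and then transfer the harmonic-profile-weighted VD and PI from \cite{gyryalsc} to the principal-eigenfunction weight $\varphi_V^2\,d\mu$. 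Because the boundary Harnack principle provides a comparison with constants controlled by $C_0, c_0, D_0, P_0$ and by an upper bound on the relevant scale, the resulting constants $a, A$ acquire exactly the stated dependence. The hypothesis $\textup{diam}_V\leq KR$ is used to guarantee that Assumption \ref{assumption2} supplies VD and PI on all ambient scales up to $\textup{diam}_V$, so that the local-to-global arguments producing the transformed Harnack inequality go through with constants uniform in $V$.
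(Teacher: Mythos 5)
The paper does not prove Theorem~\ref{jannalierl} from scratch: it states explicitly that the result is a special case of Theorem~7.9 of \cite{lierllsc} and simply cites it. Your sketch correctly reconstructs the strategy of that cited proof — the Doob $\varphi$-transform, reduction to two-sided bounds $\widetilde p_V\asymp 1$ for $t\gtrsim \textup{diam}_V^2$, and the heart of the argument being weighted doubling and Poincar\'e on $(V,d_V,\varphi_V^2\,d\mu)$ so that Theorem~\ref{breakthrough} applies — so you have identified the same route the paper relies on, just unpacked one layer further.

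One small inaccuracy worth flagging: for a \emph{bounded} inner uniform domain, \cite{lierllsc} takes the eigenfunction $\varphi_V$ itself as the profile function in the $\varphi$-transform, rather than first comparing $\varphi_V$ to a separate $\mathcal{L}$-harmonic profile $h$ of $V$ (as is done for unbounded domains in \cite{gyryalsc}, where no $L^2$ eigenfunction exists). The BHP is indeed still invoked to control ratios of $\varphi_V$ near $\partial V$ and to establish the weighted VD and PI, but $\varphi_V$ is not harmonic and there is no intermediate harmonic-profile comparison step needed in the bounded case. This does not affect the correctness of your outline; it just slightly conflates the bounded and unbounded cases of the Lierl--Saloff-Coste program.
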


For obtaining a lower bound for $\varphi_U$, the following Carleson-type estimate from Proposition 5.12 of \cite{lierllsc} will also be useful. Informally, it states that for inner uniform domains $V$, the values of $\varphi_V$ on and near $\partial V$ are controlled by the values of $\varphi_V$ away from $\partial V$.

\begin{Lemma}
    \label{carleson}
    Let $(X,d,\mu,\mathcal{E},\mathcal{D}(\mathcal{E}))$ be a Dirichlet space satisfying Assumptions \ref{assumption1} and \ref{assumption2}. There are constants $C_1=C_1(C_0,c_0)\geq 1$ and $C_2=C_2(C_0,c_0)\in (0,1)$ such that the following holds. For any bounded $(C_0,c_0)$-inner uniform domain $V\subseteq X$ with $\textup{diam}_V\leq R/C_1$, for any $\xi\in \widetilde{V}$, and for any $r\in (0,C_2\textup{diam}_V)$, we have
    \begin{align*}
        \varphi_V(y)\leq A\varphi_V(x_r),\text{ for all }y\in B_{\widetilde{V}}(\xi,r),
    \end{align*}
    where $x_r\in V$ is any point with $d_V(\xi,x_r)=r/4$ and $d(x_r,\partial V)\geq c_0r/8$. Here $A\geq 1$ depends only on $D_0,P_0,C_0,c_0$. 
\end{Lemma}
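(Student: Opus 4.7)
The plan is to establish this Carleson-type estimate by combining a Harnack chain argument along interior paths---enabled by the inner uniformity of $V$---with a probabilistic hitting probability estimate that controls $\varphi_V$ near $\partial V$. First observe that $u(s,x) := e^{-\lambda_V s}\varphi_V(x)$ is a nonnegative weak solution of the heat equation on $(0,\infty)\times V$ vanishing continuously at $(0,\infty)\times \partial V$. The constraint $\textup{diam}_V \leq R/C_1$ together with Lemma \ref{scaleeig} makes $\lambda_V\textup{diam}_V^2$ uniformly bounded, so the factor $e^{\lambda_V s}$ is harmless on the relevant time scales and all scales stay below $R$, where the $R$-scale-invariant PHI from Theorem \ref{breakthrough} applies.

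I would then split into two cases. In the \emph{interior case} $d(y,\partial V)\geq c_0 r/16$, the inner uniformity (Definition \ref{iudefn}) produces a curve in $V$ joining $y$ to $x_r$ of length $\lesssim r$ and staying at distance $\gtrsim c_0 r$ from $\partial V$. Along this curve one builds a Harnack chain of uniformly bounded length (depending only on $C_0, c_0$) by balls of radius $\asymp c_0 r$ lying well inside $V$, and iterating the elliptic Harnack inequality yields $\varphi_V(y)\leq A_1\varphi_V(x_r)$. In the \emph{boundary case} $d(y,\partial V) < c_0 r/16$, applying the optional stopping theorem to the bounded martingale $e^{-\lambda_V(t\wedge \tau_V)}\varphi_V(X_{t\wedge \tau_V})$ at the exit time $\tau := \tau_{B_{\widetilde{V}}(\xi,2r)\cap V}$ gives
\begin{align*}
\varphi_V(y) = \mathbb{E}^y\big[e^{-\lambda_V\tau}\varphi_V(X_\tau)\mathbf{1}_{\tau<\tau_V}\big].
\end{align*}
The event $\{\tau = \tau_V\}$ contributes nothing because $\varphi_V = 0$ on $\partial V$, while on $\{\tau<\tau_V\}$ the exit point $X_\tau$ lies on the internal sphere $\{z\in V: d_V(z,\xi) = 2r\}$; for such exit points with $d(X_\tau,\partial V)\gtrsim r$, the interior-case Harnack chain bounds $\varphi_V(X_\tau)\lesssim \varphi_V(x_r)$.

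The hard part will be handling exit points that themselves land near $\partial V$, which requires a quantitative iteration. The key input is that starting from any point near $\partial V$, the Hunt process has a uniformly positive probability (depending only on $c_0, C_0, H_0$) of being killed at $\partial V$ before exiting a small inner uniform neighborhood; this follows from Proposition \ref{ballinside} applied to a boundary ball combined with the Gaussian heat kernel bounds of Theorem \ref{breakthrough}. Iterating this hitting estimate at most $O(\log(1/c_0))$ times collapses the boundary case to the interior case. An alternative, more elegant route is to invoke the boundary Harnack principle on inner uniform domains established in \cite{lierllsc}, which compares $\varphi_V$ to a positive harmonic profile vanishing on $B_{\widetilde{V}}(\xi, 2r)\cap \partial V$; the Carleson estimate for harmonic functions is then standard on inner uniform domains and transfers to the desired estimate for $\varphi_V$. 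The conditions $\textup{diam}_V \leq R/C_1$ and $r < C_2\textup{diam}_V$ with $C_1, C_2$ depending only on $(C_0, c_0)$ ensure that all scales used stay below $R$ throughout.
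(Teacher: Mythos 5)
The paper does not prove this lemma; it is quoted verbatim from Proposition 5.12 of \cite{lierllsc}, and the burden of proof is deferred there. So your proposal is a blind attempt at a result the paper treats as known, and the comparison must be with what would be required to make your sketch rigorous.

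There is a genuine gap in the central step of your boundary case. You claim that starting from a point near $\partial V$, the Hunt process has a uniformly positive probability of being killed at $\partial V$ before exiting a small neighborhood, and that this "follows from Proposition \ref{ballinside} applied to a boundary ball combined with the Gaussian heat kernel bounds." Proposition \ref{ballinside} is an \emph{interior} corkscrew condition; it gives a fat ball inside $V$, not any quantitative thickness of the complement near $\partial V$. Inner uniformity is a purely interior hypothesis and does not imply a capacity density (or exterior corkscrew) condition, which is what a uniform survival/killing estimate requires. Concretely, one can build inner uniform $V\subseteq\mathbb{R}^n$ of the form $B(0,1)\setminus E$ with $E$ a compact polar set that does not disconnect $B$ at any scale; the diffusion never hits $E$, so the killing probability at boundary points $\xi\in E$ is identically zero, yet $V$ is inner uniform and the Carleson estimate at such $\xi$ still holds (trivially, since $\varphi_V$ does not degenerate near $E$). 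This shows the mechanism you rely on is not available in general, and the conclusion must come from a different argument. The vague "$O(\log(1/c_0))$ iterations" claim compounds the problem: even granting a killing estimate, the iteration you sketch produces constants that grow with the number of steps, and you do not explain how the scheme closes (the sup on the right involves a larger ball, so one needs either a scale-decay estimate for $\varphi_V$ near $\partial V$ or a Bombieri--Giusti-type self-improving inequality, neither of which you supply). Finally, your "more elegant alternative" is circular: the boundary Harnack principle proved in \cite{lierllsc} has the Carleson estimate (their Proposition 5.12, i.e.\ the very lemma at hand) as one of its ingredients, and $\varphi_V$ is itself a positive solution of $(\mathcal{L}+\lambda_V)\varphi_V=0$ vanishing on $\partial V$, so "the Carleson estimate for harmonic functions" and the target statement are essentially the same claim. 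Your interior-case Harnack chain argument and the reduction of the time-dependent solution $e^{-\lambda_V s}\varphi_V$ to a steady-state Harnack bound via Lemma \ref{scaleeig} are both fine; the missing piece is a proof of the boundary decay that does not presuppose an exterior fatness of $\partial V$, which is precisely where the work in \cite{lierllsc} lies.
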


\subsection{Upper bound for $\varphi_U$}

We begin with bounding the $L^{\infty}$-norm of $\varphi_U$ from above and below.

\begin{Lemma}
    \label{maxphi}
    Let $(X,d,\mu,\mathcal{E},\mathcal{D}(\mathcal{E}))$ be a Dirichlet space satisfying both Assumptions \ref{assumption1} and \ref{assumption2}. Suppose $V\subseteq X$ is a bounded $(C_0,c_0)$-inner uniform domain with $\textup{diam}_V\leq R$. If $U\subseteq X$ is any bounded domain with $U\supseteq V$ and $\lambda_U\geq R^{-2}$, then
        $$\frac{1}{\mu(U)}\leq \|\varphi_U\|^2_{L^{\infty}(U)}\lesssim_{D_0,P_0,c_0} \frac{1}{\mu(V)}.$$
\end{Lemma}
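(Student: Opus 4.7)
The lower bound is immediate from Cauchy--Schwarz applied to $1=\int_U \varphi_U^2\,d\mu$, giving $1 \leq \|\varphi_U\|_{L^\infty(U)}^2\,\mu(U)$.

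For the upper bound, the plan is to combine the spectral expansion of the Dirichlet heat kernel with the Gaussian on-diagonal bound from Theorem \ref{breakthrough}. Retaining the principal term, $p^D_U(2t, x, x) \geq e^{-2\lambda_U t}\varphi_U(x)^2$; together with domain monotonicity $p^D_U \leq p_X$ and Theorem \ref{breakthrough}, this yields
$$\varphi_U(x)^2 \leq e^{2\lambda_U t}\,p_X(2t, x, x) \leq \frac{c_3\,e^{2\lambda_U t}}{\mu(B(x, \sqrt{2t}))},$$
valid for $2t\leq R^2$. The natural choice is $2t = \textup{diam}_V^2 \leq R^2$. Domain monotonicity of eigenvalues gives $\lambda_U \leq \lambda_V$, and Lemma \ref{scaleeig}(1) bounds $\lambda_V \leq 1024 D_0/(c_0^2\textup{diam}_V^2)$, so the exponential $e^{\lambda_U \textup{diam}_V^2}$ is controlled by a constant depending only on $c_0,D_0$. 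This would produce the pointwise bound
$$\varphi_U(x)^2 \lesssim_{c_0,D_0} \frac{1}{\mu(B(x, \textup{diam}_V))}, \qquad x \in U.$$

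To conclude, I need $\mu(B(x^*, \textup{diam}_V)) \gtrsim_{c_0,D_0} \mu(V)$ at a point $x^*$ where $\|\varphi_U\|_{L^\infty(U)}$ is attained. When $x^* \in V$ this is immediate: since $d\leq d_V$ on $V$, one has $V\subseteq B(x^*, \textup{diam}_V)$ and so $\mu(B(x^*, \textup{diam}_V))\geq \mu(V)$. The main obstacle is the case $x^*\in U\setminus V$ with $x^*$ possibly far from $V$. Here I would invoke the parabolic Harnack inequality (Theorem \ref{breakthrough}) at the scale $r\asymp 1/\sqrt{\lambda_U}\gtrsim \textup{diam}_V$ around $x^*$, which (applied to the weak solution $(t,y)\mapsto e^{-\lambda_U t}\varphi_U(y)$) gives $\varphi_U \geq c\,\varphi_U(x^*)$ on a ball $B(x^*,r/2)$; the $L^2$ normalization then yields $\varphi_U(x^*)^2\lesssim 1/\mu(B(x^*,r/2))$. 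Proposition \ref{ballinside} supplies an inscribed ball $B(y_0, c_0\textup{diam}_V/8)\subseteq V$ whose measure is comparable to $\mu(V)$ by doubling, and a triangle-inequality plus doubling comparison (using that $r\asymp \textup{diam}_V\leq R$) reduces the needed volume lower bound on $\mu(B(x^*,r/2))$ to this inscribed-ball case. Verifying that the Harnack ball can actually be chosen of radius comparable to $1/\sqrt{\lambda_U}$ inside $U$ (handling the possibility that $x^*$ is close to $\partial U$, where one must instead invoke continuity of $\varphi_U$ up to the boundary and work at a slightly smaller scale) is the technical heart of the argument.
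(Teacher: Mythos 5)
Your argument has the same skeleton as the paper's: the lower bound is the trivial $1=\int_U\varphi_U^2\le\mu(U)\|\varphi_U\|_\infty^2$, and for the upper bound both proofs combine the spectral expansion $e^{-\lambda_U t}\varphi_U^2(x)\le p_U^D(t,x,x)\le p_X(t,x,x)$ with the on-diagonal Gaussian bound from Theorem \ref{breakthrough}, then use domain monotonicity $\lambda_U\le\lambda_V$ together with Lemma \ref{scaleeig}(1) and Proposition \ref{volumeratioballs} to reduce to a volume comparison. Your choice $2t=\textup{diam}_V^2$ is only cosmetically different from the paper's $t=1/\lambda_U$: the paper kills the exponential outright (using the hypothesis $\lambda_U\ge R^{-2}$ to keep $t\le R^2$), while you keep $t=\textup{diam}_V^2/2\le R^2$ and then bound $e^{\lambda_U\textup{diam}_V^2}$ by $\lambda_U\le\lambda_V\lesssim\textup{diam}_V^{-2}$. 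Both land on the same pointwise bound $\varphi_U^2(x)\lesssim 1/\mu(B(x,c'\textup{diam}_V))$.

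Your flag about the final volume comparison $\mu(B(x,\textup{diam}_V))\gtrsim\mu(V)$ for $x\in U\setminus V$ is a fair reading of the text: the paper writes this step as a one-line inequality, implicitly via the inclusion $V\subseteq B(x,\textup{diam}_V)$, which is immediate for $x\in V$ (since $d\le d_V$) but not for $x\in U$ far from $V$. However, the Harnack detour you propose does not actually close that gap: the parabolic Harnack inequality applied to $(t,y)\mapsto e^{-\lambda_U t}\varphi_U(y)$ in a cylinder around $x^*$ would only reproduce the very same estimate $\varphi_U(x^*)^2\lesssim 1/\mu(B(x^*,r/2))$ (indeed the on-diagonal heat-kernel bound \emph{is} essentially the Harnack estimate), and it additionally requires the cylinder $B(x^*,r)$ to lie inside $U$, which is itself a separate issue near $\partial U$. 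The point you still have to make is that $\mu(B(x^*,c'\textup{diam}_V))\gtrsim\mu(V)$, and for that you unavoidably need $x^*$ to be within $O(\textup{diam}_V)$ of $V$ (then Proposition \ref{volumeratioballs} does the rest). In every place the paper invokes the lemma this holds because one is under a standing assumption of the form $U\subseteq V_c$ with $g(c)$ bounded, so $\textup{diam}_U\lesssim\textup{diam}_V$; adding a hypothesis of that kind is the clean fix, rather than any Harnack argument. So: correct main argument, correct observation of the implicit step, but an unnecessary and ultimately circular attempt to repair it.
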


\begin{proof}
The lower bound follows immediately from $$1=\|\varphi_U\|^2_{L^2(U)}\leq \mu(U)\cdot \|\varphi_U\|^2_{L^{\infty}(U)}.$$Now we prove the upper bound on $\|\varphi_U\|^2_{L^{\infty}(U)}$.
    For all $t>0$ and $x\in U$, (\ref{eigenexpansion}) and (\ref{domainmono}) imply that
    $$e^{-\lambda_U t}\varphi_U^2(x) \leq p^D_U(t,x,x)\leq p_{X}(t,x,x).$$
    Thus, by Theorem \ref{breakthrough}, we have $\varphi_U^2(x)\lesssim_{D_0,P_0} e^{\lambda_Ut}/\mu(B(x,\sqrt{t}))$. By choosing $t=1/\lambda_U\leq R^2$ and using the domain monotonicity of the Dirichlet eigenvalue,  we get
    $$\varphi_U^2(x)\lesssim\frac{1}{\mu(B(x,\lambda_U^{-1/2}))}\leq \frac{1}{\mu(B(x,\lambda_V^{-1/2}))}.$$
    Appealing to Lemma \ref{scaleeig} and Proposition \ref{volumeratioballs}, we get that for some constant $A=A(D_0)>0$,
    \begin{align*}
        \varphi_U^2(x) & \lesssim  \frac{1}{\mu(B(x,c_0A^{-1/2}\text{diam}_V/16))}
        \\ & \leq \frac{\mu(B(x,\text{diam}_V))}{\mu(B(x,c_0A^{-1/2}\text{diam}_V/16))}\cdot \frac{1}{\mu(V)} \lesssim \frac{1}{\mu(V)},
    \end{align*}
    which proves the upper bound. 
\end{proof}

\begin{Theo}
    \label{ub}
    Suppose $V\subseteq X$ is a bounded $(C_0,c_0)$-inner uniform domain with $\textup{diam}_V\leq 16R/c_0$. Let $\{V_c:c\geq 1\}$ be a family of bounded $(C_0,c_0)$-inner uniform domains all containing $V$, as in Definition \ref{Vc}. 
    
    \begin{enumerate}
        \item If $U\subseteq X$ is any arbitrary domain with $V\subseteq U\subseteq V_c$, then for all $x\in U$
    $$\varphi_U^2(x) \leq A_1\varphi_{V_c}^2(x).$$
        \item If $U\subseteq X$ is a $(C_0,c_0)$-inner uniform domain with $V\subseteq U\subseteq V_c$, then for all $x\in V
        $
        $$\varphi_U^2(x)\geq \frac{1}{A_2}\varphi_V^2(x).$$
    \end{enumerate}
    Here $A_1,A_2>1$ are constants depending only on $D_0,P_0,C_0,c_0$ and an upper bound on $g(c)^2$. 
    
    \end{Theo}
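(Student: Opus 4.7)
\textbf{Proof plan for Theorem \ref{ub}.} Both bounds rest on the same two ingredients: domain monotonicity of the Dirichlet heat kernel (\ref{domainmono}) combined with the diagonal lower bound $p^D_W(t,x,x) \geq e^{-\lambda_W t}\varphi_W^2(x)$ furnished by expanding (\ref{eigenexpansion}) (which has nonnegative terms on the diagonal), and the upper bound
\[
p^D_W(t,x,x) \leq A e^{-\lambda_W t}\varphi_W^2(x),\qquad t \geq C\,\text{diam}_W^2,
\]
supplied by Theorem \ref{jannalierl} whenever $W$ is $(C_0,c_0)$-inner uniform with $\text{diam}_W$ a bounded multiple of $R$. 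The whole argument is then a heat-kernel sandwich at a well-chosen timescale $t \asymp \text{diam}_{V_c}^2$, with Lemma \ref{scaleeig} providing $\lambda_V \lesssim_{D_0,c_0} \text{diam}_V^{-2}$.

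For Part~(1), fix $x \in U$. Using the inclusions $V \subseteq U \subseteq V_c$ and applying Theorem \ref{jannalierl} to the inner uniform domain $V_c$ (whose diameter $g(c)\,\text{diam}_V \leq 16g(c)R/c_0$ is controlled by $g(c)$), I chain
\[
e^{-\lambda_U t}\varphi_U^2(x) \leq p^D_U(t,x,x) \leq p^D_{V_c}(t,x,x) \leq A e^{-\lambda_{V_c} t}\varphi_{V_c}^2(x),
\]
so $\varphi_U^2(x) \leq A\exp\bigl((\lambda_U-\lambda_{V_c})t\bigr)\varphi_{V_c}^2(x)$. Choosing $t := C \text{diam}_{V_c}^2 = Cg(c)^2 \text{diam}_V^2$ (admissible for Theorem \ref{jannalierl} applied to $V_c$), the eigenvalue monotonicity $\lambda_U \leq \lambda_V$ together with Lemma \ref{scaleeig} yields $(\lambda_U-\lambda_{V_c})t \leq \lambda_V t \lesssim_{D_0,c_0} g(c)^2$, producing $A_1 = A\exp\bigl(O(g(c)^2)\bigr)$.

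For Part~(2), the strategy is symmetric but now $U$ is the inner uniform domain to which Theorem \ref{jannalierl} is applied. Fix $x \in V$ and chain
\[
e^{-\lambda_V t}\varphi_V^2(x) \leq p^D_V(t,x,x) \leq p^D_U(t,x,x) \leq A e^{-\lambda_U t}\varphi_U^2(x),
\]
the last step valid for $t \geq C\,\text{diam}_U^2$. The main technical obstacle, and the reason the final constants depend only on $g(c)^2$, is to control $\text{diam}_U$ by $g(c)\text{diam}_V$. I would argue as follows: since $U \subseteq V_c$, $\partial U \subseteq \overline{V_c}$, whence $d(z,\partial U) \leq \text{diam}_{V_c}$ for every $z \in U$; on the other hand, for any $x,y \in U$, the inner uniform connecting curve $\gamma$ from $x$ to $y$ contains, by an intermediate value argument applied to $d_U(x,\cdot)-d_U(y,\cdot)$, a point $z^\ast$ with $d_U(x,z^\ast) = d_U(y,z^\ast) \geq d_U(x,y)/2$. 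Plugging into (\ref{iu}) gives $d(z^\ast,\partial U) \geq c_0\,d_U(x,y)/4$, and comparing with the previous upper bound yields $\text{diam}_U \leq (4/c_0)\,\text{diam}_{V_c} = (4g(c)/c_0)\text{diam}_V$.

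With this diameter control in hand, taking $t := C\,\text{diam}_U^2 \lesssim_{C_0,c_0} g(c)^2\text{diam}_V^2$ is both admissible for Theorem \ref{jannalierl} applied to $U$ (since then $\text{diam}_U \lesssim_{C_0,c_0} g(c)R$) and gives $(\lambda_V-\lambda_U)t \leq \lambda_V t \lesssim_{D_0,c_0} g(c)^2$ via Lemma \ref{scaleeig}, so the inequality collapses to $\varphi_V^2(x) \leq A_2\varphi_U^2(x)$ with $A_2$ of the required form. The hard step is conceptually straightforward but essential: the bound $\text{diam}_U \lesssim_{C_0,c_0} \text{diam}_{V_c}$. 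Apart from that, the proof is a single application of the Theorem \ref{jannalierl} sandwich at the scale $t \asymp \text{diam}_{V_c}^2$ in each part.
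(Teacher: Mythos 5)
Your argument is correct and follows the same route as the paper's proof: sandwich the on-diagonal Dirichlet heat kernel between the eigenfunction lower bound $e^{-\lambda_W t}\varphi_W^2$ from (\ref{eigenexpansion}) and the intrinsic ultracontractivity upper bound of Theorem \ref{jannalierl}, transfer between domains by the monotonicity (\ref{domainmono}), and kill the $e^{\lambda t}$ factor with Lemma \ref{scaleeig}. Part (1) is identical to the paper's. For Part (2) the paper disposes of the matter with ``replace the roles of $U$ and $V_c$ by $V$ and $U$,'' which silently requires $\textup{diam}_U \lesssim \textup{diam}_{V_c}$ in order to apply Theorem \ref{jannalierl} to $U$ with a controlled auxiliary constant $K$ and to keep $\lambda_V t$ bounded; you correctly identify this as the only nontrivial step and supply a proof. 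Your IVT argument (find a midpoint $z^\ast$ on the inner uniform curve with $d_U(x,z^\ast)=d_U(y,z^\ast)\geq d_U(x,y)/2$, then feed it into (\ref{iu}) and use $U\subseteq V_c$) is valid; an alternative one-line route already in the paper is Proposition \ref{ballinside}, which gives a ball of radius $c_0\textup{diam}_U/16$ inside $U\subseteq V_c$, hence of radius $\lesssim \textup{diam}_{V_c}$, yielding the same bound. In short: the proposal is correct, same method as the paper, with one genuinely useful explication of a diameter bound the paper leaves implicit.
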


\begin{proof}
    Note that if $\text{diam}_V\leq 16R/c_0$, then $\text{diam}^2_{V_c}\leq 16g(c)^2R/c_0$. Therefore, applying Theorem \ref{jannalierl} with $V_c$ in place of $V$, $K=16g(c)^2/c_0>1$, and $t=C\text{diam}_V^2$ gives
    $$p^D_{V_c}(t,x,y)\leq A' \varphi_{V_c}(x)\varphi_{V_c}(y),$$
    where $A'>0$ is a constant that depends only on $D_0,P_0,C_0,c_0$ and an upper bound on $g(c)^2$.
    It follows from (\ref{eigenexpansion}) and (\ref{domainmono}) that for $t=C\text{diam}_V^2$, 
    \begin{align}   
        \label{uvc}
        \varphi_U^2(x)\leq e^{\lambda_U t}p^D_U(t,x,x)\leq A' e^{\lambda_U t}\varphi^2_{V_c}(x).
    \end{align}
    Due to Lemma \ref{scaleeig},
    $\lambda_U\cdot \text{diam}^2_{V_c}\leq \lambda_V \cdot g(c)^2 \text{diam}^2_V\lesssim_{D_0,c_0} g(c)^2,$
    so the first statement of the theorem follows. The proof of the second statement is by replacing the roles of $U$ and $V_c$ in (\ref{uvc}) by $V$ and $U$, and by similar reasoning.
\end{proof}

\subsection{Lower bound for $\varphi_U$ when $c\geq 1$ is close to $1$}

For $x\in V$, we now begin to derive a lower bound for $\varphi_U(x)$ in terms of $\varphi_V(x)$. 

\begin{Lemma}
    \label{preliminaryLB}
    Suppose $V\subseteq X$ is a bounded $(C_0,c_0)$-inner uniform domain with $\textup{diam}_V\leq 16R/c_0$. For any domain $U\subseteq X$ with $U\supseteq V$ and $x\in V$,
    \begin{align*}
        \varphi_U(x)& \gtrsim_{D_0,P_0,C_0,c_0} \Big(\int_V\varphi_V\cdot \varphi_U d\mu\Big)\varphi_V(x).
    \end{align*}
\end{Lemma}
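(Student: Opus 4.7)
The plan is to run a Duhamel/heat-kernel calculation that combines the two-sided principal-term bound on $p^D_V(t,x,y)$ from Theorem \ref{jannalierl} with the domain monotonicity \eqref{domainmono} of the Dirichlet heat kernel, using $\varphi_U$ itself as a test function via the eigenfunction identity $P^D_{U,t}\varphi_U = e^{-\lambda_U t}\varphi_U$.

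Concretely, fix $x\in V$ and a time $t>0$ to be chosen. Starting from the eigenfunction identity,
\begin{equation*}
e^{-\lambda_U t}\varphi_U(x)\;=\;P^D_{U,t}\varphi_U(x)\;=\;\int_U p^D_U(t,x,y)\varphi_U(y)\,d\mu(y)\;\geq\;\int_V p^D_U(t,x,y)\varphi_U(y)\,d\mu(y),
\end{equation*}
where the last step uses positivity of $\varphi_U$ and restricts the integration to $V$. By domain monotonicity $p^D_V(t,x,y)\leq p^D_U(t,x,y)$ for $x,y\in V$, so the right-hand side is at least $\int_V p^D_V(t,x,y)\varphi_U(y)\,d\mu(y)$. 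Now I invoke the lower bound in Theorem \ref{jannalierl}: provided $t\geq C\,\textup{diam}_V^2$ with $C=C(C_0,c_0)$, one has
\begin{equation*}
p^D_V(t,x,y)\;\geq\;a\,e^{-\lambda_V t}\,\varphi_V(x)\varphi_V(y),
\end{equation*}
with $a>0$ depending only on $D_0,P_0,C_0,c_0$ (Theorem \ref{jannalierl} applies because $\textup{diam}_V\leq 16R/c_0$ so the auxiliary constant $K$ is under control). Plugging this in gives
\begin{equation*}
e^{-\lambda_U t}\varphi_U(x)\;\geq\;a\,e^{-\lambda_V t}\,\varphi_V(x)\int_V \varphi_V\,\varphi_U\,d\mu,
\end{equation*}
i.e.\ $\varphi_U(x)\geq a\,e^{(\lambda_U-\lambda_V)t}\,\varphi_V(x)\int_V\varphi_V\varphi_U\,d\mu$.

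It remains to verify that the exponential prefactor is bounded below by a constant of the allowed type at the particular choice $t=C\,\textup{diam}_V^2$. By domain monotonicity \eqref{eigmono}, $\lambda_U\leq\lambda_V$, so $e^{(\lambda_U-\lambda_V)t}\geq e^{-\lambda_V t}=e^{-C\lambda_V\textup{diam}_V^2}$. The upper bound in Lemma \ref{scaleeig} yields $\lambda_V\textup{diam}_V^2\leq 1024D_0/c_0^2$, which makes $e^{-C\lambda_V\textup{diam}_V^2}$ a positive constant depending only on $D_0,P_0,C_0,c_0$. This finishes the proof.

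There is no real obstacle here: the only thing one has to be careful about is that Theorem \ref{jannalierl} requires $t$ sufficiently large compared to $\textup{diam}_V^2$, while one also needs $\lambda_V t$ to stay bounded from above so that the factor $e^{-\lambda_V t}$ does not kill the estimate — both are accommodated by the single choice $t\asymp\textup{diam}_V^2$, which is made compatible via Lemma \ref{scaleeig}. Note the argument does not use any regularity of $U$ beyond being a domain containing $V$, which is exactly the generality claimed.
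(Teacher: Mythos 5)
Your argument is correct and follows the paper's proof essentially verbatim: restrict $\int_U p^D_U(t,x,y)\varphi_U(y)\,d\mu$ to $V$, use domain monotonicity of the Dirichlet heat kernel, apply the lower bound of Theorem \ref{jannalierl} at $t=C\,\textup{diam}_V^2$, and control $\lambda_V\textup{diam}_V^2$ via Lemma \ref{scaleeig}. One small misattribution: the step $e^{(\lambda_U-\lambda_V)t}\geq e^{-\lambda_V t}$ is equivalent to $\lambda_U\geq 0$, not to the domain monotonicity $\lambda_U\leq\lambda_V$ that you cite from \eqref{eigmono}; the paper instead simply uses $e^{-\lambda_U t}\leq 1$ at the very start, which streamlines the bookkeeping.
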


\begin{proof}
    It follows from (\ref{eigenexpansion}) that for all $x\in U$ and $t>0$,
    $$\int_U p_U^D(t,x,y)\varphi_U(y)d\mu(y)=e^{-\lambda_U t}\varphi_U(x)\leq \varphi_U(x).$$
Thus, applying Theorem \ref{jannalierl} with  $t= C\text{diam}^2_V$ and with $C=C(C_0,c_0)>0$, we get
\begin{align*}
    \varphi_U(x) & \geq \int_V p_V^D(t,x,y) \varphi_U(y)d\mu(y)
    \\ & \geq \int_V ae^{-\lambda_V t}\varphi_V(x)\varphi_V(y)\varphi_U(y)d\mu(y),
\end{align*}
where $a$ depends only on $D_0,P_0,C_0,c_0$. 
We know from Lemma \ref{scaleeig} that
$-\lambda_V\text{diam}_V^2\geq -1024D_0/c_0^2$, which completes the argument.
\end{proof}

For a domain $V\subseteq X$ and $\delta>0$, we define the set $$V(\delta):=\{x\in V:d(x,\partial V)\geq 2\delta \text{diam}_V\}.$$

\begin{Lemma}
    \label{pathawayfromboundary}
    Let $V\subseteq X$ be a $(C_0,c_0)$-inner uniform domain. Suppose $\delta>0$ is chosen so that $V(\delta)\cap \partial V=\varnothing$. Then, for any $x,y\in V(\delta)$, a continuous path $\gamma:[0,1]\to V$ connecting $x$ and $y$ from the definition of inner uniformity has  $d(z,\partial V)\geq c_0\delta\textup{diam}_V/2$ for all $z\in \gamma([0,1])$. 
\end{Lemma}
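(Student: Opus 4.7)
The plan is to combine the equivalent inner uniformity condition (\ref{iu2}) with a triangle inequality argument, via a short case split on whether the tracked point $z$ lies close to an endpoint of $\gamma$ or not.

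Fix $z \in \gamma([0,1])$. By (\ref{iu2}), the inner uniform path $\gamma$ satisfies
\[
d(z,\partial V) \;\geq\; \tfrac{c_0}{2}\,\min\{d_V(x,z),\, d_V(y,z)\}.
\]
I would then split into two cases according to the size of $m(z) := \min\{d_V(x,z), d_V(y,z)\}$.

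In the \emph{first case}, $m(z) \geq \delta\,\text{diam}_V$. Plugging into the inequality above gives $d(z,\partial V) \geq \tfrac{c_0}{2}\,\delta\,\text{diam}_V$ directly.

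In the \emph{second case}, $m(z) < \delta\,\text{diam}_V$. Without loss of generality assume $d_V(x,z) < \delta\,\text{diam}_V$. Since the intrinsic distance on $X$ satisfies $d(x,z) \leq d_V(x,z)$ (paths in $V$ are also paths in $X$), and since $x \in V(\delta)$ means $d(x,\partial V) \geq 2\delta\,\text{diam}_V$, the triangle inequality for $d$ gives
\[
d(z,\partial V) \;\geq\; d(x,\partial V) - d(x,z) \;\geq\; 2\delta\,\text{diam}_V - \delta\,\text{diam}_V \;=\; \delta\,\text{diam}_V.
\]
Using $c_0 \in (0,1]$, this is at least $\tfrac{c_0}{2}\,\delta\,\text{diam}_V$, as desired.

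There is no real obstacle here: the hypothesis $V(\delta)\cap\partial V = \varnothing$ simply makes the statement nonvacuous (ensuring $V(\delta)$ sits in the interior, so that the triangle inequality buffer of $2\delta\,\text{diam}_V$ is meaningful), and the factor of $2$ in the definition of $V(\delta)$ is exactly what is needed to absorb the worst case in the second case above. The only subtlety worth being explicit about is that $d \leq d_V$ on $V \times V$, which justifies controlling the ambient distance $d(x,z)$ by the intrinsic distance $d_V(x,z)$ along the path.
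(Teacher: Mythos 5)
Your proof is correct and uses the same two ingredients as the paper: the reformulated inner uniformity estimate (\ref{iu2}) and the triangle inequality $d(z,\partial V)\geq d(x,\partial V)-d(x,z)$ together with $d\leq d_V$. The paper avoids your explicit case split by simply adding the two inequalities $d(z,\partial V)\geq \frac{c_0}{2}d(x,z)$ and $d(z,\partial V)\geq \frac{c_0}{2}(d(x,\partial V)-d(x,z))$, which is a purely cosmetic difference, so the approaches are essentially identical.
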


\begin{proof}
    Let $z\in \gamma([0,1])$ and without loss of generality assume $d(x,z)\leq d(y,z)$. Then from (\ref{iu2}), we have $d(z,\partial V)\geq c_0d(x,z)/2$. On the other hand, 
    $$d(z,\partial V)\geq d(x,\partial V)-d(x,z)\geq \frac{c_0}{2}(d(x,\partial V)-d(x,z)).$$
    Adding these inequalities, we get $2d(z,\partial V)\geq c_0d(x,\partial V)/2\geq c_0\cdot 2\delta\text{diam}_V/2$.
\end{proof}

The next lemma uses the parabolic Harnack inequality to prove that for an inner uniform domain $V$, the values of $\varphi_V$ at scale $\delta \cdot\text{diam}_V$ away from the boundary $\partial V$ are all comparable. 

\begin{Lemma}
    \label{phiharnack}
    Let $V\subseteq X$ be a bounded $(C_0,c_0)$-inner uniform domain with $\textup{diam}_V\leq 2R$. Suppose $\delta>0$ is chosen so that $V(\delta)\cap \partial V=\varnothing $. Then for all $x,y\in V(\delta)$, we have
    $$\varphi_V(x)\leq H^{1/\delta}\varphi_V(y),$$
    where $H=H(D_0,P_0,C_0,c_0)>1$.
    
\end{Lemma}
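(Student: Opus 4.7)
The plan is a standard chaining argument using the parabolic Harnack inequality (PHI) from Definition \ref{RPHI}, applied to the nonnegative space-time function $u(t,z) := e^{-\lambda_V t}\varphi_V(z)$, which is a weak solution of the heat equation on $(0,\infty)\times V$. First, I invoke the $(C_0,c_0)$-inner uniformity of $V$ (Definition \ref{iudefn}) to produce a continuous curve $\gamma:[0,1]\to V$ joining $x$ to $y$ with length at most $C_0 d_V(x,y)\leq C_0\textup{diam}_V$. Since $x,y\in V(\delta)$, Lemma \ref{pathawayfromboundary} then guarantees that every $z\in\gamma([0,1])$ stays at distance at least $c_0\delta\textup{diam}_V/2$ from $\partial V$.

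Next, set $r := c_0\delta\textup{diam}_V/4$, so that $B(z,2r)\subseteq V$ for every $z\in\gamma([0,1])$ and $r\leq R/2$ (using $\textup{diam}_V\leq 2R$ and $c_0\delta\leq 1$; the statement is vacuous otherwise). For any such $z$ and any two points $z',z''\in B(z,r/2)$, PHI on the cylinder $(0,r^2)\times B(z,r)$, with $t_-\in(3r^2/4,r^2)$ and $t_+\in(r^2/4,r^2/2)$, yields
$$e^{-\lambda_V t_-}\varphi_V(z') \leq H_0\, e^{-\lambda_V t_+}\varphi_V(z''),$$
so that $\varphi_V(z') \leq H_0 e^{\lambda_V r^2}\varphi_V(z'')$. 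The eigenvalue bound $\lambda_V\leq 1024 D_0/(c_0^2\textup{diam}_V^2)$ from Lemma \ref{scaleeig}(1) turns $\lambda_V r^2$ into at most $64 D_0\delta^2 \leq 64 D_0$, so the per-step multiplicative constant $H' := H_0 \exp(64 D_0)$ depends only on $D_0, P_0, c_0$.

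Finally, cover $\gamma$ by an arc-length chain $z_0 = x, z_1, \ldots, z_N = y$ with $d(z_i,z_{i+1}) < r/2$, which needs only $N \leq 16 C_0/(c_0\delta)$ balls. Iterating the single-ball estimate with $(z',z'') = (z_i, z_{i+1})$, both of which sit in $B(z_i, r/2)$, gives
$$\varphi_V(x) \leq (H')^N \varphi_V(y) \leq H^{1/\delta}\varphi_V(y),$$
with $H := (H')^{16 C_0/c_0}$ depending only on $D_0,P_0,C_0,c_0$, as desired.

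The argument is mostly routine; the one delicate point is coordinating scales, namely choosing the radius $r$ small enough that every ball $B(z,r)$ sits safely inside $V$ along the whole curve $\gamma$ — furnished by inner uniformity through Lemma \ref{pathawayfromboundary} — yet keeping the per-step PHI constant $H_0 e^{\lambda_V r^2}$ free of any $\delta$-dependence, which is exactly what the upper eigenvalue bound of Lemma \ref{scaleeig}(1) guarantees. Without this upper bound on $\lambda_V$, the per-step factor could drift with $\delta$ and the final constant would degrade from the clean $H^{1/\delta}$ form.
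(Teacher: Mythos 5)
Your argument is correct and is essentially the same as the paper's: apply the parabolic Harnack inequality to $u(t,z)=e^{-\lambda_Vt}\varphi_V(z)$ on a chain of balls along the inner-uniform curve from $x$ to $y$, control the per-step factor $H_0e^{\lambda_Vr^2}$ via Lemma \ref{scaleeig}(1), and use Lemma \ref{pathawayfromboundary} to keep the balls inside $V$, yielding $N\lesssim C_0/(c_0\delta)$ steps. The only cosmetic difference is that you use one uniform radius $r=c_0\delta\,\textup{diam}_V/4$ for every ball in the chain, whereas the paper uses slightly larger radius $\delta\,\textup{diam}_V/2$ at the two endpoints (which lie in $V(\delta)$ and so are farther from the boundary) and radius $c_0\delta\,\textup{diam}_V/4$ only at intermediate points; both bookkeeping choices give the same $H^{1/\delta}$ bound.
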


\begin{proof}
    Since $V(\delta)\cap \partial V=\varnothing$, we can assume that $\delta \in (0,1/2)$. For any $z\in V(\delta)$, we have $B(z,\delta\text{diam}_V)\subsetneq  V$. Define 
    $$v:(0,\delta^2\text{diam}^2_V)\times B(z,\delta\text{diam}_V)\to \mathbb{R},\enspace v(t,x)=e^{-\lambda_V t}\varphi_V(x).$$
    By Lemma 6.4.5 of \cite{fukushima}, $v(t,x)> 0$. Moreover, $v$ is a weak solution of the heat equation $(\partial_t -\mathcal{L})v=0$ on its domain. The parabolic Harnack inequality then implies that as long $r:=\delta \text{diam}_V\leq R$,
    \begin{align}
        \label{ballphi}
        e^{-\lambda_V r^2}\varphi_V(x)\leq H_0 e^{-\lambda_V r^2/4}\varphi_V(y)\text{ for all }x,y\in B(z,r/2).
    \end{align}
    By Lemma \ref{scaleeig}, we get that for some constant $H_0'=H_0'(D_0,P_0,c_0)>1$,
    \begin{align}
        \label{ballharnack}
        \varphi_V(x)\leq H'_0 \varphi_V(y)\text{ for all }x,y\in B(z,r/2).
    \end{align}
    Now let $x,y\in V(\delta)$ be arbitrary with $y\notin B(x,r/2)$. Connect $x$ to $y$ by a continuous path $\gamma:[0,1]\to V$ guaranteed by the definition of inner uniformity. We can construct a sequence of overlapping balls $B(\gamma(t_i),r_i)$ for $i=1,2,...,N$ with the number of balls $N\leq \lceil 10C_0/(c_0\delta)\rceil $ and with the following properties:
    \begin{align*}
    \begin{cases}
          0=t_1<t_2<\cdots<t_N=1,
         \\ r_1,r_N=r/2\text{ and }r_i=c_0r/4\text{ for }i\neq 1,N.
    \end{cases}
    \end{align*}
    It follows from Lemma \ref{pathawayfromboundary} that $d(\gamma(t_i),\partial U)\geq c_0\delta/2$. Thus, applying (\ref{ballharnack}) with $B(z,r/2)$ replaced by $B(\gamma(t_i),r_i)$ and iterating this estimate along $\gamma$, we get
    $\varphi_V(x)\leq (H_0')^{\lceil 10C_0/(c_0\delta)\rceil}\varphi_V(y)$.
\end{proof}

With the help of Lemmas \ref{preliminaryLB}, \ref{pathawayfromboundary}, and \ref{phiharnack}, we are now in a position to prove the lower bound $\varphi_U\gtrsim \varphi_V$.

\begin{Theo}
    \label{lowerbound}
    Let $(X,d,\mu,\mathcal{E},\mathcal{D}(\mathcal{E}))$ be a Dirichlet space satisfying Assumptions \ref{assumption1} and \ref{assumption2}. There is a constant $C=C(D_0,P_0,C_0,c_0)\geq 1$ such that the following holds. Let $V\subseteq X$ be a $(C_0,c_0)$-inner uniform domain with $\textup{diam}_{V}\leq R/C$. Let $\{V_c:c\geq 1\}$ be a family of bounded $(C_0,c_0)$-inner uniform domains all containing $V$, as in Definition \ref{Vc}. There exists $c=c(f,g,D_0,P_0,C_0,c_0)>1$ sufficiently close to $1$ such that as long as the $2\textup{diam}_V$-neighborhood of $V_c$ does not cover $X$, we have
    $$\varphi_U(x)\gtrsim\varphi_V(x),$$
    for any arbitrary domain $U\subseteq X$ with $V\subseteq U\subseteq V_c$. The implied constant depends only on $D_0,P_0,C_0,c_0,$ and an upper bound on $h_V(\delta)$ as $\delta\downarrow 0$ (see Section \ref{VVc}, Remark \ref{33}).
\end{Theo}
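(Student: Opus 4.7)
The plan is to invoke Lemma \ref{preliminaryLB}, which reduces the desired bound $\varphi_U(x)\gtrsim \varphi_V(x)$ on $V$ to showing that the pairing $\int_V \varphi_V\varphi_U\,d\mu$ is bounded below by a positive constant depending only on the stated parameters. The strategy is to prove that for a suitable $\delta>0$ and $c>1$ sufficiently close to $1$, both $\varphi_U$ and $\varphi_V$ are of order $1/\sqrt{\mu(V)}$ on the deep interior $V(\delta) := \{x\in V : d(x,\partial V)\geq 2\delta\,\textup{diam}_V\}$, which carries most of the $L^2$-mass of either eigenfunction.

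The first ingredient is a uniform $L^\infty$ bound. The assumption that the $2\,\textup{diam}_V$-neighborhood of $V_c$ does not cover $X$, together with $g(c)$ close to $1$ and $\textup{diam}_V\leq R/C$ for a sufficiently large $C=C(D_0,P_0,C_0,c_0)$, enables an application of Lemma \ref{scaleeig}(2) to obtain $\lambda_U,\lambda_V\geq R^{-2}$. Lemma \ref{maxphi} then yields $\|\varphi_U\|_{L^\infty}^2,\,\|\varphi_V\|_{L^\infty}^2\lesssim 1/\mu(V)$, with implied constants depending only on $D_0,P_0,C_0,c_0$. The second ingredient is a lower bound on the $L^2$-mass in $V(\delta)$: the inclusion $U\subseteq V_c$ and the definition of $h_V$ give $\mu(U\setminus V(\delta))\leq \bigl((f(c)-1)+h_V(2\delta)\bigr)\mu(V)$, so combining with the $L^\infty$ bound and $\|\varphi_U\|_{L^2(U)}=1$ produces $\int_{V(\delta)}\varphi_U^2\,d\mu\geq 1 - A'\bigl((f(c)-1)+h_V(2\delta)\bigr)$, and similarly for $\varphi_V$. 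I would first choose $\delta=\delta(h_V,D_0,P_0,C_0,c_0)$ so that $A'h_V(2\delta)\leq 1/4$, and then $c=c(f,\delta)>1$ so close to $1$ that $A'(f(c)-1)\leq 1/4$; this ensures $\int_{V(\delta)}\varphi_U^2\,d\mu,\ \int_{V(\delta)}\varphi_V^2\,d\mu\geq 1/2$.

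The third ingredient is the parabolic Harnack chain of Lemma \ref{phiharnack}. By inner uniformity of $V$ and Lemma \ref{pathawayfromboundary}, any two points in $V(\delta)$ are connected by a continuous path inside $V$ staying at distance $\gtrsim c_0\delta\,\textup{diam}_V$ from $\partial V$. Covering this path by $\asymp 1/\delta$ balls of comparable radius contained in $V\subseteq U$ and iterating the parabolic Harnack inequality shows that each of $\varphi_V$ and $\varphi_U$ oscillates by at most a factor of the form $H^{O(1/\delta)}$ on $V(\delta)$. The argument goes through verbatim for $\varphi_U$ because the chain of balls lies in $V\subseteq U$, and the parabolic Harnack inequality applied to $(t,x)\mapsto e^{-\lambda_U t}\varphi_U(x)$ only requires that the balls be contained in the domain $U$. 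Combined with the previous $L^2$ lower bounds and $L^\infty$ upper bounds, this yields $\varphi_U,\varphi_V\gtrsim c_\delta/\sqrt{\mu(V)}$ on $V(\delta)$ for a constant $c_\delta>0$. Therefore
\[
\int_V \varphi_V\varphi_U\,d\mu \;\geq\; \int_{V(\delta)}\varphi_V\varphi_U\,d\mu \;\geq\; c_\delta^{\,2}\,\frac{\mu(V(\delta))}{\mu(V)} \;\gtrsim\; c_\delta^{\,2},
\]
and an appeal to Lemma \ref{preliminaryLB} concludes the proof.

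The main obstacle is the order in which the parameters must be chosen: $\delta$ must be selected first, its smallness depending on the rate at which $h_V(2\delta)\downarrow 0$, and only afterward is $c$ chosen close to $1$ in a manner depending on $f$ and $\delta$. The unavoidable dependence of the final implied constant on $h_V$ reflects the fact, discussed in Remark \ref{33}, that inner uniformity alone does not control the decay of $h_V$ near zero.
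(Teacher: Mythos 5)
Your proposal is correct and follows essentially the same strategy as the paper's proof: reduce via Lemma \ref{preliminaryLB} to lower-bounding $\int_V\varphi_V\varphi_U$, use Lemma \ref{scaleeig} and Lemma \ref{maxphi} for the $L^\infty$ bound, establish $L^2$-concentration on $V(\delta)$ by splitting off $U\setminus V$ (controlled by $f(c)-1$) and $V\setminus V(\delta)$ (controlled by $h_V(2\delta)$), and iterate the parabolic Harnack inequality (Lemma \ref{phiharnack}) to turn $L^2$-mass on $V(\delta)$ into a pointwise lower bound. Your extension of Lemma \ref{phiharnack} to $\varphi_U$ on $V(\delta)$ is legitimate since the Harnack chains lie in $V\subseteq U$ and $\lambda_U\leq\lambda_V\lesssim\textup{diam}_V^{-2}$ by domain monotonicity and Lemma \ref{scaleeig}.

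The one place you genuinely diverge from the paper is in lower-bounding $\|\varphi_V\|_{L^2(V(\delta))}$. The paper invokes the Carleson-type estimate (Lemma \ref{carleson}) to compare $\sup_V\varphi_V$ with $\sup_{V(\delta)}\varphi_V$, then combines this with the Harnack inequality, Proposition \ref{ballinside}, and volume doubling to conclude. You instead apply the same $h_V$-based concentration computation to $\varphi_V$ that the paper applies to $\varphi_U$, which is in fact simpler here since $\varphi_V$ is entirely supported in $V$: $\int_{V(\delta)}\varphi_V^2\geq 1-\|\varphi_V\|_{L^\infty}^2\,\mu(V\setminus V(\delta))\geq 1-C\,h_V(2\delta)$. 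This bypasses Lemma \ref{carleson} (and the appeal to Proposition \ref{ballinside} for bounding $\mu(V(\delta))/\mu(V)$) at no cost, since $\delta$ must in any case be chosen depending on $h_V$. Two minor quibbles: you write $c=c(f,\delta)$, but the constraint $A'(f(c)-1)\leq 1/4$ does not involve $\delta$ at all, so $c$ depends only on $f$, $g$, $D_0,P_0,C_0,c_0$ exactly as the theorem statement asserts — otherwise $c$ would implicitly depend on $h_V$, which it must not. Relatedly, the two constraints (on $\delta$ and on $c$) are independent, so the order in which they are chosen is not in fact an obstacle; the paper chooses $c$ first, then $\delta$, and either order works.
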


\begin{proof}
    We begin the proof by upper bounding the inner diameter of $V_c$. Let $a>0$ and $\varepsilon_0>0$ be the constants depending only on $D_0,P_0$ from Lemma \ref{scaleeig}. We take $c=c(g)>1$ sufficiently close to $1$ so that $g(c)\leq 1.5$. This will ensure that as long as  
    \begin{align}
        \label{diamVbound}
            \text{diam}_V\leq (\sqrt{a}\wedge \varepsilon_0\wedge 1)R/1.5,\hspace{0.2in} \{x\in X:d(x,V_c)<2\text{diam}_V\}\neq X, 
    \end{align}

     the second statement of Lemma \ref{scaleeig} holds for the domain $V_c$, so that the hypothesis $\lambda_U\geq R^{-2}$ of Lemma \ref{maxphi} is satisfied. In fact, the upper bound on $\text{diam}_V$ in (\ref{diamVbound}) will also justify application of Lemma \ref{preliminaryLB}, Lemma  \ref{phiharnack}, and Theorem \ref{ub} in the calculation to follow below.    
  
    Let $\delta>0$ be sufficiently small enough as in Lemma \ref{phiharnack}. We have
    $$\int_{V(\delta)}\varphi^2_V d\mu \leq \mu(V)\cdot H^{2/\delta}\inf_{V(\delta)}\varphi_V^2.$$
    It then follows from Lemma \ref{preliminaryLB} and Lemma \ref{maxphi} that for all $x\in V$, 
    \begin{align}
        \nonumber
        \frac{\varphi_U(x)}{\varphi_V(x)}&\gtrsim_{D_0,P_0,C_0,c_0} 
        \inf_{V(\delta)}\varphi_V\int_{V(\delta)}\varphi_U d\mu
        \\ \nonumber & \geq \frac{\|\varphi_V\|_{L^2(V(\delta))}}{H^{1/\delta }\mu(V)^{1/2}}\int_{V(\delta)}\varphi_U d\mu 
        \\ \nonumber & \geq \frac{\|\varphi_V\|_{L^2(V(\delta))}}{H^{1/\delta }\mu(V)^{1/2}}\cdot \frac{\|\varphi_U\|^2_{L^2(V(\delta))}}{\|\varphi_U\|_{L^{\infty}(U)}}
        \\ \label{L2lowerbound}& \gtrsim_{D_0,P_0,c_0} \frac{\|\varphi_V\|_{L^2(V(\delta) )}\|\varphi_U\|^2_{L^2(V(\delta))}}{H^{1/\delta}}
    \end{align}
    We will now argue that for an appropriate choice of $\delta>0$, the principal eigenfunction $\varphi_U$ has $L^2$-norm concentrated on $V(\delta)$. We recall that $\varphi_U$ is $L^2(U)$-normalized. Extending $\varphi_U\equiv 0$ outside $U$ and using Theorem \ref{ub}, we get
    $$\int_V \varphi_U^2 d\mu=1-\int_{V_c\backslash V}\varphi_U^2 d\mu\geq 1-A\int_{V_c\backslash V} \varphi^2_{V_c}d\mu ,$$
    so that applying Lemma \ref{maxphi} to $\varphi_{V_c}$, 
    \begin{align*}
        \int_V \varphi_U^2 d\mu & \geq 1-A\cdot C(D_0,P_0,c_0)\frac{\mu(V_c\backslash V)}{\mu(V)}
        \\ & =1-A'(f(c)-1).
    \end{align*}
    Because $A'>1$ depends only on $D_0,P_0,C_0,c_0$ and an upper bound on $g(c)^2(\leq 9)$, and because $\lim_{c\to 1}f(c)=1$, we get for any $c=c(D_0,P_0,C_0,c_0,f)>1$ sufficiently close to $1$, 
    $$\int_V \varphi_U^2 d\mu \geq \frac{1}{2}.$$
    It follows that 
    \begin{align*}
        \int_{V(\delta)}\varphi_U^2 d\mu  & =\int_V \varphi_U^2 d\mu - \int_V \varphi_U^2(1-\mathbf{1}_{V(\delta)}) d\mu  
        \\ & \geq \frac{1}{2} -\frac{C(D_0,P_0,c_0)}{\mu(V)}(\mu(V)-\mu(V(\delta))).
        \\ & = \frac{1}{2}-C(D_0,P_0,c_0)h_V(2\delta),
    \end{align*}
    where $h_V$ is defined in (\ref{hv}). Choose $\delta=\delta(D_0,P_0,C_0,c_0,h_V)>0$ sufficiently small enough so that $h_V(2\delta)\leq 1/(4C(D_0,P_0,c_0))$. Then  $\int_{V(\delta)}\varphi_U^2 d\mu \geq 1 /4$. 
    
    The proof will be finished if $\|\varphi_V\|_{L^2(V(\delta))}$ is bounded below. If necessary assume that $\text{diam}_V$ is an even smaller scalar multiple of $R$ to satisfy the hypothesis of Lemma \ref{carleson}. Let $r=C_2\text{diam}_V/2$ as in Lemma \ref{carleson} and replace the $\delta>0$ previously chosen with $\delta\wedge (c_0C_2/32)$. This choice of $\delta>0$ ensures that any point $x_r$ from Lemma \ref{carleson} is properly contained in $V(\delta)$. For this choice of $\delta>0$,
    $$1=\int_V\varphi_V^2 d\mu \leq \mu(V)\sup_V \varphi_V^2\leq \mu(V)\cdot A^2\sup_{V(\delta)}\varphi_V^2,$$
    where $A\geq 1$ is now the constant from Lemma \ref{carleson}. This implies 
    \begin{align}
        \nonumber \int_{V(\delta)}\varphi_V^2 d\mu  & \geq \mu(V(\delta))\inf_{V(\delta)}\varphi_V^2 
        \\ \nonumber &\geq \mu(V(\delta))\frac{1}{H^{1/\delta}}\sup_{V(\delta)}\varphi_V^2
        \\ \label{thm1-1}& \geq \frac{\mu(V(\delta))}{A^2H^{1/\delta}\mu(V)}.
    \end{align}
    The left-hand side is bounded below if we choose $\delta>0$ even smaller so that $V(\delta)$ contains the ball from Proposition \ref{ballinside}, and then apply Proposition \ref{volumeratioballs}. This finishes the proof.
\end{proof}

\subsection{Lower bound for $\varphi_U$ when $f(c)$ is bounded above}
The next theorem is quite general and gives a lower bound of the form $\varphi_U\gtrsim \varphi_V$ (without requiring $c$ to be sufficiently close to $1$ in $V\subseteq U\subseteq V_c$). It instead only requires that some point $x_U\in U$ where $\varphi_U$ is not too small compared to $\mu(U)^{-1/2}$ connects to $V$ via a bounded number of overlapping balls. This subsection can be skipped on first reading.

\begin{Theo} 
\label{lowerbound2} Let $(X,d,\mu,\mathcal{E},\mathcal{D}(\mathcal{E}))$ be a Dirichlet space satisfying Assumptions \ref{assumption1} and \ref{assumption2}. Let $V\subseteq X$ be a bounded $(C_0,c_0)$-inner uniform domain and let $\{V_c:c\geq 1\}$ be a family of bounded $(C_0,c_0)$-inner uniform domains all containing $V$, as in Definition \ref{Vc}. Assume $g_0>1$ is an upper bound for $g(c)$ and assume the $1.01g_0\textup{diam}_V$-neighborhood of $V_c$ does not cover $X$.
    
    Let $U\subseteq X$ be a domain with $V\subseteq U\subseteq V_c$. Suppose there is a point $x_U \in U$ with $\varphi_U(x_U)\geq \beta/\sqrt{\mu(U)}$ for some $\beta>0$. Suppose the domains $U,V$ satisfy the following property: there exists $\widetilde{C}\geq 1,\widetilde{\delta}\in (0,1)$ and a chain of at most $N\leq K/\widetilde{\delta}$ overlapping balls $\{B_i\}_i$, such that
    \begin{enumerate}
        \item Each $B_i$ has radius $\widetilde{\delta}\textup{diam}_U$ and are precompact subsets of $U$. The first ball $B_1$ is centered at $x_U$ and the last ball $B_N$ has a center in $V$.
        \item $ \textup{diam}_U\leq \widetilde{C}\textup{diam}_V$.
    \end{enumerate}
    There exists a constant $C=C(D_0,P_0,g_0,\widetilde{C})>0$ such that whenever $\textup{diam}_V\leq CR$,
    \begin{align*}
        \varphi_U(x)\gtrsim \frac{\beta^2}{f(c)} \varphi_V(x),
    \end{align*}
    for all $x\in V$. The implied constant depends only on $D_0,P_0,C_0,c_0,\widetilde{\delta},\widetilde{C},K$.
\end{Theo}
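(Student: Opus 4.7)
The plan is to apply Lemma \ref{preliminaryLB}, which for $x \in V$ gives the bound $\varphi_U(x) \gtrsim \bigl(\int_V \varphi_V \varphi_U \, d\mu\bigr) \varphi_V(x)$, and thereby reduce the theorem to showing $\int_V \varphi_V \varphi_U \, d\mu \gtrsim \beta^2/f(c)$. Choosing $C = C(D_0, P_0, g_0, \widetilde{C})$ sufficiently small, the hypothesis $\textup{diam}_V \leq CR$ together with $\textup{diam}_{V_c} \leq g_0 \textup{diam}_V$ and the $1.01 g_0\textup{diam}_V$-neighborhood assumption puts us into the scope of Lemmas \ref{scaleeig}, \ref{maxphi}, \ref{phiharnack}, and \ref{preliminaryLB}; in particular $\lambda_U \geq \lambda_{V_c} \gtrsim \textup{diam}_{V_c}^{-2} \geq R^{-2}$. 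The strategy is to produce a single ball $B^* \subseteq V$ deep inside $V$ on which \emph{both} $\varphi_U$ and $\varphi_V$ are bounded below.

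For $\varphi_U$: I would view $(t,x) \mapsto e^{-\lambda_U t}\varphi_U(x)$ as a positive solution of the heat equation on $(0,\infty) \times U$ and iterate the parabolic Harnack inequality along the given chain $B_1, \ldots, B_N$ of overlapping balls. Each ball has radius $r = \widetilde{\delta}\textup{diam}_U \leq \widetilde{\delta}\widetilde{C}\textup{diam}_V$ and is precompactly contained in $U$; Lemma \ref{scaleeig} gives $\lambda_U r^2 \lesssim (\widetilde{C}\widetilde{\delta})^2$, so each PHI step costs only a bounded factor. Since $N \leq K/\widetilde{\delta}$, this yields $\varphi_U(y_N) \gtrsim \beta/\sqrt{\mu(U)}$ at the center $y_N \in V$ of $B_N$, with constants depending on $H_0, K, \widetilde{C}, \widetilde{\delta}$. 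To extend into $V$, I take $y^* \in V$ with $B(y^*, c_0\textup{diam}_V/16) \subseteq V$ supplied by Proposition \ref{ballinside}, set $B^* := B(y^*, c_0\textup{diam}_V/32)$, and connect $y_N$ to $y^*$ by an inner uniform path $\gamma$ in $V$ of length at most $C_0\textup{diam}_V$. Covering $\gamma$ by an additional $O(1)$ balls contained in $U$ and iterating PHI along this extended chain and once more on $B^*$, I obtain $\inf_{B^*} \varphi_U \gtrsim \beta/\sqrt{\mu(U)}$.

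For $\varphi_V$ on $B^*$: Lemma \ref{maxphi} applied with $U = V$ yields $\|\varphi_V\|_\infty \geq 1/\sqrt{\mu(V)}$, and combining Lemma \ref{carleson} to locate an interior point realizing this up to constants with Lemma \ref{phiharnack} to propagate the lower bound to $V(\delta)$ for $\delta \asymp c_0$ gives $\inf_{B^*} \varphi_V \gtrsim 1/\sqrt{\mu(V)}$. Moreover $V \subseteq B(y^*, \textup{diam}_V)$, so volume doubling gives $\mu(B^*) \asymp \mu(V)$. Combining and using $\mu(U) \leq \mu(V_c) = f(c)\mu(V)$,
$$\int_V \varphi_V \varphi_U \, d\mu \geq \int_{B^*} \varphi_V \varphi_U \, d\mu \gtrsim \mu(B^*) \cdot \frac{\beta}{\sqrt{\mu(U)\mu(V)}} \asymp \beta\sqrt{\tfrac{\mu(V)}{\mu(U)}} \geq \frac{\beta}{\sqrt{f(c)}}.$$
Finally, Lemma \ref{maxphi} also gives $\varphi_U(x_U) \leq \|\varphi_U\|_\infty \lesssim 1/\sqrt{\mu(V)}$, so the hypothesis on $x_U$ forces $\beta \lesssim \sqrt{f(c)}$; hence $\beta/\sqrt{f(c)} \gtrsim \beta^2/f(c)$, and Lemma \ref{preliminaryLB} delivers $\varphi_U(x) \gtrsim (\beta^2/f(c))\varphi_V(x)$ as required.

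The main obstacle is the extension of the chain from $y_N$ into the interior of $V$: although $y_N \in V$, it can be arbitrarily close to $\partial V$, so requiring all intermediate balls to lie inside $V$ would force them to be very small near $y_N$. The resolution is that PHI only requires balls contained in $U$, not in $V$. Near $y_N$, the given ball $B_N \subseteq U$ provides adequate room (radius $\widetilde{\delta}\textup{diam}_U$, independent of $d(y_N, \partial V)$), and once the inner uniform path $\gamma$ has moved a definite distance from $y_N$, the inner uniformity of $V$ together with $V \subseteq U$ (whence $d(\cdot,\partial U) \geq d(\cdot,\partial V)$ on $V$) supplies the lower bound on $d(\cdot,\partial U)$ needed to continue with balls of size $\asymp c_0\textup{diam}_V$ all the way to $B^*$.
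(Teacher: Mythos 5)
Your argument is correct and reaches the stated conclusion, but it organizes the work differently from the paper. Both proofs start from Lemma~\ref{preliminaryLB}, so both reduce to a lower bound on $\int_V \varphi_V\varphi_U\,d\mu$. From there, the paper proceeds in $L^2$: it pushes the pointwise value $\varphi_U(x_U)$ through the chain $\{B_i\}$ via PHI and then applies Lemma~\ref{phiharnack} \emph{to $\varphi_U$ viewed as a positive caloric function on $V$}, obtaining the pointwise comparison $\varphi_V^2 \lesssim \frac{f(c)}{\beta^2}A^{2K/\widetilde\delta}H^{2/\delta}\varphi_U^2$ on $V(\delta)$; this is then fed into the $L^2$-inequality $(\ref{L2lowerbound})$ together with a lower bound on $\|\varphi_V\|_{L^2(V(\delta))}$. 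You instead convert everything to $L^\infty$ lower bounds on a single deep ball $B^*$: you PHI-chain $\varphi_U$ from $x_U$ through $B_1,\dots,B_N$ and then extend the chain into the interior of $V$ by hand (exploiting that the balls only need to lie in $U\supseteq V$, which is a correct and necessary observation), while the paper sidesteps that extension by letting Lemma~\ref{phiharnack} do the propagation for $\varphi_U$ inside $V$. A concrete gain of your route is that it yields the factor $\beta/\sqrt{f(c)}$ rather than the stated $\beta^2/f(c)$, which is strictly stronger (your observation that $\beta\lesssim\sqrt{f(c)}$, forced by Lemma~\ref{maxphi}, confirms $\beta/\sqrt{f(c)}\gtrsim\beta^2/f(c)$); the cost is a longer, more explicit chaining step whose constant depends on $C_0,c_0,\widetilde C,\widetilde\delta$, consistent with the allowed parameters. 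Both sets of constant-dependencies check out.
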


\begin{Rmk}
    \normalfont
    The constant $C$ in the above theorem is of the form $C'/(g_0\vee\widetilde{C})$ where $C'$ depends only on $D_0,P_0$. If $X$ is unbounded and $R=g_0=\infty$, then we do not require $\text{diam}_V\leq CR$ in Theorem \ref{lowerbound2} (and hence in Corollary \ref{corollary1} below).
\end{Rmk}

\begin{proof}
    Let $a>0$ and $\varepsilon_0>0$ be the constants depending only on $D_0,P_0$ from Lemma \ref{scaleeig}. As long as the $1.01g_0\text{diam}_V$-neighborhood of $V_c$ does not cover $X$ and that $\text{diam}_V\leq (\sqrt{a}\wedge \varepsilon_0\wedge 1)R/g_0$, we have from (\ref{L2lowerbound}) that
    \begin{align}
        \label{L2lowerbound2}
        \frac{\varphi_U(x)}{\varphi_V(x)}\gtrsim_{D_0,c_0}\frac{\|\varphi_V\|_{L^2(V(\delta))}\|\varphi_U\|^2_{L^2(V(\delta))}}{H^{1/\delta}},
    \end{align}
    for all sufficiently small $\delta>0$. We now aim to show that for an appropriate $\delta>0$, $\varphi_U\gtrsim\varphi_V$ on $V(\delta)$. By the hypothesis and Lemma \ref{maxphi} applied to $V$, we obtain
    \begin{align*}
        &  \varphi^2_U(x_U)\geq \frac{\beta^2}{\mu(V_c)}=\frac{\beta^2}{f(c)\mu(V)},
        \\ &\|\varphi_V\|^2_{L^{\infty}(V)}\lesssim_{D_0,P_0,c_0} \frac{1}{\mu(V)}. 
    \end{align*}
    It follows that for all $x\in V$,
    \begin{align}
        \label{thm2-1}\varphi^2_V(x)\lesssim_{D_0,P_0,c_0}\frac{f(c)}{\beta^2}\varphi_U^2(x_U).
    \end{align}
    Let $\{B_i\}_i$ be a chain of radius $\widetilde{\delta}\text{diam}_U$-balls as in the hypothesis. On each ball $B_i$, applying parabolic Harnack inequality to $(t,x)\mapsto e^{-\lambda_U t}\varphi_U(x)$ gives
    \begin{align*}
        \varphi_U(x)\leq H_0 \exp\Big(3\lambda_U (\widetilde{\delta}\text{diam}_U)^2/4\Big)\varphi_U(y)\lesssim_{D_0,c_0,\widetilde{C},\widetilde{\delta}} H_0\varphi_U(y)\lesssim A\varphi_U(y),
    \end{align*}
    for all $x,y\in B_i$. Here $A\geq 1$ is some constant depending only on $D_0,P_0,c_0,\widetilde{C}$, and $\widetilde{\delta}$. Iterating this estimate along the chain of overlapping balls $\{B_i\}$, we get
    \begin{align}
    \label{thm2-3}
    \varphi_U(x_U)\leq A^{K/\widetilde{\delta}}\varphi_U(y), \text{ for all }y\in B_N.
    \end{align}
    We now choose a $y_0\in B_N$ separated away from the the boundary of $V$. Proposition \ref{ballinside} applied to $B_N$ implies there is $y_0\in B_N\cap V$ with $d(y_0,\partial V)\geq c_0\widetilde{\delta}\text{diam}_U/8\geq c_0^2\widetilde{\delta}\text{diam}_V/16$. Applying Proposition \ref{phiharnack} with the domain $V$ but with $\varphi_V$ replaced by $\varphi_U$ gives 
    \begin{align}
        \label{thm2-4}
        \varphi_U(y_0)\leq H^{1/\delta}\varphi_U(x),\text{ for all }x\in V(\delta):=\{x\in V:d(x,\partial V)\geq 2\delta\text{diam}_V\},
    \end{align}
    as long as $\delta=c_0^2\widetilde{\delta}/32>0$. Combining (\ref{thm2-1}), (\ref{thm2-3}), (\ref{thm2-4}), we get for all $x\in V(\delta)$,
    \begin{align}
        \label{thm2-5}
        \varphi^2_V(x)&\lesssim_{D_0,P_0,c_0}\frac{f(c)}{\beta^2}A^{2K/\widetilde{\delta}}H^{2/\delta}\varphi^2_U(x).
    \end{align}
    Hence, from (\ref{L2lowerbound2}),
    \begin{align*}
        \frac{\varphi_U(x)}{\varphi_V(x)}\gtrsim_{D_0,P_0,c_0}\frac{\beta^2}{H^{3/\delta}f(c)A^{2K/\widetilde{\delta}}}\Big(\int_{V(\delta)}\varphi^2_V d\mu \Big)^{3/2}.
    \end{align*}
    For $\delta=c_0^2\widetilde{\delta}/32>0$, the ball from Proposition \ref{ballinside} is contained inside $V(\delta)$. Thus, reasoning as in (\ref{thm1-1}), $(\int_{V(\delta)}\varphi_V^2 d\mu)^{3/2}$ is bounded below by a positive constant depending only on $D_0,P_0,C_0,c_0$. The conclusion follows. 
\end{proof}

In Theorem \ref{lowerbound2}, by considering the case when $x_U\in V$, we get the following corollary. Informally, Corollary \ref{corollary1} (and also Theorem \ref{lowerbound3} below) states that an inequality of the form $\varphi_U\gtrsim \varphi_V$ holds on $V$ as long as $V$ contains a point where $\varphi_U$ is not too small.

\begin{Cor}
    \label{corollary1}
    Let $(X,d,\mu,\mathcal{E},\mathcal{D}(\mathcal{E}))$ be a Dirichlet space satisfying Assumptions
    \ref{assumption1} and \ref{assumption2}. Let $V\subseteq X$ be a bounded $(C_0,c_0)$-inner uniform domain and let $\{V_c:c\geq 1\}$ be a family of bounded $(C_0,c_0)$-inner uniform domains all containing $V$, as in Definition \ref{Vc}. Assume $g_0\geq 1$ is an upper bound for $g(c)$ and assume the $1.01g_0\textup{diam}_V$-neighborhood of $V_c$ does not cover $X$. 

     Let $U\subseteq X$ be a domain with $V\subseteq U\subseteq V_c$. Suppose there is a point $x_U\in V$ with $d(x_U,\partial U)\geq \widetilde{\delta}\textup{diam}_U$ and $\varphi_U(x_U)\geq \beta/\sqrt{\mu(U)}$ for some $\widetilde{\delta},\beta>0$.
    Also suppose there exists $\widetilde{C}>1$ such that
    $\textup{diam}_U\leq \widetilde{C}\textup{diam}_V$. Then there exists a constant $C=C(D_0,P_0,g_0,\widetilde{C})>0$ such that whenever $\textup{diam}_V\leq CR$,
    \begin{align}
        \label{cor1} \varphi_U(x)\gtrsim\frac{\beta^2}{f(c)}\varphi_V(x).
    \end{align}
     The implied constant in (\ref{cor1}) depends only on $D_0,P_0,C_0,c_0,\widetilde{C},\widetilde{\delta}$. In particular, when $U$ satisfies an $\alpha$-exterior ball condition and that $\varphi_U(x_U)\geq (1-\varepsilon)\|\varphi_U\|_{L^{\infty}(U)}$ for $\varepsilon=\varepsilon(D_0,P_0,\alpha)\in (0,1)$ as in Theorem \ref{separate2}, then 
     \begin{align}
         \label{cor2}\varphi_U(x)\gtrsim \frac{1}{f(c)}\varphi_V(x),
     \end{align}
     with the implied constant in (\ref{cor2}) depending only on $D_0,P_0,C_0,c_0,\widetilde{C},\alpha$.
\end{Cor}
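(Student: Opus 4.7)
The plan is to deduce Corollary \ref{corollary1} directly from Theorem \ref{lowerbound2}, with the hypotheses of the corollary tailored so that the chain-of-balls condition reduces to a single trivial ball.

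First I would set $N=1$ and $K=1$, and take the chain to consist of the single ball $B_1 := B(x_U,\widetilde{\delta}\textup{diam}_U)$. By the assumed distance estimate $d(x_U,\partial U)\geq \widetilde{\delta}\textup{diam}_U$, this ball is a precompact subset of $U$. By hypothesis $x_U\in V$, so $B_1=B_N$ is centered at $x_U$ and has its center in $V$, satisfying both conditions of the chain. Since $\widetilde{\delta}\in(0,1)$ (which we may assume, as $d(x_U,\partial U)<\textup{diam}_U$ always), we have $N=1\leq K/\widetilde{\delta}$. The inner diameter bound $\textup{diam}_U\leq \widetilde{C}\textup{diam}_V$ is assumed. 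Thus Theorem \ref{lowerbound2} applies directly and yields (\ref{cor1}); the dependence $K=1$ ensures the implied constant depends only on $D_0,P_0,C_0,c_0,\widetilde{C},\widetilde{\delta}$.

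For (\ref{cor2}), the task is to verify that the stronger hypothesis $\varphi_U(x_U)\geq (1-\varepsilon)\|\varphi_U\|_{L^\infty(U)}$ together with the $\alpha$-exterior ball condition forces both (a) $d(x_U,\partial U)\geq \widetilde{\delta}\textup{diam}_U$ for some $\widetilde{\delta}$ depending only on $D_0,P_0,C_0,c_0,\alpha,\widetilde{C}$, and (b) $\beta\gtrsim 1$. For (a), Theorem \ref{separate2} gives $d(x_U,\partial U)\gtrsim \lambda_U^{-1/2}$ with the implied constant depending only on $D_0,P_0,\alpha$. Combining the domain monotonicity $\lambda_U\leq \lambda_V$ with the upper bound $\lambda_V\lesssim 1/\textup{diam}_V^2$ from Lemma \ref{scaleeig} gives $\lambda_U^{-1/2}\gtrsim \textup{diam}_V\geq \textup{diam}_U/\widetilde{C}$, so $\widetilde{\delta}\gtrsim 1/\widetilde{C}$ with the required dependencies. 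For (b), Lemma \ref{maxphi} gives $\|\varphi_U\|_{L^\infty(U)}\geq 1/\sqrt{\mu(U)}$, so $\varphi_U(x_U)\geq (1-\varepsilon)/\sqrt{\mu(U)}$ and hence $\beta\geq 1-\varepsilon$. Applying (\ref{cor1}) and absorbing $\beta^2\geq (1-\varepsilon)^2$ into the implied constant then gives (\ref{cor2}).

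The main subtlety, which is not really a technical obstacle so much as bookkeeping, is confirming that $\varepsilon$, $\widetilde{\delta}$, and the implied constants from Theorem \ref{separate2} and Lemma \ref{scaleeig} are all controlled solely by $D_0,P_0,C_0,c_0,\alpha,\widetilde{C}$, so that the final dependencies in (\ref{cor2}) are exactly as stated. Since $\varepsilon$ in Theorem \ref{separate2} depends only on $D_0,P_0,\alpha$, the chain of reductions above preserves the claimed parameter dependencies, and no further work is required.
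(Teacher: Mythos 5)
Your approach mirrors the paper's proof: specialize Theorem \ref{lowerbound2} to a one-ball chain at $x_U$ for (\ref{cor1}), then use Theorem \ref{separate2} together with Lemma \ref{scaleeig} to produce the required $\widetilde{\delta}$ and take $\beta=1-\varepsilon$ via Lemma \ref{maxphi} for (\ref{cor2}). However, there is a small but genuine slip in the first step. You take $B_1 = B(x_U, \widetilde{\delta}\textup{diam}_U)$ and assert it is precompact in $U$ because $d(x_U,\partial U)\geq \widetilde{\delta}\textup{diam}_U$. In the borderline case of equality this fails: the closure of the open ball of radius $r=d(x_U,\partial U)$ may contain points of $\partial U$, so it need not be a precompact subset of $U$, which is exactly what the hypothesis of Theorem \ref{lowerbound2} requires of each $B_i$. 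The paper sidesteps this by halving the radius — it applies Theorem \ref{lowerbound2} with $\widetilde{\delta}$ replaced by $\widetilde{\delta}/2$, takes the single ball $B(x_U,\widetilde{\delta}\textup{diam}_U/2)$, and sets $K=\widetilde{\delta}/2$ so that $N=1\leq K/(\widetilde{\delta}/2)$ — and this shrinkage does not change the final parameter dependence. Once you make the same adjustment, your argument for (\ref{cor1}) is correct, and your derivation of (\ref{cor2}) (using $\lambda_U^{-1/2}\geq\lambda_V^{-1/2}\gtrsim\textup{diam}_V\geq\textup{diam}_U/\widetilde{C}$ to get a $\widetilde{\delta}$ depending only on $D_0,P_0,c_0,\alpha,\widetilde{C}$) matches the paper exactly.
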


\begin{proof}
    The first statement (\ref{cor1}) follows from Theorem \ref{lowerbound2} by replacing $\widetilde{\delta}>0$ with $\widetilde{\delta}/2>0$, taking the chain of balls to be the single ball $B(x_U,\widetilde{\delta}\text{diam}_U/2)$, and taking $K=\widetilde{\delta}/2$. For the second statement,
    by Theorem \ref{separate2} and Lemma \ref{scaleeig}, we have
    \begin{align}
        \label{0.1}
        d(x_U,\partial U)\gtrsim_{D_0,P_0,\alpha}\frac{1}{\sqrt{\lambda_U}}\geq \frac{1}{\sqrt{\lambda_V}}\gtrsim_{D_0,c_0}\text{diam}_V\geq \frac{1}{\widetilde{C}}\text{diam}_U.
    \end{align}
    So $d(x_U,\partial U)\geq \gamma \text{diam}_U$ for some constant $\gamma>0$ depending only on $D_0,P_0,c_0,\alpha,\widetilde{C}$. Then (\ref{cor2}) follows from (\ref{cor1}) by taking $\widetilde{\delta}=\gamma/2$ and $\beta=1-\varepsilon$. 
\end{proof}

\subsection{Proof sketch of results from Section \ref{examples}}
\label{proofsketch}

\textit{Proof sketch of Theorem \ref{triangleprofile} (planar and spherical triangles).} In this proof sketch, all implied constants in the symbol $\asymp$ will only depend on the angle lower bound $\alpha>0$.
The class of (planar or spherical) triangles with angles bounded below is $(C_0,c_0)$-inner uniform with the constants of inner uniformity depending only on $\alpha>0$. We first explain why Theorem \ref{triangleprofile} holds for planar triangles, then explain how the proof strategy is similar for spherical triangles.  

Let $T\subseteq \mathbb{R}^2$ be a triangle with angles bounded below by $\alpha>0$. Proposition 5.12 of \cite{lierllsc} implies the following: there exists a small constant $c_{\alpha}>0$ such that whenever $c\in (0,c_{\alpha})$, $z\in \partial T$ and $x,y\in \overline{B(z,c\cdot \text{diam}(T))\cap T}$,
\begin{align}
    \label{phih}
    \frac{\varphi_T(x)}{\varphi_T(y)}\asymp \frac{h(x)}{h(y)}, 
\end{align}
where $h$ is any nonnegative harmonic function vanishing on $ \overline{B(z,c\cdot \text{diam}(T))}\cap \partial T$. Let $z=z_1$ be the vertex corresponding to angle $\alpha_1$. Applying (\ref{phih}) with the harmonic function $h=h_1(r,\theta)=r^{\pi/\alpha_1}\sin(\pi\theta/\alpha_1)$ (in polar coordinates with origin at $z_1$) and choosing an appropriate $y$ gives 
\begin{align}
    \label{triangle1}
    \varphi_T\asymp \frac{h}{\text{diam}(T)^{\pi/\alpha_1+1}}
\end{align}
on the closure of the small ball $\overline{B(z_1,c\cdot \text{diam}(T))\cap T}$. Moreover, on the same small ball,
\begin{align}
    \label{triangle2}
    h\asymp d_2d_3(d_2+d_3)^{\pi/\alpha_1-2}, \hspace{0.2in} \frac{d_1}{\text{diam}(T)}\asymp \frac{(d_3+d_1)^{\pi/\alpha_2-2}}{\text{diam}(T)^{\pi/\alpha_2-2}}\asymp \frac{(d_1+d_2)^{\pi/\alpha_3-2}}{\text{diam}(T)^{\pi/\alpha_3-2}}\asymp 1.
\end{align}
Combining (\ref{triangle1}) with (\ref{triangle2}) shows that the desired result (\ref{triangleprofile1}) holds on $\overline{B(z_1,c\cdot \text{diam}(T))\cap T}$. By the same argument, the desired result (\ref{triangleprofile1}) is true on all balls centered on $\partial T$ (if $z_1$ is replaced by a non-vertex point on $\partial T$, then the angle $\alpha_1$ is replaced by $\pi$). A covering argument implies (\ref{triangleprofile1}) holds on $\{x\in T:d(x,\partial T)\leq c\cdot \text{diam}(T)\}$. 

On the complement $\{x\in T:d(x,\partial T)>c\cdot \text{diam}(T)\}$, we use Lemma \ref{maxphi}, the parabolic Harnack inequality (Lemma \ref{phiharnack}), and Theorem \ref{separate2} to conclude that $\varphi_T\asymp 1/\text{diam}(T)$, while
$$d_1d_2d_3(d_1+d_2)^{\pi/\alpha_3-2}(d_2+d_3)^{\pi/\alpha_1-2}(d_3+d_1)^{\pi/\alpha_2-2}\asymp \text{diam}(T)^{\pi/\alpha_1+\pi/\alpha_2+\pi/\alpha_3-3}.$$
The proof is thus finished for planar triangles. 

For spherical triangles, the only difference is that in (\ref{triangle1}), instead of comparing ratios of $\varphi_T$ and $h$, we compare ratios of $\varphi_T$ and another principal Dirichlet eigenfunction $\varphi_W$. For illustration, endow $\mathbb{S}^2$ with spherical coordinates $\{(\theta,\psi):\theta\in [0,2\pi),\psi\in [0,\pi]\}$, for which the Laplace-Beltrami operator takes the form
\begin{align*}
    \Delta_{\mathbb{S}^2}=\frac{1}{\sin^2\psi}\frac{\partial^2}{\partial\theta^2}+\frac{1}{\sin \psi} \frac{\partial}{\partial \psi}\Big(\sin \psi\frac{\partial}{\partial \psi}\Big).
\end{align*}
Suppose $z_1$ is a vertex of a spherical triangle at the north pole of $\mathbb{S}^2$, and with angle $\alpha_1$. Consider the wedge shape $W=\{(\theta,\psi)\in \mathbb{S}^2:\theta\in (0,\alpha_1)\}$, which has principal Dirichlet eigenfunction $\varphi_W(\theta,\psi)\asymp \sin(\pi\theta/\alpha_1)\cdot (\sin\psi)^{\pi/\alpha_1}$ (in fact this expression is the eigenfunction $\varphi_W$, up to a $L^2(W)$-normalizing constant). Then $\varphi_W$ replaces the role of $h$, allowing us to prove that (\ref{triangleprofile1}) holds for spherical triangles as well. \QED{} 
\\~\\
\textit{Proof sketch of Theorem \ref{polythm} (regular polygons).} In this proof, all implied constants in the symbol $\asymp$ will be absolute. First assume that $P=P(n,l)$ is inscribed inside the unit ball $B(0,1)$, so that $l=2\sin(\pi/n)$. Note that the family of regular polygons is $(C_0,c_0)$-inner uniform with absolute constants $(C_0,c_0)$. Proposition 5.12 of \cite{lierllsc} then implies that there exists an absolute constant $c'\in (0,1)$ such that whenever $c\in (0,c')$, $z\in \partial P$, and $x,y\in \overline{B(z,cl)\cap P}$,
\begin{align}
    \label{4.0}
    \varphi_P(x)\asymp \frac{\varphi_P(y)}{h(y)}h(x),
\end{align}
where $h$ is any nonnegative harmonic function vanishing on $\overline{B(z,c l)}\cap \partial P$. We take $h(r,\theta)=r^{n/(n-2)}\sin(n\theta/(n-2))$ if $z$ is a vertex of $P$ and $h=d(\cdot,l_i)$ if $z$ is a nonvertex point on side $l_i$. 

In (\ref{4.0}), we take $y=y_z\in \overline{B(z,cl)\cap P}$ so that $d(z,y_z)$ is maximal. Then take the number of vertices $n$ large enough so that all such points $\{y_z:z\in \partial P\}$ intersect the interior of the circle $B(0,\cos(\pi/n))$ inscribed inside $P$. By Theorem \ref{ub}, if $z$ is a vertex of $P$, then
\begin{align}
    \label{4.1}
    \varphi_P(y_z)\asymp \varphi_{B(0,1)}(1-cl,0)\asymp \frac{1}{n}.
\end{align}
(To obtain the right hand side of (\ref{4.1}), we used that for any $r>0$,  $\varphi_{B(0,r)}(x)\asymp (r-|x|)/r^2$, which follows from e.g. Theorem \ref{ellipsoid}.) The estimate (\ref{4.1}) also holds if $z$ is a non-vertex boundary point of $P$. On the other hand, we have $h(y_z)=(c\sin(\pi/n))^{\pi/\theta_z}$, where $\theta_z\in \{\pi,\pi(n-2)/n\}$ is the interior angle of $P$ corresponding to $z$. So $h(y_z)\asymp 1/n$ and thus $\varphi_P(y_z)\asymp h(y_z)$. From (\ref{4.0}) we get, for all $x\in B(z,cl)$,
\begin{align}
    \label{4.2}
    \varphi_P(x)\asymp h(x)\asymp \begin{cases}
        d_i(x) & \text{if }z\text{ is not a vertex of }P\text{ and }z\in l_i
        \\ d_i(x)d_{i+1}(x)\Big\{d_i(x)+d_{i+1}(x)\Big\}^{n/(n-2)}& \text{if }z\text{ is the vertex in between }l_i,l_{i+1}.
    \end{cases} 
\end{align}
The indices above are taken modulo $n$. (On the other hand, note that $\varphi_P\asymp 1$ away from the balls $B(z,cl)$ due to Lemma \ref{maxphi} and Lemma \ref{phiharnack}.)
After some rewriting of the expression (\ref{4.2}), we see that the theorem statement holds for all regular polygons $P(n,2\sin(\pi/n))$ inscribed in the unit circle (in which case $nl=2n\sin(\pi/n)\asymp 1$). We then conclude the theorem by scaling properties of Dirichlet Laplacian eigenfunctions. \QED{}

\section{Separating maximum of $\varphi_U$ away from $\partial U$}
\label{separationsection}

In this section, we show that if a bounded domain $U\subseteq X$ satisfies an exterior ball condition (Definition \ref{alphaball}), then a point $x_U\in U$ with $\varphi_U(x_U)$ approximately $\|\varphi_U\|_{L^{\infty}(U)}$ is separated away from the boundary of $U$ by a factor of $\lambda_U^{-1/2}$. In $\mathbb{R}$ and $\mathbb{R}^2$, we weaken the hypothesis on $x_U$ to $\varphi_U(x_U)\gtrsim 1/\sqrt{\mu(U)}$ and prove an analogous result. These results are collected in Theorems \ref{separate2} and \ref{separate3} below. Both theorems cover the case  when $\varphi_U(x_U)=\|\varphi_U\|_{L^{\infty}(U)}$. 

We first recall a result due to Rachh and Steinerberger \cite{rs} that applies when $U\subseteq \mathbb{R}^2$ is a simply connected domain, then we modify their proof strategy to obtain Theorem \ref{separate2} below.

\begin{Theo} 
    \label{rachstein}
    There is an universal constant $c>0$ such that the following holds. Let $U\subseteq \mathbb{R}^2$ be a simply connected domain. Let $\varphi_U$ be the principal Dirichlet Laplacian eigenfunction of $U$. Let $x_U\in U$ be a point with $|\varphi_U(x_U)|=\|\varphi_U\|_{L^{\infty}(U)}$. Then
    $$\text{d}(x_U,\partial U)\geq \frac{c}{\sqrt{\lambda_U}}.$$
    The same result holds (with the same universal constant) if $(\varphi_U,\lambda_U)$ is replaced by $(\varphi^U_k,\lambda_k(U))$. Furthermore, no such result holds with $c$ replaced by $c_d$ in $\mathbb{R}^d$ for $d\geq 3$. 
\end{Theo}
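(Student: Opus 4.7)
The plan is to combine a Green's function representation of $\varphi_U$ with a classical planar estimate on the torsion function. Write $G_U$ for the Dirichlet Green's function of $-\Delta$ on $U$ and $T_U(x):=\int_U G_U(x,y)\,dy$ for the torsion function, so that $-\Delta T_U=1$ on $U$ with $T_U|_{\partial U}=0$. First I would reduce the theorem to a pointwise lower bound on $T_U(x_U)$. From $-\Delta\varphi_U=\lambda_U\varphi_U$ one has the resolvent representation $\varphi_U(x)=\lambda_U\int_U G_U(x,y)\varphi_U(y)\,dy$; evaluating at $x=x_U$, bounding $\varphi_U(y)\le \varphi_U(x_U)$ pointwise, and dividing by $\varphi_U(x_U)>0$ yields
$$1\;\le\;\lambda_U\int_U G_U(x_U,y)\,dy\;=\;\lambda_U\,T_U(x_U),$$
so $T_U(x_U)\ge 1/\lambda_U$. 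This step is dimension- and topology-free.

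The second ingredient, which is where both planarity and simple connectedness enter, is the classical pointwise torsion bound
$$T_U(x)\;\le\;C\,d(x,\partial U)^2\qquad\text{for all }x\in U,$$
valid with an absolute constant $C$ whenever $U\subseteq\mathbb{R}^2$ is simply connected. A natural route is via a Riemann map $\phi:\mathbb{D}\to U$ with $\phi(0)=x$: pull the torsion problem back to the disk, where the torsion function is explicit, and apply the Koebe $1/4$ distortion estimate $d(x,\partial U)\le |\phi'(0)|\le 4\,d(x,\partial U)$ to identify the conformal radius at $x$ with $d(x,\partial U)$ up to absolute constants. Inserting $x=x_U$ and combining with the previous step gives
$$\frac{1}{\lambda_U}\;\le\;T_U(x_U)\;\le\;C\,d(x_U,\partial U)^2,$$
hence $d(x_U,\partial U)\ge c\lambda_U^{-1/2}$ with $c=1/\sqrt{C}$ absolute, which is the asserted bound. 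For a higher eigenfunction $\varphi^U_k$, let $x_k$ maximize $|\varphi^U_k|$ and let $\Omega\subseteq U$ be the nodal domain of $\varphi^U_k$ containing $x_k$. Then $|\varphi^U_k||_{\Omega}$ is, up to a sign, the principal Dirichlet eigenfunction of $\Omega$ with eigenvalue $\lambda_1(\Omega)=\lambda_k(U)$, so applying the two-step argument on $\Omega$ and using $d(x_k,\partial U)\ge d(x_k,\partial\Omega)$ (valid because $\partial\Omega\subseteq\overline{U}$) gives the conclusion.

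The main obstacle is the torsion estimate in the second step: it is here that the argument genuinely exploits the planar conformal structure, and it is exactly the feature that fails in $\mathbb{R}^d$ for $d\ge 3$, accounting for the non-existence assertion at the end of the theorem. A simply connected $d$-dimensional domain built from a ball with a long, thin tentacle attached can arrange $d(x_U,\partial U)$ to be arbitrarily small while $\lambda_U$ stays controlled by the ball, ruling out any dimensional constant $c_d>0$. A secondary technical subtlety in the higher-eigenfunction argument is that a nodal domain of $\varphi^U_k$ on a simply connected $U$ need not itself be simply connected; this is handled either by establishing the torsion bound for a broader class of bounded planar domains by Koebe-type arguments on each boundary component, or by replacing $\Omega$ with a simply connected subregion containing $x_k$ on which the principal Dirichlet eigenvalue is still $\lambda_k(U)$. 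Tracking the constants carefully through the Koebe distortion (for the function versus its derivative) is what guarantees that the resulting $c$ is universal.
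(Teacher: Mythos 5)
Your first step is correct and is essentially an integrated version of what both Rachh--Steinerberger and the paper's own Theorem~\ref{separate2} use: from $\varphi_U=\lambda_U G_U\varphi_U$ one gets $1\le \lambda_U T_U(x_U)$, which is the time-integral of the identity $1\le e^{\lambda_U t}\,\mathbb{P}^{x_U}(\tau_U>t)$ appearing in \cite{rs} and in the proof of Theorem~\ref{separate2}. (Note, incidentally, that the paper does not re-prove Theorem~\ref{rachstein}; it is quoted from \cite{rs}, and the paper's own contribution in this direction is Theorem~\ref{separate2}, proved via the probabilistic identity above together with a single-time escape estimate under an exterior ball condition.)

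The gap is in your second step: the pointwise torsion bound $T_U(x)\le C\,d(x,\partial U)^2$ with an absolute constant is \emph{false} for general bounded simply connected planar domains. A clean counterexample is the slit disk $U=B(0,1)\setminus\big([0,1)\times\{0\}\big)$ with $x=(-\delta,0)$. Here $d(x,\partial U)=\delta$, but near the slit tip the relevant reference harmonic function is $h(re^{i\theta})=r^{1/2}\sin(\theta/2)$, so by the boundary Harnack principle $G_U(x,y)\asymp\sqrt{\delta}$ for $y$ in the bulk of the disk, giving $T_U(x)=\int_U G_U(x,y)\,dy\asymp\sqrt{\delta}\gg\delta^2$. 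The same issue undermines the Riemann-map heuristic: pulling back gives $T_U(x)=\int_{\mathbb{D}}G_{\mathbb{D}}(0,w)\,|\phi'(w)|^2\,dw$, and the Koebe distortion bound $|\phi'(w)|\lesssim |\phi'(0)|/(1-|w|)^3$ makes this integral diverge, so no absolute-constant bound in terms of $|\phi'(0)|^2$ alone can come out of that estimate. What Rachh--Steinerberger actually exploit is \emph{not} a torsion bound but a single-scale escape estimate: by simple connectedness, $U^c$ contains a continuum of diameter $\gtrsim d(x,\partial U)$ inside $B(x,2\,d(x,\partial U))$, and Beurling's projection theorem gives $\mathbb{P}^{x}(\tau_U> C\,d(x,\partial U)^2)\le 1-p_0$ with $p_0$ absolute. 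Applied once at $x=x_U$ in the identity $1\le e^{\lambda_U t}\,\mathbb{P}^{x_U}(\tau_U>t)$ with $t=C\,d(x_U,\partial U)^2$, this yields $\lambda_U\,d(x_U,\partial U)^2\gtrsim 1$. This estimate does \emph{not} iterate to a pointwise torsion bound (as the slit disk shows), which is exactly where the direct route through $T_U$ fails.
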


Theorem \ref{rachstein} is only a special case of the main result in \cite{rs}, which was stated for solutions of $-\Delta u=Vu$ where $V$ is a potential on $\mathbb{R}^2$. In Theorem \ref{rachstein}, the lower bound is due to \cite{rs}, while the observation that the same result does not hold in higher dimensions is due to Hayman \cite{hayman}. 

\begin{Dfn}
    \label{alphaball}
    \normalfont
    In a Dirichlet space $(X,d)$, we say that a bounded domain $U\subseteq X$ satisfies an \textit{$\alpha$-exterior ball condition} if there is $\alpha\in (0,1]$ such that for every $x\in U$, there is a ball $B\subseteq B(x,2d(x,\partial U))\backslash U$ with radius $\alpha d(x,\partial U)$.
\end{Dfn}

In Definition \ref{alphaball}, we do not require the exterior ball $B$ to intersect $\partial U$. Consider $\mathbb{R}^n$ equipped with the Euclidean metric and a bounded domain $U\subseteq \mathbb{R}^n$. In this case, Definition \ref{alphaball} can be thought of as a sort of uniform exterior cone condition on $U$. Indeed, suppose for each $x\in U$ and $x'\in\partial U$ with $d(x,x')=d(x,\partial U)$, there is a spherical cone $C\subseteq U^c$ based at $x$ with opening angle $\theta$ and height $d(x,\partial U)$. Then $U$ satisfies an $\alpha$-exterior ball condition (in the sense of Definition \ref{alphaball}) with $\alpha=\alpha(\theta,n)$. In particular, the family of convex domains in $\mathbb{R}^n$ satisfy Definition \ref{alphaball} with $\alpha=\alpha(n)$.

\begin{Theo}
    \label{separate2}
    Let $(X,d,\mu,\mathcal{E},\mathcal{D}(\mathcal{E}))$ be a Dirichlet space satisfying both Assumptions \ref{assumption1} and \ref{assumption2}. There exist constants $C=C(D_0,P_0)>1$ and $\varepsilon=\varepsilon(D_0,P_0,\alpha)\in (0,1)$ such that whenever $U\subseteq X$ is a bounded domain with $\textup{diam}(U)\leq R/C$ and satisfying an $\alpha$-exterior ball condition, 
    $$d(x_U,\partial U)\gtrsim_{D_0,P_0,\alpha} \frac{1}{\sqrt{\lambda_U}},$$
    for any point $x_U\in U$ with $\varphi_U(x_U)\geq (1-\varepsilon)\|\varphi_U\|_{L^{\infty}(U)}$. The diameter $\textup{diam}(U)$ is with respect to $d$. The same result holds (with the same implied constant) if $(\varphi_U,\lambda_U)$ is replaced by $(\varphi_k^U,\lambda_k(U))$ and $x_U$ is any point with $|\varphi_k^U(x_U)|\geq (1-\varepsilon) \|\varphi_k^U\|_{L^{\infty}(U)}$.
\end{Theo}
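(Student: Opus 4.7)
The plan is to argue by contradiction: suppose that $r := d(x_U,\partial U)$ satisfies $r < c\lambda_U^{-1/2}$ for some small constant $c = c(D_0,P_0,\alpha)>0$ to be determined, and derive a contradiction with the hypothesis $\varphi_U(x_U)\geq (1-\varepsilon)\|\varphi_U\|_{L^\infty(U)}$ by choosing $\varepsilon$ sufficiently small. The starting point is the Feynman--Kac formula (\ref{huntexp}), which, applied to $\varphi_U$ using $P^D_{U,t}\varphi_U=e^{-\lambda_U t}\varphi_U$, gives for every $t>0$
\begin{align*}
    \varphi_U(x_U) \;=\; e^{\lambda_U t}\,\mathbb{E}^{x_U}\!\left[\varphi_U(X_t)\mathbf{1}_{\{\tau_U>t\}}\right]\;\leq\; e^{\lambda_U t}\|\varphi_U\|_{L^\infty(U)}\bigl(1-\mathbb{P}^{x_U}[\tau_U\leq t]\bigr).
\end{align*}
Thus everything reduces to a uniform lower bound on the exit probability $\mathbb{P}^{x_U}[\tau_U\leq t]$ for a well-chosen $t\asymp r^2$.

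The key technical step extracts this exit probability from the $\alpha$-exterior ball condition. Let $B\subseteq B(x_U,2r)\setminus U$ be a ball of radius $\alpha r$ granted by Definition \ref{alphaball}, and set $t=9r^2$; taking $C\geq 3$ in $\textup{diam}(U)\leq R/C$ ensures $\sqrt{t}\leq R$, so the Gaussian lower heat kernel bound of Theorem \ref{breakthrough} applies. Since the Hunt process has continuous sample paths and $B\cap U=\varnothing$, we have the event inclusion $\{X_t\in B\}\subseteq \{\tau_U\leq t\}$, whence
\begin{align*}
    \mathbb{P}^{x_U}[\tau_U\leq 9r^2]\;\geq\; \int_{B}p_X(9r^2,x_U,y)\,d\mu(y)\;\gtrsim_{D_0,P_0}\;\frac{\mu(B)}{\mu(B(x_U,3r))}\;\gtrsim_{D_0,\alpha}\;1,
\end{align*}
where the exponential factor in the heat kernel is absorbed into an absolute constant (using $d(x_U,y)\leq 3r$ for $y\in B$), and the last inequality follows from Proposition \ref{volumeratioballs} applied to the concentric balls at $x_U$ and the center of $B$ (note $B\subseteq B(x_U,3r)$ and the center of $B$ is within distance $2r$ of $x_U$). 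Denote this uniform lower bound by $p=p(D_0,P_0,\alpha)>0$.

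Combining the two displays with $t=9r^2$ gives $\varphi_U(x_U)\leq (1-p)\,e^{9\lambda_U r^2}\|\varphi_U\|_{L^\infty(U)}$. Set $\varepsilon:=p/2$. Choosing $c=c(D_0,P_0,\alpha)>0$ small enough that $(1-p)e^{9c^2}<1-\varepsilon$, the assumption $r<c\lambda_U^{-1/2}$ forces $\varphi_U(x_U)<(1-\varepsilon)\|\varphi_U\|_{L^\infty(U)}$, contradicting the hypothesis and proving $d(x_U,\partial U)\gtrsim_{D_0,P_0,\alpha}\lambda_U^{-1/2}$. The argument for higher eigenfunctions is identical: from $P^D_{U,t}\varphi_k^U=e^{-\lambda_k(U) t}\varphi_k^U$ we obtain $|\varphi_k^U(x_U)|\leq e^{\lambda_k(U) t}\|\varphi_k^U\|_{L^\infty(U)}\mathbb{P}^{x_U}[\tau_U>t]$, and the exit probability bound above makes no use of non-negativity of the eigenfunction. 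The main obstacle is really only the exit probability estimate; once one records that the exterior ball sits at distance $O(r)$ from $x_U$ and has radius $\asymp r$, volume doubling makes the lower bound $\mu(B)/\mu(B(x_U,3r))\gtrsim_{D_0,\alpha} 1$ immediate, and the rest is bookkeeping of constants.
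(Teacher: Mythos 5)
Your proof is correct and takes essentially the same approach as the paper: both start from the Feynman--Kac representation $e^{-\lambda_U t}\varphi_U(x_U)=\mathbb{E}^{x_U}[\varphi_U(X_t)\mathbf{1}_{\{\tau_U>t\}}]$, then lower-bound the exit probability by integrating the Gaussian lower heat kernel bound over the $\alpha$-exterior ball and using volume doubling to control $\mu(B)/\mu(B(x_U,Cr))$. The only cosmetic differences are that you argue by contradiction with a fixed $t=9r^2$ and use the inclusion $\{X_t\in B\}\subseteq\{\tau_U\leq t\}$ directly, whereas the paper argues directly, optimizes the time scale $t=C^2r^2$, and passes through the hitting time $\sigma_B$ together with stochastic completeness.
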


\begin{proof}
    By replacing $\varphi_k^U$ with $-\varphi_k^U$ if necessary, we can assume $\varphi_k^U(x_U)>0$. Because the proof for $\varphi_k^U$ is the same as $\varphi_1^U=\varphi_U$, we only prove the desired result for $\varphi_U$.  
    
    We first assume $\varphi_U(x_U)=\|\varphi_U\|_{L^{\infty}(U)}$. Recall that $\varphi_U$ is the principal Dirichlet eigenfunction of $-\mathcal{L}^D_U$; let $\{X^x_t:t\geq 0\}$ be the Hunt process associated to $-\mathcal{L}^D_U$ started at the point $x$.  By (\ref{eigenexpansion}) and (\ref{huntexp}), we have for all $t>0$ and $x\in U$
    \begin{align}
        \label{twoways}
        e^{-\lambda_Ut}\varphi_U(x)=P^D_{U,t}\varphi_U(x)=\mathbb{E}^x[\varphi_U(X_t)\mathbf{1}_{\{\tau_U>t\}}].
    \end{align}
    By taking $x=x_U$, we get $1\leq e^{\lambda_U t}\mathbb{P}^{x_U}(\tau_U>t)$, or equivalently
    $$-\frac{\log \mathbb{P}^{x_U}(\tau_U>t)}{\lambda_U}\leq t.$$
    The result will follow if we can show that for some sufficently large $C>0$ to be chosen later, the probability $\mathbb{P}^{x_U}(\tau_U\leq t)$ is bounded away from $0$ for $t=C^2d(x_U,\partial U)^2$. Let $B\subseteq U^c$ be a ball of radius $\alpha d(x_U,\partial U)$ corresponding to $x_U$ in Definition \ref{alphaball}, and let $\sigma_B=\inf\{t>0:X^{x_U}_t\in B\}$.
    Because $B\subseteq U^c$ and $X_t$ has continuous paths, we have
    \begin{align*}
        \mathbb{P}^{x_U}(\tau_U\leq C^2d(x_U,\partial U)^2) & \geq \PP^{x_U}(\sigma_B\leq C^2 d(x_U,\partial U)^2).
    \end{align*}
    The probability of the complement is 
    \begin{align}
        \nonumber
        \mathbb{P}^{x_U}(\sigma_B\geq C^2 d(x_U,\partial U)^2) & = \int_{X\backslash B} p^D_{X\backslash B}(C^2
        d(x_U,\partial U)^2, x_U,y )dy
        \\ \nonumber & \leq \int_{X\backslash B} p_X(C^2d(x_U,\partial U)^2,x_U,y)dy
        \\ \label{s1}& = 1 - \int_B p_X(C^2d(x_U,\partial U)^2, x_U,y)dy
        \\ \label{s2} & \leq 1-\int_{B} \frac{c_1}{\mu(B(x_U,Cd(x_U,\partial U)))}\exp\Big(-\frac{d(x_U,y)^2}{c_2 \cdot C^2 d(x_U,\partial U)^2}\Big)dy.
    \end{align}
    In (\ref{s1}), we used the stochastic completeness of $X$ (see Theorem 2.1 of \cite{hebischlsc}). In (\ref{s2}), we used Theorem \ref{breakthrough} assuming that $C^2d(x_U,\partial U)^2\leq R^2$. Since $d(x_U,y)\leq 2d(x_U,\partial U)$ for all $y\in B$, we get from (\ref{s2}) and Proposition \ref{volumeratioballs} that
    \begin{align}
        \nonumber \mathbb{P}^{x_U}(\sigma_B\geq C^2 d(x_U,\partial U)^2) & \leq 1 - c_1\exp\Big(-\frac{4}{c_2\cdot C^2}\Big)\frac{\mu(B)}{\mu(B(x_U,C d(x_U,\partial U)))}
        \\ \label{s3}& \leq 1 - c_1\exp\Big(-\frac{4}{c_2 \cdot C^2}\Big)\cdot \frac{1}{D_0^2}\Big(\frac{\alpha}{C}\Big)^{\log_2 D_0}.
    \end{align}
    The right-hand side of (\ref{s3}) can be assumed to be strictly less than $1$ if $C>\sqrt{2}$.
    Write $\psi(C)$ to denote the right-hand side of (\ref{s3}). We have that $
    0\leq \psi(C)<1$ for $C\in (\sqrt{2},\infty)$ with $\lim_{C\to\infty}\psi(C)=1$, and that $\psi$ is monotone increasing on $[\sqrt{8/c_2\log_2(D_0)},\infty)$. Thus, by choosing $C$ appropriately so that $C$ depends only on $D_0,P_0$, we see that $$\mathbb{P}^{x_U}(\sigma_B\geq C^2d(x_U,\partial U)^2)\leq \text{const}(D_0,P_0,\alpha)<1.$$ This concludes the argument when $\varphi_U(x_U)=\|\varphi_U\|_{L^{\infty}(U)}$. 

    For the general case when $\varphi_U(x_U)\geq (1-\varepsilon)\|\varphi_U\|_{L^{\infty}(U)}$, let $\varepsilon\in (0,1)$ be arbitrary for now.  Note that if $x_U\in U$ has $\varphi_U(x_U)\geq (1-\varepsilon)\|\varphi_U\|_{L^{\infty}(U)}$, then reasoning as in (\ref{twoways}) we get
    $$t\geq -\frac{1}{\lambda_U}\log \Big(\frac{\mathbb{P}^x(\tau_U>t)}{1-\varepsilon}\Big).$$
    We already proved that for $t=C^2d(x_U,\partial U)$, the probability $\mathbb{P}^x(\tau_U>t)\leq \theta(D_0,P_0,\alpha)<1$ for some $\theta\in (0,1)$ bounded away from $1$. So the result follows once we choose $\varepsilon>0$ sufficiently small so that $\theta/(1-\varepsilon)$ is also bounded away from $1$. 
\end{proof}

Next, when the Dirichlet space is $\mathbb{R}$ or $\mathbb{R}^2$ equipped with a divergence form second-order uniformly elliptic operator as in Example \ref{soue}, we prove a result similar to Theorem \ref{separate2}. In Theorem \ref{separate3} below, we only assume that $x_U\in U$ is a point for which $\varphi_U(x_U)\gtrsim 1/\sqrt{\mu(U)}$, rather than assuming $\varphi(x_U)$ is close to $\|\varphi_U\|_{L^{\infty}(U)}$. 

Let $\{X_t:t\geq 0\}$ be the Hunt process associated to the operator (\ref{soueo}), introduced in Section \ref{huntprocess}. In $\mathbb{R}$ or  $\mathbb{R}^2$, we prove a lemma about $\{X_t:t\geq 0\}$ hitting a small ball of radius $\alpha>0$ with high probability.

\begin{Lemma}
    \label{2dimBM}
    Consider a divergence form second-order uniformly elliptic operator $\mathcal{L}$ on $\mathbb{R}^2$ with ellipticity constant $\Lambda\geq 1$ as in (\ref{soueo}), and let $\{X^x_t:t\geq 0\}$ be the associated Hunt process started at some $x\in \mathbb{R}^2$. Let $\alpha\in (0,1)$. Suppose $B$ is any Euclidean ball of radius $\alpha $ contained inside the annulus 
    $$A_x:=\{y\in \mathbb{R}^2:1<|x-y|<2\}.$$ Then for every $\varepsilon>0$, there is $C=C(\alpha,\Lambda,\varepsilon)>0$ such that 
    \begin{align*}
        \mathbb{P}^x(\sigma_B\geq C)<\varepsilon.
    \end{align*}
    Here $\sigma_B=\inf\{t>0:X^x_t\in B\}$ is the first hitting time of $B$. The same result holds with $\mathbb{R}^2$ replaced by $\mathbb{R}$, with the constant $C$ depending only on $\Lambda$ and $\varepsilon$.
\end{Lemma}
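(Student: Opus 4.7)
The plan is to decompose the hitting time according to whether the Hunt process exits an auxiliary large ball $D = B(x, M)$ before hitting $B$. For $M > 3$ to be chosen later, I would write
\[
\mathbb{P}^x(\sigma_B \geq C) \leq \mathbb{P}^x(\tau_D \geq C) + \mathbb{P}^x(\tau_D < \sigma_B),
\]
and control the two terms separately, using throughout that the intrinsic distance on $\mathbb{R}^n$ is comparable to the Euclidean one with constants depending only on $\Lambda$, and that the Hunt process has continuous paths with two-sided Gaussian heat kernel bounds at all scales (Theorem \ref{breakthrough}).

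For the first term I would invoke the standard exit-time bound $\mathbb{E}^x[\tau_D] \leq C_1(\Lambda) M^2$, which follows, for instance, from the fact that $w(y) = C_1(M^2 - |y-x|^2)$ gives a supersolution with $\mathcal{L} w \leq -1$ for $C_1$ large enough depending on $\Lambda$ (combined with Dynkin's formula), or, alternatively, from iterated tail bounds via the Gaussian heat kernel. Markov's inequality then yields $\mathbb{P}^x(\tau_D \geq C) \leq C_1 M^2 / C$.

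The main obstacle is the second term, which requires a harmonic-measure / recurrence estimate specific to dimensions $\leq 2$. In $\mathbb{R}^2$, the $\mathcal{L}$-harmonic function $u(y) := \mathbb{P}^y(\tau_D < \sigma_B)$ on $D \setminus \overline{B}$ (with boundary values $0$ on $\partial B$ and $1$ on $\partial D$) should satisfy $u(x) \leq C_2(\Lambda)/\log(M/\alpha)$, reflecting the logarithmic rate of $2$D recurrence. For the Laplacian this is immediate from the barrier $y \mapsto \log|y - y_B|$ (with $y_B$ the center of $B$) and the maximum principle; for general uniformly elliptic $\mathcal{L}$ in $\mathbb{R}^2$, an analogous bound follows because the Green function of $\mathcal{L}$ on $D$ has the same logarithmic profile (a consequence of the Gaussian heat kernel bounds together with the two-dimensional volume growth $\mu(B(y,r)) \asymp r^2$), so one can compare $u$ against a fixed multiple of such a Green potential via the maximum principle for $\mathcal{L}$. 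In $\mathbb{R}$, the scale function $s(y) = \int_0^y dz/a(z)$ of $\mathcal{L}u = (au')'$ satisfies $s(y) \asymp_\Lambda y$, and the explicit optional-stopping identity $\mathbb{P}^x(\tau_D < \sigma_B) = (s(b) - s(x))/(s(b) - s(x - M))$ (assuming $B$ lies to the right of $x$, with $b$ its left endpoint) yields a bound of order $1/M$ independent of $\alpha$, since in one dimension the process cannot exit $D$ on the side of $B$ without first hitting $B$. Combining these, I would first pick $M = M(\alpha, \Lambda, \varepsilon)$ large enough that the harmonic-measure term is at most $\varepsilon/2$ (in $\mathbb{R}^2$ this forces $M$ to depend on $\alpha$ like a power of $1/\alpha$, while in $\mathbb{R}$ no such dependence is needed), and then pick $C$ large enough that $C_1 M^2/C \leq \varepsilon/2$, which gives the claim with the stated parameter dependences.
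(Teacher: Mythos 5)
Your proposal is correct and takes essentially the same approach as the paper: decompose via an auxiliary large ball $D$, bound $\mathbb{P}^x(\tau_D \geq C)$ by $\mathbb{E}^x[\tau_D]/C$ with $\mathbb{E}^x[\tau_D]\lesssim_\Lambda M^2$, and bound the harmonic-measure term $\mathbb{P}^x(\tau_D<\sigma_B)$ using the logarithmic (resp.\ linear in $\mathbb{R}$) behavior of an $\mathcal{L}$-harmonic barrier vanishing near $B$. The only cosmetic differences are that the paper centers the large ball at the center $z$ of $B$ rather than at $x$ (then inflates to $B(x,3S)$ for the exit-time estimate), and it instantiates your ``Green potential with logarithmic profile'' step precisely by citing Lemma 6.1 of \cite{griglsc}, which gives a nonnegative $\mathcal{L}$-harmonic function $u$ on $\{|y|\geq\alpha\}$ with $u=0$ on $|y|=\alpha$ and $u\asymp_\Lambda \log(|y|/\alpha)$, after which unique solvability of the Dirichlet problem and optional stopping yield $\log(|x|/\alpha)\asymp_\Lambda \log(S/\alpha)\cdot\mathbb{P}^x(\sigma_B\geq\tau_{B(z,S)})$; you should make that step explicit rather than gesturing at Green-function comparability, but the underlying idea is the one the paper uses.
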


\begin{proof}
    We first prove the result for $\mathbb{R}^2$. Let $\varepsilon>0$ be given. Write $B=B(z,\alpha)$ for some $z\in A_x$, and consider a ball $B(z,S)$ concentric with $B(z,\alpha)$ and with $S\geq 4$. For any $C>0$, we have
    \begin{align}
        \label{bm0}
        \PP^x(\sigma_{B(z,\alpha)}\geq C) & \leq \mathbb{P}^x(\sigma_{B(z,\alpha )}\geq \tau_{ B(z,S)})+\mathbb{P}^x(\tau_{ B(z,S)}\geq C).
    \end{align}
    We claim that for sufficiently large $S=S(\alpha,\Lambda,\varepsilon)$,
    \begin{align}
        \label{bm.1}
        \mathbb{P}^x(\sigma_{B(z,\alpha)}\geq \tau_{ B(z,S)})< \frac{\varepsilon}{2}.
    \end{align}
    
     For simplicity of notation, assume $z=0$ is the origin; the general case is similar. By Lemma 6.1 of \cite{griglsc}, we can choose a nonnegative continuous $\mathcal{L}$-harmonic function $u(x)$ on $\{x\in \mathbb{R}^2:|x|\geq \alpha\}$ such that $u(x)=0$ whenever $|x|=\alpha$, and with $u(x)\asymp_{\Lambda}\log(|x|/\alpha )$. On the other hand, by 
    the unique solvability of the the Dirichlet problem for $\mathcal{L}$ on the annulus (see e.g. \cite{chenzhao}, Theorem 1.1),
    \begin{align}
        \label{bm2} 
        u(x)=\mathbb{E}^{x}[u(X^{x}_{\sigma_{B(0,\alpha)}\wedge \tau_{ B(0,S)}})].
    \end{align}
    From (\ref{bm2}) we get
    \begin{align}
        \label{bm3}
        \log\frac{|x|}{\alpha} \asymp_{\Lambda} \mathbb{E}^{x}[u(X^{x}_{\sigma_{B(0,\alpha)}\wedge \tau_{B(0,S)}})] \asymp_{\Lambda} \log \frac{S}{\alpha}\cdot \PP^x(\sigma_{B(0,\alpha)}\geq \tau_{B(0,S)}).
    \end{align}
    Since $|x|<2$ by hypothesis, (\ref{bm3}) implies that (\ref{bm.1}) holds for sufficiently large $S=S(\alpha,\Lambda,\varepsilon)$. Fix such a value of $S$ and note that $B(z,S)=B(0,S)\subseteq B(x,3S)$. This gives

    \begin{align}
        \label{bm4}
        \mathbb{P}^x(\tau_{B(z,S)}\geq C) \leq \mathbb{P}^x(\tau_{B(x,3S)}\geq C) \leq \frac{\mathbb{E}^x[\tau_{B(x,3S)}]}{C}.
    \end{align}
    By Proposition 8.3 of Chapter VII in \cite{bass}, we have $\mathbb{E}^x[\tau_{B(x,3S)}] \lesssim_{\Lambda} (3S)^2$. Thus, the left hand side of (\ref{bm4}) is at most $\varepsilon/2$ for $C=C(S,\Lambda)=C(\alpha,\Lambda,\varepsilon)$ sufficiently large enough. Combining with (\ref{bm.1}) and (\ref{bm0}) gives the result for the case of $\mathbb{R}^2$. The proof for $\mathbb{R}$ is essentially the same. The only difference is that we choose a nonnegative $\mathcal{L}$-harmonic function $u(x)$ on $(\infty,-\alpha]\cup [\alpha,\infty)$ with $u(x)\asymp_{\Lambda} |x|-\alpha$.
\end{proof}

\begin{Theo}
    \label{separate3}
    Consider a divergence form second-order uniformly elliptic operator $\mathcal{L}$ on $\mathbb{R}^n$ with ellipticity constant $\Lambda\geq 1$ as in (\ref{soueo}). Let $\mu$ denote the Lebesgue measure on $\mathbb{R}^n$.

    \begin{enumerate}
        \item Suppose $n=2$. Let $V\subseteq  \mathbb{R}^2$ be a bounded $(C_0,c_0)$-inner uniform domain with respect to the Euclidean distance and let $\{V_c:c\geq 1\}$ be a family of bounded $(C_0,c_0)$-inner uniform domains all containing $V$, as in Definition \ref{Vc}. Suppose $U$ is a domain satisfying an $\alpha$-exterior ball condition, and with $V\subseteq U\subseteq V_c$. For the principal Dirichlet eigenfunction $\varphi_U$ of $-\mathcal{L}$, suppose $x_U\in U$ is a point with $\varphi_U(x_U)\geq \beta/\sqrt{\mu(U)}$ for some $\beta>0$. Then
        \begin{align}
            \label{R2separation}
            d(x_U,\partial U) \gtrsim\frac{1}{\sqrt{\lambda_U}}. 
        \end{align}
        The implied constants depend only on $c_0,\alpha,\beta,\Lambda$ and an upper bound on $f(c),c\geq 1$. 
        \item Suppose $n=1$. Let $U=(u_1,u_2)\subseteq \mathbb{R}$ be a bounded interval. For any point $x_U\in U$ with $\varphi_U(x_U)\geq \beta/\sqrt{\mu(U)}$ for some $\beta>0$, we have 
        \begin{align}
            \label{R2separation2}
            d(x_U,\partial U)\gtrsim u_2-u_1,
        \end{align}
        where the implied constants depend only on $\beta$ and $\Lambda$.
    \end{enumerate}

\end{Theo}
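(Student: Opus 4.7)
The plan is to follow the proof strategy of Theorem \ref{separate2}, but with the Gaussian heat-kernel exit-time estimate replaced by Lemma \ref{2dimBM}, and the near-maximum hypothesis replaced by an $L^\infty$ upper bound from Lemma \ref{maxphi} combined with the weaker pointwise lower bound $\varphi_U(x_U) \geq \beta/\sqrt{\mu(U)}$. Theorem \ref{separate2} cannot be invoked directly, because the ratio $\varphi_U(x_U)/\|\varphi_U\|_{L^\infty(U)}$ guaranteed by our hypothesis is only of order $\beta/\sqrt{f(c)}$ times a constant, which may be much smaller than the fixed $1-\varepsilon(D_0,P_0,\alpha)$ required there.

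Write $r := d(x_U,\partial U)$ and, as in (\ref{twoways}), record the inequality
\[
e^{-\lambda_U t}\varphi_U(x_U) = \mathbb{E}^{x_U}\bigl[\varphi_U(X_t)\mathbf{1}_{\{\tau_U>t\}}\bigr] \leq \|\varphi_U\|_{L^\infty(U)}\,\mathbb{P}^{x_U}(\tau_U > t).
\]
By the $\alpha$-exterior ball condition, pick $B \subseteq B(x_U,2r)\setminus U$ of radius $\alpha r$; since $B(x_U,r) \subseteq U$, the ball $B$ lies strictly in the annulus $\{r < |y-x_U| < 2r\}$. After rescaling space by $1/r$ and time by $1/r^2$ (which preserves the ellipticity constant $\Lambda$ and keeps $\mathcal{L}$ in divergence form), Lemma \ref{2dimBM} supplies, for each $\varepsilon \in (0,1)$, a constant $C_\varepsilon = C(\alpha,\Lambda,\varepsilon)$ with $\mathbb{P}^{x_U}(\sigma_B \geq C_\varepsilon r^2) < \varepsilon$; because $B\subseteq U^c$ and paths are continuous we have $\tau_U \leq \sigma_B$, so $\mathbb{P}^{x_U}(\tau_U \geq C_\varepsilon r^2) < \varepsilon$. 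Simultaneously, Lemma \ref{maxphi} applied to $V\subseteq U\subseteq V_c$ (its hypothesis $\lambda_U \geq R^{-2}$ is vacuous since $R=\infty$ in this Euclidean setting) yields
\[
\|\varphi_U\|_{L^\infty(U)}^2 \;\lesssim_{D_0,P_0,c_0}\; \frac{1}{\mu(V)} \;\leq\; \frac{f(c)}{\mu(U)},
\]
and in the Euclidean setting $D_0,P_0$ depend only on $\Lambda$.

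I would then argue by contradiction: suppose $r^2 \leq 1/(C_\varepsilon \lambda_U)$ and take $t := 1/\lambda_U$. Then $C_\varepsilon r^2 \leq t$, and combining the three inputs above gives
\[
\frac{\beta}{\sqrt{\mu(U)}} \;\leq\; \varphi_U(x_U) \;\leq\; e\,\|\varphi_U\|_{L^\infty(U)}\,\varepsilon \;\lesssim\; e\sqrt{\frac{f(c)}{\mu(U)}}\,\varepsilon,
\]
so $\beta \lesssim e\sqrt{f(c)}\,\varepsilon$. Choosing $\varepsilon = \varepsilon(\beta,f(c),c_0,\Lambda) > 0$ small enough that the right-hand side is at most $\beta/2$ contradicts this inequality, forcing $r > 1/\sqrt{C_\varepsilon \lambda_U}$, which is (\ref{R2separation}). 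The one-dimensional case is entirely analogous: every interval $U=(u_1,u_2)$ trivially satisfies an $\alpha$-exterior ball condition with $\alpha = 1/2$ and is itself inner uniform, so Lemma \ref{maxphi} applies with $V=U$ and $f(c)=1$; running the same argument with the $\mathbb{R}$-version of Lemma \ref{2dimBM} gives $d(x_U,\partial U) \gtrsim 1/\sqrt{\lambda_U}$, and (\ref{R2separation2}) follows from the elementary estimate $\lambda_U \asymp_{\Lambda} (u_2-u_1)^{-2}$ for a one-dimensional interval.

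The only real difficulty is bookkeeping the order in which the parameters are fixed: one must first choose $\varepsilon$ depending on $\beta,f(c),c_0,\Lambda$ (to defeat the $L^\infty$ bound), then invoke Lemma \ref{2dimBM} to obtain $C_\varepsilon$, and only then deduce the threshold $r \geq 1/\sqrt{C_\varepsilon\lambda_U}$. Since $C_\varepsilon$ in Lemma \ref{2dimBM} blows up as $\varepsilon\to 0$, one cannot let $\varepsilon$ shrink freely while expecting a uniform bound, but this is not an obstruction here because $\varepsilon$ can be fixed once and for all as a function of the data $(\beta,f(c),c_0,\alpha,\Lambda)$.
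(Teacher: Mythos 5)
Your proposal is correct and follows essentially the same route as the paper's proof: both use (\ref{twoways}) together with the $L^\infty$ bound from Lemma \ref{maxphi}, the exterior-ball/continuity-of-paths reduction $\tau_U \leq \sigma_B$, and the rescaled Lemma \ref{2dimBM} to control $\mathbb{P}^{x_U}(\tau_U > C r^2)$. The only cosmetic difference is that you fix $\varepsilon$ first and argue by contradiction, whereas the paper rearranges the inequality into the explicit form (\ref{a0}) and then chooses the probability threshold $0.99\,\beta/(c_0'\sqrt{f(c)})$ directly.
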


\begin{Rmk}
    \normalfont
     With essentially the same proof, Theorem \ref{separate3} also holds with $(\varphi_U,\lambda_U)$ replaced by $(\varphi^U_k,\lambda_k(U))$, and with any $x_U\in U$ such that $|\varphi^U_k(x_U)|\geq \beta/\sqrt{\mu(U)}$. In this case, the implied constants in (\ref{R2separation}) and (\ref{R2separation2}) depend also on $k$. 
\end{Rmk}

\begin{proof}
    We first prove (\ref{R2separation}). Setting $x=x_U$ in (\ref{twoways}) and using 
    Lemma \ref{maxphi} with $R=\infty$, we get for all $t>0$
    \begin{align*}
        e^{-\lambda_U t}\frac{\beta}{\sqrt{\mu(U)}}\leq e^{-\lambda_U t}\varphi_U(x_U)\leq \|\varphi_U\|_{\infty}\mathbb{P}^{x_U}(\tau_U>t)\lesssim_{c_0} \sqrt{\frac{f(c)}{\mu(U)}}\mathbb{P}^{x_U}(\tau_U>t).
    \end{align*}
    Writing $c_0'>0$ for a constant depending only on $c_0$, and rearranging,
    \begin{align}
        \label{a0}
        -\frac{1}{\lambda_U}\log \Big\{\frac{c_0'\sqrt{f(c)}}{\beta}\mathbb{P}^{x_U}(\tau_U>t)\Big\}\leq t.
    \end{align}
    Let $B\subseteq \mathbb{R}^2\backslash U$ denote a ball of radius $\alpha\text{dist}(x_U,\partial U)$ from the exterior ball condition. For any $C>0$, 
    \begin{align}
        \label{a1}
        \mathbb{P}^{x_U}(\tau_U>C^2 d(x_U,\partial U)^2) & \leq \mathbb{P}^{x_U}(\sigma_B >C^2 d(x_U,\partial U)^2).
    \end{align}
    By a scaling argument (see for example Proposition 1.1 in Chapter VII of \cite{bass}), we may assume $d(x_U,\partial U)=1$ by replacing the uniformly elliptic matrix $A(x):=(a_{ij}(x))$ by $A(\Phi(x))$, where $\Phi:\mathbb{R}^2\to \mathbb{R}^2$ is dilation by $d(x_U,\partial U)$ with respect to $x_U$. Then by Lemma \ref{2dimBM}, for large enough $C=C(c_0,f,\alpha,\beta,\Lambda)$, we have
    \begin{align*}
        \mathbb{P}^{x_U}(\tau_U>C^2d(x_U,\partial U)^2) <0.99\frac{\beta}{c_0'\sqrt{f(c)}},
    \end{align*}
    which implies by (\ref{a0}) that (\ref{R2separation}) holds. Now suppose $n=1$. The same reasoning leading to (\ref{R2separation}) shows that $d(x_U,\partial U)\gtrsim 1/\sqrt{\lambda_U}$. Then to obtain (\ref{R2separation2}), we only need to observe that the variational formula for Dirichlet eigenvalues implies that $\lambda_U$ is comparable to the principal Dirichlet Laplacian eigenvalue of $U$, which is equal to $\pi ^2/(u_2-u_1)^2$.
\end{proof}

Let $\mathcal{L}$ be a divergence form second-order uniformly elliptic operator on $\mathbb{R}^2$. We now combine Corollary \ref{corollary1} and Theorem \ref{separate3} to give the following very concrete result. Theorem \ref{lowerbound3} improves  Corollary \ref{corollary1} in that the implied constant no longer depends on the parameter $\widetilde{\delta}>0$ in $d(x_U,\partial U)\geq \widetilde{\delta}\text{diam}_U$.

\begin{Theo}
\label{lowerbound3}
    Consider the setting of Example \ref{soue} with $n=2$. Let $V\subseteq \mathbb{R}^2$ be a bounded $(C_0,c_0)$-inner uniform domain and let $\{V_c:c\geq 1\}$ be a family of bounded $(C_0,c_0)$-inner uniform domains all containing $V$, as in Definition \ref{Vc}. Suppose $U\subseteq \mathbb{R}^2$ is a domain satisfying an $\alpha$-exterior ball condition with $V\subseteq U\subseteq V_c$. Suppose $\textup{diam}_U\leq \widetilde{C}\textup{diam}_V$ for some $\widetilde{C}>0$. Suppose there exists a point $x_U\in V$ with $\varphi_U(x_U)\geq \beta/\sqrt{\mu(U)}$ for some $\beta>0$. Then for all $x\in V$,
    $$\varphi_U(x)\gtrsim \varphi_V(x).$$
    The implied constant depends only on $C_0,c_0,\widetilde{C},\alpha,\beta,\Lambda$, and an upper bound on $f(c),c\geq 1$.
\end{Theo}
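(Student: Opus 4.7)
The plan is to observe that Theorem~\ref{lowerbound3} is essentially Corollary~\ref{corollary1} combined with Theorem~\ref{separate3}. The only new input needed is to convert the bound $\varphi_U(x_U)\geq \beta/\sqrt{\mu(U)}$ into an explicit quantitative separation $d(x_U,\partial U)\geq \widetilde{\delta}\,\textup{diam}_U$ for some $\widetilde{\delta}>0$ depending only on the allowed parameters. Once such a $\widetilde{\delta}$ is in hand, Corollary~\ref{corollary1} delivers the conclusion directly, since in the setting of Example~\ref{soue} we have $R=\infty$ and thus the hypothesis $\textup{diam}_V\leq CR$ is automatic.

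First I would apply Theorem~\ref{separate3} in its $n=2$ form (which applies precisely because $U$ satisfies an $\alpha$-exterior ball condition, and because $\varphi_U(x_U)\geq \beta/\sqrt{\mu(U)}$): this yields
\begin{align*}
    d(x_U,\partial U)\;\gtrsim\; \frac{1}{\sqrt{\lambda_U}},
\end{align*}
with implied constant depending only on $c_0,\alpha,\beta,\Lambda$ and an upper bound on $f(c)$. Next, by the domain monotonicity of the principal Dirichlet eigenvalue (\ref{eigmono}) together with $V\subseteq U$, we have $\lambda_U\leq \lambda_V$, so $1/\sqrt{\lambda_U}\geq 1/\sqrt{\lambda_V}$. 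Applying part~(1) of Lemma~\ref{scaleeig} to the inner uniform domain $V$ (with $R=\infty$) gives $\lambda_V\lesssim_{D_0,c_0}1/\textup{diam}_V^2$, and hence
\begin{align*}
    d(x_U,\partial U)\;\gtrsim\; \textup{diam}_V\;\geq\; \frac{1}{\widetilde{C}}\,\textup{diam}_U,
\end{align*}
using the hypothesis $\textup{diam}_U\leq \widetilde{C}\,\textup{diam}_V$. The resulting constant depends only on $C_0,c_0,\widetilde{C},\alpha,\beta,\Lambda$, and an upper bound on $f(c)$ (recall that in Example~\ref{soue} the doubling constant $D_0$ is itself controlled by $\Lambda$ and $n=2$).

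Thus there exists $\widetilde{\delta}=\widetilde{\delta}(C_0,c_0,\widetilde{C},\alpha,\beta,\Lambda,f)>0$ such that $d(x_U,\partial U)\geq \widetilde{\delta}\,\textup{diam}_U$. At this stage the hypotheses of Corollary~\ref{corollary1} (with this value of $\widetilde{\delta}$, and with $R=\infty$ so that $\textup{diam}_V\leq CR$ holds trivially) are all satisfied, and its conclusion~(\ref{cor1}) gives
\begin{align*}
    \varphi_U(x)\;\gtrsim\; \frac{\beta^2}{f(c)}\,\varphi_V(x),\qquad x\in V,
\end{align*}
with implied constant depending only on $D_0,P_0,C_0,c_0,\widetilde{C},\widetilde{\delta}$. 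Absorbing the dependence on $\widetilde{\delta}$ and $f(c)$ into the overall constant (using the upper bound on $f(c)$) yields the desired inequality $\varphi_U(x)\gtrsim \varphi_V(x)$ on $V$.

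No step here is really an obstacle; the whole point of the theorem is to replace the \emph{hypothesis} $d(x_U,\partial U)\gtrsim \textup{diam}_U$ in Corollary~\ref{corollary1} by a quantitative \emph{consequence} of the exterior ball condition and the lower bound on $\varphi_U(x_U)$, via Theorem~\ref{separate3}. The mildly subtle point is simply the bookkeeping of parameter dependencies, in particular making sure (i) that the constant produced by Theorem~\ref{separate3} depends only on the quantities allowed in the statement of Theorem~\ref{lowerbound3}, and (ii) that the chain $1/\sqrt{\lambda_U}\geq 1/\sqrt{\lambda_V}\gtrsim \textup{diam}_V\gtrsim \textup{diam}_U/\widetilde{C}$ produces a $\widetilde{\delta}$ that may then be safely absorbed into the final implicit constant.
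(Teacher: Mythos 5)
Your proof is correct and takes the same route as the paper, whose entire proof is the single sentence that Theorem~\ref{lowerbound3} follows from combining (\ref{cor1}) of Corollary~\ref{corollary1} with Theorem~\ref{separate3}. You have simply made the combination explicit, including the chain $1/\sqrt{\lambda_U}\geq 1/\sqrt{\lambda_V}\gtrsim \textup{diam}_V\geq\textup{diam}_U/\widetilde{C}$ (which also appears as display~(\ref{0.1}) in the proof of the second part of Corollary~\ref{corollary1}) and the parameter bookkeeping.
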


\begin{proof}
    This follows from combining (\ref{cor1}) of Corollary \ref{corollary1} with Theorem \ref{separate3}.
\end{proof}

\section{Appendix}
\label{append}

\subsection{A caricature function for some $C^{1,1}$-domains in $\mathbb{R}^n$} \label{round}

Throughout this section, we work with Dirichlet Laplacian eigenfunctions. We aim to use our main results, which compare principal Dirichlet eigenfunctions, to obtain a caricature function $\Phi_U$ for certain $C^{1,1}$ domains $U\subseteq \mathbb{R}^n$. Recall that caricature functions for triangles, polygons, and regular polygons were given in Theorems \ref{triangleprofile}, \ref{polygonal}, and \ref{polythm} respectively. The results in this section are intended to provide more examples of caricature functions.

For the rest of this section, we define $\rho_U(x):=\text{dist}(x,\partial U)$. For a fixed bounded $C^{1,1}$ domain $U\subseteq \mathbb{R}^n$, Dirichlet heat kernel estimates from \cite{zhang} imply that for all $x\in U$ sufficiently close to the boundary $\partial U$, for appropriately chosen $t>0$, the Dirichlet heat kernel has $p^D_U(t,x,x)\asymp \rho_U^2(x)$. At the same time, intrinsic ultracontractivity (e.g. \cite{ow} or \cite{lierllsc}) implies $p^D_U(t,x,x)\asymp \varphi_U^2(x)$. We can thus deduce that $\varphi_U\asymp \rho_U$. For sufficiently smooth domains $U$, the observation that $\varphi_U$ decays linearly in $\rho_U$ near the boundary is not new and seems to be well-known in the literature even before \cite{zhang}. For instance, Theorem 7.1 in \cite{daviessimon} proves a lower bound of the form $\varphi_U\gtrsim\rho_U$ for $C^{\infty}$ domains $U$ without appealing to heat kernel estimates. 

The existing results in the literature thus yield caricature functions for a wide class of $C^{1,1}$-domains. For example, in Theorem \ref{ellipsoid} below, we give caricature functions for ellipsoids in Euclidean space, which essentially follows from known results.   

\begin{Theo}
    \label{ellipsoid}
    Let $n\geq 2$ and let $U\subseteq \mathbb{R}^n$ be an ellipsoid of bounded eccentricity with constant $K$ (Definition \ref{boundedecc}). Then if $\varphi_U$ is the principal Dirichlet Laplacian eigenfunction of $U$ normalized so that $\|\varphi_U\|_{L^2(U)}=1$, 
    \begin{align*}
        \varphi_U\asymp \frac{\rho_U}{\textup{diam}(U)^{(n+2)/2}},
    \end{align*}
    where the implied constants depend only on the dimension $n$ and the eccentricity constant $K$. 
\end{Theo}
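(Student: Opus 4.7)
The plan is to reduce by dilation to unit-diameter ellipsoids, then establish $\varphi_U \asymp \rho_U$ with constants depending only on $n$ and $K$. Writing $D := \textup{diam}(U)$ and $\widetilde U := D^{-1} U$, the Dirichlet Laplacian scaling identities $\varphi_U(y) = D^{-n/2}\varphi_{\widetilde U}(y/D)$ and $\rho_U(y) = D\,\rho_{\widetilde U}(y/D)$ reduce the claim to showing $\varphi_{\widetilde U} \asymp \rho_{\widetilde U}$ uniformly within the class of unit-diameter ellipsoids of eccentricity at most $K$, since the two rescaling factors combine to give the prefactor $D^{-(n+2)/2}$.

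In this rescaled setting, bounded eccentricity supplies $B(x_0, r_K) \subseteq U \subseteq B(x_0, 1)$ for some $r_K > 0$ depending only on $K$. By Example \ref{iuexamples}(1), $U$ is then $(C_0,c_0)$-inner uniform with constants depending only on $K$, and Lemma \ref{scaleeig} yields $\lambda_U \asymp 1$. Applying Lemma \ref{maxphi} with $V = B(x_0, r_K)$ gives $\|\varphi_U\|_\infty \asymp 1$; by Theorem \ref{separate2} the maximum is attained at some point $x_\ast$ with $d(x_\ast, \partial U) \gtrsim \lambda_U^{-1/2} \asymp 1$, and iterating the parabolic Harnack inequality of Lemma \ref{phiharnack} along a chain of balls propagates this lower bound to give $\varphi_U \asymp 1$ throughout the bulk region $\{x \in U : \rho_U(x) \geq c_1\}$, for some $c_1 = c_1(n,K) > 0$. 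On this region $\rho_U \asymp 1$, so $\varphi_U \asymp \rho_U$ there.

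For the boundary strip $\{x \in U : \rho_U(x) < c_1\}$, the idea is to exploit the fact that unit-diameter ellipsoids of eccentricity at most $K$ are $C^{1,1}$ with seminorm controlled by $K$, hence satisfy a uniform interior and exterior ball condition of radius $\alpha = \alpha(K) > 0$. Near any boundary point $z \in \partial U$, the boundary Harnack principle (Theorem 5.12 of \cite{lierllsc}) compares $\varphi_U$ on $B(z, \delta) \cap U$ to any nonnegative harmonic function vanishing on $\partial U \cap B(z, \delta)$. Taking as comparison function the coordinate normal to the tangent hyperplane at $z$, which by $C^{1,1}$ smoothness is comparable to $\rho_U$ on $B(z, \delta) \cap U$ for $\delta = \delta(K) > 0$, gives $\varphi_U \asymp \rho_U$ in this neighborhood. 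A covering argument patches these local estimates with the bulk estimate from the previous paragraph, and scaling back concludes.

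The main obstacle will be extracting uniformity of the implied constants in $K$: both the boundary Harnack principle and the local equivalence between $\rho_U$ and the normal coordinate depend on geometric data of $\partial U$, which must be controlled only through $K$. This reduces to the elementary observation that unit-diameter ellipsoids of eccentricity at most $K$ form a compact family in the $C^{1,1}$ topology, with uniformly bounded interior/exterior ball radii and inner uniformity constants. An alternative route, as sketched in the paragraph preceding the theorem, would be to combine the Dirichlet heat kernel estimates of \cite{zhang} with intrinsic ultracontractivity to obtain $\varphi_U^2(x) \asymp p^D_U(t,x,x) \asymp \rho_U^2(x)$ for a suitable $t = t(n,K) > 0$; this likewise hinges on the same uniform geometric control.
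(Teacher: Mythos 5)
Your primary route is genuinely different from the paper's. The paper does not give a separate proof of Theorem \ref{ellipsoid}: it observes, in the preceding paragraph, that the result ``essentially follows from known results,'' namely the combination of Zhang's $C^{1,1}$ Dirichlet heat kernel estimates \cite{zhang} with intrinsic ultracontractivity (Theorem \ref{jannalierl}) to get $\varphi_U^2(x)\asymp p^D_U(t,x,x)\asymp\rho_U^2(x)$ for an appropriate $t\asymp_{n,K}\textup{diam}(U)^2$. That is precisely the ``alternative route'' you sketch in your last paragraph, so you have correctly identified the paper's actual argument; the same citation is what the paper invokes in Case 2 of the proof of Theorem \ref{roundingthm}. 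Your primary route --- rescale to unit diameter, establish the bulk bound $\varphi_{\widetilde U}\asymp1$ via Lemma \ref{maxphi}, Theorem \ref{separate2} and iterated parabolic Harnack, then handle the boundary strip with a boundary Harnack principle --- instead mirrors the proof the paper gives for Theorem \ref{roundingthm} (Case 1). What the heat-kernel route buys is brevity; what the BHP route buys is that it extends naturally to domains that are merely inner uniform or only piecewise $C^{1,1}$, which is why the paper uses it for the more delicate Theorem \ref{roundingthm}.

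There is, however, a concrete gap in your boundary-strip step. You propose to apply BHP with comparison function ``the coordinate normal to the tangent hyperplane at $z$.'' But BHP compares two nonnegative functions that are both harmonic in $U\cap B(z,\delta)$ and both vanish continuously on $\partial U\cap B(z,\delta)$. The linear coordinate $x_n$ (in local coordinates where the tangent hyperplane at $z$ is $\{x_n=0\}$) vanishes only at $z$ itself, not on the rest of $\partial U\cap B(z,\delta)$, since $\partial U$ curves away from the tangent hyperplane. So the ratio $\varphi_U/x_n$ is not controlled by BHP near $\partial U\setminus\{z\}$, and the step as written does not close. The fix is the one the paper carries out in the proof of Theorem \ref{roundingthm}: take as comparison function the Dirichlet Green function $G^D_U(p_1,\cdot)$ with pole at the center $p_1$ of the interior $\varepsilon$-ball at $z$; this genuinely vanishes on $\partial U$. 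Domain monotonicity of Green functions squeezes $G^D_{B_1}(p_1,\cdot)\leq G^D_U(p_1,\cdot)\leq G^D_{B_2^c}(p_1,\cdot)$, and Lemma \ref{greenlinear} shows both bounding Green functions decay linearly in the respective distance to the sphere, which by the two-sided ball condition (via Lemma \ref{colinear}) is exactly $\rho_U$ along the inward normal. With that substitution your argument is sound; the rescaling identities in your first paragraph are correct, and the bulk estimate via Lemma \ref{maxphi}, Theorem \ref{separate2} and Lemma \ref{phiharnack} gives $\varphi_{\widetilde U}\asymp 1$ at unit distance from the boundary as you claim. One minor remark: the paper obtains $K$-dependent uniformity through explicit estimates (e.g.\ Lemma \ref{convexvolume}, Lemma \ref{scaleeig}) rather than a compactness argument in the $C^{1,1}$ topology, but for the specific family of ellipsoids your compactness appeal is legitimate.
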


However, one cannot hope for $\varphi_U\asymp \rho_U$ to hold uniformly over all $C^{1,1}$ domains. Consider, for example, a square $S$ and a triangle $T$ in $\mathbb{R}^2$, as in Figure \ref{roundedfigures} below.
\begin{figure}[H]
  \centering
  \includegraphics[width=0.5\linewidth]{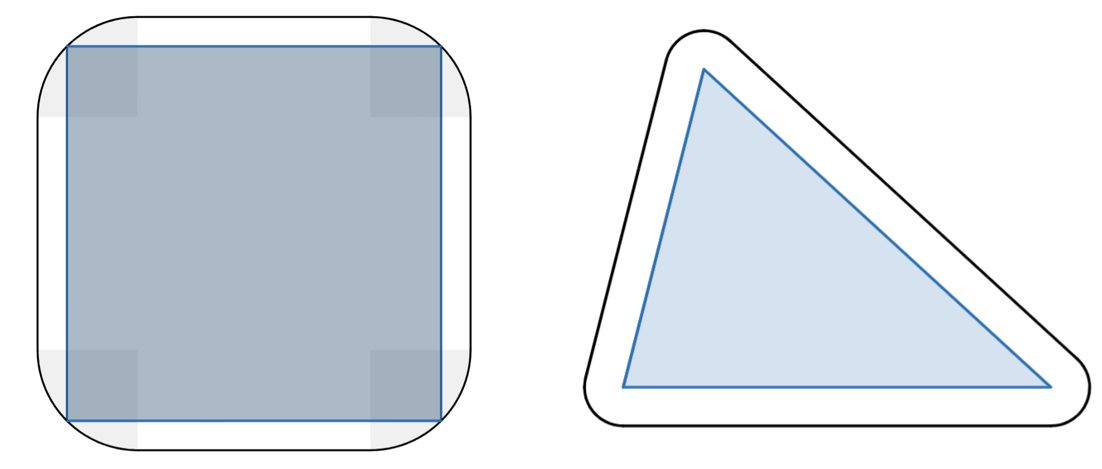}
  \caption{A square and a triangle in $\mathbb{R}^2$ are expanded at scale $\varepsilon>0$.}
  \label{roundedfigures}
\end{figure}
The square $S$ is enlarged by adding four quarter circles of radius $\varepsilon>0$, and the triangle $T$ is enlarged by its $\varepsilon$-neighborhood. Note that these enlarged domains are $C^{1,1}$ domains. Let us focus on the enlarged square $S'\supseteq S$. If $\varepsilon>0$ is bounded away from $0$, then it follows from known results that (modulo some $L^2$-normalizing constant) $\varphi_{S'}(x)\asymp \rho_{S'}(x)$. However, if $\varepsilon>0$ is close to $0$, then we expect a quadratic term to appear in $\varphi_{S'}$, since $S'$ \Quote{converges} to $S$ as $\varepsilon\to 0$ and that $\varphi_S(x)$ decays like $\rho_S^2(x)$ as $x$ approaches a vertex of $S$. Thus for the square, $\varphi_{S'}\asymp \rho_{S'}$ is insufficient to describe $\varphi_{S'}$ when $\varepsilon>0$ is small, and similarly for the triangle.

To then obtain a caricature function uniformly over all $\varepsilon>0$, we use boundary Harnack principle in conjunction with comparison of principal Dirichlet eigenfunctions. The main result of the section is Theorem \ref{roundingthm}, which addresses perturbations of bounded convex domains in $\mathbb{R}^n$. Next, Theorem \ref{roundingthm} is used to obtain  Corollaries \ref{roundedsquare} and \ref{roundedtriangles}, which give caricature functions for both of the enlarged domains in Figure \ref{roundedfigures}.   

In the proofs to follow, we do not directly define $C^{1,1}$ domains, but we instead consider domains satisfying both an interior and exterior ball condition (Definition \ref{epsball} below). Because every domain in $\mathbb{R}^n$ satisfying a two-sided ball condition is $C^{1,1}$ and vice versa (see e.g. \cite{barb}, Theorem 1.0.9), our approach applies to many $C^{1,1}$ domains.

\begin{Dfn}
    \label{epsball}
   We say that a domain $U\subseteq \mathbb{R}^n$ satisfies a two-sided $\varepsilon$-ball condition if, for all $z\in \partial U$, there exists two Euclidean balls $B_1$ and $B_2$ of radius $\varepsilon>0$ such that $B_1\subseteq U$, $B_2\subseteq U^c$, and $z\in (\partial B_1)\cap (\partial B_2)$.
\end{Dfn}

Next, we recall that for any $\varepsilon>0$, the Dirichlet Green function of the ball $B(0,\varepsilon)\subseteq\mathbb{R}^n$ is given by
\begin{align}
    \label{greenfunction}
    G^D_{B(0,\varepsilon)}(x,y) & = \begin{cases}
        \displaystyle -\frac{1}{2\pi}\log\|x-y\|+\frac{1}{2\pi}\log \frac{\|x\|\cdot \|y-x^*\|}{\varepsilon}, & n = 2
        \\ \displaystyle \frac{1}{n(n-2)\omega_n}\Bigg(\frac{1}{\|x-y\|^{n-2}}-\frac{\varepsilon^{n-2}}{\|x\|^{n-2}\cdot \|y-x^*\|^{n-2}}\Bigg), & n\geq 3
    \end{cases}
\end{align}
where $\omega_n$ is the volume of the unit ball in $\mathbb{R}^n$ and $x^*:=\varepsilon^2 x/\|x\|^2$ is inversion with respect to the ball $B(0,\varepsilon)$. The Dirichlet Green function of the complement of the ball, $G^D_{B(0,\varepsilon)^c}(x,y)$, is also given by the same expression (\ref{greenfunction}), but defined for $x,y\in B(0,\varepsilon)^c$. (By abuse of notation, we write $B(0,\varepsilon)^c$ in place of the open set $\mathbb{R}^n\backslash\overline{B(0,\varepsilon)}$.)
For $\varepsilon=1$, the derivation of $G^{D}_{B(0,1)}(x,y)$ can be found in e.g. Chapter 2 of \cite{evans}.

The next lemma states that the Dirichlet Green functions of a ball and the exterior of a ball decay linearly near the boundary. The proof uses the explicit expressions (\ref{greenfunction}) and is omitted. 
\begin{Lemma}
    \label{greenlinear}
Suppose $n\geq 2$ and let $B(0,\varepsilon)\subseteq \mathbb{R}^n$ be an Euclidean ball. For any $\varepsilon>0$, we have 
\begin{align*}
    &  G^D_{B(0,\varepsilon)}(0,y) \asymp \frac{\rho_{B(0,\varepsilon)}(y)}{\varepsilon^{n-1}},\hspace{0.15in}\text{whenever }\rho_{B(0,\varepsilon)}(y)\leq \frac{3\varepsilon}{4}
    \\ & G^{D}_{B(0,\varepsilon)^c}(2\varepsilon\mathbf{e}_n,y)\asymp \frac{\rho_{B(0,\varepsilon)^c(y)}}{\varepsilon^{n-1}},\hspace{0.15in}\text{whenever }\rho_{B(0,\varepsilon)^c}(y)\leq \frac{3\varepsilon}{4},
\end{align*}
where the implied constants depend only on the dimension $n$. Here $\mathbf{e}_n$ denotes the $n$th standard basis vector in $\mathbb{R}^n$.
\end{Lemma}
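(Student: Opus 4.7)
The plan is to obtain both asymptotic comparisons by direct algebraic manipulation of the explicit formulas in~(\ref{greenfunction}), exploiting factorizations that isolate the factor $\|y\|^2 - \varepsilon^2$ (which is comparable to $\varepsilon \cdot \rho(y)$ near the boundary in both regimes).

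First I would dispose of the interior case. At $x=0$ the formula~(\ref{greenfunction}) simplifies (taking the limit $x\to 0$, or equivalently observing that adding the constant $\frac{1}{2\pi}\log\varepsilon$ resp.\ $-\frac{1}{n(n-2)\omega_n\varepsilon^{n-2}}$ to the fundamental solution already forces it to vanish on $\partial B(0,\varepsilon)$) to
\[
G^D_{B(0,\varepsilon)}(0,y)=\begin{cases}\displaystyle \frac{1}{2\pi}\log(\varepsilon/\|y\|), & n=2,\\[4pt] \displaystyle \frac{1}{n(n-2)\omega_n}\,\frac{\varepsilon^{n-2}-\|y\|^{n-2}}{\|y\|^{n-2}\varepsilon^{n-2}}, & n\ge 3.\end{cases}
\]
In the regime $\rho_{B(0,\varepsilon)}(y)=\varepsilon-\|y\|\leq 3\varepsilon/4$ one has $\|y\|\asymp\varepsilon$, so for $n\ge 3$ the factorization $\varepsilon^{n-2}-\|y\|^{n-2}=(\varepsilon-\|y\|)\sum_{k=0}^{n-3}\varepsilon^{n-3-k}\|y\|^{k}\asymp\rho(y)\varepsilon^{n-3}$ and the denominator $\asymp\varepsilon^{2n-4}$ yield $G\asymp\rho(y)/\varepsilon^{n-1}$. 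For $n=2$, writing $\log(\varepsilon/\|y\|)=\log(1+\rho(y)/\|y\|)$ with $\rho(y)/\|y\|\in[0,3]$ bounded and using $\log(1+t)\asymp t$ on bounded intervals gives the same conclusion.

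The exterior case is the heart of the argument. With $x_0=2\varepsilon\mathbf{e}_n$, the inversion is $x_0^{*}=\tfrac{\varepsilon}{2}\mathbf{e}_n$. Set $u:=\|y-x_0^{*}\|$ and $v:=\|y-x_0\|$. The classical boundary identity $\|y-x^{*}\|\cdot\|x\|=\varepsilon\|y-x\|$ for $\|y\|=\varepsilon$ specializes here to $2u=v$ on $\partial B(0,\varepsilon)$, which is exactly what forces the Green function to vanish there. Near the boundary I would compute the key cancellation
\[
4u^{2}-v^{2}=4\|y-x_0^{*}\|^{2}-\|y-x_0\|^{2}=3\|y\|^{2}+4\|x_0^{*}\|^{2}-\|x_0\|^{2}=3(\|y\|^{2}-\varepsilon^{2}),
\]
where the cross terms cancel precisely because $8\,y\!\cdot\!x_0^{*}=2\,y\!\cdot\!x_0$ for this choice of $x_0$. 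Since $\|y\|^{2}-\varepsilon^{2}=\rho(y)(\|y\|+\varepsilon)$ and both $2u+v\asymp\varepsilon$ and $\|y\|+\varepsilon\asymp\varepsilon$ in the regime $\rho_{B(0,\varepsilon)^{c}}(y)\le 3\varepsilon/4$, one obtains $2u-v=(4u^{2}-v^{2})/(2u+v)\asymp\rho(y)$.

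Given this, the rest is parallel to the interior case. For $n\ge 3$ the numerator of~(\ref{greenfunction}) factors as $\frac{(2u)^{n-2}-v^{n-2}}{2^{n-2}u^{n-2}v^{n-2}}=\frac{(2u-v)\sum_{k=0}^{n-3}(2u)^{n-3-k}v^{k}}{2^{n-2}u^{n-2}v^{n-2}}\asymp\frac{(2u-v)\varepsilon^{n-3}}{\varepsilon^{2n-4}}\asymp\frac{\rho(y)}{\varepsilon^{n-1}}$, and for $n=2$ one writes $G=\frac{1}{2\pi}\log(2u/v)=\frac{1}{2\pi}\log(1+(2u-v)/v)$ with $(2u-v)/v\asymp\rho(y)/\varepsilon\le 3/4$, invoking $\log(1+t)\asymp t$ once more. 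The main obstacle is really the identification $4u^{2}-v^{2}=3(\|y\|^{2}-\varepsilon^{2})$; everything else is bookkeeping using $\|y\|\asymp\varepsilon$, $u\asymp v\asymp\varepsilon$ in the stated range.
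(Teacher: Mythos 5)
Your argument is correct and it is exactly the approach the paper has in mind: the paper explicitly omits the proof of Lemma \ref{greenlinear}, remarking only that it ``uses the explicit expressions (\ref{greenfunction}).'' The interior case is straightforward, and the heart of your exterior argument, the identity $4u^{2}-v^{2}=3(\|y\|^{2}-\varepsilon^{2})$, checks out: it is the specialization of the classical relation $\|x\|^{2}\|y-x^{*}\|^{2}-\varepsilon^{2}\|x-y\|^{2}=(\|x\|^{2}-\varepsilon^{2})(\|y\|^{2}-\varepsilon^{2})$ to $x=2\varepsilon\mathbf{e}_{n}$, and your direct expansion is a valid way to arrive at it. The range estimates $u\asymp v\asymp\varepsilon$, $\|y\|\asymp\varepsilon$, $2u+v\asymp\varepsilon$ in the regime $\rho\le 3\varepsilon/4$ are all correct, and the telescoping factorization of $a^{n-2}-b^{n-2}$ together with $\log(1+t)\asymp t$ on bounded intervals closes the argument.

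One small imprecision: in the exterior $n=2$ case you write $(2u-v)/v\asymp\rho(y)/\varepsilon\le 3/4$, conflating the comparison constant with the pointwise bound. In fact $(2u-v)/v$ need not be $\le 3/4$; what you can and should say is that it is bounded above by an absolute constant (e.g.\ $4u^{2}/v^{2}=1+3(\|y\|^{2}-\varepsilon^{2})/v^{2}\le 100$ using $\|y\|\le 7\varepsilon/4$ and $v\ge\varepsilon/4$, so $(2u-v)/v\le 9$), after which $\log(1+t)\asymp t$ on that bounded interval gives the conclusion. This does not affect the validity of the proof.
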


\begin{Lemma}
    \label{colinear}
    Let $U\subseteq \mathbb{R}^n$ be a domain satisfying a two-sided $\varepsilon$-ball condition. Suppose that given $x\in U$, $z=z_x\in \partial U$ has $\rho_U(x)=\|x-z\|$. Let $B_1\subseteq U$ and $B_2\subseteq U^c$ be two Euclidean balls from Definition \ref{epsball} with $\partial B_1\cap \partial B_2=\{z\}$. Then $x$ must lie on the infinite ray starting at $z$ and passing through the center of $B_1$.
\end{Lemma}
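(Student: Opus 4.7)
The plan is a purely geometric argument using external tangency of closed balls. The two key ingredients will be: (i) the open ball $B(x,\rho_U(x))$ is contained in $U$; and (ii) the exterior ball $B_2$ is contained in $U^c$. Let $c_1,c_2$ denote the centers of $B_1,B_2$. Because $B_1$ and $B_2$ are open balls of equal radius $\varepsilon$ with disjoint interiors (lying respectively in $U$ and $U^c$) whose closures meet only at $z$, a standard tangency calculation forces $c_1,z,c_2$ to be colinear with $z$ between the centers; writing $u := (c_1-z)/\varepsilon$, we have $c_1 = z+\varepsilon u$ and $c_2 = z-\varepsilon u$. The task reduces to showing that $x-z$ is a positive scalar multiple of $u$.

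First I would verify that $B(x,\rho_U(x))\subseteq U$: any $y$ with $\|y-x\|<\rho_U(x)$ avoids $\partial U$ by definition of $\rho_U$, and the open ball, being connected and containing $x\in U$, sits in the connected component $U$ of $\mathbb{R}^n\setminus \partial U$ containing $x$. Next, the open exterior ball $B_2$ cannot contain any point of $\partial U$: such a point would have a full neighborhood inside $B_2\subseteq U^c$, contradicting the definition of the boundary. Consequently the open balls $B(x,\rho_U(x))$ and $B_2$ are disjoint.

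To finish, I would compare the closed balls $\overline{B}(x,\rho_U(x))$ and $\overline{B}(c_2,\varepsilon)$. Both contain $z$ on their boundary, and the previous step ensures disjoint open interiors, which in Euclidean space is equivalent to the separation inequality $\|x-c_2\|\geq \rho_U(x)+\varepsilon$. The triangle inequality $\|x-c_2\|\leq \|x-z\|+\|z-c_2\|=\rho_U(x)+\varepsilon$ then forces equality. Equality in the triangle inequality means $z$ lies on the line segment from $x$ to $c_2$, so $x-z$ is a positive scalar multiple of $z-c_2 = \varepsilon u$. Hence $x$ lies on the infinite ray starting at $z$ in direction $u$, which passes through $c_1 = z+\varepsilon u$, as required.

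I do not expect a serious obstacle here, as the argument is elementary. The mildly delicate points are the two open-set observations (that $B(x,\rho_U(x))\subseteq U$, and that the open ball $B_2$ misses $\partial U$), together with the equivalence between disjointness of two open Euclidean balls and the separation inequality on the distance between their centers — all standard.
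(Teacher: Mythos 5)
Your argument is correct and follows essentially the same route as the paper's: both proofs observe that $B(x,\rho_U(x))$ and the exterior ball $B_2$ are disjoint, convert disjointness into the separation inequality $\|x-c_2\|\geq \rho_U(x)+\varepsilon$, and then invoke the equality case of the triangle inequality (the paper writes it out in coordinates after placing $z$ at the origin and $c_2$ at $\varepsilon\mathbf{e}_n$; you do the same thing coordinate-free). One small remark: your phrasing that $U$ is ``the connected component of $\mathbb{R}^n\setminus\partial U$ containing $x$'' is not literally guaranteed for a general domain, but the step goes through cleanly via the partition $\mathbb{R}^n=U\sqcup\partial U\sqcup\mathrm{ext}(U)$: since $B(x,\rho_U(x))$ misses $\partial U$ and is connected, it cannot meet both $U$ and $\mathrm{ext}(U)$.
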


\begin{proof}
    It suffices to prove the lemma when $z=0\in \mathbb{R}^n$ is the origin. Further assume without loss of generality that $B_1=B_1(-\varepsilon\mathbf{e}_n,\varepsilon)$ and $B_2=B_2(\varepsilon\mathbf{e}_n,\varepsilon)$. Since $\rho_U(x)=\|x-z\|=\|x\|$, the ball $B(x,\|x\|)$ must be completely contained in the closure of $U$. In particular, it does not intersect $B_2$. A necessary and sufficient condition for $B(x,\|x\|)\cap B_2(\varepsilon \mathbf{e}_n,\varepsilon)=\varnothing$ is if $\|x-\varepsilon \mathbf{e}_n\|\geq \|x\|+\varepsilon$.

    Elementary arguments give the following inequality: for $x=(x_1,x_2,...,x_n)\in\mathbb{R}^n$, $\|x-\varepsilon \mathbf{e}_n\|\leq \|x\|+\varepsilon$, with equality if and only if $x_1,x_2,...,x_{n-1}=0$ and $x_n\leq 0$. Applying this inequality to the previous observation $\|x-\varepsilon \mathbf{e}_n\|\geq \|x\|+\varepsilon$, we deduce that $x=(0,0,...,0,x_n)$ with $x_n\leq 0$, which is exactly the desired conclusion.
\end{proof}

\begin{Dfn}
    \label{boundedecc}
    A convex domain $V\subseteq \mathbb{R}^n$ has \textit{bounded eccentricity} with constant $K\geq 1$ if for some $x\in \mathbb{R}^n$ and radii $0<a\leq A<\infty$,
    $$B(x,a)\subseteq V\subseteq B(x,A),$$
    with $ a/A\in (K^{-1}, K)$.
\end{Dfn} 

Recall from Example \ref{iuexamples} that  convex sets of bounded eccentricity are inner uniform. The next theorem is the main result of this section. 

\begin{Theo}
    \label{roundingthm}
Let $n\geq 2$ and $\varepsilon>0$. Suppose $V\subseteq \mathbb{R}^n$ is a convex domain having bounded eccentricity with constant $K\geq 1$. Let $U\subseteq \mathbb{R}^n$ be a domain satisfying a two-sided $\varepsilon$-ball condition such that
$$V\subseteq U \subseteq V_{\varepsilon}:=\{x\in \mathbb{R}^n:\textup{dist}(x,V)< 2\varepsilon \}.$$
Then for any $x\in U$,
\begin{align}
    \label{rounding}
    \varphi_U(x)\asymp \frac{\rho_U(x)}{\rho_U(x)+\varepsilon}\Big(\varphi_V(x_{\varepsilon})\wedge\frac{1}{(\textup{diam}(V)+\varepsilon)^{n/2}}\Big)
\end{align}
where $x_{\varepsilon}\in V$ is any point with $\rho_V(x_{\varepsilon})\gtrsim \varepsilon$ and $d_U(x,x_{\varepsilon})\asymp \varepsilon$. The implied constants in (\ref{rounding}) depend only on $n$ and $K$.
\end{Theo}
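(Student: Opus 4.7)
The plan is to split the analysis of $\varphi_U(x)$ into an interior regime $\rho_U(x)\geq \varepsilon/4$ and a boundary regime $\rho_U(x)<\varepsilon/4$, and to patch the two estimates together using the fact that $\rho_U(x)/(\rho_U(x)+\varepsilon)\asymp 1$ precisely in the interior regime, while $\rho_U(x)/(\rho_U(x)+\varepsilon)\asymp \rho_U(x)/\varepsilon$ in the boundary regime. After rescaling so that $\textup{diam}(V)=1$, the convex-with-bounded-eccentricity hypothesis makes both $V$ and the $2\varepsilon$-neighborhood $V_\varepsilon$ simultaneously $(C_0,c_0)$-inner uniform with constants depending only on $n$ and $K$, and $V\subseteq U\subseteq V_\varepsilon$ fits directly into the framework of Definition~\ref{Vc} with $f(c)$ and $g(c)$ controlled in terms of $\varepsilon/\textup{diam}(V)$. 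Lemmas~\ref{scaleeig} and~\ref{maxphi} then pin down $\lambda_U\asymp D^{-2}$ and $\|\varphi_U\|_{L^\infty(U)}\asymp D^{-n/2}$ with $D:=\textup{diam}(V)+\varepsilon$.

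For the interior regime, I would apply Theorem~\ref{ub} to obtain $\varphi_U\lesssim \varphi_{V_\varepsilon}$ on $U$, and then bound $\varphi_{V_\varepsilon}(x)$ at any point with $\rho_U(x)\gtrsim\varepsilon$ by its $L^\infty$-norm $\asymp D^{-n/2}$ and, via a parabolic Harnack chain, also by $\varphi_V(x_\varepsilon)$. For the matching lower bound I would invoke Theorem~\ref{lowerbound} when $\varepsilon$ is small enough relative to $\textup{diam}(V)$, and Corollary~\ref{corollary1} together with Theorem~\ref{separate2} (using that the two-sided $\varepsilon$-ball condition trivially yields an $\alpha$-exterior ball condition with $\alpha=\alpha(n)$) for the remaining range of $\varepsilon$. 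The parabolic Harnack inequality, in the form of Lemma~\ref{phiharnack} applied to $\varphi_U$ (justified because any two points with $\rho_U\geq\varepsilon/4$ can be joined inside $U$ by a Harnack chain of length $\lesssim 1$ at scale $\varepsilon$), then transports the comparison from any central point both to the chosen $x_\varepsilon$ and to $x$, giving $\varphi_U(x)\asymp \varphi_V(x_\varepsilon)\wedge D^{-n/2}$ throughout the interior regime.

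The substance of the theorem is in the boundary regime. For $x$ with $\rho_U(x)<\varepsilon/4$, let $z=z_x\in \partial U$ realize $\|x-z\|=\rho_U(x)$ and let $B_1=B(y_1,\varepsilon)\subseteq U$, $B_2=B(y_2,\varepsilon)\subseteq U^c$ be the two tangent balls supplied by the two-sided ball condition. Lemma~\ref{colinear} guarantees that $x$ lies on the normal segment $[z,y_1]$, so $\rho_{B_1}(x)=\rho_{B_2^c}(x)=\rho_U(x)$. Let $x^*$ denote the point on $[z,y_1]$ at distance $\varepsilon/4$ from $z$, so $\rho_U(x^*)\asymp\varepsilon$ and $d_U(x,x^*)\leq \varepsilon$. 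I would then establish the key estimate $\varphi_U(x)\asymp (\rho_U(x)/\varepsilon)\,\varphi_U(x^*)$ by applying the boundary Harnack principle of \cite{lierllsc,lierllsc2} on the ball $B(z,\varepsilon/2)$, where $U$ is locally inner uniform with constants depending only on $n$ and $K$ and where the zeroth-order contribution $\lambda_U(\varepsilon/2)^2\lesssim 1$ is harmless, comparing $\varphi_U$ with the benchmark $G^D_U(y_1,\cdot)$. The required behavior $G^D_U(y_1,y)\asymp \rho_U(y)/\varepsilon^{n-1}$ along $[z,y_1]$ then follows by sandwiching $G^D_{B_1}(y_1,\cdot)\leq G^D_U(y_1,\cdot)\leq G^D_{B_2^c}(y_1,\cdot)$ via the domain monotonicity of the Dirichlet Green function (using $B_1\subseteq U\subseteq B_2^c$) and invoking the explicit linear-decay estimates of Lemma~\ref{greenlinear}.

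Combining the boundary estimate with the interior result $\varphi_U(x^*)\asymp \varphi_V(x_\varepsilon)\wedge D^{-n/2}$ (which applies because $x^*$ lies in the interior regime and is within distance $\varepsilon$ of every admissible $x_\varepsilon$) produces (\ref{rounding}), since $\rho_U(x)/\varepsilon\asymp \rho_U(x)/(\rho_U(x)+\varepsilon)$ when $\rho_U(x)<\varepsilon/4$. The main obstacle I anticipate is precisely the execution of this boundary Harnack step: one must verify that the BHP of \cite{lierllsc,lierllsc2} applies uniformly at scale $\varepsilon$ with $\varphi_U$ playing the role of a generalized harmonic function (absorbing the zeroth-order term $\lambda_U\varphi_U$), and then pair it with the Green-function sandwich so that the resulting implied constant depends only on $n$ and $K$, independently of how small $\varepsilon$ is or how $U$ weaves between $V$ and $V_\varepsilon$.
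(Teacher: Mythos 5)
Your boundary-regime argument is essentially identical to the paper's: Lemma~\ref{colinear} to place $x$ on the normal segment, boundary Harnack principle from \cite{lierllsc} on a ball of radius $\asymp \varepsilon$ centered at $z$ with $G^D_U(p_1,\cdot)$ as benchmark, and then the Green-function sandwich $G^D_{B_1}\leq G^D_U\leq G^D_{B_2^c}$ (via domain monotonicity and $B_1\subseteq U\subseteq B_2^c$) together with Lemma~\ref{greenlinear}. Your correctly flagged ``main obstacle'' --- uniformity of the BHP at scale $\varepsilon$ with $\varphi_U$ as a generalized harmonic function --- is the same point the paper handles by invoking Proposition~5.12 of \cite{lierllsc}, which accommodates the zeroth-order term, once one observes that the two-sided $\varepsilon$-ball condition plus sandwiching between convex sets of bounded eccentricity makes $U$ itself $(C_0,c_0)$-inner uniform with constants depending only on $n,K$.

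Where you diverge, and where the paper is cleaner, is the interior regime. Because $U$ is inner uniform, Theorem~\ref{ub} directly gives \emph{both} $\varphi_V(x_\varepsilon)\lesssim\varphi_U(x_\varepsilon)$ and $\varphi_U(x_\varepsilon)\lesssim\varphi_{V'}(x_\varepsilon)$ for a slightly larger dilate $V'$, with constants depending only on $n,K$ and a bound on $g(c)$, for \emph{every} admissible $\varepsilon$. Your route through Theorem~\ref{lowerbound} (which needs $c$ close to $1$, hence $\varepsilon/\textup{diam}(V)$ small) plus Corollary~\ref{corollary1} and Theorem~\ref{separate2} for the remaining range is correct in spirit but imports machinery that the inner uniformity of $U$ renders unnecessary; it is a detour rather than an error. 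The paper also carries out a short but nontrivial step you compress into ``via a parabolic Harnack chain, also by $\varphi_V(x_\varepsilon)$'': it replaces $V_\varepsilon$ by the dilate $V'=(1+2\varepsilon/a)V\supseteq V_\varepsilon$ and uses the exact dilation-covariance of Dirichlet Laplacian eigenfunctions, $\varphi_{\gamma V}(x)=\gamma^{-n/2}\varphi_V(x/\gamma)$, together with the parabolic Harnack inequality, to pull $\varphi_{V'}(x_\varepsilon)$ down to $\varphi_V(x_\varepsilon)$. This Euclidean-Laplacian scaling identity is essential here and should be made explicit. Finally, you do not address the range $\varepsilon\geq\textup{diam}(V)$, where $x_\varepsilon$ with $\rho_V(x_\varepsilon)\gtrsim\varepsilon$ does not exist and the minimum in~(\ref{rounding}) is realized by $(\textup{diam}(V)+\varepsilon)^{-n/2}$. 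The paper treats this as a separate case using Zhang's $C^{1,1}$ heat-kernel estimates plus intrinsic ultracontractivity. Your interior/boundary decomposition can in principle be made to cover this case too (by comparing $\varphi_U$ on the interior regime directly to $\|\varphi_U\|_{L^\infty}\asymp (\textup{diam}(V)+\varepsilon)^{-n/2}$ via Lemma~\ref{maxphi} and a Harnack chain), but as written your argument relies on $x_\varepsilon$ and needs this case spelled out.
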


\begin{Rmk}
    \normalfont
    In Theorem \ref{roundingthm}, choosing $x_{\varepsilon}$ to satisfy $\rho_V(x_{\varepsilon})\gtrsim \varepsilon$ is only possible if $\varepsilon <\textup{diam}(V)$. If $\varepsilon\geq  \textup{diam}(V)$, then Theorem \ref{roundingthm} holds with $\varphi_U(x)\asymp \rho_U/(\rho_U+\varepsilon)\cdot (\textup{diam}(V)+\varepsilon)^{-n/2}$. 
\end{Rmk}

\begin{proof} 
    Recall that $\rho_U(\cdot):=\text{dist}(\cdot,\partial U).$ We consider the cases when $\varepsilon< \text{diam}(V)$ and $\varepsilon\geq \text{diam}(V)$ separately.
    
    \textit{Case 1: Assume $\varepsilon<\textup{diam}(V)$}. Suppose $x\in U$ has $\rho_U(x)\leq \varepsilon/2$ and pick a point $z\in \partial U$ with $\|x-z\|=\rho_U(x)$. Because $U$ satisfies a two-sided $\varepsilon$-ball condition, there exist two $\varepsilon$-balls $B_1(p_1)\subseteq U$ and $B_2(p_2)\subseteq U^c$ corresponding to the boundary point $z$. By Lemma \ref{colinear}, $x$ must lie on the line segment connecting $p_1$ to $z$, and thus for all such points $x$,
    \begin{align}
        \label{rhorhorho}
        \rho_{B_1}(x)=\rho_{B_2}(x)=\rho_U(x)\in [0,\varepsilon/2].
    \end{align}
    We now observe that the hypotheses on $U$ imply that $U$ is an inner uniform domain with constants of inner uniformity depending only on $K$, so we may apply boundary Harnack principle (\cite{lierllsc}, Proposition 5.12) to $U$. If $\varepsilon\in (0,\varepsilon_K)$ is sufficiently small, then for all $y_1,y_2\in B_{\widetilde{U}}(z,\varepsilon/2)$,
    \begin{align}
        \label{bhp}
        \frac{\varphi_U(y_1)}{\varphi_U(y_2)}\asymp \frac{G^D_U(p_1,y_1)}{G^D_U(p_1,y_2)}.
    \end{align}
    In Equation (\ref{bhp}), we take $y_1=x$ and $y_2$ to be the point on the line segment connecting $z\in \partial U$ and $p_1$ such that $\rho_{B_1}(y_2)=\rho_{B_1}(x)+\varepsilon/4$. By the domain monotonicity of Dirichlet Green functions, Lemma \ref{greenlinear}, and (\ref{rhorhorho}), we get
    $$\varphi_U(x)\lesssim \frac{\varphi_U(y_2)}{G_{B_1}^D(p_1,y_2)}G^D_{B_2^c}(p_1,x)\asymp \frac{\varphi_U(y_2)}{\rho_{B_1}(y_2)}\rho_{B_2^c}(x)\asymp \frac{\varphi_U(y_2)}{\rho_U(x)+\varepsilon}\rho_U(x).$$
    By the same argument using domain monotonicity, we get a matching lower bound for $\varphi_U(x)$, and thus
$$\varphi_U(x)\asymp \frac{\varphi_U(y_2)}{\rho_U(x)+\varepsilon}\rho_U(x).$$
The assumptions on $U$ imply that there is a chain of small overlapping Euclidean balls, each with radius $\asymp \varepsilon$ and precompact in $U$, connecting $y_2$ to $x_{\varepsilon}$. Because $y_2$ and $x_{\varepsilon}$ are at a distance comparable to $\varepsilon$ apart, the parabolic Harnack inequality (e.g. (\ref{ballphi})) implies that $\varphi_U(y_2)\asymp\varphi_U(x_{\varepsilon})$ with implied constants independent of $\varepsilon$. Next, comparison of principal Dirichlet eigenfunctions (Theorem \ref{ub}) imply that at the point $x_{\varepsilon}\in V$, we have $\varphi_V(x_{\varepsilon})\lesssim\varphi_U(x_{\varepsilon})\lesssim\varphi_{V'}(x_{\varepsilon})$, where $V'\supseteq V_{\varepsilon}$ is a larger dilate of $V$ with dilation factor to be determined in the next paragraph.

Suppose without loss of generality that $B(0,a)\subseteq V\subseteq B(0,A)$. Define the \Quote{norm} $N(w):=\inf\{t>0:w/t\in V\}$ induced by the convex set $V$, and note that $\gamma V=\{x\in \mathbb{R}^n:N(x)\leq \gamma\}$. If $w\in V_{\varepsilon}$, then there exists $y\in V$ with $\|w-y\|\leq 2\varepsilon$, and consequently
$$N(w)\leq N(w-y)+N(y)\leq \frac{\|w-y\|}{a}+1\leq \frac{2\varepsilon }{a}+1,$$
which implies $V_{\varepsilon}\subseteq (1+2\varepsilon/a)V$. Thus we can take $V'=(1+2\varepsilon/a)V$. Next, we claim that $\varphi_{V'}(x_{\varepsilon})\lesssim \varphi_{V}(x_{\varepsilon})$ with constants independent of $\varepsilon$. By scaling properties of Dirichlet Laplacian eigenfunctions,
$$\varphi_{(1+2\varepsilon/a)V}(x_{\varepsilon})=\frac{1}{(1+2\varepsilon/a)^{n/2}}\varphi_{V}\Big(\frac{x_{\varepsilon}}{1+2\varepsilon/a}\Big).$$
Since $\varepsilon\leq 2A$, the $(1+2\varepsilon/a)^{n/2}$ term is $\asymp 1$. Moreover, since $\varepsilon\leq \text{diam}(V)\leq 2A$, the distance between $x_{\varepsilon}$ and $x_{\varepsilon}/(1+2\varepsilon/a)$ is at most $2A(1-1/(1+2\varepsilon/a))\asymp_K \varepsilon$. Thus the parabolic Harnack inequality implies $\varphi_{V'}(x_{\varepsilon})\lesssim \varphi_{V}(x_{\varepsilon})$, finishing the first case if $\rho_U(x)\leq \varepsilon/2$. If $\rho_U(x)>\varepsilon/2$, then $\varphi_U(x)\asymp \varphi_V(x_{\varepsilon})$ by the same reasoning as above, while $\rho_U(x)/(\rho_U(x)+\varepsilon)\asymp 1$, consistent with the desired result (\ref{rounding}).

\textit{Case 2: Assume $\varepsilon\geq \textup{diam}(V)$.} This case is much better understood: when $\varepsilon$ is large compared to $\text{diam}(V)$, $U$ becomes \Quote{rounder and rounder}; with some routine details omitted, Zhang's heat kernel estimates \cite{zhang} for $C^{1,1}$ domains combined with intrinsic ultracontractivity from Lierl and Saloff-Coste (Theorem \ref{jannalierl}) imply that 
$$\varphi_U\asymp \frac{\rho_U}{\text{diam}(U)^{1+n/2}},$$
and this expression can be rewritten into the form (\ref{rounding}) since $\text{diam}(U)\asymp \text{diam}(V)+\varepsilon$, and $\varepsilon\geq \text{diam}(V)$ implies $\text{diam}(V)+\varepsilon \asymp \rho_U(x)+\varepsilon$.
\end{proof}

We now easily obtain from Theorem \ref{roundingthm} some explicit examples of caricature functions.

\begin{Cor}
    \label{roundedsquare}
    Let $V=[-l,l]^2\subseteq \mathbb{R}^2$ be a square. For $\varepsilon\in (0,l\sqrt{2}]$, consider the larger domain $U\supseteq V$ obtained from adding a quarter circle of radius $\varepsilon>0$ to each vertex of $V$, as in Figure \ref{roundedfigures}. For any $(x,y)\in U$, define $(x_{\varepsilon},y_{\varepsilon})\in V$ by 
    $$x_{\varepsilon}:=\Big(x\wedge(l-\frac{\varepsilon}{\sqrt{2}})\Big)\vee \Big(-(l-\frac{\varepsilon}{\sqrt{2}})\Big),\hspace{0.1in}y_{\varepsilon}:=\Big(y\wedge(l-\frac{\varepsilon}{\sqrt{2}})\Big)\vee \Big(-(l-\frac{\varepsilon}{\sqrt{2}})\Big).$$
    Then for any $(x,y)\in U$, 
    $$\varphi_U(x,y)\asymp \frac{\rho_U(x,y)}{\rho_U(x,y)+\varepsilon}\varphi_V(x_{\varepsilon},y_{\varepsilon}),$$
    where
    $$\varphi_V(x_{\varepsilon},y_{\varepsilon})=\frac{1}{l}\cos\Big(\frac{\pi x_{\varepsilon}}{2l}\Big)\cos\Big(\frac{\pi y_{\varepsilon}}{2l}\Big)\asymp \frac{\min\{x_{\varepsilon}+l,l-x_{\varepsilon}\}\min\{y_{\varepsilon}+l,l-y_{\varepsilon}\}}{l^3}.$$
    All of the implied constants are absolute, in particular independent of $\varepsilon$ and $l$.
\end{Cor}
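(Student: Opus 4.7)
The plan is to deduce Corollary~\ref{roundedsquare} directly from Theorem~\ref{roundingthm} together with the explicit separated-variable formula for $\varphi_V$ on the square. First, I would verify the hypotheses of Theorem~\ref{roundingthm} for $V = [-l,l]^2$ and the rounded domain $U$. Since $B(0,l) \subseteq V \subseteq B(0, l\sqrt{2})$, the square is a convex domain with bounded eccentricity ($K = \sqrt{2}$). The rounded square satisfies a two-sided $\varepsilon$-ball condition: along each flat edge, an interior and exterior tangent $\varepsilon$-ball is supplied by the geometry of the added quarter disks (the condition $\varepsilon \leq l\sqrt 2$ keeps them from overlapping), and along each arc both an interior and exterior $\varepsilon$-ball sit tangent to the arc because the arc itself has curvature radius $\varepsilon$. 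Finally, $U \subseteq V_{2\varepsilon} = \{p : \mathrm{dist}(p,V) < 2\varepsilon\}$ since each added quarter disk is centered at a vertex of $V$ and has radius $\varepsilon$.

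Next, I would invoke the standard separation of variables for the square: the $L^2(V)$-normalized principal Dirichlet Laplacian eigenfunction is
\[
\varphi_V(x,y) = \tfrac{1}{l}\cos\!\tfrac{\pi x}{2l}\cos\!\tfrac{\pi y}{2l},
\]
with principal eigenvalue $\pi^2/(2l^2)$. Writing $\cos(\pi x/(2l)) = \sin(\pi(l-x)/(2l))$ and using $\sin\theta \asymp \min\{\theta,\pi-\theta\}$ on $[0,\pi]$ gives $\cos(\pi x/(2l)) \asymp \min\{l+x,l-x\}/l$, hence
\[
\varphi_V(x,y) \asymp \frac{\min\{l+x,l-x\}\min\{l+y,l-y\}}{l^3}
\]
with absolute implied constants. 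This gives the second displayed expression of the corollary. Moreover $\|\varphi_V\|_{L^\infty(V)} = 1/l$, and since $\varepsilon \leq l\sqrt{2}$ we have $\operatorname{diam}(V)+\varepsilon \asymp l$, so $1/(\operatorname{diam}(V)+\varepsilon)^{n/2} \asymp 1/l$ with $n=2$. Therefore any value $\varphi_V(p)\le \|\varphi_V\|_\infty$ already satisfies $\varphi_V(p)\wedge (\operatorname{diam}(V)+\varepsilon)^{-n/2} \asymp \varphi_V(p)$, so the minimum appearing in Theorem~\ref{roundingthm} reduces to the $\varphi_V$ factor up to absolute constants.

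Finally, for each $(x,y) \in U$, Theorem~\ref{roundingthm} produces some auxiliary point $p_\varepsilon \in V$ with $\rho_V(p_\varepsilon) \gtrsim \varepsilon$ and $d_U((x,y),p_\varepsilon) \asymp \varepsilon$ such that $\varphi_U(x,y) \asymp \frac{\rho_U(x,y)}{\rho_U(x,y)+\varepsilon}\varphi_V(p_\varepsilon)$. It remains to replace $p_\varepsilon$ by the explicit projection $(x_\varepsilon,y_\varepsilon)$ defined in the corollary without losing comparability. By construction $(x_\varepsilon,y_\varepsilon) \in [-(l-\varepsilon/\sqrt 2),\,l-\varepsilon/\sqrt 2]^2$, so $\min\{l\pm x_\varepsilon\},\, \min\{l\pm y_\varepsilon\}\geq \varepsilon/\sqrt 2$, i.e.\ $\rho_V(x_\varepsilon,y_\varepsilon)\gtrsim \varepsilon$. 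Also $\|(x,y)-(x_\varepsilon,y_\varepsilon)\| \leq \varepsilon$ coordinate-wise, and by the triangle inequality $\|p_\varepsilon - (x_\varepsilon,y_\varepsilon)\| \lesssim \varepsilon$. Applying the multiplicative form of $\varphi_V$ obtained above, each factor $\min\{l\pm \cdot\}$ differs between $p_\varepsilon$ and $(x_\varepsilon,y_\varepsilon)$ by at most $O(\varepsilon)$, while both are already $\gtrsim \varepsilon$; hence the ratios are bounded by absolute constants and $\varphi_V(p_\varepsilon) \asymp \varphi_V(x_\varepsilon,y_\varepsilon)$, yielding the stated formula.

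The main obstacle is the case where $(x,y)$ itself already lies in the inner square $[-(l-\varepsilon/\sqrt 2),\,l-\varepsilon/\sqrt 2]^2$, so that the defining formula forces $(x_\varepsilon,y_\varepsilon)=(x,y)$ and $d_U((x,y),(x_\varepsilon,y_\varepsilon))=0$, not $\asymp \varepsilon$. This is not an obstruction in substance but requires care in the bookkeeping: one picks any auxiliary $p_\varepsilon \in V$ at distance $\asymp \varepsilon$ from $(x,y)$ to feed into Theorem~\ref{roundingthm}, and then the product form of $\varphi_V$, together with the fact that $\rho_V(x,y)\geq \varepsilon/\sqrt 2$ in this regime, again gives $\varphi_V(p_\varepsilon)\asymp \varphi_V(x,y) = \varphi_V(x_\varepsilon,y_\varepsilon)$. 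In this regime one also has $\rho_U(x,y)\gtrsim \varepsilon$, so $\rho_U/(\rho_U+\varepsilon)\asymp 1$, consistent with the formula becoming $\varphi_U \asymp \varphi_V$ on the inner square as expected.
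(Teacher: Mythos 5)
Your proof is correct and follows the route the paper intends: apply Theorem~\ref{roundingthm} with the explicit separated-variables formula for $\varphi_V$, absorb the minimum using $\operatorname{diam}(V)+\varepsilon\asymp l$, and replace the abstract reference point by the projection $(x_\varepsilon,y_\varepsilon)$ using the product structure of the caricature (including the careful handling of the degenerate case $(x,y)\in[-(l-\varepsilon/\sqrt 2),l-\varepsilon/\sqrt 2]^2$). One cosmetic inaccuracy worth noting: the corner arcs of $\partial U$ are in fact centered at the vertices of the inner square $[-(l-\varepsilon/\sqrt 2),l-\varepsilon/\sqrt 2]^2$ rather than at the vertices of $V$ --- this is precisely what makes $U=B(0,l\sqrt 2)$ when $\varepsilon=l\sqrt 2$ --- but your verification of the two-sided $\varepsilon$-ball condition and of $U\subseteq V_\varepsilon=\{\operatorname{dist}(\cdot,V)<2\varepsilon\}$ remains valid for the actual domain.
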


Note that when $\varepsilon=l\sqrt{2}$, $U$ is an Euclidean ball of radius $l\sqrt{2}$, and as $\varepsilon\to 0$, $U$ shrinks continuously into the square $V$. Corollary \ref{roundedsquare} thus uniformly describes the principal Dirichlet eigenfunction of a domain whose boundary regularity transitions from $C^{\infty}$ to $C^{1,1}$ to Lipschitz.

The next example we present is when $U=\{x\in \mathbb{R}^2:\text{dist}(x,T)<\varepsilon\}$ is the $\varepsilon$-neighborhood of a triangle $T$. Before stating the result, let us specify one particular choice of points $x_{\varepsilon}\in \mathbb{R}^2$ from Theorem \ref{roundingthm} (there are many possible choices). Given a triangle $T=\triangle A_1A_2A_3$ as in Figure \ref{choicefigure} below, draw three circles of radius $\varepsilon$ centered at $A_1,A_2,A_3$. Let $T^{\varepsilon}\subseteq T$ be the triangle formed by connecting the three points where the angle bisectors intersect the circles. (Note that we need $\varepsilon\lesssim \text{diam}(T)$ in order for $T^{\varepsilon}\subseteq T$.) 

\begin{figure}[H]
  \centering
  \includegraphics[width=0.3\linewidth]{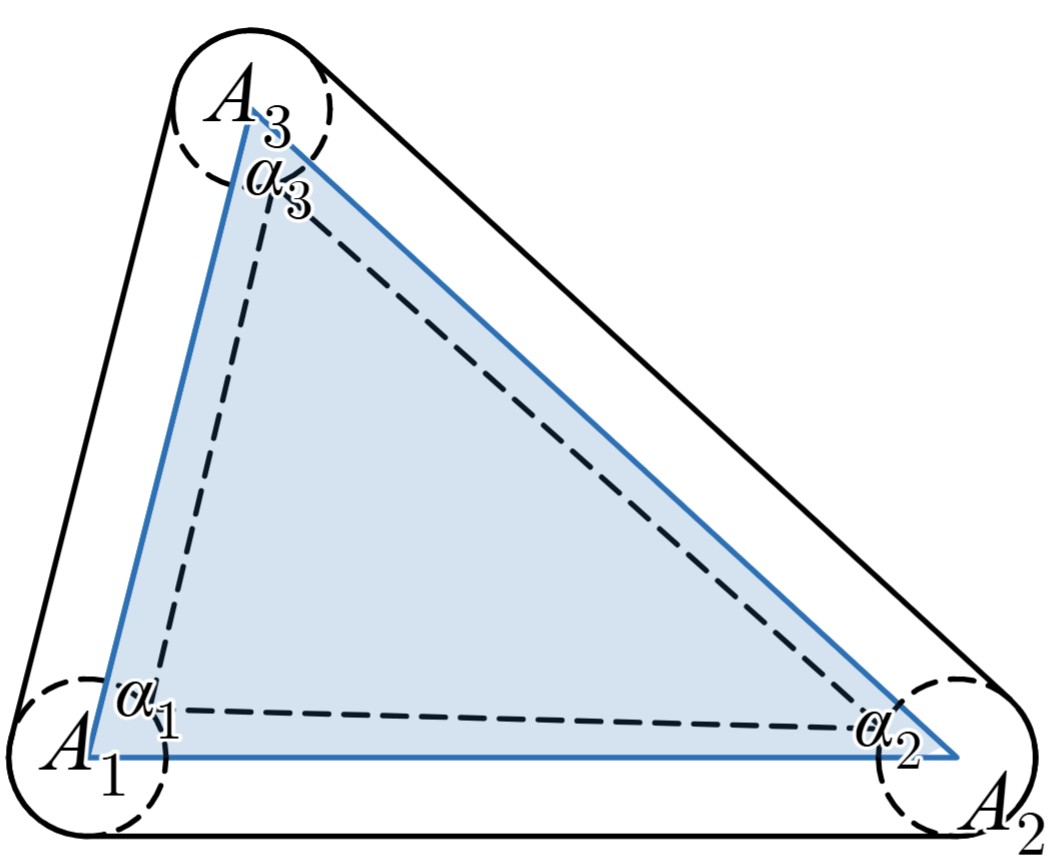}
  \caption{Given $x\in U$, $x_{\varepsilon}$ is the point in the smallest triangle with minimal distance to $x$.}
  \label{choicefigure}
\end{figure}

For any $x\in U$, define $x_{\varepsilon}\in T$ to be the closest point in $T^{\varepsilon}$ to $x$ (this allows for $x=x_{\varepsilon}$ if $x$ is already in $T^{\varepsilon}$). We then get the following.

\begin{Cor} 
    \label{roundedtriangles}
    Let $T=\triangle A_1A_2A_3\subseteq \mathbb{R}^2$ be a triangle with angles $\alpha_1,\alpha_2,\alpha_3\geq \alpha>0$, as in Figure \ref{choicefigure}. There exists a constant $c=c(\alpha)\in (0,1)$ such that the following holds. For all $\varepsilon\in (0,c\cdot \textup{diam}(T))$, if $U=\{x\in \mathbb{R}^2:\textup{dist}(x,T)<\varepsilon\}$ is the $\varepsilon$-neighborhood of $T$, then for all $x\in U$,
    $$\varphi_U(x)\asymp \frac{\rho_U(x)}{\rho_U(x)+\varepsilon}\frac{d_1d_2d_3(d_1+d_2)^{\pi/\alpha_3-2}(d_2+d_3)^{\pi/\alpha_1-2}(d_3+d_1)^{\pi/\alpha_2-2}}{\textup{diam}(T)^{\pi/\alpha_1+\pi/\alpha_2+\pi/\alpha_3-2}}(x_{\varepsilon}),$$
    where $x_{\varepsilon}\in T$ is chosen as above and $d_1(\cdot)=\textup{dist}(\cdot,\overline{A_2A_3})$, $d_2(\cdot)=\textup{dist}(\cdot,\overline{A_1A_3})$, $d_3(\cdot)=\textup{dist}(\cdot,\overline{A_1A_2})$. The implied constants depend only on $\alpha$.
\end{Cor}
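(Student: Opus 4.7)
The strategy is to apply Theorem \ref{roundingthm} with $V = T$, and then substitute the caricature expression for $\varphi_T$ from Theorem \ref{triangleprofile}. Since the angles of $T$ are bounded below by $\alpha > 0$, $T$ is convex with bounded eccentricity depending only on $\alpha$, so the hypothesis on $V$ in Theorem \ref{roundingthm} is satisfied with $K = K(\alpha)$. The inclusion $V \subseteq U \subseteq V_\varepsilon := \{x : \textup{dist}(x,V) < 2\varepsilon\}$ is immediate from the definition $U = \{x : \textup{dist}(x,T) < \varepsilon\}$.

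The first substantive step is to verify the two-sided $\varepsilon$-ball condition for $U$. The boundary $\partial U$ consists of three straight segments parallel to the sides of $T$ joined by three circular arcs of radius $\varepsilon$ centered at the vertices $A_i$. At a straight-segment point of $\partial U$, the interior $\varepsilon$-ball is centered at the foot of the perpendicular on $\partial T$ and the exterior $\varepsilon$-ball sits against the outward normal; both fit by convexity of $T$ provided $\varepsilon \lesssim_\alpha \textup{diam}(T)$. At an arc point near $A_i$, the interior ball is $B(A_i,\varepsilon) \subseteq U$, which is contained in $U$ as long as $\varepsilon$ is smaller than the distance from $A_i$ to the opposite side (which is $\gtrsim_\alpha \textup{diam}(T)$ because the angles of $T$ are bounded below), while the exterior $\varepsilon$-ball fits in the sector of opening $\pi - \alpha_i$ that lies outside $U$. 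Choosing $c = c(\alpha) \in (0,1)$ small enough makes all of these fits uniform in $\varepsilon \in (0, c\cdot\textup{diam}(T))$.

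The second step is to verify that the point $x_\varepsilon$ defined in the statement (the closest point in $T^\varepsilon$ to $x$) satisfies the hypotheses of Theorem \ref{roundingthm}. The vertices of $T^\varepsilon$ are at distance $\varepsilon$ along the angle bisectors from $A_1,A_2,A_3$, so by elementary geometry and the angle lower bound, every point of $T^\varepsilon$ lies at distance $\gtrsim_\alpha \varepsilon$ from $\partial T$, yielding $\rho_V(x_\varepsilon) \gtrsim_\alpha \varepsilon$. For the distance in $U$: since $U$ is convex (as the Minkowski sum $T + \overline{B(0,\varepsilon)}$ of convex sets), $d_U$ coincides with the Euclidean distance; routing a path from $x$ first to its projection $x' \in \overline{T}$ (of length $\leq \varepsilon$ since $x \in U$) and then from $x'$ to its nearest point in $T^\varepsilon$ (of length $\lesssim_\alpha \varepsilon$ by construction) shows $d_U(x, x_\varepsilon) \lesssim_\alpha \varepsilon$. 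For $x$ already interior to $T^\varepsilon$ (so that $x = x_\varepsilon$), the factor $\rho_U(x)/(\rho_U(x)+\varepsilon) \asymp 1$ and the conclusion $\varphi_U(x) \asymp \varphi_T(x)$ follows directly from Theorem \ref{ub} combined with the parabolic Harnack inequality of Lemma \ref{phiharnack}, consistent with the stated formula.

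With these checks in place, Theorem \ref{roundingthm} applied in dimension $n=2$ yields
\[
\varphi_U(x) \asymp \frac{\rho_U(x)}{\rho_U(x)+\varepsilon}\Bigl(\varphi_T(x_\varepsilon) \wedge \frac{1}{\textup{diam}(T)+\varepsilon}\Bigr).
\]
Since $\varepsilon < c\cdot \textup{diam}(T)$ we have $\textup{diam}(T)+\varepsilon \asymp \textup{diam}(T)$, and Lemma \ref{maxphi} applied to $T$ gives $\|\varphi_T\|_{L^\infty(T)} \asymp 1/\textup{diam}(T)$ (using $\mu(T) \asymp_\alpha \textup{diam}(T)^2$ for triangles with angles bounded below). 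Consequently $\varphi_T(x_\varepsilon) \lesssim 1/(\textup{diam}(T)+\varepsilon)$ and the minimum is just $\varphi_T(x_\varepsilon)$. Finally, substituting the expression for $\varphi_T(x_\varepsilon)$ from Theorem \ref{triangleprofile} produces the claimed caricature function. The principal technical obstacle in this plan is the uniform verification of the two-sided $\varepsilon$-ball condition for $U$ in terms of $\alpha$ alone, since the geometry of the arcs near narrow angles must be controlled carefully; the remainder of the argument is a routine assembly of the cited results.
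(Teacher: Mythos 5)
Your proposal is correct and follows the route the paper intends: apply Theorem \ref{roundingthm} with $V=T$ and substitute the caricature function for $\varphi_T$ from Theorem \ref{triangleprofile}; the paper states the corollary as an immediate consequence without spelling out the verification. One small simplification you could make: since $U = T + B(0,\varepsilon)$ is a Minkowski sum of convex sets, it is itself convex, so the \emph{exterior} $\varepsilon$-ball condition is automatic at every boundary point (a ball of any radius tangent to a supporting line lies in $U^c$) and requires no careful control of the arcs near narrow angles, while the \emph{interior} $\varepsilon$-ball condition is immediate from the Minkowski-sum description (the ball of radius $\varepsilon$ about the metric projection of $z\in\partial U$ onto $\overline{T}$ works); so the two-sided ball condition is not the technical obstacle you flagged.
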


\begin{Rmk}
    \normalfont
     As long as $U$ satisfies the hypotheses of Theorem \ref{roundingthm}, the conclusions of Corollaries \ref{roundedsquare} and \ref{roundedtriangles} continue to hold even if some of the straight sides of $U$ are replaced with \Quote{wiggly} $C^{1,1}$ curves. For example, if $U$ is the domain pictured in Figure \ref{wiggle}, then $\varphi_U$ is comparable to the same exact expression in Corollary \ref{roundedsquare}.   
\end{Rmk}

\begin{figure}[H]
  \centering
   \includegraphics[width=0.17\linewidth]{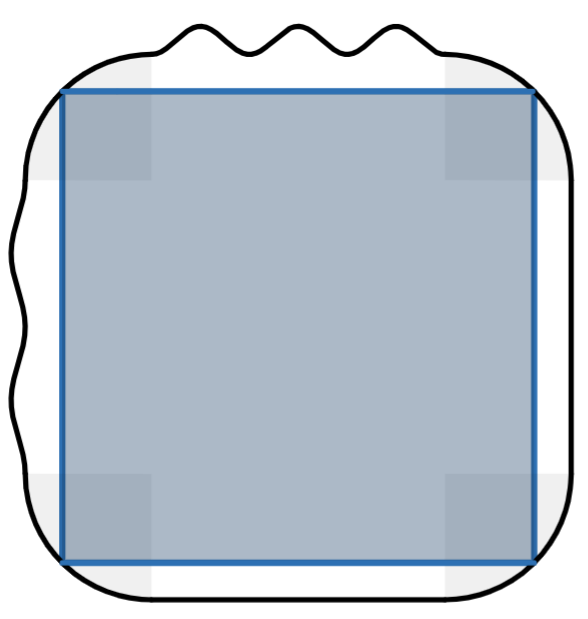}
  \caption{A perturbation of $U$ from Corollary \ref{roundedsquare}.}
  \label{wiggle}
\end{figure}

\subsection{Application of main results to Dirichlet heat kernel estimates}\label{applylierl}

Our main results can be used to give improved Dirichlet heat kernel estimates from \cite{lierllsc} which hold over large families of Euclidean domains. To begin, we state a simplified version of Theorem 7.9 \cite{lierllsc} below.
\begin{Theo}
    \label{lierlspecial}
    Consider Euclidean space $\mathbb{R}^n$ equipped with the Laplacian and Lebesgue measure $d\mu$. For $C_0\geq 1$ and $c_0\in (0,1)$, let $U\subseteq \mathbb{R}^n$ be a bounded $(C_0,c_0)$-inner uniform domain (Definition \ref{iudefn}). Let $\varphi_U>0$ be the principal Dirichlet Laplacian eigenfunction of $U$ normalized so that $\|\varphi_U\|_{L^2(U)}=1$. For $x\in U$ and $t>0$, let $B_U(x,\sqrt{t})\subseteq U$ denote a ball in $U$ with respect to the geodesic distance on $U$, with the convention that the radii of balls are taken to be minimal. Put
    $$V_{\varphi_U^2}(x,\sqrt{t}):=\int_{B_U(x,\sqrt{t})}\varphi_U^2 d\mu.$$
    Then the Dirichlet heat kernel $p^D_U(t,x,y)$ of $U$ satisfies the estimates
    \begin{align}
        \label{lierlspecial1}
        p^D_U(t,x,y)\leq c_1\frac{\varphi_U(x)\varphi_U(y)}{\sqrt{V_{\varphi^2_U}(x,\sqrt{t})}\sqrt{V_{\varphi^2_U}(y,\sqrt{t})}}\exp\Big(-\frac{\|x-y\|^2}{c_2t}\Big),
    \end{align}
    and 
    \begin{align}
        \label{lierlspecial2}
        p^D_U(t,x,y)\geq c_3\frac{\varphi_U(x)\varphi_U(y)}{\sqrt{V_{\varphi^2_U}(x,\sqrt{t})}\sqrt{V_{\varphi^2_U}(y,\sqrt{t})}}\exp\Big(-\frac{\|x-y\|^2}{c_4t}\Big)
    \end{align}
    for all $x,y\in U$ and $t>0$. The constants $c_1,c_2,c_3,c_4$ depend only on the dimension $n$ and the inner uniformity constants $C_0$ and $c_0$ of $U$.
\end{Theo}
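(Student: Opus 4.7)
My proof proposal is that Theorem \ref{lierlspecial} is a direct specialization of Theorem 7.9 of \cite{lierllsc} to the Euclidean setting, so the plan is essentially to verify that the hypotheses of that general theorem are met and to outline the internal logic rather than redo the entire argument. First, I would observe that $(\mathbb{R}^n, \|\cdot\|, d\mu, \mathcal{E}, W^{1,2}(\mathbb{R}^n))$ satisfies Assumptions \ref{assumption1} and \ref{assumption2} with $R = \infty$ and constants depending only on $n$ (this is the Laplacian special case of Example \ref{soue} with $\Lambda = 1$). In particular, the intrinsic distance (\ref{intrinsicdist}) coincides with the Euclidean distance, and Theorem \ref{breakthrough} applies to the ambient space. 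Since $U$ is a bounded $(C_0,c_0)$-inner uniform domain, all the structural tools available in this paper (bounded intrinsic diameter, boundary Harnack principle, the intrinsic ultracontractivity bound of Theorem \ref{jannalierl}, and the Carleson-type estimate of Lemma \ref{carleson}) are at our disposal.

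The heart of the proof is the Doob $\varphi_U$-transform. Define
\[
\widetilde{p}_U(t,x,y) := \frac{e^{\lambda_U t} p^D_U(t,x,y)}{\varphi_U(x)\varphi_U(y)}, \qquad d\widetilde{\mu} := \varphi_U^2 \, d\mu, \qquad \widetilde{\mathcal{E}}(f,f) := \int_U |\nabla f|^2 \, d\widetilde{\mu}.
\]
Then $\widetilde{p}_U$ is the transition density of a symmetric conservative Markov semigroup on $(U, \widetilde{\mu})$ with Dirichlet form $\widetilde{\mathcal{E}}$. The target statement is equivalent to two-sided Gaussian heat kernel bounds for $\widetilde{p}_U$ with respect to $d_U$ and $\widetilde{\mu}$, so by (the generalization of) Theorem \ref{breakthrough} it suffices to verify that $(U, d_U, \widetilde{\mu}, \widetilde{\mathcal{E}})$ is doubling and admits a scale-invariant Poincar\'e inequality.

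To establish doubling, I would combine the inner uniformity of $U$ (which via Proposition \ref{ballinside} guarantees that every intrinsic ball contains a comparable Euclidean ball well inside $U$) with the boundary Harnack principle of \cite{lierllsc} applied to $\varphi_U$. The boundary Harnack principle allows one to replace $\varphi_U$ by a reference harmonic profile on balls near $\partial U$, and together with ambient doubling this yields
\[
\int_{B_U(x,2r)} \varphi_U^2 \, d\mu \lesssim \int_{B_U(x,r)} \varphi_U^2 \, d\mu
\]
with a constant depending only on $n, C_0, c_0$. For the Poincar\'e inequality, one runs a Whitney-type decomposition of $U$ (using inner uniformity to control the geometry of the Whitney cubes) and uses that the ambient Euclidean Poincar\'e inequality on each Whitney ball upgrades, via boundary Harnack control of $\varphi_U$, to a weighted Poincar\'e inequality on intrinsic balls.

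Once these two ingredients are in place, Theorem \ref{breakthrough} furnishes the estimate
\[
\widetilde{p}_U(t,x,y) \asymp \frac{1}{\sqrt{\widetilde{\mu}(B_U(x,\sqrt{t}))\widetilde{\mu}(B_U(y,\sqrt{t}))}} \exp\!\left(-\frac{d_U(x,y)^2}{c\, t}\right),
\]
and un-transforming yields (\ref{lierlspecial1}) and (\ref{lierlspecial2}) with $d_U(x,y)^2$ in the exponential. To pass from $d_U$ to the Euclidean $\|x-y\|^2$ in the Gaussian exponent, I would use $\|x-y\| \le d_U(x,y)$ directly in the upper bound, and for the lower bound use inner uniformity to relate the two distances (adjusting the constant $c_4$). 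The main obstacle is the doubling/Poincar\'e verification for $\widetilde{\mu}$ near $\partial U$, where $\varphi_U$ degenerates; this is exactly where inner uniformity and the boundary Harnack principle of \cite{lierllsc} are indispensable, and is the substantive content of Theorem 7.9 in \cite{lierllsc}.
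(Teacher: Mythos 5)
The paper gives no independent proof of Theorem \ref{lierlspecial}: it is presented as a simplified special case of Theorem 7.9 of \cite{lierllsc}, and the reader is referred there. Your proposal correctly identifies this, and your outline of the Doob $\varphi_U$-transform strategy (reduce to doubling and a Poincar\'e inequality for the weighted measure $\varphi_U^2\,d\mu$, established via inner uniformity and the boundary Harnack principle, then invoke the equivalence of those conditions with two-sided Gaussian bounds) accurately reflects the route taken in that reference.
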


Theorem \ref{lierlspecial} thus yields sharp Dirichlet heat kernel estimates for a relatively large class of domains in $\mathbb{R}^n$. For example, the above result applies when $U$ is the Von Koch snowflake in $\mathbb{R}^2$, or when $U$ is any bounded convex domain in $\mathbb{R}^n$ with bounded eccentricity (Definition \ref{boundedecc}). Also, any bounded Lipschitz domain $U\subseteq \mathbb{R}^n$ is $(C_0,c_0)$-inner uniform for some values of $(C_0,c_0)$. However, Theorem \ref{lierlspecial} leaves open the following question:
\begin{align}
    \label{question}
    &  \textit{Can the Dirichlet heat kernel estimates (\ref{lierlspecial1}) and (\ref{lierlspecial2}) be made even more explicit by}
    \\ \nonumber & \textit{finding an explicit function $\Phi_U$ that is comparable to $\varphi_U$?} 
\end{align}
Clearly, in view of Theorem \ref{lierlspecial},
\begin{align}
    \nonumber
    & \textit{Given some family of inner uniform domains $\mathcal{U}$, knowing that $\varphi_U\asymp \Phi_U$ uniformly}
    \\ \nonumber & \textit{for all }U\in \mathcal{U}\textit{ for some explicit function }\Phi_U \textit{ implies essentially explicit Dirichlet heat}
    \\ \label{question2}& \textit{kernel estimates holding uniformly over all }U\in \mathcal{U}.
\end{align}

Of course, it is not feasible to find $\Phi_U$ such that $\Phi_U\asymp \varphi_U$ for every single inner uniform domain $U$. However, because our main results yield explicit caricature functions for large collections of Euclidean domains, for such a collection $\mathcal{U}$, we are able to answer the question (\ref{question}) in the affirmative and achieve (\ref{question2}); we summarize our findings below.

Consider Euclidean space $\mathbb{R}^n$ equipped with the Laplacian. Let $\mathcal{U}$ be any of the below families of (inner uniform) Euclidean domains:
    \begin{enumerate}
        \item the class of planar triangles in $\mathbb{R}^2$ with angles uniformly bounded below by $\alpha>0$ (Theorem \ref{triangleprofile});
        \item a triangle with another small $\varepsilon$-triangle added for $\varepsilon>0$ (Theorem \ref{addonetriangle2} and Figure \ref{trianglesperturbation2});
        \item the collection of all \Quote{generic} polygonal domains in $\mathbb{R}^2$ with a fixed number of sides, all of comparable length, as in Theorem \ref{polygonal};
        \item the collection of all regular polygons in $\mathbb{R}^2$ as in Theorem \ref{polythm};
        \item the collection of all ellipsoids in $\mathbb{R}^n$ with bounded eccentricity as in Theorem \ref{ellipsoid};
        \item the collection of all squares and triangles with rounded $\varepsilon$-corners (Figure \ref{roundedfigures}, Corollaries \ref{roundedsquare} and \ref{roundedtriangles})
    \end{enumerate}
    In each of the above cases, we have essentially explicit Dirichlet heat kernel estimates of the form (\ref{lierlspecial1}) and (\ref{lierlspecial2}), where $\varphi_U$ is made explicit by the caricature function $\Phi_U$ provided in each case, and the constants $c_1,c_2,c_3,c_4$ are as in Theorem \ref{lierlspecial}. 

In the above, only Cases $2,4,$ and $6$ rely on our results about comparison of principal Dirichlet eigenfunctions. The analysis used to obtain Cases $1$ and $3$ uses results from \cite{lierllsc}, while Case $5$ is well-known at least to specialists of the subject. We also emphasize that the above list does not include every single Euclidean domain for which we can answer (\ref{question}) using our results, but just provides selected examples. Because our arguments are specific to the Laplacian in Euclidean space, we do not know if (\ref{question}) can be answered in other (possibly more general) settings. For example, it would be interesting to prove that (\ref{question2}) can be accomplished in $\mathbb{R}^n$ equipped with an uniformly elliptic operator $\mathcal{L}=\sum \partial_i(a_{ij}\partial_j)$ with suitable assumptions on the coefficients $(a_{ij})$.
\\~\\
\textbf{Acknowledgements.} BC would like to thank Stefan Steinerberger for generously answering questions about the paper \cite{rs}, and Zhen-Qing Chen and Leonard Gross for helpful advice. BC is in part supported by NSF Grant DGE–2139899. LSC is supported by NSF Grants DMS-2054593 and DMS-2343868.

\author{
  \noindent 
  Brian Chao
  \\ Department of Mathematics, Cornell University, Ithaca, NY 14853, USA.
  \\ E-mail: \texttt{bc492@cornell.edu}
}
\\~\\
\author{
  \noindent 
  Laurent Saloff-Coste
  \\ Department of Mathematics, Cornell University, Ithaca, NY 14853, USA.
  \\ E-mail: \texttt{lsc@math.cornell.edu}
}


\begin{thebibliography}{99}

\bibitem{aikawa} H. Aikawa (2001). Boundary Harnack principle and Martin boundary for a uniform domain. {\it J. Math. Soc. Japan} {\bf 53}(1), 119-145.

\bibitem{ancona} A. Ancona (1978). Principe de Harnack à la frontière et théorème de Fatou pour un opérateur elliptique dans un domaine lipschitzien. {\it Ann. Inst. Fourier (Grenbole)} {\bf 28}, 169-213.

\bibitem{barlowmurugan} M. T. Barlow and M. Murugan (2019). Boundary Harnack principle and elliptic Harnack inequality. {\it J. Math. Soc. Japan} {\bf 71}(2), 383-412.

\bibitem{barb} S. Barb (2009). Topics in geometric analysis with applications to partial differential equations. Doctoral thesis, University of Missouri-Columbia, 2009.

\bibitem{bass} R. F. Bass (1998). Diffusions and Elliptic Operators. Springer, New York.

\bibitem{ablsc} A. Bendikov and L. Saloff-Coste (2000). On- and off-diagonal heat kernel behaviors on certain infinite dimensional local Dirichlet spaces. {\it Amer. J. Math.} {\bf{122}}, 1205-1263.

\bibitem{blieb} H. J. Brascamp and E. H. Lieb (1976). On extensions of the Brunn-Minkowski and Prékopa-Leindler theorems, including inequalities for log concave functions, and with an application to the diffusion equation. {\it J. Funct. Anal.} {\bf{22}(4)}, 366-389.

\bibitem{buser} P. Buser (1982). A note on the isoperimetric constant. \textit{Ann. Sci. \'{E}cole Norm. Sup.} {\bf {15}}, 213-230.

\bibitem{chenfuku} Z.-Q. Chen and M. Fukushima. Symmetric Markov processes, time change, and boundary theory, volume 35 of {\it London Mathematical Society Monographs Series.} Princeton University Price, Princeton, NJ, 2012.

\bibitem{chenzhao} Z.-Q. Chen and Z. Zhao (1995). Diffusion processes and second order elliptic operators with singular coefficients for lower order terms. {\it Math. Ann.} {\bf {302}}, 323-357.

\bibitem{dahlberg} B. E. J. Dahlberg (1977). Estimates of harmonic measure. {\it Arch. Ration. Mech. Anal.} {\bf 65}(3), 275-288.

\bibitem{daviessimon} E. B. Davies and B. Simon (1984). Ultracontractivity and the heat kernel for Schr\"{o}dinger operators and Dirichlet Laplacians. {\it J. Funct. Anal.} {\bf{59}}, 335-395.

\bibitem{evans} L. C. Evans (2010). \textit{Partial Differential Equations} (2nd ed). American Mathematical Society, Providence, RI.

\bibitem{fukushima} M. Fukushima, Y. Oshima, and M. Takeda. Dirichlet forms and symmetric Markov processes, De Gruyter, Berlin-New York, 2011.

\bibitem{gj} D. Greiser and D. Jerison (1998). The size of the first eigenfunction of a convex planar domain. {\it J. Amer. Math. Soc.} {\bf{11(1)}}, 41-72.

\bibitem{grigoryan} A. Grigor'yan (1991). The heat equation on noncompact Riemannian manifolds. \textit{Mat. Sb.} {\bf 182}, 55-87.

\bibitem{griglsc} A. Grigor'yan and L. Saloff-Coste (2009). Heat kernel on manifolds with ends. {\it Ann. Inst. Fourier (Grenbole)} {\bf 59}, 1917-1997.

\bibitem{gyryalsc} P. Gyrya and L. Saloff-Coste (2011). Neumann and Dirichlet heat kernels in inner uniform domains. {\it Astérisque.} {\bf{336}}.

\bibitem{hayman} 
W. K. Hayman (1977/78). Some bounds for principal frequency. {\it Applicable Anal.} {\bf{7}}, 247-254.

\bibitem{hebischlsc} W. Hebisch and L. Saloff-Coste (2001). On the relation between elliptic and parabolic Harnack inequalities. {\it Ann. Inst. Fourier (Grenbole)} {\bf{51}}, 1437-1481.

\bibitem{jones} P. W. Jones (1981). Quasiconformal mappings and extendability of functions in
Sobolev spaces. {\it Acta Math.} {\bf 147}, no. 1-2, 71–88.

\bibitem{lapiduspearse} M. L. Lapidus and E. P. J. Pearse (2006). A tube formula for the Koch snowflake curve, with applications to complex dimensions. {\it J. London Math. Soc.} {\bf 74(2)}, 397-414.

\bibitem{lierl} J. Lierl (2015). Scale-invariant Boundary Harnack Principle on inner uniform domains in fractal-type spaces. {\it Potential Anal.} {\bf 43}, 717–747. 

\bibitem{lierllsc2} J. Lierl, L. Saloff-Coste (2014). Scale-invariant Boundary Harnack Principle in inner uniform domains. {\it Osaka J. Math} {\bf 51}, 619–657.

\bibitem{lierllsc} J. Lierl and L. Saloff-Coste (2014). The Dirichlet heat kernel in inner uniform domains: Local results, compact domains and non-symmetric forms. {\it J. Funct. Anal.} {\bf 266(7)}, 4189-4235.

\bibitem{martiosarvas} O. Martio and J. Sarvas (1979). Injectivity theorems in plane and space. {\it Ann. Acad. Sci. Fenn. Ser. A I Math.} {\bf 4}, 383-401.

\bibitem{moser} J. Moser (1964). A Harnack inequality for parabolic differential equations. {\it Comm. Pure App. Math} {\bf{17}}, 101-134.

\bibitem{moser1} J. Moser (1967). Correction to: \Quote{A Harnack inequality for parabolic differential equations}. {\it Comm. Pure. App. Math} {\bf{20}}, 231-236.

\bibitem{ocsc} K. O'Connor, L. Saloff-Coste (2023). The 4-Player Gambler’s Ruin Problem. In: Alonso Ruiz, P., Hinz, M., Okoudjou, K.A., Rogers, L.G., Teplyaev, A. (eds) From Classical Analysis to Analysis on Fractals. Applied and Numerical Harmonic Analysis. Birkhäuser, Cham.

\bibitem{ow} E. M. Ouhabaz and F.-Y. Wang (2007). Sharp estimates for intrinsic ultracontractivity on $C^{1,\alpha}$-domains. {\it Manuscripta Math} {\bf{122}}, 229-244.

\bibitem{prager} M. Práger (1998). Eigenvalues and eigenfunctions of the Laplace operator on an equilateral triangle. {\it Appl. Math.} {\bf 43}, 311-320.

\bibitem{rs} M. Rachh and S. Steinerberger (2018). On the location of maxima of solutions of Schr\"{o}dinger's equation. {\it Comm. Pure App. Math} {\bf{71}}, 1109–1122.

\bibitem{sakaguchi} S. Sakaguchi (1987). Concavity properties of solutions to some degenerate quasilinear elliptic Dirichlet problems. {\it Ann. Sc. Norm. Super. Pisa Cl. Sci. 4} {\bf {14}}, 403-421. 

\bibitem{saloff-coste} L. Saloff-Coste (1992). A note on Poincar\'{e}, Sobolev, and Harnack inequalities. \textit{Int. Math. Res. Not.} {\bf 1992}, 27-38.

\bibitem{asti} L. Saloff-Coste. Aspects of Sobolev-Type Inequalities. London Mathematical Society Lecture
Note Series, vol. 289 (Cambridge University Press, Cambridge, 2002).

\bibitem{saloff-coste2} L. Saloff-Coste (1995). Parabolic Harnack inequality for divergence form second-order differential operators. \textit{Potential Anal.} {\bf 4}, 429-467.

\bibitem{sturm} K.-T. Sturm (1995). On the geometry defined by Dirichlet forms, in \textit{Seminar on Stochastic Analysis, Random Fields and Applications (Ascona, 1993)}, Progr. Probab., vol. 36, Birkh\"{a}user, 231-242.

\bibitem{sturm2} K.-T. Sturm (1996). Analysis on local Dirichlet spaces III. The parabolic Harnack inequality. \textit{J. Math. Pures Appl.} {\bf 75}, 273-297. 

\bibitem{wu} J.-M. G. Wu (1978). Comparison of kernel functions, boundary Harnack principle and relative Fatou theorem on Lipschitz domains. {\it Ann. Inst. Fourier (Grenbole)} {\bf 28}(4), 147-167.

\bibitem{zhang} Q. S. Zhang (2002). The boundary behavior of heat kernels of Dirichlet Laplacians. {\it J. Differential Equations} {\bf 182}, 416-430.

\end{thebibliography}
\end{document}